\numberwithin{equation}{section}
\newtheorem{theorem}{Theorem}[section]
\newtheorem{lemma}[theorem]{Lemma}
\newtheorem{proposition}[theorem]{Proposition}
\theoremstyle{definition}\newtheorem{definition}{Definition}[section]
\theoremstyle{remark}\newtheorem{remark}{Remark}[section]
\newcommand{\R}{\mathbb{R}}
\newcommand{\N}{\mathbb{N}}
\newcommand{\dt}{\,dt}
\newcommand{\seq}[1]{\left\{#1\right\}}
\newcommand{\abs}[1]{\left|#1\right|}
\newcommand{\norm}[1]{\left\| #1\right\|}
\DeclareMathOperator*{\esssup}{ess\,sup}
\newcommand{\En}{\Bbb{I}}
\renewcommand{\P}{\mathbb P} 
\newcommand{\EE}{\mathbb E} 
\newcommand{\LC}[2]{L^{#1}(\Omega; C^0(0,T; #2))}
\newcommand{\LOT}[2]{L^{#1}(\Omega_T;#2)}
\newcommand{\Hone}{H^1(M,h)} 
\newcommand{\Lp}[1]{L^{#1}(M,h)} 
\newcommand{\Ltwo}{\Lp{2}} 
\newcommand{\hS}{L_2(\mathfrak{U};\Lp{2})} 
\newcommand{\Div}{\mbox{div}_h\, }
\newcommand{\Grad}{\mbox{grad}_h\, }
\newcommand{\coppia}[2]{{}_{V^\ast}\langle #1,#2\rangle_{V}} 
\newcommand{\LOTV}{L^2\left(\Omega_T; \overrightarrow{\Ltwo}\right)}
\DeclareMathOperator{\supp}{supp}
\DeclareMathOperator{\dive}{div}
\DeclareMathOperator{\jac}{Jac}
\theoremstyle{example}
\title[Stochastic conservation laws on Riemannian manifolds]
{Well-posedness theory for stochastically forced conservation laws 
on Riemannian manifolds}
\author[L. Galimberti]{Luca Galimberti}
\address[Luca Galimberti]
{\newline Department of mathematics
\newline University of Oslo
\newline P.O. Box 1053, Blindern
\newline N--0316 Oslo, Norway} 
\email[]{lucaga@math.uio.no}
\author[K. H. Karlsen]{Kenneth H. Karlsen}
\address[Kenneth Hvistendahl Karlsen]
{\newline Department of mathematics
\newline University of Oslo
\newline P.O. Box 1053,  Blindern
\newline N--0316 Oslo, Norway} 
\email[]{kennethkarlsen@me.com}
\date{\today}
\subjclass[2010]{Primary: 60H15, 35L65; Secondary: 58J, 35D30}
\keywords{Stochastic conservation law, Riemannian manifold, kinetic solution}
\thanks{This work was partially supported by the Research Council 
of Norway through the project Stochastic Conservation Laws (250674/F20).}
\begin{document}

\begin{abstract}
We investigate a class of scalar conservation laws on manifolds 
driven by multiplicative Gaussian (It\^o) noise. The Cauchy problem 
defined on a Riemanian manifold is shown to be well-posed. 
We prove existence of generalized kinetic solutions using 
the vanishing viscosity method. A rigidity result \textit{\`ala} 
Perthame is derived, which implies that 
generalized solutions are kinetic solutions and that 
kinetic solutions are uniquely determined by their initial 
data ($L^1$ contraction principle). Deprived of noise, the 
equations we consider coincide with those analyzed by 
Ben-Artzi and LeFloch \cite{Ben-Artzi/LeFloch}, who worked with 
Kru{\v{z}}kov-DiPerna solutions. In the Euclidian case, the 
stochastic equations agree with those examined by 
Debussche and Vovelle \cite{DV2010}.
\end{abstract}

\maketitle

\tableofcontents

\newpage

\section{Introduction}\label{sec: Introduction}
Hyperbolic conservation laws constitute a significant 
class of nonlinear partial differential equations (PDEs) that 
arises in numerous applications. Indeed, the starting 
point of many mathematical models are balance equations 
for physical quantities such as mass, momentum, and energy. 
Prominent examples include the Euler and Saint-Venant 
(shallow water) equations. Many advances in fluid dynamics are 
built upon the mathematical theory of hyperbolic conservation laws, which 
was developed to answer questions regarding existence, uniqueness, and 
structure of weak solutions (shock waves). 
Most aspects of the theory of conservation laws are nicely 
summarized in the monumental book \cite{Dafermos:2016aa}. 

In recent years many researchers added new effects 
and features to conservation laws in order to account for 
additional (or more realistic) physical phenomena. One interesting situation 
arises when the domain of the solution to a 
hyperbolic PDE is a curved manifold, in which case the curvature 
of the domain alters the underlying dynamics. 
Significant applications include geophysical fluid dynamics, e.g.~shallow 
water waves on the surface of a planet (caricature model of 
the atmosphere), and general relativity in which the Einstein-Euler 
equations are posed on a manifold with the metric being one of the unknowns. 
For scalar conservation laws defined on manifolds, the development 
of a theory of well-posedness and numerical approximations 
(of Kru{\v{z}}kov-DiPerna solutions) was initiated 
by LeFloch and co-authors \cite{Amorim:2005aa,Amorim:2008aa,Beljadid:2017aa,Ben-Artzi/LeFloch,Ben-Artzi:2009aa,LeFloch:2014aa,LeFloch:2008aa,LeFloch:2009aa} 
(see also Panov \cite{Panov:1997aa,Panov:2011aa}). 
The subject has been extended in several directions by different authors, 
including Giesselmann \cite{Giesselmann:2009aa}, Dziuk, Kr\"oner, 
and M\"uller \cite{Dziuk:2013aa}, Lengeler and M\"uller \cite{Lengeler:2013aa}, 
Giesselmann and M\"uller \cite{Giesselmann:2014aa}, and 
Kr\"oner, M\"uller, and Strehlau \cite{Kroner:2015aa}, 
and Graf, Kunzinger, and Mitrovic \cite{Graf:2017aa}.

In a different direction, many researchers have made 
attempts to extend the scope of hyperbolic conservation laws (on 
Euclidean domains) by adding ``random" effects. Randomness 
can enter these nonlinear PDEs in different ways, such as 
through stochastic forcing (source term) or in uncertain system 
parameters (flux function). Recently the mathematical study of 
stochastic conservation laws has emerged as an active field of study, 
linking several areas of mathematics, including nonlinear analysis 
and probability theory. Several works have studied the 
effect of It\^{o}-type stochastic forcing on 
scalar conservation laws. With emphasis on questions related to 
existence and uniqueness of generalized solutions, we mention 
Kim \cite{Kim:2003mz} (see also Vallet and 
Wittbold \cite{Vallet:2009uq}), who established the well-posedness 
of Kru{\v{z}}kov solutions in the  additive noise case. 
Feng and Nualart \cite{Feng:2008ul} presented 
a non-trivial modification of the Kru{\v{z}}kov framework 
that ensured the well-posedness for nonlinear noise 
functions (multiplicative noise). Debussche and Vovelle \cite{DV2010} 
advanced a general existence and uniqueness theory 
based on kinetic solutions. Additional results can be 
found in Bauzet, Vallet, and Wittbold \cite{Bauzet:2012kx},
Chen, Ding, and Karlsen \cite{Chen:2012fk}, Hofmanov\'a \cite{Hofmanova:2013aa}, 
Biswas, Karlsen, and Majee \cite{Biswas:2014gd}, Karlsen 
and Storr{\o}sten \cite{Karlsen:2015ab}, Debussche and Vovelle 
\cite{Debussche:2015aa}, Debussche, Hofmanov\'a, and Vovelle \cite{DHV2016}, 
Lv and Wu \cite{Lv:2016aa}, Kobayasi and Noboriguchi \cite{Kobayasi:2016aa}, 
and Dotti and Vovelle \cite{Dotti:2018aa,Dotti:2016ab}.  
A class of scalar conservation laws with ``rough path" flux 
was introduced and analyzed by Lions, Perthame, and Souganidis in a series of 
works \cite{Lions:2013ab,Lions:2013aa,Lions:2014aa}. They developed a 
pathwise well-posedness theory based on kinetic solutions. This theory was 
further extended by Gess and Souganidis \cite{Gess:2015aa,Gess:2017aa}.

No previous work has investigated the combined effect of 
nonlinear domains and Gaussian noise on the dynamics of shock waves.
In this paper we are interested in the well-posedness 
of generalized solutions for a class of scalar 
conservation laws that are posed on a curved manifold and perturbed by a Gaussian 
It\^{o}-type noise term. More precisely, let $(M,h)$ be an 
$n$-dimensional ($n\geq 1$) smooth Riemannian manifold, 
which we assume is compact, connected, oriented, and with no 
boundary ($\partial M = \emptyset$). We study the 
Cauchy problem for stochastically forced conservation laws of the form
\begin{equation}\label{eq:target}
	\begin{split}
		& du + \Div\, f_x(u)\,dt = B(u)\, dW(t), 
		\;\;\; x\in M, \; 0<t<T,
		\\
		& u(0,x) = u_0(x),
		\;\;\; \text{$x\in M$},
	\end{split}
\end{equation}
where $W$ is a cylindrical Wiener process with nonlinear 
noise coefficient (operator) $B(u)$, the flux $f=f_x(\xi)$ is a vector field 
on $M$ depending (nonlinearly) on a real parameter $\xi$ and assumed to 
be geometry-compatible in the sense of Ben-Artzi 
and LeFloch \cite{Ben-Artzi/LeFloch}, 
$\Div$ is the divergence operator linked to $(M,h)$, the initial datum 
$u_0$ is a bounded (random) function, and $u=u(\omega,t,x)$ is the unknown
that is sought up to a fixed final time $T>0$. 

Our investigation of \eqref{eq:target} utilizes firmly established tools 
for the analysis of (deterministic) conservation laws, 
specifically the kinetic formulation \cite{Perthame}. 
As in Debussche and Vovelle \cite{DV2010}, we make use 
of kinetic (and also generalized kinetic) solutions. 
Suppose for the moment that $W(t)$ is a one-dimensional Wiener process, 
and replace the operator $B(u)$ by a scalar function $g(x,u)$ that is (say) 
Lipschitz in both variables. In broad strokes, a process 
$u=u(\omega,t,x)$ is called a kinetic solution of \eqref{eq:target} if 
the associated process
\begin{equation}\label{intro:kinetic-func}
\varrho(\omega,t,x,\xi)=\En_{u(\omega,t,x)>\xi}:=
\begin{cases}
	1, & \text{if $\xi < u(\omega,t,x)$}\\
	0, & \text{if $\xi \ge u(\omega,t,x)$} 
\end{cases}
\end{equation}
satisfies (in the distributional sense) the kinetic equation
\begin{equation}\label{intro:kinetic}
\partial_t \varrho 
+ \left(f'_x(\xi),\nabla \varrho\right)_h
+ g(x,\xi) \partial_\xi \varrho \, \frac{dW}{dt}
= \partial_\xi \left(\frac{(g(x,\xi))^2}{2}
\partial_\xi \varrho \right)
+ \partial_\xi m,
\end{equation}
for some nonnegative (random) measure $m$, where 
$\left(\cdot,\cdot\right)_h$ is the inner product 
induced by the metric $h$. Note the property 
$\partial_\xi \varrho = -\delta(\xi-u)$ (and thus 
$\varrho(t,x,\cdot)\in BV_\xi$). Roughly speaking, the 
difference between a kinetic solution $\varrho$ and a 
generalized kinetic solution $\rho$ is that this structural property is 
replaced by the requirement $\partial_\xi \rho=-\nu$ for some 
Young measure $\nu$ on $\R_\xi$ (that is parameterized over $\omega,t,x$). 
We refer to Section \ref{sec: kinetic and generalized} for details.

Following an approach developed by Perthame \cite{Perthame} (instead 
of the ``doubling of variables" method \cite{DV2010}), we 
establish a rigidity result implying that generalized 
kinetic solutions are in fact kinetic solutions, 
and that they are uniquely determined by their initial 
data ($L^1$ contraction principle). To achieve this, we will 
employ a regularization procedure, commutator arguments 
\textit{\`ala} DiPerna-Lions, and the It\^{o} formula 
(for semimartingales) to show that a generalized kinetic solution $\rho$ and 
its square $\rho^2$ coincide (the ``rigidity result"), 
provided $\rho|_{t=0}=\En_{u_0>\xi}$. In our setting, added 
difficulties arise due to the stochastic forcing term 
and the nonlinear nature of the underlying domain $M$. 
In a nutshell, our strategy regarding the latter 
is the following: with the help of a partition of 
the unity, we will first localize the equation \eqref{eq:target} and 
then ``pull it back'' to the Euclidean space, where the  
regularization procedure (convolution in $x,\xi$) will be carried out. 
This leads to several equations that are subsequently aggregated 
into a single (global) equation, living on the manifold $M$. 
Eventually this global equation is used to derive the rigidity result. 
We note that the solution to this equation is smooth in $x,\xi$ (but not $t$). 
The equation contains a commutator term (regularization error), arising 
as a result of the convective flux $f$ and the 
nonlinear nature of the domain $M$, which converges to 
zero thanks to a proper adaption of the 
DiPerna-Lions commutator lemma \cite{DL89}. 
 
There are additional issues linked to the noise term 
in \eqref{eq:target} and its local time (quadratic covariation), including 
the handling of regularization errors tied to the $x$ and $\xi$ variables. 
For the moment, let us focus on the $\xi$ variable. For simplicity of 
presentation, we consider the Euclidean case and set $f\equiv 0$ (see 
Section \ref{sec: reduction and uniqueness} 
for the general case). To illustrate some of 
the difficulties, consider the kinetic equation 
\begin{equation}\label{intro:kinetic-simple}
\partial_t \varrho  + a(\xi) \partial_\xi  \varrho \, \frac{dW}{dt} 
=  \partial_\xi \left(\frac{(b(\xi))^2}{2} \partial_\xi  \varrho\right)
+ \partial_\xi m, 
\qquad
\text{$\varrho$ of the form \eqref{intro:kinetic-func}}, 
\end{equation}
where $a(\xi)$ and $b(\xi)$ are two, say, Lipschitz 
functions, noting that $b\equiv a$ corresponds to \eqref{intro:kinetic}. 
In order to compare $\varrho$ and $\varrho^2$, we need to 
determine the equation satisfied by $\varrho^2$. 
Let us attempt to do that for \eqref{intro:kinetic-simple}.
Fix $S\in C^2$ (say, $S(f)=f^2$). A 
formal application of the  It\^{o} formula suggests 
the following equation for $S(\varrho)$:
\begin{equation}\label{intro:kinetic-ito}
\partial_t S(\varrho) + a(\xi) \partial_\xi  S(\varrho)\, \frac{dW}{dt} 
=\partial_\xi \left(\frac{(b(\xi))^2}{2} \partial_\xi  S(\varrho)\right) 
+ S'(\varrho) \partial_\xi m+\mathcal{Q},
\end{equation}
where $\mathcal{Q}$ contains the quadratic terms 
coming from the second order differential operator and the covariation 
of the martingale part of the equation \eqref{intro:kinetic-simple}:
\begin{align*}
	\mathcal{Q} & = \left(a^2(\xi) - b^2(\xi)\right) 
	\frac{S''(f)}{2}\left(\partial_\xi \varrho \right)^2.
\end{align*}
At first glance, it may seem that noise induces extra regularity 
in the $\xi$ variable, as a result of the second order 
differential operator in \eqref{intro:kinetic-simple}.
This, however, is not the case. Only under the ``super-parabolicity" 
condition $a^2(\cdot)< b^2(\cdot)$ do we have $Q\le 0$, 
in which case $Q$ represents ``dissipation" (from noise). 
The specific case $b\equiv a$ corresponds to 
the kinetic equation for the stochastic conservation law. 
The perfect cancellation (i.e., $Q=0$) in this case 
is the basic reason why the $L^1$ contraction principle 
(uniqueness) holds for these nonlinear SPDEs.  
Unfortunately the equation \eqref{intro:kinetic-ito} is 
only suggestive (the calculations leading up to it are only formal). 
To make the calculations rigorous we regularize the 
linear equation \eqref{intro:kinetic-simple} using a 
mollifier $\phi_\delta(x,\xi)$,  thereby bringing in 
an additional type of regularization error $\mathcal{R}(\delta)$. 
If $a=b=g$ for some function $g(\cdot)$, then 
$\mathcal{R}(\delta)$ takes the form 
\begin{equation}\label{intro:reg-error}
	\abs{\, \int  \partial_\xi \varrho_{\delta}
	\Bigl(\left(g^2 \partial_\xi \varrho \right)
	\underset{(x,\xi)}{\star} \phi_\delta\Bigr) 
	-\Bigl(\left(g \partial_\xi  \varrho \right) 
	\underset{(x,\xi)}{\star} \phi_\delta \Bigr)^2
	\, d\xi \, dx}, 
	\quad
	\varrho_{\delta}:=\varrho \underset{(x,\xi)}{\star} \phi_\delta.
\end{equation}
Under a suitable regularity assumption on $g$, one can show 
that $\mathcal{R}(\delta)\to 0$ as $\delta\to 0$. 
The relevant assumption is dictated by the following 
derivable expression for $\mathcal{R}(\delta)$:
\begin{align*}
	\mathcal{R}(\delta)
	& = \frac12 \int \abs{g(\zeta)-g(\bar\zeta)}^2 
	\partial_\xi  \varrho(t,y,\zeta)
	\partial_\xi  \varrho(t,\bar y,\bar \zeta)
	\\ & \qquad \qquad
	\times \phi_\delta(x-y,\xi-\zeta)
	\phi_\delta(x-\bar y,\xi-\bar \zeta)
	\, d\zeta\, d\bar \zeta \, dy \, d\bar y \, dx \, d\xi.
\end{align*}
For a standard mollifier $\phi_\delta$, it turns out that this 
expression tends to zero as $\delta\to 0$ 
if $g$ is Lipschitz, or more generally if 
$\abs{g(\xi_1)-g(\xi_2)}^2 \le C \abs{\xi_1-\xi_2}\delta(\abs{\xi_1-\xi_2})$ 
for some continuous function $\delta$ on $\R_+$ with $\delta(0)=0$, which 
is consistent with \cite{DV2010}. Up to this point we have 
tried to extract some of the main ideas behind the uniqueness proof (in a  
simplified situation). Unfortunately, the complete proof in the 
general case is painfully long and technical. 
We refer to Section \ref{sec: reduction and uniqueness} for details.

As part of showing existence of kinetic solutions, we 
will establish the well-posedness of variational solutions 
\cite{Krylov/Rozowskii} for a stochastic parabolic problem, 
obtained by adding to \eqref{eq:target} a small diffusion term 
$\varepsilon\, \Delta_h$ ($\varepsilon>0$) involving 
the Laplace-Beltrami operator $\Delta_h$ on $(M,h)$ 
(cf.~Section \ref{sec: Vanishing} for details). 
Making use of a priori ($L^p$) estimates, we prove that 
there exists a kinetic solution to \eqref{eq:target} 
by arguing that the kinetic function linked to 
the variational solution (of the stochastic parabolic equation) 
converges weakly as $\varepsilon\to 0$ to a generalized kinetic solution 
of \eqref{eq:target} (cf.~Section \ref{Sec: Existence} for details). 
A crucial ingredient in the overall existence proof is a generalized 
It\^{o} formula for weak solutions to a wide class of 
SPDEs on Riemannian manifolds. Indeed, since variational 
solutions of the stochastic parabolic equation are merely $H^1$ 
regular in the $x$ variable, our general setting 
forces us to derive this It\^{o} formula. This is the topic of 
Section \ref{sec: generalized ito}.

\section{Background and hypotheses}\label{sec: Background}

We now provide the precise assumptions on each of the terms appearing in 
the stochastic conservation law \eqref{eq:target}. 
Basic background material on hyperbolic conservation laws can be 
found in the books \cite{Dafermos:2016aa,Perthame}.

\subsection{Geometric framework}
The underlying space is an $n$-dimensional ($n\geq 1$) 
smooth manifold $M$, which we assume to be compact, connected, 
oriented and with no boundary. Moreover, $M$ is 
endowed with a smooth Riemannian metric $h$. By this, we mean that $h$ is a 
positive-definite, 2-covariant tensor field, which thus 
determines for every $x\in M$ an inner product $h_x$ on $T_xM$ (the 
tangent space at $x$). For any two vectors $V_1,V_2\in 
T_xM$, we will henceforth write $h_x(V_1,V_2)=:\left(V_1,V_2\right)_{h_x}$ 
or even $\left(V_1,V_2\right)_h$ if the context is clear. 
We set $|V|_h:=\left(V,V\right)_h^{1/2}$. 
Recall that in local coordinates $x=(x^i)$, the 
derivations $\partial_i:=\frac{\partial}{\partial x^i}$ 
form a basis for $T_xM$, while the differential forms $dx^i$ 
determine a basis for the cotangent space $T_x^\ast M$. 
Therefore, in local coordinates, $h$ reads 
\[
h = h_{ij}dx^idx^j,  \;\; h_{ij}=\left(\partial_i,\partial_j\right)_h, 
\]
Here and elsewhere we employ the Einstein summation convention over 
repeated indices. We will denote by $(h^{ij})$ the inverse of 
the matrix $(h_{ij})$.

We denote by $dV_h$ the Riemannian density associated to $h$, which 
in local coordinates reads 
\[
dV_h = |h|^{1/2}dx^1\cdots dx^n, 
\]
where $|h|$ is the determinant of $h$. 
We recall that integration with respect to $dV_h$ 
is done in the following way: if $u\in C^0(M)$ has support contained in 
the domain of a single chart $\Phi:U\subset M\to \Phi(U)\subset \R^n$, then
\[
\int_Mu(x)\,dV_h(x)=\int_{\Phi(U)}(|h|^{1/2}u)
\circ\Phi^{-1}\,dx^1\cdots dx^n, 
\] 
where $(x^i)$ are the coordinates associated 
to $\Phi$. If $\supp u$ is not contained in 
a single chart domain, then the integral is defined as
\[
\int_Mu(x)\,dV_h(x)=\sum_{i\in\mathcal I}\int_M\alpha_i u\,dV_h(x), 
\]
where $(\alpha_i)_{i\in\mathcal{I}}$ 
is a partition of unity subordinate to some atlas $
\mathcal{A}$. Throughout the paper, we will assume for convenience that 
\[
\mathrm{Vol}(M,h):=\int_MdV_h=1. 
\]
Always in local coordinates, the gradient of a function $u:M\to \R$ 
is the vector field given by the following expression
\[
\Grad u := h^{ij}\partial_iu\,\partial_j. 
\]
The symbol $\nabla$ will indicate the Levi-Civita 
connection of $h$, namely the unique 
linear connection on $M$ that is compatible with $h$ and is symmetric. 
In particular, the 
covariant derivative of a vector field $X=X^\alpha\partial_\alpha$ 
is the $(1,1)$-tensor field which in local coordinates 
takes the following form
\[
(\nabla X)_j^\alpha = X^\alpha_{\;;j}:= \partial_j X^\alpha 
+ \Gamma^\alpha_{kj}X^k, 
\]
where $\Gamma^k_{ij}$ are the Christoffel 
symbols associated to $\nabla$:
\[
\Gamma^k_{ij} = \frac12 h^{kl}(\partial_ih_{jl}
+\partial_jh_{il}-\partial_lh_{ij}). 
\]
The divergence of a vector field $X$ 
is the function defined by
\[
\Div X = \partial_jX^j + \Gamma^j_{kj}X^k. 
\]
We recall that for a function $u\in C^1(M)$ and 
a smooth vector field $X$, the following 
integration by parts formula holds:
\[
\int_M \left(\Grad u,X\right)_h\,dV_h 
= -\int_M u \,\Div X \, dV_h. 
\]

We assume that $f=f_x(\xi)$ is a vector field 
on $M$ depending on the real parameter $
\xi$. More precisely, $f:M\times\R\to TM$, where $TM$ 
is the tangent bundle of $M$, and 
$f$ is smooth in both $x$ and $\xi$. 
We will call $f$ the \textit{flux} on $M$. Following 
\cite{Ben-Artzi/LeFloch}, we assume 
that $f$ is \textit{geometry-compatible}, 
in the sense that
\begin{equation}\label{eq: gc}
\Div f_x(\xi)=0,  
\;\;\;\; \xi\in\R, \;\;x\in M.
\end{equation}
Moreover, we impose the following polynomial growth conditions 
on $f$ and the derivative $f':=\partial_\xi f$:
\begin{equation}\label{eq: growth condition 0}
\begin{cases}
\abs{f_x(\xi)}_h \leq C_0\left(1+\abs{\xi}^r\right), 
& \xi\in \R, \; x\in M, 
\\
\abs{f'_x(\xi)}_h \leq C_0\left(1+\abs{\xi}^{r-1}\right), 
& \xi\in \R, \; x\in M, 
\end{cases}
\end{equation}
for some constants $C_0>0,r\geq 1$. 

We denote by $\Lp{p}, p\geq 1$ 
the usual Lebesgue spaces on $(M,h)$. The Sobolev 
spaces $H^k(M,h), k\geq 1$, are defined as 
the completion of $C^\infty(M)$ with respect to the norm
\[
\norm{u}_{H^k(M,h)} = \left(\sum_{j=0}^k
\int_M \abs{\nabla^ju}^2_h\,dV_h\right)^\frac12, 
\]
where 
\[
\abs{\nabla^ju}^2_h= h^{a_1b_1}\cdots 
h^{a_jb_j}(\nabla^ju)_{a_1\cdots a_j}(\nabla^ju)_{b_1\cdots b_j}
\]
(in a local chart) and $\nabla^j$ designates the $j^{th}$ 
covariant derivative of $u$. 
Note that the spaces $H^k(M,h)$ are Hilbert spaces. 
For further details, we refer to 
\cite{Aubin} and \cite{Hebey}. 

Note that, for $x\in M$ and $V\in T_xM$, it holds 
$(\nabla u)(V)=\left(\Grad u, V\right)_h$. For this reason, we will 
in this paper slightly abuse the notation by always writing 
$\left(\nabla u, V\right)_h$ instead of 
$\left(\Grad u, V\right)_h$; that is, we identify 
$\Grad u$ and $\nabla u$.

\subsection{Stochastic framework}
For background material on stochastic analysis and SPDEs, we refer to 
the books \cite{DZ,PR}. For a topological space $(X,\tau)$, the symbol 
$\mathcal{B}(X)$ will indicate its Borel $\sigma$-algebra. 
Given two measurable spaces $(X_i,\mathcal{M}_i),i=1,2$, 
and a map $f:X_1\to X_2$, the expression ``$f$ 
is $\mathcal{M}_1/\mathcal{M}_2$ measurable'' (or 
simply ``$f$ is $\mathcal{M}_1/\mathcal{M}_2$'') 
means that $f^{-1}(B)\in\mathcal{M}_1$ for all $B\in\mathcal{M}_2$.

Regarding the stochastic term, we are given a complete 
probability space $\left(\Omega,\mathcal{F},\P \right)$, along 
with a complete right-continuous filtration 
$(\mathcal{F}_t)_{t\geq 0}$. We denote by $\mathcal{P}$ 
the predictable $\sigma$-algebra on 
$\Omega_T:=\Omega\times[0,T]$ (associated 
to $(\mathcal{F}_t)_{t\geq 0}$).
By this, we mean 
\begin{align*}
\mathcal{P}&=\sigma\left(\left\{(s,t]\times F_s: 0\leq s < t\leq T, 
F_s\in \mathcal{F}_s\right\}\cup \left\{
\{0\}\times F_0: F_0\in\mathcal{F}_0\right\} \right)\\
&=\sigma\left(Y:\Omega_T\to\R: Y \mbox{ is left-continuous 
and adapted to } \mathcal{F}_t,t\in [0,T]\right). 
\end{align*}
Given an arbitrary separable Hilbert 
space $\tilde{H}$, a map $Y:\Omega_T\to\tilde{H}$ 
that is $\mathcal{P}/\mathcal{B}(\tilde{H})$ measurable 
will be called $\tilde{H}$-predictable. 
If $(X,\mathcal{M})$ is a measure space, a map $Y:\Omega_T\times 
X\to\tilde{H}$ will be called progressively measurable (with 
respect to $(\mathcal{F}_t)_{t\geq 0}$) if for all $t\in[0,T]$ the 
map $Y|_{\Omega\times [0,t]\times X}$ is $
\mathcal{F}_t\otimes\mathcal{B}([0,t])
\otimes\mathcal{M}/\mathcal{B}(\tilde{H})$ measurable.

Whenever we write that a statement holds true ``for a.e.~$(\omega,t)$", we 
will be referring to the product measure between $\P$ and 
the Lebesgue measure on $[0,T]$. 

The initial datum $u_0$ is in general a random variable, namely $u_0$ 
is $\mathcal{F}_0$-measurable and in $L^p(\Omega;\Lp{p})$ 
for some $p\in[1,\infty)$. The driving process $W$ is a cylindrical 
Wiener process, i.e., $W(t)=\sum_{k\geq 1}\beta_k(t)e_k$, where
\begin{enumerate}

\item $(e_k)_{k\geq 1}$ is an orthonormal basis for 
a separable Hilbert space $\mathfrak{U}$;

\item $(\beta_k(t))_{k\geq 1}$ are mutually independent real-valued standard 
Wiener processes relative to $(\mathcal{F}_t)_{t\geq 0}$;

\item the sum converges in $\mathcal{M}_T^2(\mathfrak{U}_0)$, 
the space of $\mathfrak{U}_0$-valued continuous, square 
integrable martingales, where $\mathfrak{U}_0$ is 
the auxiliary Hilbert space defined as
\[
\mathfrak{U}_0:=\left\{v=\sum_{k\geq 1}a_ke_k: 
\sum_{k\geq 1}\frac{a_k^2}{k^2}<\infty\right\}, 
\]
endowed with the norm
\[
\norm{v}^2_{\mathfrak{U}_0}
=\sum_{k\geq 1}\frac{a_k^2}{k^2},
\]
such that the embedding $\mathfrak{U}\hookrightarrow\mathfrak{U}_0$ 
is Hilbert-Schmidt (cf.~\cite{PR} for details).

\end{enumerate}

In our setting, we can assume without loss of generality 
that the $\sigma$-algebra $\mathcal{F}$ is countably generated and 
$(\mathcal{F}_t)_{t\in [0,T]}$ is the filtration 
generated by $u_0$ and $W$.

For each $z\in \Lp{2}$, we consider a mapping 
$B(z):\mathfrak{U}\to\Lp{2}$ defined by 
$B(z)e_k:=g_k(\cdot,z(\cdot)), k\in\N$, with $g_k\in C^0(M\times\R)$. 
We assume the following conditions on 
$\left\{g_k\right\}_{k\in\N}$: there exist 
positive constants $D_1,D_2$ such that
\begin{align}
\label{eq: definition of G}
& G^2(x,\xi):=\sum_{k\geq 1}\abs{g_k(x,\xi)}^2
\leq D_1\left(1+\abs{\xi}^2\right), 
\;\; x\in M, \;\xi\in\R,  \\ &
\label{eq: gk are lipschitz}
\sum_{k\geq 1}\abs{g_k(x,\xi)-g_k(y,\zeta)}^2
\leq D_2\left(d_h^2(x,y)+\abs{\xi-\zeta}^2\right), 
\;\; x,y\in M,  \;\xi,\zeta\in\R, 
\end{align}
where $d_h$ is the distance function on $(M,h)$. 
From \eqref{eq: definition of G}, it easily follows that
\[
B:\Lp{2}\to\hS, 
\]
where $\hS$ denotes the (separable Hilbert) space of Hilbert-Schmidt 
operators from $\mathfrak{U}$ to $\Lp{2}$. We also observe that, 
in view of \eqref{eq: gk are lipschitz}, $B$ is Lipschitz on $\Lp{2}$, 
because it holds, for any $z_1,z_2\in \Lp{2}$,
\begin{equation}\label{eq: B is Lipschitz on L2}
\begin{split}
& \norm{B(z_1)-B(z_2)}_{\hS}^2 
= \sum_{k\geq 1}\norm{B(z_1)e_k-B(z_2)e_k}_{\Ltwo}^2
\\ & \quad 
=\int_M\sum_{k\geq 1}\abs{g_k(x,z_1(x))-g_k(x,z_2(x))}^2\, d V_h(x)
\\ & \quad 
\leq \int_M D_2|z_1(x)-z_2(x)|^2 \, dV_h(x)
=D_2\norm{z_1-z_2}_{\Ltwo}^2. 
\end{split}
\end{equation}

For later use, we note the following simple result:

\begin{lemma}\label{lemma: G^2 is locally Lipschitz }
Assume that \eqref{eq: definition of G} 
and \eqref{eq: gk are lipschitz} hold. Then
\begin{align*}
& \abs{G^2(x_1,\xi_1)-G^2(x_2,\xi_2)} 
\\ & \quad
\leq \sqrt{D_1D_2}
\left\{\sqrt{1+\xi_1^2}+\sqrt{1+\xi_2^2}\right\}
\sqrt{d_h^2(x_1,x_2)+\abs{\xi_1-\xi_2}^2}, 
\end{align*}
for all $x_1,x_2\in M$ and $\xi_1,\xi_2\in\R$.
\end{lemma}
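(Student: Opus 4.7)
The plan is to view $G(x,\xi)$ as the $\ell^2$-norm of the vector-valued function $\mathbf{g}(x,\xi):=(g_k(x,\xi))_{k\geq 1}$, so that $G^2(x,\xi)=\norm{\mathbf{g}(x,\xi)}_{\ell^2}^2$. Then the elementary factorization $a^2-b^2=(a-b)(a+b)$ reduces the estimate for $G^2$ to separate bounds on $\norm{\mathbf{g}(x_1,\xi_1)}_{\ell^2}-\norm{\mathbf{g}(x_2,\xi_2)}_{\ell^2}$ and on $\norm{\mathbf{g}(x_1,\xi_1)}_{\ell^2}+\norm{\mathbf{g}(x_2,\xi_2)}_{\ell^2}$, for which hypotheses \eqref{eq: definition of G} and \eqref{eq: gk are lipschitz} are tailor-made.

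More precisely, first I would write
\[
\abs{G^2(x_1,\xi_1)-G^2(x_2,\xi_2)}
=\Bigl|\norm{\mathbf{g}(x_1,\xi_1)}_{\ell^2}-\norm{\mathbf{g}(x_2,\xi_2)}_{\ell^2}\Bigr|
\cdot\Bigl(\norm{\mathbf{g}(x_1,\xi_1)}_{\ell^2}+\norm{\mathbf{g}(x_2,\xi_2)}_{\ell^2}\Bigr).
\]
The reverse triangle inequality in $\ell^2$ gives
\[
\Bigl|\norm{\mathbf{g}(x_1,\xi_1)}_{\ell^2}-\norm{\mathbf{g}(x_2,\xi_2)}_{\ell^2}\Bigr|
\leq \norm{\mathbf{g}(x_1,\xi_1)-\mathbf{g}(x_2,\xi_2)}_{\ell^2}
=\Bigl(\sum_{k\geq 1}\abs{g_k(x_1,\xi_1)-g_k(x_2,\xi_2)}^2\Bigr)^{1/2},
\]
and then \eqref{eq: gk are lipschitz} bounds this last quantity by $\sqrt{D_2}\sqrt{d_h^2(x_1,x_2)+\abs{\xi_1-\xi_2}^2}$.

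For the second factor, hypothesis \eqref{eq: definition of G} yields directly $\norm{\mathbf{g}(x_i,\xi_i)}_{\ell^2}=\sqrt{G^2(x_i,\xi_i)}\leq\sqrt{D_1}\sqrt{1+\xi_i^2}$ for $i=1,2$, so
\[
\norm{\mathbf{g}(x_1,\xi_1)}_{\ell^2}+\norm{\mathbf{g}(x_2,\xi_2)}_{\ell^2}
\leq \sqrt{D_1}\Bigl(\sqrt{1+\xi_1^2}+\sqrt{1+\xi_2^2}\Bigr).
\]
Multiplying the two bounds gives the claimed inequality. There is no real obstacle here: the proof is a one-line manipulation once the $\ell^2$-viewpoint is adopted, and the only mild point is to recognize that \eqref{eq: gk are lipschitz} is exactly an $\ell^2$-Lipschitz statement for the vector $\mathbf{g}$, so that the reverse triangle inequality can be invoked in its natural form.
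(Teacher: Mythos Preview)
Your proof is correct and essentially the same as the paper's: both factor $a^2-b^2=(a-b)(a+b)$ and then invoke \eqref{eq: gk are lipschitz} and \eqref{eq: definition of G} for the two factors. The only cosmetic difference is that the paper factors componentwise and then applies Cauchy--Schwarz over $k$, whereas you factor at the level of the $\ell^2$-norm and use the reverse triangle inequality; the resulting bound is identical.
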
 
\begin{proof}
A direct computation gives
\begin{align*}
& \abs{G^2(x_1,\xi_1)-G^2(x_2,\xi_2)}
\leq \sum_{k\geq 1}\abs{g_k^2(x_1,\xi_1)-g_k^2(x_2,\xi_2)}
\\ & \quad \leq \left( \, \sum_{k\geq 1}
\abs{g_k(x_1,\xi_1)+g_k(x_2,\xi_2)}^2\right)^{1/2} 
\left(\, \sum_{k\geq 1}
\abs{g_k(x_1,\xi_1)-g_k(x_2,\xi_2)}^2\right)^{1/2}
\\ & \quad \leq \sqrt{D_1D_2}
\left\{\sqrt{1+\xi_1^2}+\sqrt{1+\xi_2^2}\right\}
\sqrt{d_h^2(x_1,x_2)+\abs{\xi_1-\xi_2}^2}. 
\end{align*}
\end{proof}

In \eqref{eq:target}, ``$B(u)\, dW$" is understood as 
an It\^{o} stochastic integral. 
We refer to \cite{DZ,PR} for a detailed 
construction of the stochastic integral
$$
N_t := \int_0^t H \, dW 
= \sum_{k\ge 1}  \int_0^t H_k \, d\beta_k, 
\qquad H_k:=H e_k,
$$
for any predictable $L^2(M,h)$-valued process 
$$
H\in L^2\big(\Omega,\mathcal{F};L^2(0,T;\hS)\big).
$$
We will frequently make use of the Burkholder-Davis-Gundy 
inequality \cite[App.~D]{PR}, which applied to $N_t$ reads
\begin{equation}\label{eq: BDG}
\EE\left[ \sup_{t\in [0,T]} \norm{\sum_{k\ge 1} 
\int_0^t H_k \, d \beta_k}_{L^2(M,h)}^p \right]
\le C\, \EE\left[\left(\int_0^T \sum_{k\ge 1}
\norm{H_k}_{L^2(M,h)}^2 \dt\right)^{\frac{p}{2}} \right],
\end{equation} 
where $C$ is a constant depending on $p\ge 1$. 

\begin{remark}
Condition \ref{eq: gk are lipschitz} is used for convenience and 
simplicity of presentation. It can be replaced by the 
following more general condition (conforming to \cite{DV2010}):
\[
\sum_{k\geq 1}\abs{g_k(x,\xi)-g_k(y,\zeta)}^2
\leq D_2\left(d_h^2(x,y)+\abs{\xi-\zeta}
\delta(\abs{\xi-\zeta})\right), 
\]
for $x,y\in M$ and $\xi,\zeta\in\R$, where 
$\delta: [0,\infty)\to[0,\infty)$ is a continuous 
non-decreasing function such that $\delta(0)=0$. The relevant
proofs remain the same modulo some notational changes.
\end{remark}

\section{Kinetic solutions and main result}\label{sec: kinetic and generalized}
Following Debussche and Vovelle \cite{DV2010}, we 
introduce the concepts of kinetic and  generalized 
kinetic solutions for stochastic conservation laws 
defined on a manifold. We start with the notion of kinetic measure.

\begin{definition}[kinetic measure]\label{def: kinetik measure}
We say that a map $m$ from $\Omega$ to the set of 
non-negative finite measures over $[0,T]\times M\times\R$ 
is a \textit{kinetic measure} if
\begin{enumerate}

\item $m$ is measurable, that is, for each 
$\phi\in C_b^0([0,T]\times M\times\R)$, $m(\phi):\Omega\to\R$ 
is measurable, where $m(\phi)$ denotes the action of $m$ on $\phi$;

\item $m$ is integrable, that is,
\[
\EE\, m([0,T]\times M\times \R)<\infty\,;
\]

\item $m$ vanishes for large $\xi$, that is, 
if $B_R^c=\left\{\xi\in\R: \abs{\xi}\geq R\right\}$, then 
\[
\lim_{R\to\infty}\EE\, m([0,T]\times M\times B_R^c)=0;
\]

\item for all $\phi\in C^0_b(M\times\R)$, the process
\[
t\mapsto \int_{[0,t]\times M\times\R}\phi(x,\xi)\, m(ds,dx,d\xi)
\in L^2(\Omega\times [0,T])
\]
admits a predictable representative.

\end{enumerate}
\end{definition} 

\begin{definition}[kinetic solutions]\label{def: solution}
With $u_0\in L^\infty(\Omega,\mathcal{F}_0;L^\infty(M,h))$, 
set $\rho_0:=\En_{u_0>\xi}$. A measurable function 
$u:\Omega\times [0,T]\times M\to \R$ is said to be 
a \textit{kinetic solution} of \eqref{eq:target} 
with initial data $u_0$ if $(u(t))_{t\in [0,T]}$ is predictable;
$\forall p\in [1,\infty)$, there exists a positive 
constant $C_p$ such that
\[
\EE\left(\esssup_{t\in [0,T]} \norm{u(t)}^p_{\Lp{p}}\right)\leq C_p;
\]
and there exists a kinetic measure $m$ such that 
$\P$-a.s.~the function $\rho:=\En_{u>\xi}$ satisfies 
\begin{equation*}
\begin{split}
\int_0^T & \int_M\int_\R\rho\,\partial_t\psi \, d\xi \,dV_h(x)\,dt
+\int_M\int_\R\rho_0\,\psi(0,x,\xi)\, d\xi \,dV_h(x)\\
&+\int_0^T\int_M\int_\R\rho\left(f'_x(\xi),\nabla \psi\right)_h 
\,d\xi \,dV_h(x)\,dt
=m(\partial_\xi\psi)\\
& -\sum_{k\geq 1}\int_0^T\int_M g_k(x,u(t,x))\,\psi(t,x,u(t,x))
\,dV_h(x)\,d\beta_k(t)\\
&-\frac12\int_0^T\int_M \partial_\xi\psi(t,x,u(t,x)) 
\,G^2(x,u(t,x))\,dV_h(x)\,dt, 
\end{split}
\end{equation*}
for all $\psi\in C^1_c([0,T)\times M\times\R)$.
\end{definition} 

Let $(X,\mu)$ be a finite measure space, 
and denote by $\mathrm{Prob}(\R)$ the set of 
probability measures on $\R$. A map $\nu :X\to\mathrm{Prob}(\R)$ is a 
\textit{Young measure} on $X$ if, for all $\phi\in C_b(\R)$, 
the map $z\mapsto \nu_z(\phi)$ from $X$ to $\R$ is measurable. 
We say that a Young measure $\nu$ \textit{vanishes at infinity} if, for 
every $p\in [1,\infty)$,
\begin{equation}\label{def: Young measure}
\int_X\int_\R \abs{\xi}^p\, 
\nu_{z}(d\xi)\,\mu(dz)<\infty. 
\end{equation}

Let $(X,\mu)$ be a finite measure space. 
A measurable function $\rho:X\times\R\to [0,1]$ is said 
to be a \textit{generalized kinetic function} 
if there exists a Young measure $\nu$ on $X$ vanishing at infinity 
such that, for $\mu$-a.e.~$z\in X$ and for all $\xi\in\R$, 
$\rho(z,\xi)=\nu_z(\xi,\infty)$. 

We say that $\rho$ is a \textit{kinetic function} if 
there exists a measurable function $u:X\to\R$ such 
that $\rho(z,\xi)=\En_{u(z)>\xi}$ a.e., or, 
equivalently, $\nu_z=\delta_{u(z)}$ for $\mu$-a.e.~$z\in X$.


A generalized kinetic function $\rho$ satisfies 
$\partial_\xi \rho=-\nu$. If $\rho$ is a kinetic function, then 
$\partial_\xi \rho=-\delta_{u}$. Let $\rho$ be a 
generalized kinetic function. Note that the function
$\chi_\rho(z,\xi):=\rho(z,\xi)-\En_{0>\xi}$ 
is, contrary to $\rho$, integrable on $\R_\xi$.

\begin{definition}[generalized kinetic solution]\label{def: generalized solution}
Fix a generalized kinetic function $\rho_0:\Omega\times M\times \R\to [0,1]$. 
We call $\rho:\Omega\times [0,T]\times M\times \R\to [0,1]$ 
a \textit{generalized kinetic solution} of \eqref{eq:target} 
with initial data $\rho_0$ if $\chi_\rho=\rho-\En_{0>\xi}$ 
is $\mathcal{P}/\mathcal{B}(L^2(M\times\R))$ 
measurable and for all $p\in [1,\infty)$ there exists $C_p> 0$ such that
\[
\EE\left(\esssup_{t\in[0,T]}
\int_M\int_\R\abs{\xi}^p\, 
\nu_{\omega,t,x}(d\xi)\, dV_h(x)\right)\leq C_p, 
\]  
where $\nu=-\partial_\xi\rho$ is a Young measure, and 
if there exists a kinetic measure $m$ 
such that $\P$-a.s.~($f'=\partial_\xi f$)
\begin{equation}\label{eq: generalized solution}
\begin{split}
\int_0^T&\int_M\int_\R\rho\,\partial_t\psi
\, d\xi \,dV_h(x)\,dt
+\int_M\int_\R\rho_0\,\psi(0,x,\xi)
\, d\xi \,dV_h(x) \\ & 
+\int_0^T\int_M\int_\R\rho\left(f'_x(\xi),
\nabla \psi\right)_h \,d\xi \,dV_h(x)\,dt
=m(\partial_\xi\psi)\\
& -\sum_{k\geq 1}\int_0^T\int_M\int_\R g_k(x,\xi)\,\psi
\,\nu_{\omega,t,x}(d\xi)\,dV_h(x)\,d\beta_k(t)\\
&-\frac12\int_0^T\int_M\int_\R\partial_\xi\psi \,G^2(x,\xi)
\,\nu_{\omega,t,x}(d\xi)\,dV_h(x)\,dt,
\end{split}
\end{equation}
for all $\psi\in C^1_c([0,T)\times M\times\R)$.
\end{definition} 

We note the following result, which is identical 
to \cite[Proposition 10]{DV2010} 
(see also \cite[Lemma 1.3.3]{Dafermos:2016aa}). 
It tells us that a generalized kinetic solution 
possesses (weak) left and right limits 
at every instant of time. 

\begin{lemma}\label{lemma: left and right limits}
Let $\rho$ be a generalized kinetic solution to 
\eqref{eq:target} with initial data $\rho_0$. 
For any $t_\ast\in[0,T]$, there exist 
generalized kinetic functions $\rho^{\ast,\pm}$ on 
$\Omega\times M\times\R$ such that $\P$-a.s.,
\begin{align*}
&\iint_{M\times\R} \rho(t_\ast-h)\,\psi \,d\xi\,dV_h(x)
\overset{h\downarrow 0}{\to} 
\iint_{M\times\R}\rho^{\ast,-} \,\psi \,d\xi\,dV_h(x),
\\ &
\iint_{M\times\R}\rho(t_\ast+h) \,\psi d\xi\,dV_h(x)
\overset{h\downarrow 0}{\to} 
\iint_{M\times\R}\rho^{\ast,+} \,\psi \,d\xi\,dV_h(x),
\end{align*}
for all $\psi\in C^1_c(M\times\R)$. Moreover, $\P$-a.s.,
\[
\iint_{M\times\R}(\rho^{\ast,+}-\rho^{\ast,-}) 
\,\psi\,d\xi\,dV_h(x) 
= -\int_{[0,T]\times M\times\R}\partial_\xi \psi(x,\xi)
\,\En_{\{t_\ast\}}(t) \,m(dt,dx,d\xi). 
\]
In particular, $\P$-a.s., the set 
$\seq{t_\ast\in [0,T]:\rho^{\ast,+}\neq\rho^{\ast,-}}$ 
is at most countable.
\end{lemma}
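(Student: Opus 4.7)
The proof follows the strategy of \cite[Proposition 10]{DV2010}, exploiting the structure of the kinetic equation \eqref{eq: generalized solution} in the time variable. For a fixed $\phi \in C^1_c(M\times\R)$, set
$$F_\phi(t) := \iint_{M\times\R} \rho(t,x,\xi)\,\phi(x,\xi)\,d\xi\,dV_h(x).$$
Testing \eqref{eq: generalized solution} against $\alpha(t)\phi(x,\xi)$ for arbitrary $\alpha \in C^1_c([0,T))$ identifies, as a distributional equation in $t$, the derivative of $F_\phi$. Integrating this identity shows that $F_\phi$ coincides for a.e.\ $t$ ($\P$-a.s.) with a c\`adl\`ag process $\bar F_\phi(t)$ which is a sum of: (i) the initial value $\iint\rho_0\phi$; (ii) an absolutely continuous drift coming from $\iint \rho(s)(f'_x(\xi),\nabla\phi)_h$; (iii) a continuous square-integrable martingale $\sum_k\int_0^t\iint g_k(x,\xi)\phi\,\nu_{\omega,s,x}(d\xi)\,dV_h\,d\beta_k$ (bounded via \eqref{eq: definition of G} and the Young-measure moment estimate in Definition \ref{def: generalized solution}); (iv) an absolutely continuous term from $\iint\partial_\xi\phi\,G^2\,\nu_{\omega,s,x}(d\xi)\,dV_h$; and (v) the c\`adl\`ag BV contribution $t\mapsto -\int_{[0,t]\times M\times\R}\partial_\xi\phi\,dm$ coming from the kinetic measure.

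Next I would invoke separability: fix a countable family $\{\phi_n\}_{n\geq 1}\subset C^1_c(M\times\R)$ which is dense in $L^1(M\times\R)$ and total in $C_0(M\times\R)$. On a $\P$-almost sure event $\tilde\Omega$, the identity $F_{\phi_n}=\bar F_{\phi_n}$ holds for a.e.\ $t$ for \emph{every} $n$ simultaneously, and each $\bar F_{\phi_n}$ is c\`adl\`ag. For any $t_\ast\in[0,T]$ and $\omega\in\tilde\Omega$, define
$$L^{\pm}_{t_\ast}(\phi_n):=\lim_{h\downarrow 0}\bar F_{\phi_n}(t_\ast\pm h),$$
which exists by the c\`adl\`ag property. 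Since $0\le\rho\le 1$, one has $|L^{\pm}_{t_\ast}(\phi_n)|\leq\norm{\phi_n}_{L^1(M\times\R)}$, so the map $\phi_n\mapsto L^\pm_{t_\ast}(\phi_n)$ extends by density to a positive linear functional on $L^1(M\times\R)$, represented by a measurable function $\rho^{\ast,\pm}(\omega,x,\xi)\in [0,1]$.

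To identify $\rho^{\ast,\pm}$ as a generalized kinetic function I would verify that, for almost every $(\omega,x)$, the map $\xi\mapsto \rho^{\ast,\pm}(\omega,x,\xi)$ is non-increasing with the correct limits at $\pm\infty$, so that $-\partial_\xi\rho^{\ast,\pm}$ is a probability measure on $\R_\xi$. Monotonicity is inherited from $\rho$ by testing against nonnegative product-form $\phi$ and passing to the limit; the tail behavior is secured by the moment bound $\EE\,\esssup_t\iint |\xi|^p\nu_{\omega,t,x}(d\xi)\,dV_h(x)\le C_p$ together with Fatou's lemma applied through the limits $h\downarrow 0$. The asserted convergence statements of the lemma then follow by unwinding the definition of $\rho^{\ast,\pm}$ on $\{\phi_n\}$ and extending to all of $C^1_c(M\times\R)$ via density and the uniform bound $|F_\phi(t)|\leq\norm{\phi}_{L^1}$.

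Finally, subtracting the right-limit and left-limit expressions of $\bar F_\phi$ at $t_\ast$, the absolutely continuous parts and the continuous martingale cancel, leaving only the jump of the $m$-contribution:
$$\iint_{M\times\R}(\rho^{\ast,+}-\rho^{\ast,-})\,\phi\,d\xi\,dV_h(x)=-\int_{[0,T]\times M\times\R}\partial_\xi\phi(x,\xi)\,\car{\{t_\ast\}}(t)\,m(dt,dx,d\xi).$$
The set $\{t_\ast\in[0,T]:\rho^{\ast,+}\neq\rho^{\ast,-}\}$ is contained in the atoms of the finite (random) measure on $[0,T]$ defined by the time-marginal of $m$, hence $\P$-a.s.\ at most countable. \textbf{The main obstacle} is the coordination of null sets across the uncountable family $\{\phi\in C^1_c(M\times\R)\}$: the a.e.-in-$t$ identity $F_\phi=\bar F_\phi$ is a priori $\phi$-dependent, and working on a single almost-sure event $\tilde\Omega$ compatible with all $\phi$ requires the countable dense subset $\{\phi_n\}$ plus care in upgrading the $L^\infty$ representation to a genuine Young-measure structure.
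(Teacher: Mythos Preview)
Your proposal is correct and follows exactly the approach the paper takes: the paper does not give its own proof but simply refers the reader to \cite[Proposition~10]{DV2010}, and your sketch is a faithful outline of that argument (c\`adl\`ag representative of $F_\phi$ via the kinetic equation, countable dense family to handle null sets, identification of $\rho^{\ast,\pm}$ as generalized kinetic functions, jump formula from the $m$-term, and countability of jumps from the atoms of the time marginal of $m$).
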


For a proof of this result see the above-mentioned reference. 
For a generalized kinetic solution $\rho$, we henceforth define 
the accompanying functions $\rho^\pm$ by setting 
$\rho^\pm(t_*)=\rho^{\ast,\pm}(t_*)$ for $t_*\in[0,T]$. 
Clearly, $\rho^+(t)=\rho^-(t)=\rho(t)$ for a.e.~$t\in [0,T]$.

As in \cite[page 14]{DV2010}, we can replace \eqref{eq: generalized solution} 
by a weak formulation that is pointwise in time: 
$\forall t\in[0,T]$ and $\forall \psi\in C^1_c(M\times\R)$,
\begin{equation}\label{eq: generalized solution (2)}
\begin{split}
-&\int_M\int_\R\rho^+(t)\,\psi\, d\xi \,dV_h(x)
+\int_M\int_\R\rho_0\,\psi\, d\xi \,dV_h(x)
\\ &\qquad\quad 
+\int_0^t\int_M\int_\R\rho^+(s)
\left(f'_x(\xi),\nabla \psi\right)_hd\xi \,dV_h(x)\, ds
\\ & \quad =-\sum_{k\geq 1}\int_0^t\int_M\int_\R g_k(x,\xi)
\,\psi\,\nu_{\omega,s,x}(d\xi)\,dV_h(x)\,d\beta_k(s)\\
&\qquad\quad -\frac12\int_0^t\int_M\int_\R
\partial_\xi\psi \,G^2(x,\xi)\,
\nu_{\omega,s,x}(d\xi)\,dV_h(x)\,ds
\\ &\qquad\quad
+\int_{[0,t]\times M\times\R}
\partial_\xi\psi \,m(ds,dx,d\xi),  
\quad \P\mbox{-almost surely}. 
\end{split}
\end{equation}
This ``pointwise in time'' formulation will be 
utilized in the uniqueness proof.

The next theorem contains the main result of the paper, namely 
the existence, uniqueness, and stability of kinetic solutions.
Moreover, the trajectories (in $L^p$) of 
kinetic solution are continuous, $\P$-almost surely. 
The proof of the theorem is scattered across 
Section \ref{sec: reduction and uniqueness} (uniqueness) 
and Section \ref{Sec: Existence} (existence).  

\begin{theorem}[well-posedness]\label{thm:well-posed}
Suppose \eqref{eq: growth condition 0}, \eqref{eq: definition of G}, 
\eqref{eq: gk are lipschitz} hold. There exists a 
unique kinetic solution $u$ of \eqref{eq:target} with 
initial datum $u_0\in L^\infty(\Omega,\mathcal{F}_0;L^\infty(M,h))$. 
If $u_1, u_2$ are kinetic solutions of \eqref{eq:target} with 
initial data $u_{1,0}, u_{2,0}$, respectively, then
the following $L^1$ contraction property holds: 
\begin{equation}\label{L1-contraction}
	\EE \int_M \abs{\left(u_1-u_2\right)(t,x)} \, \,dV_h(x)
	\leq \EE \int_M \abs{\left(u_{1,0}-u_{2,0}\right)(x)}\,dV_h(x),
\end{equation} 
for $t\in [0,T]$. Besides, $u$ has 
a representative in $L^p(\Omega;L^\infty(0,T;L^p(M,h)))$ 
with continuous trajectories in $L^p(M,h)$, $\P$-a.s., for any $p\in [1,\infty)$.
\end{theorem}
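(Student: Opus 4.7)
The plan is to split Theorem \ref{thm:well-posed} into three subproblems: a \emph{rigidity/uniqueness} statement at the level of generalized kinetic solutions, an \emph{existence} statement via the vanishing viscosity method, and an a posteriori statement on time-continuity in $L^p(M,h)$. The cornerstone is the Perthame-style rigidity result: every generalized kinetic solution $\rho$ with kinetic initial datum $\En_{u_0>\xi}$ satisfies $\rho=\rho^2$, hence is itself a kinetic function $\rho=\En_{u>\xi}$. Once this is established, uniqueness and the $L^1$ contraction \eqref{L1-contraction} follow from the elementary identity $\int_\R\bigl(\En_{u_1>\xi}-\En_{u_2>\xi}\bigr)^2 d\xi=|u_1-u_2|$ and from a bound on the evolution of $\EE\iint_{M\times\R}\bigl[\rho_1^+(1-\rho_2^+)+\rho_2^+(1-\rho_1^+)\bigr]\,d\xi\,dV_h$ obtained by the same mollification scheme used to prove rigidity.

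To make the formal It\^o calculation on $\rho^2$ and on the cross product $\rho_1\rho_2$ rigorous, I would mollify the pointwise-in-time kinetic identity \eqref{eq: generalized solution (2)} in the $(x,\xi)$ variables. Since $M$ admits no global convolution, I would fix a finite atlas, choose a subordinate partition of unity $(\alpha_i)_i$, pull each localized piece $\alpha_i\rho$ back to Euclidean space, mollify there against a standard $\phi_\delta(x,\xi)$, and reassemble the mollified pieces into one smooth-in-$(x,\xi)$ equation on $M$. Three regularization errors must then be driven to zero as $\delta\to 0$: (i) a DiPerna-Lions commutator arising from the nonlinear flux $f'_x(\xi)$ together with the chart changes, handled by a Riemannian adaptation of the commutator lemma using smoothness of $f$; (ii) the stochastic regularization error \eqref{intro:reg-error}, controlled via \eqref{eq: gk are lipschitz} and Lemma \ref{lemma: G^2 is locally Lipschitz }; and (iii) the covariation remainder $\mathcal{Q}$ of the introduction, which cancels identically in the diagonal case $a=b=g_k$ corresponding to our kinetic equation. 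Applying the semimartingale It\^o formula to $S(\rho_\delta)$ with $S(f)=f^2$ (or, for comparison of two solutions, to $\rho_{1,\delta}\,\bar\rho_{2,\delta}$) and then sending $\delta\to 0$ delivers both the rigidity identity and the $L^1$ contraction.

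For existence I would use the vanishing viscosity approximation $du_\varepsilon+\Div f_x(u_\varepsilon)\dt=\varepsilon\Delta_h u_\varepsilon\dt+B(u_\varepsilon)\dW$, treated in the Krylov-Rozovskii variational framework. The geometry-compatibility \eqref{eq: gc}, the growth bounds \eqref{eq: growth condition 0}, and \eqref{eq: definition of G} provide the coercivity and monotonicity needed to produce a variational solution $u_\varepsilon\in L^2(\Omega;L^2(0,T;H^1(M,h)))\cap L^2(\Omega;C([0,T];L^2(M,h)))$. The generalized It\^o formula of Section \ref{sec: generalized ito} applied to $|u_\varepsilon|^p$ then yields $L^p(\Omega;L^\infty(0,T;L^p(M,h)))$ estimates uniform in $\varepsilon$, while the parabolic dissipation measure $m_\varepsilon:=\varepsilon|\nabla u_\varepsilon|_h^2\,\delta_{u_\varepsilon(t,x)=\xi}\,dt\,dV_h(x)$ stays bounded. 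Along a subsequence the kinetic functions $\varrho_\varepsilon=\En_{u_\varepsilon>\xi}$ converge weakly-$\ast$ to a generalized kinetic function $\rho$, and $m_\varepsilon$ converges to a kinetic measure $m$. Passing to the limit in the kinetic equation for $u_\varepsilon$ produces a generalized kinetic solution of \eqref{eq:target} with initial datum $\En_{u_0>\xi}$, and the rigidity step of the preceding paragraph upgrades it to an honest kinetic solution $u$.

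Continuity of trajectories in $L^p(M,h)$ then follows by applying the generalized It\^o formula to $|u|^p$ along the limit solution, together with Lemma \ref{lemma: left and right limits} and the observation that once $\rho=\En_{u>\xi}$ the jumps $\rho^+-\rho^-$ must vanish at every $t$ (a nontrivial jump would be incompatible with $\rho=\rho^2$, since it would enter the rigidity identity with the wrong sign). I expect the principal obstacle to be the rigidity step of the second paragraph: arranging simultaneously for (a) a Riemannian mollification that commutes with the stochastic integral up to a controllable error, (b) preservation of the exact covariation cancellation \emph{after} regularization rather than only before, and (c) the limited time-regularity that the It\^o calculus forces upon an equation whose solution is smooth only in $(x,\xi)$, all within the generalized kinetic framework, is where essentially the entire technical burden of the paper concentrates.
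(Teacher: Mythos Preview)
Your plan matches the paper's architecture almost exactly: localization via a finite atlas and partition of unity, pullback mollification in $(x,\xi)$, DiPerna--Lions commutator control of the flux term, semimartingale It\^o formula applied to $(\rho^+_\delta)^2$, the symmetry argument of Lemma~\ref{lemma: simmetry argument} for the stochastic regularization error, and existence via vanishing viscosity with $L^p$ estimates from the generalized It\^o formula of Section~\ref{sec: generalized ito}.

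Two points of divergence are worth noting. First, for the $L^1$ contraction the paper does \emph{not} redo the mollification analysis on the cross term $\rho_{1,\delta}\bar\rho_{2,\delta}$ as you suggest; instead it sets $\rho:=\tfrac12(\En_{u_1>\xi}+\En_{u_2>\xi})$, observes that this is itself a generalized kinetic solution with kinetic measure $\tfrac12(m_1+m_2)$, and applies Proposition~\ref{prop: Reduction and uniqueness} once to this single $\rho$. Since $\rho-\rho^2=\tfrac14(\rho_1-\rho_2)^2$ when $\rho_i^2=\rho_i$, the contraction \eqref{L1-contraction} falls out immediately. Your route would work too, but duplicates the entire regularization argument; the averaging trick is the economical Perthame device that buys you contraction for free once rigidity is in hand.

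Second, your continuity argument has a small gap: you cannot apply the generalized It\^o formula of Proposition~\ref{prop: generalized Ito} to $|u|^p$ for the \emph{limit} solution, because that formula requires $u\in L^2(\Omega_T;H^1(M,h))$, which the kinetic solution does not possess. The paper instead argues at the level of the kinetic function $\rho^+$, invoking Lemma~\ref{lemma: left and right limits} and replicating \cite[Cor.~16]{DV2010}: rigidity gives $\rho^+(t)=\En_{u^+(t)>\xi}$ for every $t$, and the countable jump set together with the structure of the kinetic measure forces $\rho^+=\rho^-$ everywhere, from which $L^p$-continuity of $u^+$ follows. Your intuition that jumps are incompatible with $\rho=\rho^2$ is pointed in the right direction, but the mechanism is the one just described rather than an It\^o computation on $u$.
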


\section{Rigidity and uniqueness results}\label{sec: reduction and uniqueness}

The aim of this section is to show that generalized kinetic solutions are in fact 
kinetic solutions and that they are unique. To achieve this, we will employ a 
regularization procedure, which will enable us to compare $(\rho^{\pm})^2$ 
and $\rho^{\pm}$, following \cite{Perthame} (see also \cite{Dalibard}). 
Our strategy is the following: with the help of a 
smooth partition of unity subordinate to a finite atlas 
$\mathcal{A}=\left\{\kappa:X_\kappa\to\tilde{X}_\kappa \right\}_\kappa$, 
we localize the equation \eqref{eq: generalized solution (2)} 
on $X_\kappa\subset M$, thereby obtaining $\sharp(\mathcal{A})$ equations, 
indexed by $\kappa$, that are ``pulled back'' to $\tilde{X}_\kappa \subset\R^n$ 
and regularized with a mollifier on $\R^n\times\R$. 
Subsequently, we aggregate the regularized equations to 
arrive at a single SPDE, parameterized by $(x,\xi)\in M\times\R$. 
We renormalize this equation using It\^{o}'s formula. 
This enables us to analyze the difference between the regularized 
versions of $\rho^+$ and $(\rho^+)^2$, eventually 
concluding the rigidity result. 

The key result of this section is

\begin{proposition}[rigidity result]\label{prop: Reduction and uniqueness}
Suppose \eqref{eq: growth condition 0}, \eqref{eq: definition of G}, 
and \eqref{eq: gk are lipschitz} hold. Let $\rho$ be a 
generalized kinetic solution to \eqref{eq:target} 
with initial data $\rho_0$. Then, for all $t\in [0,T]$,
$$
\EE \int_{M\times\R} \left( \rho^+-(\rho^+)^2\right) (t)
\,d\xi \,dV_h(x)
\leq \EE\int_{M\times\R}\left(\rho_0-\rho_0^2\right)
\,d\xi \,dV_h(x),
$$
where the temporal right limit $\rho^+$ of $\rho$ is defined 
in Lemma \ref{lemma: left and right limits}.
\end{proposition}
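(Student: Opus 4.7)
The plan is to follow the Perthame-type renormalization strategy \cite{Perthame} adapted to the geometric and stochastic setting. First I would fix a finite atlas $\mathcal{A}=\{\kappa:X_\kappa\to\tilde X_\kappa\}$ with a subordinate smooth partition of unity $\{\alpha_\kappa\}$ on $M$. Testing the pointwise-in-time formulation \eqref{eq: generalized solution (2)} with functions of product form $\alpha_\kappa(x)\,\theta(x)\,\varphi(\xi)$ supported in $X_\kappa\times\R$, I would pull each localized equation back to $\tilde X_\kappa\subset\R^n$ by $\kappa^{-1}$, regularize by convolution with a standard mollifier $\phi_\delta(y,\xi)$ in the Euclidean variables, and glue the chart-wise equations back together using $\{\alpha_\kappa\}$. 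The outcome is a single global semimartingale $\rho_\delta(\omega,t,x,\xi)$ on $M\times\R$ that is smooth in $(x,\xi)$ and, for each fixed $(x,\xi)$, satisfies a classical finite-dimensional It\^{o} SDE in $t$.

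Next I would apply the scalar It\^{o} formula to $S(\rho_\delta)$ with $S(r)=r-r^2$ (so that $S(\rho)=0$ exactly on kinetic functions), integrate in $(x,\xi)\in M\times\R$ against a smooth $\xi$-cut-off (to be removed at the end via the uniform moment bounds on $\nu$), and take expectation. The central cancellation to exploit is the one recalled around \eqref{intro:kinetic-ito}: the It\^{o} correction coming from $B(u)\,dW$ combines with the contribution coming from $\tfrac12\partial_\xi(G^2\nu)$---because in our equation the noise and diffusive coefficients match ($a=b=g_k$ in the notation of the introduction)---leaving only a regularization residue $\mathcal{R}(\delta)$ of the type displayed in \eqref{intro:reg-error}. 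The measure contribution is controlled by the non-negativity of $m$ combined with the concavity structure of $S$ (so that, after an integration by parts in $\xi$, the sign is favourable), while the convective flux produces a DiPerna--Lions commutator $[f'_x\cdot\nabla,\phi_\delta\underset{(y,\xi)}{\star}]\rho$ arising from the interplay of the flux, the chart transitions and the curvature of $M$.

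The main obstacle will be the simultaneous vanishing of these two regularization errors as $\delta\to 0$. For $\mathcal{R}(\delta)$, the Lipschitz hypothesis \eqref{eq: gk are lipschitz} and the symmetric integral representation recalled in the introduction yield $\EE\,\mathcal{R}(\delta)\to 0$, the auxiliary stochastic integrals being handled via the Burkholder--Davis--Gundy inequality \eqref{eq: BDG}. For the commutator I would adapt the classical DiPerna--Lions lemma \cite{DL89} to the Riemannian setting, exploiting the smoothness of $f$, the geometry-compatibility \eqref{eq: gc} and the boundedness and smoothness of the chart transitions, so that only lower-order bounded contributions survive in the limit; controlling both errors uniformly in $\omega$ after expectation is the technically heaviest part of the argument, and the analogue of but more involved than the Euclidean treatment in \cite{DV2010}. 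Passing to the limit in the resulting renormalized inequality and invoking Lemma \ref{lemma: left and right limits} to evaluate at the fixed time $t$ through the right-continuous representative $\rho^+(t)$ then yields the rigidity inequality
\[
\EE\int_{M\times\R}\bigl(\rho^+-(\rho^+)^2\bigr)(t)\,d\xi\,dV_h(x)\le\EE\int_{M\times\R}\bigl(\rho_0-\rho_0^2\bigr)\,d\xi\,dV_h(x).
\]
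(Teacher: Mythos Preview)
Your proposal is correct and follows essentially the same route as the paper: localize via a finite atlas and partition of unity, pull back to $\R^n$, mollify in $(x,\xi)$, reassemble a global smooth-in-$(x,\xi)$ semimartingale, apply the scalar It\^o formula for c\`adl\`ag semimartingales, and control the two regularization errors by (i) a DiPerna--Lions commutator lemma for the flux term and (ii) a symmetrization argument exploiting \eqref{eq: gk are lipschitz} for the noise/It\^o-correction residue. The only cosmetic difference is that the paper applies It\^o to $r\mapsto r^2$ and then adds the linear equation, whereas you apply It\^o directly to $S(r)=r-r^2$; the resulting signs and cancellations are identical.
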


Note that if $\rho_0$ is a kinetic function, 
i.e., $\rho_0=\En_{u_0>\xi}$ for some bounded random 
function $u_0$, then $\rho^+-(\rho^+)^2=0$ (rigidity) 
and accordingly $\rho^+$ takes values in $\seq{0,1}$.
Consequently, $\rho^+$ is a kinetic function, i.e., there exists 
a measurable function $u$ such that $\rho=\En_{u>\xi}$ (and Theorem 
\ref{thm:well-posed} will follow from this).

The proof of Proposition \ref{prop: Reduction and uniqueness} 
will be laid out in several subsections.

\subsection{Localized equations}

To prove Proposition \ref{prop: Reduction and uniqueness}, 
we need some preparational material. We will work with a finite atlas 
$\mathcal{A}=\left\{\kappa:X_\kappa\to\tilde{X}_\kappa \right\}_\kappa$, 
where $X_\kappa\subset M$ is an open subset of $M$ and 
$\tilde{X}_\kappa \subset\R^n$ is an open 
subset of $\R^n$. The typical point of $\R^n$ will be denoted by $z$. 
We take a smooth partition of the unity 
$\{\alpha_\kappa\}_{\kappa\in\mathcal{A}}$ subordinate to 
$\mathcal{A}$, such that
\begin{enumerate}

\item $\alpha_\kappa\geq 0,\,\sum_{\kappa\in\mathcal{A}}\alpha_\kappa=1$,

\item $\alpha_\kappa\in C^\infty(M)$, and

\item $\mbox{supp }\alpha_\kappa\subset X_\kappa$ (and compact).
 
\end{enumerate}

Let $\rho$ be a generalized kinetic solution with initial data 
$\rho_0$ (not necessarily of the form $I_{u_0>\xi}$). 
As $\alpha_\kappa\in C^\infty(M)$, it follows from 
\eqref{eq: generalized solution (2)} that the 
function $\alpha_\kappa\rho^+(t)$ solves for all $
\psi\in C^1_c(M\times\R)$ and $t\in[0,T]$,
\begin{equation}\label{eq: what alfakapparo solves}
\begin{split}
& -\int_M\int_\R\alpha_\kappa(x)\rho^+(t)\,\psi \, d\xi \,dV_h(x)+
\int_M\int_\R\alpha_\kappa(x)\rho_0\,\psi\, d\xi \,dV_h(x)\\
& \quad \qquad
+\int_0^t\int_M\int_\R\alpha_\kappa(x)\rho^+(s)
\left(f'_x(\xi),\nabla \psi\right)_hd\xi \,dV_h(x)\,ds\\
&\quad \qquad \quad
+\int_0^t\int_M\int_\R\rho^+(s)\,\psi
\left(f'_x(\xi),\nabla \alpha_\kappa\right)_h \, d\xi \,dV_h(x)\, ds\\
& \quad =-\sum_{k\geq 1}\int_0^t\int_M\int_\R g_k(x,\xi)\,\psi\,\alpha_\kappa(x)\,
\nu_{\omega,s,x}(d\xi)\,dV_h(x)\,d\beta_k(s)\\
& \quad \qquad 
-\frac12\int_0^t\int_M\int_\R\partial_\xi\psi \,G^2(x,\xi)
\,\alpha_\kappa(x)\,\nu_{\omega,s,x}(d\xi)\,dV_h(x)\,ds\\
&\quad \qquad \quad
+\int_{[0,t]\times M\times\R}\partial_\xi\psi\,\alpha_\kappa(x)
\,m(ds,dx,d\xi), \quad\P\mbox{-almost surely}. 
\end{split}
\end{equation}

We define 
\[
\rho_\kappa^+(\omega,t,z,\xi):=
\alpha_\kappa(z)\,\rho^+(\omega,t,z,\xi)\,\abs{h_\kappa(z)}^{1/2}
\]
and
\[
\rho_{0,\kappa}(\omega,z,\xi):=
\alpha_\kappa(z)\,\rho_0(\omega,z,\xi)\,\abs{h_\kappa(z)}^{1/2}, 
\]
with $\omega\in\Omega,t\in[0,T],z\in\tilde{X}_\kappa \subset\R^n,\xi\in\R$. 

\begin{remark}
Most of the time, but not always, we will use the 
convention of not explicitly writing the chart: 
for example, writing $\alpha_\kappa(z)$ 
instead of $\alpha_\kappa(\kappa^{-1}(z))$). 
Furthermore, we write $\abs{h_\kappa(z)}^{1/2}$ (instead of 
$|h(z)|^{1/2}$) to remind us that it is a local expression on 
$X_\kappa$, and not a global one on $M$. In other words,
\[
\tilde{X}_\kappa \ni z\mapsto \abs{h_\kappa(z)}^{1/2}
\]
is a smooth function, and so is its inverse $\abs{h_\kappa(z)}^{-1/2}$. 
Given a function $v$ compactly supported in $\tilde{X}_\kappa $, we 
can ``lift'' to $M$ the function $\frac{v}{\abs{h_\kappa}^{1/2}}$, 
obtaining a global function on $M$, compactly supported in $X_\kappa$ 
(outside $X_\kappa$ the function is set to zero). 
\end{remark}

We observe that for fixed $\omega\in\Omega,t\in[0,T],\xi\in\R$, 
\[
\supp \rho_\kappa^+(\omega,t,\cdot,\xi)
\subset \kappa(\supp\alpha_\kappa)\subset\subset 
\tilde{X}_\kappa \subset \R^n
\] 
and
\[
\supp \rho_{0,\kappa}(\omega,\cdot,\xi)\subset 
\kappa(\supp\alpha_\kappa)\subset\subset \tilde{X}_\kappa \subset \R^n, 
\] 
and thus they may be seen as global functions on $\R^n$. 

\subsection{Regularization of localized equations}
Let $\phi_1$, $\phi_2$ be a standard mollifiers on $\R^n$ and 
$\R$, respectively, and define the function
\[
\phi_\varepsilon(z,\xi) := 
\varepsilon^{-n}\phi_1\left(\frac{z}{\varepsilon}\right) 
\varepsilon^{-1}\phi_2\left(\frac{\xi}{\varepsilon}\right),
\quad z\in\R^n, \;\xi\in\R, 
\]
whose support is contained in 
$\overline{B_\varepsilon(0)}\times[-\varepsilon,\varepsilon]$. 

We define regularizations of $\rho_{\kappa}^+$ and 
$\rho_{0,\kappa}$. For $\omega\in\Omega$, $t\in[0,T]$, 
$z\in\R^n$, $\xi\in\R$,
\begin{align*}
& (\rho_{\kappa}^+)_\varepsilon (\omega,t)(z,\xi) 
\\ & \quad :=\int_{\R^n\times\R}\rho_{\kappa}^+(\omega,t,\bar{z},\bar{\xi})\,
\varepsilon^{-n}\phi_1\left(\frac{z-\bar{z}}{\varepsilon}\right)
\varepsilon^{-1} \phi_2\left(\frac{\xi-\bar{\xi}}{\varepsilon}\right)
\,d\bar{\xi}\,d\bar{z}
\\ & \quad =\int_{\R^n\times\R}\alpha_\kappa(\bar{z})\,
\rho^+(\omega,t,\bar{z},\bar{\xi})\,
\abs{h_\kappa(\bar{z})}^{1/2}\varepsilon^{-n}\phi_1
\left(\frac{z-\bar{z}}{\varepsilon}\right)\varepsilon^{-1} 
\phi_2\left(\frac{\xi-\bar{\xi}}{\varepsilon}\right)\,d\bar{\xi}\,d\bar{z}, 
\\ & (\rho_{0,\kappa})_\varepsilon(\omega)(z,\xi) 
\\ & \quad := \int_{\R^n\times\R}\rho_{0,\kappa}(\omega,\bar{z},\bar{\xi})\,
\varepsilon^{-n}\phi_1\left(\frac{z-\bar{z}}{\varepsilon}\right)
\varepsilon^{-1} \phi_2\left(\frac{\xi-\bar{\xi}}{\varepsilon}\right)
\,d\bar{\xi}\,d\bar{z} 
\\ & \quad =\int_{\R^n\times\R}\alpha_\kappa(\bar{z})\,
\rho_0(\omega,\bar{z},\bar{\xi})\,\abs{h_\kappa(\bar{z})}^{1/2}\varepsilon^{-n}
\phi_1\left(\frac{z-\bar{z}}{\varepsilon}\right)
\varepsilon^{-1} \phi_2\left(\frac{\xi-\bar{\xi}}{\varepsilon}\right)
\,d\bar{\xi}\,d\bar{z}.
\end{align*}
We set $\varepsilon_\kappa :=
\mbox{dist}\left(\kappa(\supp\alpha_\kappa), 
\partial\tilde{X}_\kappa \right)>0$. 

The main properties of $(\rho_{\kappa}^+)_\varepsilon$ and 
$(\rho_{0,\kappa})_\varepsilon$ are listed in 
\begin{lemma}\label{lemma: properties of rho kappa epsilon}
Let $\kappa\in\mathcal{A}$. Then
\begin{enumerate}

\item $(\rho_{\kappa}^+)_\varepsilon(\omega,t)\in 
C^\infty(\R^n_z\times\R_\xi)$, for all $(\omega,t)\in\Omega_T$.

\item for $\varepsilon < \varepsilon_\kappa$ and for any 
$\omega\in\Omega,t\in[0,T],\xi\in\R$,
\[
\supp (\rho_{\kappa}^+)_\varepsilon(\omega,t)(\cdot,\xi) \subset 
\kappa(\supp\alpha_\kappa) +\overline{B_\varepsilon(0)}
\subset\subset \tilde{X}_\kappa. 
\] 
This implies in particular that for any $(\omega,t)\in\Omega_T$, 
the function $(\rho_{\kappa}^+)_\varepsilon(\omega,t)$ can 
be seen as an element of $C^\infty(M\times\R)$, provided 
that we set it equal to zero outside $X_\kappa\times\R$.

\item $\lim_{\varepsilon\to 0}(\rho_{\kappa}^+)_\varepsilon(\omega,t)
(\cdot,\cdot)=\rho_{\kappa}^+(\omega,t,\cdot,\cdot)$ 
in $\mathcal{D}'(\R^n\times\R)$ for any $(\omega,t)\in\Omega_T$. 
In particular, for $\psi\in \mathcal{D}(X_\kappa\times\R)$ we have
\begin{align*}
\int_{M\times\R}\psi(x,\xi)&\,
\frac{(\rho_{\kappa}^+)_\varepsilon(\omega,t)(x,\xi)}
{\abs{h_\kappa(x)}^{1/2}}\,d\xi\,dV_h(x)\\
&\overset{\varepsilon\downarrow 0}{\longrightarrow} 
\int_{M\times\R}\psi(x,\xi)\,\alpha_\kappa(x) \rho^+(\omega,t,x,\xi)\,d\xi\,dV_h(x), 
\end{align*}
which holds for $\psi\in \mathcal{D}(M\times\R)$ as well, because 
the functions $\frac{(\rho_{\kappa}^+)_\varepsilon}{\abs{h_\kappa}^{1/2}}$ 
and $\alpha_\kappa\rho^+$ are supported in $X_\kappa\times\R$.

\item For any $1\leq p < \infty$ and $(\omega,t)\in\Omega_T$,
\[
\frac{(\rho_{\kappa}^+)_\varepsilon(\omega,t)(\cdot,\cdot)}
{\abs{h_\kappa(\cdot)}^{1/2}}
\overset{\varepsilon\downarrow 0}{\longrightarrow} 
\alpha_\kappa(\cdot) \rho^+(\omega,t,\cdot,\cdot) 
\;\;\;\mbox{in $L^p_{\mathrm{loc}}(M\times\R)$}.  
\]

\end{enumerate}
The listed properties hold true for 
$(\rho_{0,\kappa})_\varepsilon$ as well.
\end{lemma}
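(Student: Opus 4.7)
The four claims are all standard properties of convolution with a smooth mollifier, applied once we recognise that $\rho_\kappa^+$ is bounded by $\|\alpha_\kappa\|_\infty \sup_{\supp\alpha_\kappa}|h_\kappa|^{1/2}$ and has $z$-support contained in the compact set $\kappa(\supp\alpha_\kappa)\Subset\tilde X_\kappa$. For item (1), since $\phi_1,\phi_2\in C_c^\infty$, differentiation under the integral sign yields $(\rho_\kappa^+)_\varepsilon(\omega,t)\in C^\infty(\R^n\times\R)$, with all derivatives falling on the mollifier. For item (2), if $(\rho_\kappa^+)_\varepsilon(\omega,t)(z,\xi)\neq 0$, then there exists $\bar z\in\kappa(\supp\alpha_\kappa)$ with $|z-\bar z|\leq\varepsilon$, hence $z\in\kappa(\supp\alpha_\kappa)+\overline{B_\varepsilon(0)}$; for $\varepsilon<\varepsilon_\kappa$ this sumset is compactly contained in $\tilde X_\kappa$. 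Pulling back via $\kappa^{-1}$ and extending by zero outside $X_\kappa$ then produces a smooth function on $M$ with compact support in $X_\kappa$.

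For item (3), the function $\rho_\kappa^+(\omega,t,\cdot,\cdot)$ is bounded with compact $z$-support, hence lies in $L^1_{\mathrm{loc}}(\R^n\times\R)$, and the classical mollifier theorem gives $(\rho_\kappa^+)_\varepsilon\to\rho_\kappa^+$ in $L^1_{\mathrm{loc}}(\R^n\times\R)$, and therefore in $\mathcal{D}'(\R^n\times\R)$. To derive the explicit integral identity, fix $\psi\in\mathcal{D}(X_\kappa\times\R)$. Since both $(\rho_\kappa^+)_\varepsilon/|h_\kappa|^{1/2}$ and $\alpha_\kappa\rho^+$ vanish outside $X_\kappa\times\R$, one localises to the chart and uses $dV_h=|h_\kappa|^{1/2}\,dz$ to cancel the weight $|h_\kappa|^{1/2}$ introduced in the definition of $\rho_\kappa^+$:
\begin{align*}
\int_{M\times\R}\psi\,\frac{(\rho_\kappa^+)_\varepsilon}{|h_\kappa|^{1/2}}\,d\xi\,dV_h
&=\int_{\tilde X_\kappa\times\R}(\psi\circ\kappa^{-1})\,(\rho_\kappa^+)_\varepsilon\,d\xi\,dz \\
&\xrightarrow{\varepsilon\downarrow 0}\int_{\tilde X_\kappa\times\R}(\psi\circ\kappa^{-1})\,\rho_\kappa^+\,d\xi\,dz,
\end{align*}
and undoing the chart computation on the limit yields the claimed identity. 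The extension to $\psi\in\mathcal{D}(M\times\R)$ is immediate, since both integrands are supported in $X_\kappa\times\R$.

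For item (4), the same bookkeeping applies: for a compact $K\subset M\times\R$, the $L^p(K)$-norm of $(\rho_\kappa^+)_\varepsilon/|h_\kappa|^{1/2}-\alpha_\kappa\rho^+$ reduces, after passage to the chart and using that $|h_\kappa|^{\pm 1/2}$ is bounded on the compact subset $\kappa(K\cap\supp\alpha_\kappa)$, to an $L^p$-norm of $(\rho_\kappa^+)_\varepsilon-\rho_\kappa^+$ on a compact subset of $\tilde X_\kappa\times\R$; this tends to zero by standard mollifier theory, since $\rho_\kappa^+\in L^p_{\mathrm{loc}}(\R^n\times\R)$. The arguments for $(\rho_{0,\kappa})_\varepsilon$ are verbatim the same. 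The only non-routine aspect is purely notational: one must keep careful track of the density factor $|h_\kappa|^{1/2}$, which was inserted into the definition of $\rho_\kappa^+$ precisely so that Euclidean convolution in the chart is compatible with the intrinsic Riemannian integration on $M$; this is the one small obstacle, and once dispensed with the proof is straightforward.
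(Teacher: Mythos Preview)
Your proof is correct and follows essentially the same approach as the paper: items (1)--(3) are dismissed by standard mollifier properties, and for (4) you pass to the chart, absorb the density factor $|h_\kappa|^{1/2}$ into a bounded weight, and reduce to the Euclidean $L^p_{\mathrm{loc}}$-convergence of mollifications of a bounded function. The paper carries out the chain of equalities in (4) explicitly, writing the $L^p$-integral over $M\times[-L,L]$ in coordinates and arriving at $|h_\kappa|^{(1-p)/2}$ as the residual weight, but the substance is identical; one minor caution is that the relevant compact set on which you bound $|h_\kappa|^{\pm 1/2}$ is $\kappa(\supp\alpha_\kappa)+\overline{B_\varepsilon(0)}$ rather than $\kappa(K\cap\supp\alpha_\kappa)$, since the support of $(\rho_\kappa^+)_\varepsilon$ protrudes by $\varepsilon$ --- but you already established in (2) that this is still compactly contained in $\tilde X_\kappa$, so the argument goes through.
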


\begin{proof}
Claims (1) and (2) follow from standard properties of convolution. 
Claim (3) is an easy consequence of $(2)$ and convergence properties of 
convolution. To show $L^p$-convergence, we argue like this: 
for any $L>0$, using on $X_\kappa$ the coordinates 
given by $\kappa$, we obtain
\begin{align*}
& \int_{M\times [-L,L]}
\abs{\frac{(\rho_{\kappa}^+)_\varepsilon(\omega,t)(x,\xi)}{\abs{h_\kappa(x)}^{1/2}}
-\alpha_\kappa(x) \rho^+(\omega,t,x,\xi)}^p
\,d\xi\,dV_h(x)\\
& \quad =\int_{X_\kappa\times [-L,L]}
\abs{\frac{(\rho_{\kappa}^+)_\varepsilon(\omega,t)(x,\xi)}{\abs{h_\kappa(x)}^{1/2}}
-\alpha_\kappa(x) \rho^+(\omega,t,x,\xi)}^p
\,d\xi\,dV_h(x)\\
& \quad =\int_{\tilde{X}_\kappa \times [-L,L]}
\abs{\frac{(\rho_{\kappa}^+)_\varepsilon(\omega,t)(z,\xi)}{\abs{h_\kappa(z)}^{1/2}}
-\alpha_\kappa(z) \rho^+(\omega,t,z,\xi)}^p\abs{h_\kappa(z)}^{1/2}
\, d\xi\,dz\\
& \quad =\int_{\tilde{X}_\kappa \times [-L,L]}
\abs{(\rho_{\kappa}^+)_\varepsilon(\omega,t)(z,\xi)
-\rho_\kappa^+(\omega,t,z,\xi)}^p\abs{h_\kappa(z)}^{\frac12(1-p)}
\, d\xi\,dz\\
& \quad \leq C(\mathcal{A},h,p)\int_{\tilde{X}_\kappa \times [-L,L]}
\left|(\rho_{\kappa}^+)_\varepsilon(\omega,t)(z,\xi)
-\rho_\kappa^+(\omega,t,z,\xi)\right|^p 
\, d\xi\,dz . 
\end{align*}
The last integral converges to zero as $\varepsilon$ 
goes to zero by standard properties of convolution, 
since $\rho_\kappa^+(\omega,t,\cdot,\cdot)$ is 
in $L^\infty(\R^n_z\times\R_\xi)$. 
\end{proof}

For $(\omega,t)\in\Omega_T$ we define the following 
finite Borel measure on $[0,t]\times M\times\R$:
\[
C^0_b([0,t]\times M\times \R)\ni \phi \mapsto \int_{[0,t]\times M\times\R}
\phi(s,x,\xi)\,\alpha_\kappa(x)\,m(ds,dx,d\xi),
\]
denoted by $(\alpha_\kappa m)(\omega,t)$. 
By definition, $\supp\left((\alpha_\kappa m)(\omega,t)\right)
\subset [0,t]\times\supp(\alpha_\kappa)\times \R$. 
We define its pushforward $(\alpha_\kappa m)_\sharp(\omega,t)$ 
via the homeomorphism 
\[
K:[0,t]\times X_\kappa\times\R 
\to [0,t]\times\tilde{X_k}\times \R, \quad
(s,x,\xi)\mapsto (s,\kappa(x),\xi). 
\]
Hence, its action is given by
\[
C^0_b([0,t]\times\tilde{X}_\kappa \times \R) 
\ni \phi \mapsto \int_{[0,t]\times\tilde{X}_\kappa \times\R}
\phi(s,z,\xi)\,\alpha_\kappa(z)\,m_\sharp(ds,dz,d\xi), 
\]
where $m_\sharp$ is the pushforward of $m$ via $K$. 
With a little abuse of notation, we will also write 
$(\alpha_\kappa m)_\sharp(\omega,t)$ for the finite Borel 
on $\tilde{X}_\kappa \times\R$ determined by 
\[
C^0_b(\tilde{X}_\kappa \times\R)\ni \phi \mapsto 
\int_{[0,t]\times\tilde{X}_\kappa \times\R}\phi(z,\xi)\,\alpha_\kappa(z)
\,m_\sharp(ds,dz,d\xi), 
\]
Consequently, $(\alpha_\kappa m)_\sharp(\omega,t)$ is a finite 
Borel measure on $\tilde{X}_\kappa \times\R$ that is 
supported in $\kappa(\supp \alpha_\kappa) \times\R 
\subset \tilde{X}_\kappa \times\R$, and thus it may be 
naturally viewed as a finite Borel measure on $\R^n\times\R$. 

We regularize $(\alpha_\kappa m)_{\sharp}$ using the 
mollifier $\phi_\varepsilon$: for 
$\omega\in\Omega,t\in[0,T],z\in\R^n,\xi\in\R$ we define
\begin{align*}
((\alpha_\kappa m)_{\sharp})_\varepsilon(\omega,t)(z,\xi)
& := (\alpha_\kappa m)_\sharp(\omega,t)\left( \varepsilon^{-n}
\phi_1\left(\frac{z-\cdot}{\varepsilon}\right) 
\varepsilon^{-1}\phi_2\left(\frac{\xi-\cdot}{\varepsilon}\right)\right)
\\ &=\int_{[0,t]\times\tilde{X}_\kappa \times\R}\phi_\varepsilon(z-\bar{z},\xi
-\bar{\xi})\,\alpha_\kappa(\bar{z})\,m_\sharp(ds,d\bar{z},d\bar{\xi}). 
\end{align*}
The main properties of 
$((\alpha_\kappa m)_{\sharp})_\varepsilon(\omega,t)$ are listed in
\begin{lemma}\label{lemma: properties of alpha m  epsilon}
Let $\kappa\in\mathcal{A}$. Then
\begin{enumerate}

\item $((\alpha_\kappa m)_{\sharp})_\varepsilon(\omega,t)\in 
C^\infty(\R^n_z\times\R_\xi)$ for all $(\omega,t)\in\Omega_T$.

\item for $\varepsilon < \varepsilon_\kappa$ 
and any $\omega\in\Omega,t\in[0,T],\xi\in\R$.
\[
\supp \left(((\alpha_\kappa m)_{\sharp})_\varepsilon(\omega,t)(\cdot,\xi)\right)
\subset \kappa(\supp \alpha_\kappa) +\overline{B_\varepsilon(0)}
\subset\subset \tilde{X}_\kappa. 
\] 
This entails that for fixed $(\omega,t)\in\Omega_T$, the 
function $((\alpha_\kappa m)_{\sharp})_\varepsilon(\omega,t)$ 
can be seen as an element of $C^\infty(M\times\R)$, 
provided we set it to zero outside $X_\kappa\times\R$.

\item $\lim_{\varepsilon\to 0}((\alpha_\kappa m)_{\sharp})_\varepsilon(\omega,t)
=(\alpha_\kappa m)_{\sharp}(\omega,t)$ in the sense of measures for any 
$(\omega,t)\in\Omega_T$. In particular, for $\psi\in C^0_c(X_\kappa\times\R)$,
\begin{align*}
\int_{M\times\R}\psi(x,\xi)
& \,\frac{((\alpha_\kappa m)_{\sharp})_\varepsilon(\omega,t)(x,\xi)}
{\abs{h_\kappa(x)}^{1/2}}\,d\xi\,dV_h(x)\\
&\overset{\varepsilon\downarrow 0}{\longrightarrow} 
\int_{M\times\R}\psi(x,\xi)\,(\alpha_\kappa m)(\omega,t)(dx,d\xi), 
\end{align*}
which is equal to 
\[
\int_{[0,t]\times M\times\R}\psi(x,\xi)\,\alpha_\kappa(x)
\,m(ds,dx,d\xi). 
\]
This result holds for $\psi\in C^0_c(M\times\R)$ as well, 
because the function $\frac{((\alpha_\kappa m)_{\sharp})_\varepsilon}
{\abs{h_\kappa}^{1/2}}$ and the measure 
$(\alpha_\kappa m)(\omega,t)$ are supported in $X_\kappa\times\R$.

\item for any $(\omega,t)\in\Omega_T$ and $\psi\in C^1_c(M\times\R)$,
\begin{align*}
\bigg|\int_{M\times\R}\psi(x,\xi)\partial_\xi 
& \left[\frac{((\alpha_\kappa m)_{\sharp})_\varepsilon(\omega,t)(x,\xi)}
{\abs{h_\kappa(x)}^{1/2}}\right]d\xi\,dV_h(x)\bigg|
\\ & \leq \norm{\partial_\xi\psi}_{L^\infty(M\times\R)}
\int_{[0,t]\times M\times\R}\alpha_\kappa(x)\,m(ds,dx,d\xi). 
\end{align*}

\end{enumerate}
\end{lemma}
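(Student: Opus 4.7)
All four claims follow the template of Lemma~\ref{lemma: properties of rho kappa epsilon}: they are classical mollifier facts, with the single extra bookkeeping step of cancelling the density factor $|h_\kappa|^{1/2}$ against the Riemannian volume $dV_h$ whenever one moves between $M$ and the chart $\tilde X_\kappa$. My plan treats $(1)$ and $(2)$ in one stroke each. Claim $(1)$: differentiate under the integral in the definition of $((\alpha_\kappa m)_\sharp)_\varepsilon$; dominated convergence applies since $(\alpha_\kappa m)_\sharp(\omega,t)$ is a finite measure and all derivatives of $\phi_\varepsilon$ are bounded on its (compact) support. Claim $(2)$: the $z$-support of the convolution lies in the Minkowski sum $\kappa(\supp\alpha_\kappa)+\overline{B_\varepsilon(0)}$, which is compactly contained in $\tilde X_\kappa$ for $\varepsilon<\varepsilon_\kappa$.

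For $(3)$, after passing to the chart $\kappa$ so that $dV_h=|h_\kappa|^{1/2}\,dz$ absorbs the factor $|h_\kappa|^{-1/2}$, the claim reduces to weak convergence of mollified measures: for $\psi\in C^0_c(\tilde X_\kappa\times\R)$,
\[
\int_{\R^n\times\R}\psi\,((\alpha_\kappa m)_\sharp)_\varepsilon\,dz\,d\xi
\;=\;\int(\psi\ast\check\phi_\varepsilon)\,\alpha_\kappa\,dm_\sharp
\;\xrightarrow[\varepsilon\to 0]{}\;(\alpha_\kappa m)_\sharp(\omega,t)(\psi),
\]
with $\check\phi_\varepsilon(z,\xi):=\phi_\varepsilon(-z,-\xi)$, via Fubini, uniform convergence of $\psi\ast\check\phi_\varepsilon$ to $\psi$, and dominated convergence against the finite measure $\alpha_\kappa m_\sharp$. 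Identification of the limit with $\int \psi\,\alpha_\kappa\,dm$ on $M$ is then the definition of the pushforward.

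Claim $(4)$ is the only quantitatively new content. After the same local reduction (noting $|h_\kappa|$ is $\xi$-independent, so the $\partial_\xi$ passes through the $|h_\kappa|^{-1/2}$ cleanly), the quantity to estimate is
\[
\left|\int_{\R^n\times\R}\psi(z,\xi)\,\partial_\xi\bigl[((\alpha_\kappa m)_\sharp)_\varepsilon\bigr](z,\xi)\,dz\,d\xi\right|.
\]
I would substitute the definition of $((\alpha_\kappa m)_\sharp)_\varepsilon$, apply Fubini to swap the $(z,\xi)$ and $(\bar z,\bar\xi)$ integrations, and then integrate by parts in $\xi$ in the inner smooth convolution (no boundary terms, since $\psi$ is compactly supported in $M\times\R$) to transfer the derivative onto $\psi$. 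What remains is
\[
\left|\int\!\left[\int(\partial_\xi\psi)(z,\xi)\,\phi_\varepsilon(z-\bar z,\xi-\bar\xi)\,dz\,d\xi\right]\!\alpha_\kappa(\bar z)\,m_\sharp(ds,d\bar z,d\bar\xi)\right|,
\]
and since $\phi_\varepsilon\geq 0$ with $\int\phi_\varepsilon\,dz\,d\xi=1$, the bracket is uniformly bounded by $\|\partial_\xi\psi\|_{L^\infty(M\times\R)}$. Pulling $\alpha_\kappa m_\sharp$ back to $\alpha_\kappa m$ yields the asserted estimate. The only genuine obstacle is careful bookkeeping of the $|h_\kappa|^{1/2}$ factor across the chart/manifold switch and justifying Fubini and the $\xi$-integration by parts; analytically the proof rests only on the finiteness and nonnegativity of $m$, the smoothness of $\phi_\varepsilon$, and one integration by parts.
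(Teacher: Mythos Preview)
Your proposal is correct and matches the paper's proof essentially line for line: the paper also dismisses (1)--(3) as identical to Lemma~\ref{lemma: properties of rho kappa epsilon}, and for (4) it performs exactly your integration by parts in $\xi$, passes to the chart to cancel $|h_\kappa|^{1/2}$ against $dV_h$, recognizes the result as $(\alpha_\kappa m)_\sharp\bigl((\partial_\xi\psi)_\varepsilon\bigr)$, and bounds this by $\|\partial_\xi\psi\|_{L^\infty}$ times the total mass. The only cosmetic difference is that the paper integrates by parts on $M$ first and then moves to the chart, whereas you reverse the order; either way the key identity and estimate are the same.
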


\begin{proof}
The proof is identical to the proof of Lemma 
\ref{lemma: properties of rho kappa epsilon}, except for the last point. 
Let $\psi\in C^1_c(X_\kappa\times\R)$. 
Using on $X_\kappa$ the coordinates given by $\kappa$, it follows that
\begin{align*}
-\int_{M\times\R}& \psi (x,\xi)\,\partial_\xi
\left[\frac{((\alpha_\kappa m)_{\sharp})_\varepsilon(\omega,t)(x,\xi)}
{\abs{h_\kappa(x)}^{1/2}}\right]
\, d\xi\,dV_h(x)
\\ & =\int_{M\times\R}\partial_\xi\psi(x,\xi)
\,\frac{((\alpha_\kappa m)_{\sharp})_\varepsilon(\omega,t)(x,\xi)}
{\abs{h_\kappa(x)}^{1/2}}
\,d\xi\,dV_h(x)
\\ & =\int_{\tilde{X}_\kappa \times\R}\partial_\xi\psi(z,\xi)
\,((\alpha_\kappa m)_{\sharp})_\varepsilon(\omega,t)(z,\xi)
\,d\xi\,dz
\\ &=(\alpha_\kappa m)_{\sharp}
(\omega,t\left((\partial_\xi\psi)_\varepsilon\right), 
\end{align*}
where $(\partial_\xi\psi)_\varepsilon$ is the convolution of 
$\partial_\xi\psi$ with $\phi_\varepsilon$. 
By basic estimates for convolution, 
$\norm{(\partial_\xi\psi)_\varepsilon}_{L^\infty(\tilde{X}_\kappa \times\R)}
\leq \norm{\partial_\xi\psi}_{L^\infty(\tilde{X}_\kappa \times\R)}
\leq \norm{\partial_\xi\psi}_{L^\infty(M\times\R)}$, 
where we observe that $\partial_\xi\psi$ can be unambiguously 
defined on all of $M\times\R$. 
Hence, we obtain, by the definition of pushforward,
\begin{align*}
\bigg|\int_{M\times\R}\psi(x,\xi)& \partial_\xi  
\left[\frac{((\alpha_\kappa m)_{\sharp})_\varepsilon(\omega,t)(x,\xi)}
{\abs{h_\kappa(x)}^{1/2}}\right]d\xi\,dV_h(x)\bigg|\\
&\leq \norm{\partial_\xi\psi}_{L^\infty(M\times\R)}
\int_{\tilde{X}_\kappa \times\R}
(\alpha_\kappa m)_{\sharp}(\omega,t)(dz,d\xi)\\
&\leq \norm{\partial_\xi\psi}_{L^\infty(M\times\R)}
\int_{[0,t]\times M\times\R}\alpha_\kappa(x)\,m(ds,dx,d\xi). 
\end{align*}
In view of the compactness of the supports, the last 
estimate holds for any $\psi\in C^1_c(M\times\R)$, and the 
lemma is therefore proved.
\end{proof}

To regularize \eqref{eq: what alfakapparo solves}, we have 
to consider the following map:
\begin{align*}
\nu_\kappa:&\,\Omega\times [0,T]\times\tilde{X}_\kappa 
\to \{\mbox{finite Borel measures on }\R\}, \\
(\nu_\kappa&)_{\omega,t,z}(\cdot):= \alpha_\kappa(z) 
\,\abs{h_\kappa(z)}^{1/2}\,\nu_{\omega,t,\kappa^{-1}(z)}(\cdot).
\end{align*}
Since, if $z\notin \kappa(\supp\alpha_\kappa)$, 
then $(\nu_\kappa)_{\omega,t,z}$ is the null 
measure on $\R$, we can extend $(\nu_\kappa)$ 
on $\Omega\times [0,T]\times\R^n$ in a natural way. This map may 
be transformed into a Radon measure on $\R^n\times\R$ as follows: 
for all $(\omega,t)\in\Omega_T$ and $\psi\in C^0_c(\R^n\times\R)$, set
\begin{align*}
(\nu_\kappa)_{\omega,t}(\psi)
& :=\int_{\R^n}(\nu_\kappa)_{\omega,t,z}(\psi(z,\cdot))\,dz
\\ & = \int_{\kappa(\supp \alpha_\kappa)}\alpha_\kappa(z)
\,\abs{h_\kappa(z)}^{1/2}\left(\int_\R\psi(z,\xi)
\,\nu_{\omega,t,\kappa^{-1}(z)}(d\xi)\right)\, dz. 
\end{align*}
The support of this measure is contained in 
$\kappa(\supp \alpha_\kappa)\times \R$. Once again, we regularize 
$(\nu_\kappa)_{\omega,t}$ using the mollifier $\phi_\varepsilon$.
For $(\omega,t)\in\Omega_T$, $(z,\xi)\in\R^n\times\R$, we set
\[
(\nu_\kappa)_\varepsilon(\omega,t)(z,\xi)
:=(\nu_\kappa)_{\omega,t}\left( \varepsilon^{-n}
\phi_1\left(\frac{z-\cdot}{\varepsilon}\right) 
\varepsilon^{-1}\phi_2\left(\frac{\xi-\cdot}{\varepsilon}\right)\right). 
\]
The main properties of $(\nu_\kappa)_\varepsilon$ are listed in 
\begin{lemma}\label{lemma: properties of nu kappa epsilon}
Let $\kappa\in\mathcal{A}$. Then
\begin{enumerate}

\item $(\nu_\kappa)_\varepsilon(\omega,t)\in C^\infty(\R^n_z\times\R_\xi)$ 
for all $(\omega,t)\in\Omega_T$.
 
\item for $\varepsilon < \varepsilon_\kappa$ and 
any $\omega\in\Omega,t\in[0,T],\xi\in\R$,
\[
\supp \left((\nu_\kappa)_\varepsilon(\omega,t)(\cdot,\xi)\right)
\subset \kappa(\supp \alpha_\kappa) +\overline{B_\varepsilon(0)}
\subset\subset \tilde{X}_\kappa. 
\] 
Hence, for fixed $(\omega,t)\in\Omega_T$, the function 
$(\nu_\kappa)_\varepsilon(\omega,t)$ can be seen as an element of 
$C^\infty(M\times\R)$, setting it to zero outside of $X_\kappa\times\R$.

\item $\lim_{\varepsilon\to 0}(\nu_\kappa)_\varepsilon(\omega,t)
= (\nu_\kappa)_{\omega,t}$ in the sense of measures for any 
$(\omega,t)\in\Omega_T$. In particular, for 
$\psi\in C^0_c(X_\kappa\times\R)$,
\begin{align*}
\int_{M\times\R}\psi(x,\xi) & 
\frac{(\nu_\kappa)_\varepsilon(\omega,t)(x,\xi)}
{\abs{h_\kappa(x)}^{1/2}} \,d\xi\,dV_h(x) 
\\ & \overset{\varepsilon\downarrow 0}{\longrightarrow} 
\int_M\alpha_\kappa(x)\left(\int_\R\psi(x,\xi)
\nu_{\omega,t,x}(d\xi)\right) \, dV_h(x). 
\end{align*}
This result holds for $\psi\in C^0_c(M\times\R)$ as well, because 
the function $\frac{(\nu_\kappa)_\varepsilon}{\abs{h_\kappa}^{1/2}}$ 
and the measure $\alpha_\kappa\nu\, dV_h$ are supported 
in $X_\kappa\times\R$.

\item For a.e.~$(\omega,t)\in\Omega_T$, for all 
$(x,\xi)\in M\times\R$, and for all $\varepsilon<\varepsilon_\kappa$,
\[
\partial_\xi\left[
\frac{(\rho_{\kappa}^+)_\varepsilon(\omega,t)(x,\xi)}{\abs{h_\kappa(x)}^{1/2}}
\right] = -\,\frac{(\nu_{\kappa})_\varepsilon(\omega,t)(x,\xi)}
{\abs{h_\kappa(x)}^{1/2}}, 
\]
where $(\rho^+_\kappa)_\varepsilon$ is given 
by Lemma \ref{lemma: properties of rho kappa epsilon}.

\end{enumerate}
\end{lemma}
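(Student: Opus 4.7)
Parts (1)--(3) can be proved by adapting verbatim the arguments used for $(\rho_\kappa^+)_\varepsilon$ and $((\alpha_\kappa m)_\sharp)_\varepsilon$ in Lemmas~\ref{lemma: properties of rho kappa epsilon} and~\ref{lemma: properties of alpha m  epsilon}: smoothness follows by differentiating under the integral sign against the $C^\infty$ mollifier $\phi_\varepsilon$, the support estimate is the triangle inequality together with $\supp\phi_\varepsilon\subset\overline{B_\varepsilon(0)}\times[-\varepsilon,\varepsilon]$ and the constraint $\varepsilon<\varepsilon_\kappa$, and the measure-convergence in~(3) is the standard fact that the convolution of a finite Radon measure with a standard mollifier converges to the measure in the weak-$*$ sense, combined with a chart change-of-variables to rewrite the limit as $\alpha_\kappa \nu\,dV_h$ on $M\times\R$. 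The only mild difference from the previous lemmas is that one has to verify that the map $(\nu_\kappa)_{\omega,t}$ really defines a finite Radon measure on $\R^n\times\R$; this is immediate from the compactness of $\kappa(\supp\alpha_\kappa)$ and the vanishing-at-infinity condition \eqref{def: Young measure} applied with $p=1$.

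The genuinely new content is~(4). The plan is to differentiate under the integral in the explicit definition of $(\rho_\kappa^+)_\varepsilon$ with respect to $\xi$, then exploit the identity $\partial_\xi \phi_\varepsilon(z-\bar z,\xi-\bar\xi)=-\partial_{\bar\xi}\phi_\varepsilon(z-\bar z,\xi-\bar\xi)$ to transfer the $\xi$-derivative onto the dummy variable $\bar\xi$. For each fixed $\bar z$, the resulting inner integral
\[
\int_\R \rho^+(\omega,t,\bar z,\bar\xi)\,\partial_{\bar\xi}\phi_\varepsilon(z-\bar z,\xi-\bar\xi)\,d\bar\xi
\]
is then evaluated via the defining relation $\rho^+(\omega,t,\bar z,\bar\xi)=\nu_{\omega,t,\bar z}((\bar\xi,\infty))$ and Fubini, giving $\int_\R \phi_\varepsilon(z-\bar z,\xi-\bar\xi)\,\nu_{\omega,t,\bar z}(d\bar\xi)$. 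Reinstating the outer integration in $\bar z$ together with the factor $\alpha_\kappa(\bar z)|h_\kappa(\bar z)|^{1/2}$ and comparing with the definition of $(\nu_\kappa)_\varepsilon$ yields $\partial_\xi(\rho_\kappa^+)_\varepsilon=-(\nu_\kappa)_\varepsilon$ on $\R^n\times\R$; dividing by the $\xi$-independent factor $|h_\kappa(z)|^{1/2}$ gives the claimed identity on $M\times\R$.

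The only technical point I expect is the justification of the distributional integration by parts in $\bar\xi$, since $\rho^+$ is merely a nonincreasing $[0,1]$-valued function of $\bar\xi$ rather than a smooth one. This, however, is handled directly from the representation $\rho^+(\cdot,\cdot,\bar z,\bar\xi)=\nu_{\bar z}((\bar\xi,\infty))$: for a smooth compactly supported test function $\phi(\bar\xi)$ one has $\int_\R \rho^+(\bar\xi)\phi'(\bar\xi)\,d\bar\xi=\int_\R \phi(\bar\xi)\,\nu_{\bar z}(d\bar\xi)$ by a single application of Fubini, with the exchange legitimized by the compact support of $\phi$ and the finiteness of $\nu_{\bar z}$. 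Once (4) is in place, Lemmas~\ref{lemma: properties of rho kappa epsilon}--\ref{lemma: properties of nu kappa epsilon} together furnish a coherent smooth regularization of the localized equation \eqref{eq: what alfakapparo solves}, on which the subsequent It\^o renormalization (leading to the rigidity result) can be carried out.
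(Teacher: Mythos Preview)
Your proposal is correct and follows essentially the same approach as the paper. For parts (1)--(3) the paper likewise says only that ``the proof is identical'' to the earlier lemmas, and for part (4) both arguments rest on the single fact that $\partial_\xi\rho^+=-\nu$ commutes with convolution: the paper phrases this as a distributional identity on $\R^n\times\R$ (test against $\psi\in\mathcal D$, then convolve), while you unpack the same computation pointwise in $\bar z$ via Fubini and the representation $\rho^+(\bar z,\bar\xi)=\nu_{\bar z}((\bar\xi,\infty))$. One minor slip: the finiteness of $(\nu_\kappa)_{\omega,t}$ does not require the vanishing-at-infinity condition \eqref{def: Young measure}; it follows directly from $\nu_{\omega,t,x}(\R)=1$ and the compactness of $\kappa(\supp\alpha_\kappa)$.
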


\begin{proof}
Only the last claim requires a proof. 
Let $\psi$ be in $\mathcal{D}(X_\kappa\times\R)$. 
By the definition of a generalized kinetic solution, 
for $\P\otimes dt$-a.e.~$(\omega,t)\in\Omega_T$,
\begin{align*}
\int_{M\times\R} & \alpha_\kappa(x)
\rho^+(\omega,t,x,\xi)\,\partial_\xi\psi(x,\xi)
\,d\xi\,dV_h(x)\\
&=\int_M\alpha_\kappa(x)\int_\R\psi(x,\xi)\,
\nu_{\omega,t,x}(d\xi)\,dV_h(x).
\end{align*}
Using the coordinates given by $\kappa$, this means
\begin{align*}
\int_{\kappa(\supp\alpha_\kappa)\times\R} & 
\rho^+_\kappa(\omega,t,z, \xi)\,\partial_\xi\psi(z,\xi)\,d\xi\,dz
\\ & =\int_M\alpha_\kappa(z)\int_\R\psi(z,\xi)
\,\nu_{\omega,t,\kappa^{-1}(z)}(d\xi)
\,\abs{h_\kappa(z)}^{1/2}\,dz. 
\end{align*}
In other words, for all $\psi\in\mathcal{D}(\tilde{X}_\kappa \times\R)$ 
and for $\P\otimes dt$-a.e.~$(\omega,t)\in\Omega_T$,
\[
\int_{\kappa(\supp\alpha_\kappa)\times\R}
\rho^+_\kappa(\omega,t,z,\xi)\,\partial_\xi\psi(z,\xi)
\,dz \,d\xi = (\nu_\kappa)_{\omega,t}(\psi), 
\]
which holds for any $\psi\in\mathcal{D}(\R^n\times\R)$, because 
of the supports of $\rho^+_\kappa$ and $(\nu_\kappa)_{\omega,t}$. 

Therefore, by standard properties of convolution, it follows 
for all $\varepsilon<\varepsilon_\kappa$ and 
for all $(z,\xi)\in\R^n\times\R$ that
\[
\partial_\xi(\rho^+_\kappa)_\varepsilon(\omega,t)(z,\xi)
=-(\nu_\kappa)_\varepsilon(\omega,t)(z,\xi), 
\]
for a.e.~$(\omega,t)\in\Omega_T$. 
This property remains true if we divide both sides 
by $\abs{h_\kappa(z)}^{1/2}$. The claim now follows. 
\end{proof}

As a further step towards the regularization 
of \eqref{eq: what alfakapparo solves}, we need to 
define an additional Radon measure on $\R^n\times\R$:
\[
(G^2\nu_\kappa)_{\omega,t}(\psi):=
\int_{\kappa(\supp \alpha_\kappa)}\int_\R\alpha_\kappa(z)G^2(z,\xi)
\,\abs{h_\kappa(z)}^{1/2}\psi(z,\xi)
\, \nu_{\omega,t,\kappa^{-1}(z)}(d\xi)\,dz, 
\]
with $\psi\in C^0_c(\R^n\times\R)$ and $G^2$ is 
defined in \eqref{eq: definition of G}. 
In view of Lemma \ref{lemma: G^2 is locally Lipschitz }, 
we have $G^2\in C^0(M\times\R)$, and thus, because the 
support of $\alpha_\kappa$ is compact, 
$\alpha_\kappa G^2\abs{h_\kappa}^{1/2}$ 
may be seen as a (continuous) Borel function on $\R^n\times\R$, if 
extended to zero outside of $\tilde{X}_\kappa \times\R$. We 
regularize $(G^2\nu_\kappa)_{\omega,t}$ by convolution. 
For $(\omega,t)\in\Omega_T$, $(z,\xi)\in\R^n\times\R$, we set
\[
(G^2\nu_\kappa)_\varepsilon(\omega,t)(z,\xi)
:=(G^2\nu_\kappa)_{\omega,t}\left( \varepsilon^{-n}
\phi_1\left(\frac{z-\cdot}{\varepsilon}\right) 
\varepsilon^{-1}\phi_2\left(\frac{\xi-\cdot}{\varepsilon}\right)\right). 
\]
The main properties are listed in the following lemma, 
whose proof is identical to those of 
Lemmas \ref{lemma: properties of rho kappa epsilon} 
and \ref{lemma: properties of alpha m  epsilon}.
\begin{lemma}\label{lemma: properties of G^2 nu kappa epsilon}
Let $\kappa\in\mathcal{A}$. Then
\begin{enumerate}

\item $(G^2\nu_\kappa)_\varepsilon(\omega,t)\in C^\infty(\R^n_z\times\R_\xi)$ 
for all $(\omega,t)\in\Omega_T$

\item for $\varepsilon < \varepsilon_\kappa$ and any 
$\omega\in\Omega,t\in[0,T],\xi\in\R$,
\[
\supp (G^2\nu_\kappa)_\varepsilon(\omega,t)(\cdot,\xi)
\subset \kappa(\supp\alpha_\kappa)
+\overline{B_\varepsilon(0)}\subset\subset \tilde{X}_\kappa . 
\] 
Thus, for fixed $(\omega,t)\in\Omega_T$, the function 
$(G^2\nu_\kappa)_\varepsilon(\omega,t)$ can be seen as an element 
of $C^\infty(M\times\R)$, provided it is set to 
zero outside of $X_\kappa\times\R$.

\item $\lim_{\varepsilon\to 0}(G^2\nu_\kappa)_\varepsilon(\omega,t)
= (G^2\nu_\kappa)_{\omega,t}$ in the sense of measures, for 
$(\omega,t)\in\Omega_T$. In particular, 
for $\psi\in C^0_c(X_\kappa\times\R)$,
\begin{align*}
\int_{M\times\R}\psi(x,\xi) & 
\frac{(G^2\nu_\kappa)_\varepsilon(\omega,t)(x,\xi)}
{\abs{h_\kappa(x)}^{1/2}} \,d\xi\,dV_h(x)\\ 
&\overset{\varepsilon\downarrow 0}{\longrightarrow} 
\int_M\alpha_\kappa(x)\left(\int_\R \psi(x,\xi) 
G^2(x,\xi)\,\nu_{\omega,t,x}(d\xi)\right)\, dV_h(x). 
\end{align*}
This result holds for any $\psi\in C^0_c(M\times\R)$, since the 
function $\frac{(G^2\nu_\kappa)_\varepsilon}{\abs{h_\kappa}^{1/2}}$ and the 
measure $\alpha_\kappa G^2\,\nu\, dV_h$ are supported in $X_\kappa\times\R$.

\item for any $(\omega,t)\in\Omega_T$ and $\psi\in C^1_c(M\times\R)$,
\begin{align*}
\bigg|\int_{M\times\R}&\psi(x,\xi)
\partial_\xi\left[\frac{(G^2\nu_\kappa)_\varepsilon(\omega,t)(x,\xi)}
{\abs{h_\kappa(x)}^{1/2}}\right]\,d\xi\,dV_h(x)\bigg|
\\ & \leq \norm{\partial_\xi\psi}_{L^\infty(M\times\R)}
\int_M\alpha_\kappa(x)\left(\int_\R 
G^2(x,\xi)\,\nu_{\omega,t,x}(d\xi)\right)\,dV_h(x). 
\end{align*}

\end{enumerate}
\end{lemma}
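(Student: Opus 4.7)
The plan is to mirror the structure of Lemmas \ref{lemma: properties of rho kappa epsilon} and \ref{lemma: properties of alpha m  epsilon}, with the weight $G^2(z,\xi)$ taking the role previously played by trivial or unit weights. The first step, and really the only non-routine point, is to confirm that $(G^2\nu_\kappa)_{\omega,t}$ is a well-defined finite Radon measure on $\R^n\times\R$ for a.e.\ $(\omega,t)$: its support is contained in $\kappa(\supp\alpha_\kappa)\times\R$, and by \eqref{eq: definition of G} its total mass is dominated by
\[
D_1\int_{M}\alpha_\kappa(x)\int_{\R}(1+\xi^2)\,\nu_{\omega,t,x}(d\xi)\,dV_h(x),
\]
which is finite for a.e.\ $(\omega,t)$ thanks to the $p=2$ moment estimate built into Definition \ref{def: generalized solution}. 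Joint measurability in $(\omega,t)$ follows from the continuity of $G^2$ (Lemma \ref{lemma: G^2 is locally Lipschitz }) and the measurability properties of $\nu$.

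Claims (1) and (2) are then immediate from standard properties of convolution against the compactly supported smooth kernel $\phi_\varepsilon$: the mollified object is $C^\infty$, and its $z$-support is the $\varepsilon$-thickening of $\supp(G^2\nu_\kappa)_{\omega,t}$, hence sits compactly inside $\tilde X_\kappa$ whenever $\varepsilon<\varepsilon_\kappa$.

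For Claim (3), given $\psi\in C^0_c(X_\kappa\times\R)$, I would pull back to the chart and apply Fubini together with the definition of the regularization to rewrite
\[
\int_{M\times\R}\psi(x,\xi)\,\frac{(G^2\nu_\kappa)_\varepsilon(\omega,t)(x,\xi)}{|h_\kappa(x)|^{1/2}}\,d\xi\,dV_h(x)
=(G^2\nu_\kappa)_{\omega,t}\bigl(\psi\ast\phi_\varepsilon\bigr).
\]
Since $\psi\ast\phi_\varepsilon\to\psi$ uniformly (as $\psi$ is continuous and compactly supported) and $(G^2\nu_\kappa)_{\omega,t}$ is finite, one may pass to the limit to obtain $(G^2\nu_\kappa)_{\omega,t}(\psi)$; unwinding the definition of this measure yields exactly the claimed integral over $M\times\R$.

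Claim (4) is handled exactly as in Lemma \ref{lemma: properties of alpha m  epsilon}(4): because $\psi$ is compactly supported in $\xi$, integration by parts in $\xi$ generates no boundary terms, and combined with the chart change this gives
\[
\int_{M\times\R}\psi\,\partial_\xi\!\left[\frac{(G^2\nu_\kappa)_\varepsilon}{|h_\kappa|^{1/2}}\right]d\xi\,dV_h(x)
=-(G^2\nu_\kappa)_{\omega,t}\bigl((\partial_\xi\psi)\ast\phi_\varepsilon\bigr).
\]
The elementary bound $\|(\partial_\xi\psi)\ast\phi_\varepsilon\|_\infty\le\|\partial_\xi\psi\|_{L^\infty(M\times\R)}$ combined with the total-mass estimate from the first step then delivers the inequality. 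The only genuine, and rather mild, obstacle is thus the first step: verifying finiteness and joint measurability of $(G^2\nu_\kappa)_{\omega,t}$ under the quadratic growth of $G^2$; once that is in place, everything else is a transcription of the earlier proofs.
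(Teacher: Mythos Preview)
Your proposal is correct and follows exactly the approach the paper indicates: the paper itself gives no detailed proof, stating only that it ``is identical to those of Lemmas~\ref{lemma: properties of rho kappa epsilon} and~\ref{lemma: properties of alpha m  epsilon}'', and your sketch faithfully transcribes that template, including the chart pullback, the identity $\int_{M\times\R}\psi\,\partial_\xi[(G^2\nu_\kappa)_\varepsilon/|h_\kappa|^{1/2}]\,d\xi\,dV_h=-(G^2\nu_\kappa)_{\omega,t}((\partial_\xi\psi)_\varepsilon)$, and the convolution bound $\|(\partial_\xi\psi)_\varepsilon\|_\infty\le\|\partial_\xi\psi\|_\infty$. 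One minor clarification: in Claim~(4) the ``total-mass estimate'' you invoke should be the \emph{exact} formula for the total mass, namely $(G^2\nu_\kappa)_{\omega,t}(\R^n\times\R)=\int_M\alpha_\kappa\int_\R G^2\,\nu\,dV_h$, rather than its upper bound by $D_1\int(1+\xi^2)\,\nu$; the latter only serves to show finiteness, while the former is precisely the right-hand side of the claimed inequality.
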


For $(\omega,t)\in\Omega_T$, consider the function 
$A_\kappa(\omega,t)(\cdot,\cdot)$ defined as
\[
A_\kappa(\omega,t)(z,\xi):=
\begin{cases}
\rho^+(\omega,t,z,\xi)
\left(f'_z(\xi),\nabla\alpha_\kappa\right)_h
\,\abs{h_\kappa(z)}^{1/2}, 
& (z,\xi)\in \tilde{X}_\kappa \times\R, \\
0, & (z,\xi)\notin \tilde{X}_\kappa \times\R, \\
\end{cases}
\]
which we regularize using the mollifier $\phi_\varepsilon$. 
\begin{lemma}\label{lemma: properties of A kappa epsilon}
Let $\kappa\in\mathcal{A}$. Then
\begin{enumerate}

\item $(A_{\kappa})_\varepsilon(\omega,t)\in C^\infty(\R^n_z\times\R_\xi)$, 
for all $(\omega,t)\in\Omega_T$.

\item for $\varepsilon < \varepsilon_\kappa$ and for 
any $\omega\in\Omega,t\in[0,T],\xi\in\R$,
\[
\supp (A_{\kappa})_\varepsilon(\omega,t)(\cdot,\xi) 
\subset \kappa(\supp\alpha_\kappa) +\overline{B_\varepsilon(0)}
\subset\subset \tilde{X}_\kappa. 
\] 
This implies in particular that for any $(\omega,t)\in\Omega_T$, 
the function $(A_{\kappa})_\varepsilon(\omega,t)$ can be 
seen as an element of $C^\infty(M\times\R)$, provided 
that we set it equal to zero outside of $X_\kappa\times\R$.

\item $\lim_{\varepsilon\to 0}(A_{\kappa})_\varepsilon(\omega,t)
= A_{\kappa}(\omega,t)(\cdot,\cdot)$ in $\mathcal{D}'(\R^n\times\R)$ 
for any $(\omega,t)\in\Omega_T$. In particular, 
for $\psi\in \mathcal{D}(X_\kappa\times\R)$, we have
\begin{align*}
\int_{M\times\R}\psi(x,\xi) & 
\,\frac{(A_{\kappa})_\varepsilon(\omega,t)(x,\xi)}
{\abs{h_\kappa(x)}^{1/2}}\,d\xi\,dV_h(x)
\\ & \overset{\varepsilon\downarrow 0}{\longrightarrow}
\int_{M\times\R}\psi(x,\xi)\,\rho^+(\omega,t,x,\xi)(f'_x(\xi)
\nabla\alpha_\kappa)_h\,d\xi\,dV_h(x). 
\end{align*}
This result holds for any $\psi\in \mathcal{D}(M\times\R)$ 
as well, since the functions involved are supported in $X_\kappa\times\R$.

\item For any $1\leq p < \infty$ and $(\omega,t)\in\Omega_T$,
\[
\frac{(A_{\kappa})_\varepsilon(\omega,t)(\cdot,\cdot)}
{\abs{h_\kappa(\cdot)}^{1/2}}
\overset{\varepsilon\downarrow 0}{\longrightarrow} 
\rho^+(\omega,t,\cdot,\cdot)
(f'_\cdot(\cdot),\nabla\alpha_\kappa)_h \;\;\;
\mbox{in $L^p_{\mathrm{loc}}(M\times\R)$}. 
\]

\end{enumerate}
\end{lemma}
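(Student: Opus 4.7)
The plan is to mimic the proof of Lemma \ref{lemma: properties of rho kappa epsilon} almost verbatim, since $A_\kappa$ plays a role analogous to $\rho_\kappa^+$. First I observe that $A_\kappa(\omega,t,\cdot,\cdot)$ is a measurable function on $\R^n\times\R$ whose $z$-support is contained in the compact set $\kappa(\supp\alpha_\kappa)$. Because $\rho^+\in[0,1]$, because $\left(f'_z(\xi),\nabla\alpha_\kappa\right)_h$ is smooth in $z$ with compact $z$-support and of polynomial growth in $\xi$ by the second estimate of \eqref{eq: growth condition 0}, and because $\abs{h_\kappa}^{1/2}$ is smooth and bounded on $\kappa(\supp\alpha_\kappa)$, the function $A_\kappa(\omega,t,\cdot,\cdot)$ belongs to $L^\infty(\tilde X_\kappa\times[-L,L])$ for every $L>0$, and hence to $L^p_{\mathrm{loc}}(\R^n\times\R)$ for any $p\in[1,\infty)$.

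Given this, claims (1) and (2) are immediate from standard properties of convolution with the smooth compactly supported kernel $\phi_\varepsilon$: the convolution of an $L^1_{\mathrm{loc}}$ function with a $C^\infty_c$ kernel is $C^\infty$, and $\supp(u\star v)\subset\supp u+\supp v$.

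Claim (3) follows from distributional convergence of mollified $L^1_{\mathrm{loc}}$ functions. The chart-change step -- transferring the integral over $M$ back to $\R^n$, which produces $\abs{h_\kappa}^{1/2}$ that cancels the $\abs{h_\kappa}^{-1/2}$ present in the integrand -- is word-for-word the one used in the proof of Lemma \ref{lemma: properties of rho kappa epsilon}, and then one applies the standard fact that $(A_\kappa)_\varepsilon\to A_\kappa$ in $\mathcal{D}'(\R^n\times\R)$.

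For claim (4) the argument of Lemma \ref{lemma: properties of rho kappa epsilon} adapts directly. For any $L>0$ I localize in $\xi$ to $[-L,L]$, pass to the chart via $\kappa$, absorb the factor $\abs{h_\kappa(z)}^{(1-p)/2}$, which is bounded on the compact set $\kappa(\supp\alpha_\kappa)+\overline{B_{\varepsilon_\kappa}(0)}$ (since $\abs{h_\kappa}$ stays bounded away from zero there), and reduce to the classical $L^p$ convergence of mollifications of the $L^\infty(\tilde X_\kappa\times[-L,L])$ function $A_\kappa$. The only substantive difference with respect to Lemma \ref{lemma: properties of rho kappa epsilon} is that $A_\kappa$ is not globally $L^\infty$ in $(z,\xi)$ but only locally in $\xi$; this is harmless because the $L^p_{\mathrm{loc}}$ statement is tested on relatively compact sets in $M\times\R$, on which $\xi$ is confined to a bounded interval. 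I do not foresee any genuine obstacle; the proof is essentially a bookkeeping exercise parallel to the earlier lemmas.
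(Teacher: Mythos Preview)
Your proposal is correct and follows essentially the same approach as the paper: both arguments declare claims (1)--(3) immediate from standard mollification facts, and for claim (4) both establish $A_\kappa(\omega,t)\in L^p_{\mathrm{loc}}(\R^n\times\R)$ via the polynomial growth bound \eqref{eq: growth condition 0} on $f'$ together with $0\le\rho^+\le 1$, and then repeat the chart-change computation from Lemma~\ref{lemma: properties of rho kappa epsilon}. Your remark that $A_\kappa$ is only locally $L^\infty$ in $\xi$ (unlike $\rho_\kappa^+$) and that this is harmless for an $L^p_{\mathrm{loc}}$ conclusion is exactly the point.
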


\begin{proof}
Only claim $(4)$ requires a proof, as the other claims are obvious. 
From the very definition of $A_\kappa(\omega,t)$, 
it follows, for $(z,\xi)\in \tilde{X}_\kappa \times\R$,
\begin{align*}
\abs{A_\kappa(\omega,t)(z,\xi)} &=
\abs{\rho^+(\omega,t,z,\xi)(f'_z(\xi),
\nabla\alpha_\kappa)_h\,\abs{h_\kappa(z)}^{1/2}}
\\ & \leq C(M,h,\mathcal{A})\, 
\abs{(f'_z(\xi))^l\,\partial_{z^l}\alpha_\kappa(z)}, 
\end{align*}
where $l$ sums over $1$ to $n$. Thanks to \eqref{eq: growth condition 0}, 
we infer that $A_\kappa(\omega,t)\in L^p_{\mathrm{loc}}(\R^n\times\R)$. 
Hence, arguing as in the proof of 
Lemma \ref{lemma: properties of rho kappa epsilon}, 
we arrive at claim $(4)$.  
\end{proof}

We eventually need to define a finite signed Borel 
measure on $\R^n\times\R$ denoted by $(g_k\nu_\kappa)_{\omega,t},k\in\N$, 
where $g_k\in C^0(M\times\R)$ (see equation \eqref{eq: definition of G}). 
We proceed like this: given $\psi\in C^0_b(\R^n\times\R)$, 
after extending $\alpha_\kappa g_k\,\abs{h_\kappa}^{1/2}$ by 
zero outside of $\tilde{X}_\kappa \times\R$, we estimate 
the following quantity:
\begin{align*}
\int_{\kappa(\supp \alpha_\kappa)} 
&  \int_\R\alpha_\kappa(z)|g_k(z,\xi)|
\,\abs{h_\kappa(z)}^{1/2}|\psi(z,\xi)|
\,\nu_{\omega,t,\kappa^{-1}(z)}(d\xi)\,dz
\\ \leq \norm{\psi}_{L^\infty} & \sqrt{D_1}\int_M\alpha_\kappa(x)
\int_\R \left(1+\abs{\xi}\right)\,\nu_{\omega,t,x}(d\xi)\,dV_h(x)
\\ \leq \norm{\psi}_{L^\infty} &\sqrt{D_1}
\int_M\int_\R \left(1+\abs{\xi}\right)
\,\nu_{\omega,t,x}(d\xi)\,dV_h(x), 
\end{align*}
where we have used \eqref{eq: definition of G}. 
Hence, by the ``vanishing at infinity" assumption 
\eqref{def: Young measure}, we see that the 
last quantity is finite for all $(\omega,t)\in F$, where 
$F\in\mathcal{P}$, $(\P\otimes dt)(F^c)=0$. 
In view of this, we define the finite signed 
Borel measure $(g_k\nu_\kappa)_{\omega,t}$ as the 
one given by the continuous linear functional
\[
C^0_b(\R^n\times\R)\ni\psi\mapsto \int_{\R^n}
\int_\R\alpha_\kappa(z)g_k(z,\xi)\,\abs{h_\kappa(z)}^{1/2}\psi(z,\xi)
\,\nu_{\omega,t,\kappa^{-1}(z)}(d\xi)\,dz, 
\]
if $(\omega,t)\in F$; otherwise, $(g_k\nu_\kappa)_{\omega,t}$ is set 
to be the null measure on $\R^n\times\R$. 
Observe that the supports of these signed measures 
are contained in $\kappa(\mbox{supp }\alpha_\kappa)\times\R$. 

We regularize these measures with the mollifier $\phi_\varepsilon$. 
For $(\omega,t)\in\Omega_T$, $(z,\xi)\in\R^n\times\R$, and $k\in\N$, set
\[
(g_k\nu_\kappa)_\varepsilon(\omega,t)(z,\xi)
:=(g_k\nu_\kappa)_{\omega,t}\left( \varepsilon^{-n}
\phi_1\left(\frac{z-\cdot}{\varepsilon}\right) 
\varepsilon^{-1}\phi_2\left(\frac{\xi-\cdot}{\varepsilon}\right)\right). 
\]
The following result, whose proof is now obvious, holds:  
\begin{lemma}\label{lemma: properties of gk nu kappa epsilon}
Let $\kappa\in\mathcal{A}$. Then
\begin{enumerate}

\item $(g_k\nu_\kappa)_\varepsilon(\omega,t)\in C^\infty(\R^n_z\times\R_\xi)$ 
for all $(\omega,t)\in\Omega_T$.

\item for $\varepsilon < \varepsilon_\kappa$ and 
any $\omega\in\Omega,t\in[0,T],\xi\in\R$,
\[
\supp \left((g_k\nu_\kappa)_\varepsilon(\omega,t)(\cdot,\xi)\right)
\subset \kappa(\supp \alpha_\kappa)
+\overline{B_\varepsilon(0)}\subset\subset \tilde{X}_\kappa. 
\] 
This entails that for fixed $\omega\in\Omega,t\in[0,T]$ 
the function $(g_k\nu_\kappa)_\varepsilon(\omega,t)$ 
can be seen as an element of $C^\infty(M\times\R)$, provided 
that it is set equal to zero outside of $\tilde{X}_\kappa \times\R$.

\item $\lim_{\varepsilon\to 0}(g_k\nu_\kappa)_\varepsilon(\omega,t)= 
(g_k\nu_\kappa)_{\omega,t}$ in the sense of measures for any $(\omega,t)
\in\Omega_T$. In particular, for $\psi\in C^0_c(X_\kappa\times\R)$,
\begin{align*}
\int_{M\times\R}\psi(x,\xi)& 
\frac{(g_k\nu_\kappa)_\varepsilon(\omega,t)(x,\xi)}
{\abs{h_\kappa(x)}^{1/2}} \,d\xi\,dV_h(x)\\ 
&\overset{\varepsilon\downarrow 0}{\longrightarrow} \int_M\alpha_\kappa(x)
\left(\int_\R\psi(x,\xi)g_k(x,\xi)\nu_{\omega,t,x}(d\xi)\right)dV_h(x). 
\end{align*}
This result holds for $\psi\in C^0_c(M\times\R)$ as well, because the 
function $\frac{(g_k\nu_\kappa)_\varepsilon}{\abs{h_\kappa}^{1/2}}$ and 
the signed measure $\alpha_\kappa g_k\,\nu\, dV_h$ 
are supported in $X_\kappa\times\R$.

\end{enumerate}
\end{lemma}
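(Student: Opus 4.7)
My plan is to mirror the arguments of Lemmas \ref{lemma: properties of nu kappa epsilon} and \ref{lemma: properties of G^2 nu kappa epsilon} almost verbatim, with the only genuinely new point being to keep track of the fact that $(g_k\nu_\kappa)_{\omega,t}$ is now a signed (rather than positive) finite Borel measure on $\R^n\times\R$. The construction itself has already been carried out in the paragraph preceding the lemma, so what remains is just to read off the usual convolution-theoretic consequences.

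Claims (1) and (2) I would dispatch as routine consequences of convolution with the smooth compactly supported kernel $\phi_\varepsilon$: smoothness is automatic for the convolution of a finite signed measure with a $C^\infty_c$ function, which gives (1); the support inclusion in (2) follows from $\supp \phi_\varepsilon \subset \overline{B_\varepsilon(0)}\times[-\varepsilon,\varepsilon]$, from the fact that $(g_k\nu_\kappa)_{\omega,t}$ is supported in $\kappa(\supp\alpha_\kappa)\times\R$, and from the choice $\varepsilon < \varepsilon_\kappa = \mathrm{dist}(\kappa(\supp\alpha_\kappa),\partial\tilde X_\kappa)$. Extension by zero outside $X_\kappa\times\R$ then yields a global $C^\infty(M\times\R)$ function.

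For claim (3), I would first test against $\psi\in C^0_c(\R^n\times\R)$. Using the symmetry of $\phi_\varepsilon$ one has
\[
(g_k\nu_\kappa)_\varepsilon(\omega,t)(\psi)
= (g_k\nu_\kappa)_{\omega,t}\!\left(\psi\underset{(z,\xi)}{\star}\phi_\varepsilon\right),
\]
and $\psi\underset{(z,\xi)}{\star}\phi_\varepsilon \to \psi$ uniformly on $\R^n\times\R$. Since $(g_k\nu_\kappa)_{\omega,t}$ has finite total variation for $(\omega,t)\in F$ (and is null otherwise), the right-hand side converges to $(g_k\nu_\kappa)_{\omega,t}(\psi)$. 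To get the form stated for $\psi\in C^0_c(X_\kappa\times\R)$, I would pull back to the chart $\kappa$: the factor $\abs{h_\kappa(z)}^{1/2}$ arising from $dV_h$ cancels the $\abs{h_\kappa(x)}^{1/2}$ in the denominator, exactly as in the proof of Lemma \ref{lemma: properties of nu kappa epsilon}(3). The extension to arbitrary $\psi\in C^0_c(M\times\R)$ is then immediate because all relevant supports lie in $X_\kappa\times\R$.

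The only point that needs mild verification (and the only reason the author bothered to single out this lemma at all) is that $(g_k\nu_\kappa)_{\omega,t}$ really is a continuous linear functional on $C^0_b(\R^n\times\R)$ for $(\omega,t)\in F$; this is precisely the bound already derived in the text, namely
\[
\abs{(g_k\nu_\kappa)_{\omega,t}(\psi)}
\leq \sqrt{D_1}\,\norm{\psi}_{L^\infty}
\int_M\int_\R (1+\abs{\xi})\,\nu_{\omega,t,x}(d\xi)\,dV_h(x),
\]
which combines \eqref{eq: definition of G} with the ``vanishing at infinity'' property \eqref{def: Young measure}. I do not anticipate any real obstacle: the proof is essentially a transcription of Lemmas \ref{lemma: properties of rho kappa epsilon}--\ref{lemma: properties of G^2 nu kappa epsilon} with ``positive measure'' systematically replaced by ``signed measure of finite total variation'', which is why the author writes that the proof ``is now obvious''.
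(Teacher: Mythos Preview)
Your proposal is correct and matches the paper's treatment: the paper simply states that the proof ``is now obvious'' in light of the preceding Lemmas \ref{lemma: properties of rho kappa epsilon}--\ref{lemma: properties of G^2 nu kappa epsilon}, and your write-up spells out exactly the routine convolution/support/weak-convergence arguments those lemmas use, with the only adjustment being that $(g_k\nu_\kappa)_{\omega,t}$ is a finite signed measure rather than a positive one.
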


In view of these results, \eqref{eq: what alfakapparo solves} 
with $\psi\in C^1_c(X_\kappa\times\R)$ can be written as
\begin{equation}\label{eq: what alfakapparo solves (2)}
\begin{split}
-&\int_{\tilde{X}_\kappa \times\R}\rho^+_\kappa(t)\,\psi\, d\bar{\xi}\,d\bar{z}+
\int_{\tilde{X}_\kappa \times\R}\rho_{0,\kappa}\,\psi\, d\bar{\xi}\,d\bar{z}\\
&\hspace{0.5cm}+\int_0^t\int_{\tilde{X}_\kappa \times\R}\rho^+_\kappa(s)(f'_{\bar{z}}
(\bar{\xi}))^l\partial_{\bar{z}^l} \psi\, d\bar{\xi}\,d\bar{z}\, ds\\
&=-\sum_{k\geq 1}\int_0^t\int_{\tilde{X}_\kappa \times\R}\psi\, 
(g_k\nu_\kappa)_{\omega,s}(d\bar{z},d\bar{\xi})\,d\beta_k(s)\\
&\hspace{0.5cm}-\frac12\int_0^t\int_{\tilde{X}_\kappa \times\R}\partial_{\bar{\xi}}\psi 
\,(G^2\nu_\kappa)_{\omega,s}(d\bar{z},d\bar{\xi})\,ds\\
&\hspace{0.5cm}+\int_{\tilde{X}_\kappa \times\R}
\partial_{\bar{\xi}}\psi\,(\alpha_\kappa m)_\sharp(\omega,t)(d\bar{z},d\bar{\xi}) \\
&\hspace{0.5cm}-\int_0^t\int_{\tilde{X}_\kappa \times\R}
\psi A_\kappa (s) d\bar{z}\, d\bar{\xi}\,ds, 
\qquad \text{$\P$-almost surely},
\end{split}
\end{equation}
where $(f'_{\cdot}(\cdot))^l$ is the $l$th component of 
$f'$ in the local coordinates given by $\kappa$.

We set for convenience $\mathcal{U}_\kappa:=
\kappa(\supp \alpha_\kappa) +B_{\varepsilon_\kappa/2}(0)$ 
and from now on we will 
consider only $\varepsilon<\varepsilon_\kappa/4$.
Let us introduce the following family of test functions 
$\psi_{z,\xi,\varepsilon}$ (parametrized by $z\in\tilde{X}_\kappa$, 
$\xi\in\R$, and $\varepsilon<\varepsilon_\kappa/4$):
\[
\psi_{z,\xi,\varepsilon}(\cdot,\cdot):=
\begin{cases}
\phi_\varepsilon(z-\cdot,\xi-\cdot),  
& \mbox{if } 
B_\varepsilon(z)\cap \partial\tilde{X}_\kappa =\emptyset
\\ 0,  & \mbox{otherwise} 
\end{cases}.
\]
We observe that these functions may be seen as elements of 
$C^1_c(X_\kappa\times\R)$ and that, for fixed $\omega\in\Omega$, $t\in[0,T]$, 
and $\xi\in\R$,
\[
\supp \left((\cdots)_\varepsilon(\omega,t)(\cdot,\xi)\right)
\subset \kappa(\supp \alpha_\kappa) +\overline{B_\varepsilon(0)}
\subset\subset \mathcal{U}_\kappa,
\] 
where $\cdots$ is any of the objects defined above. 
Moreover, for $\omega\in\Omega$, $t\in[0,T]$, $\xi\in\R$, and 
$z\in\tilde{X}_\kappa \setminus\mathcal{U}_\kappa$, we have that 
$(\cdots)_\varepsilon(\omega,t)(z,\xi)$ is equal to zero and 
it coincides with the action of $(\cdots)(\omega,t)$ on the 
function $\psi_{z,\xi,\varepsilon}$.

We make use of $\psi_{z,\xi,\varepsilon}$ as test function in
\eqref{eq: what alfakapparo solves (2)}, resulting in the equation
\begin{equation}\label{eq: regolarizzata 1}
\begin{split}
-( & \rho^+_{\kappa})_\varepsilon(\omega,t)(z,\xi)
+(\rho_{0,\kappa})_\varepsilon(\omega)(z,\xi)
\\ &\hspace{0.8cm}
-\int_0^t\int_{\tilde{X}_\kappa \times\R}
\rho^+_\kappa(s)(f'_{\bar{z}}(\bar{\xi}))^l
\left(\partial_l\phi_\varepsilon\right)(z-\bar{z},\xi-\bar{\xi})
\, d\bar{\xi}\,d\bar{z}\, ds
\\ &
=-\sum_{k\geq 1}\int_0^t (g_k\nu_\kappa)_\varepsilon(\omega,s)(z,\xi)
\,d\beta_k(s)
\\ &\hspace{0.8cm}
+\frac12\int_0^t\partial_\xi
\left[(G^2\nu_\kappa)_\varepsilon(\omega,s)(z,\xi)\right]\, ds
-\partial_\xi[((\alpha_\kappa m)_{\sharp})_\varepsilon(\omega,t)(z,\xi)] 
\\ & \hspace{0.8cm}
-\int_0^t (A_{\kappa})_\varepsilon(\omega,s)(z,\xi)\, ds, 
\quad \text{$\P$-almost surely},
\end{split}
\end{equation}
valid for $\varepsilon<\varepsilon_{\kappa}/4$, $\xi\in\R$, 
and $z\in\tilde{X}_\kappa $.

For the transport term involving $f'$ we need the following 
following version of the commutator lemma due to 
DiPerna and Lions \cite{DL89}: 

\begin{lemma}[DiPerna-Lions commutator estimate]\label{lemma: commutator}
Fix $\kappa\in\mathcal{A}$, and let $\mathcal{E}$ be the 
smooth vector field on $\tilde{X}_\kappa \times\R$ defined by
$\mathcal{E}(\bar{z},\bar{\xi})=
\left( (f'_{\bar{z}}(\bar{\xi}))^1, \ldots, 
(f'_{\bar{z}}(\bar{\xi}))^n,0\right)$.
Set 
\[
r_{\kappa,\varepsilon}(\omega,s)(z,\xi):=\dive_{\R^{n+1}}
\left(\rho^+_\kappa(\omega,s)\mathcal{E}\right)_\varepsilon(z,\xi)
-\dive_{\R^{n+1}}\left((\rho^+_{\kappa})_\varepsilon(\omega,s)
\mathcal{E}\right)(z,\xi), 
\]
with $(\omega,s)\in\Omega_T$, $z\in\tilde{X}_\kappa$, and $\xi\in\R$. 
Then $r_{\kappa,\varepsilon}\to 0$ in 
$L^1(\Omega_T\times\tilde{X}_\kappa \times(-L,L))$ 
as $\varepsilon\to 0$, for any $L>0$. Furthermore, for 
$\varepsilon<\varepsilon_\kappa$ and for any $(\omega,s)\in\Omega_T$, the 
function $r_{\kappa,\varepsilon}(\omega,s)$ 
can be seen as an element of $C^\infty(M\times\R)$, provided it is set to 
zero outside of $X_\kappa\times\R$.
\end{lemma}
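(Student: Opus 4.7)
My plan is to adapt the classical DiPerna--Lions commutator lemma \cite{DL89} to this chart-level setup. Two features make the adaptation essentially routine: the vector field $\mathcal{E}$ has smooth components (since $f\in C^\infty(M\times\R;TM)$) and $\rho^+_\kappa$ is uniformly bounded with support compactly contained in $\tilde{X}_\kappa$ in the $z$ variable. Using that the last component of $\mathcal{E}$ vanishes and that distributional divergence commutes with mollification, a direct product-rule computation gives
\begin{align*}
r_{\kappa,\varepsilon}(z,\xi)
&= \int_{\R^{n+1}} \rho^+_\kappa(\bar z,\bar\xi)
\bigl[(f'_{\bar z}(\bar\xi))^l-(f'_z(\xi))^l\bigr]\,
\partial_{z^l}\phi_\varepsilon(z-\bar z,\xi-\bar\xi)\,d\bar z\,d\bar\xi
\\ & \qquad - (\rho^+_\kappa)_\varepsilon(z,\xi)\sum_{l=1}^n\partial_{z^l}(f'_z(\xi))^l
=: I_\varepsilon(z,\xi)-II_\varepsilon(z,\xi),
\end{align*}
with Einstein summation on the first $l$.

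Next I would prove pointwise a.e.\ vanishing of $r_{\kappa,\varepsilon}$. After rescaling $\bar z = z+\varepsilon y$ and $\bar\xi = \xi+\varepsilon\eta$, the finite difference $\varepsilon^{-1}\bigl[(f'_{z+\varepsilon y}(\xi+\varepsilon\eta))^l-(f'_z(\xi))^l\bigr]$ converges uniformly on $\supp(\phi_1\otimes\phi_2)$ to $y^k\partial_{z^k}(f'_z(\xi))^l+\eta\,\partial_\xi(f'_z(\xi))^l$, while $\rho^+_\kappa(z+\varepsilon y,\xi+\varepsilon\eta)\to\rho^+_\kappa(z,\xi)$ in an averaged sense at every Lebesgue point (since $\rho^+_\kappa\in L^\infty$). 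The moment identities
\[
\int (\partial_l\phi_1)(-y)\,dy=0,\qquad
\int y^k(\partial_l\phi_1)(-y)\,dy=\delta^k_l,\qquad
\int\phi_2(-\eta)\,d\eta=1
\]
then combine to give $I_\varepsilon(z,\xi)\to\rho^+_\kappa(z,\xi)\sum_l\partial_{z^l}(f'_z(\xi))^l$, which is exactly the pointwise a.e.\ limit of $II_\varepsilon$. Hence $r_{\kappa,\varepsilon}\to 0$ a.e.\ on $\Omega_T\times\tilde{X}_\kappa\times\R$.

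The $L^1$ convergence then follows from the dominated convergence theorem on the finite-measure cylinder $\Omega_T\times\tilde{X}_\kappa\times(-L,L)$, once I establish a uniform bound $\|r_{\kappa,\varepsilon}\|_{L^\infty}\le C_L$. This bound is immediate: on the support of $\phi_\varepsilon(z-\cdot,\xi-\cdot)$ and for $|\xi|\le L$, smoothness of $f'$ on compact $(\bar z,\bar\xi)$-sets gives $|(f'_{\bar z}(\bar\xi))^l-(f'_z(\xi))^l|\le C_L\varepsilon$, while $\|\nabla_z\phi_\varepsilon\|_{L^1}=O(\varepsilon^{-1})$, so the two $\varepsilon$-factors cancel. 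For the chart-extension claim, note that for $\varepsilon<\varepsilon_\kappa$ both $I_\varepsilon$ and $II_\varepsilon$ vanish for $z\notin\kappa(\supp\alpha_\kappa)+\overline{B_\varepsilon(0)}\subset\subset\tilde{X}_\kappa$, so $r_{\kappa,\varepsilon}(\omega,s)$ pulled back via $\kappa^{-1}$ and extended by zero is a well-defined element of $C^\infty(M\times\R)$.

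The main obstacle is the identification of the pointwise limit of $I_\varepsilon$ through the moment identity $\int y^k(\partial_l\phi_1)(-y)\,dy=\delta^k_l$; this is the DiPerna--Lions cancellation, which forces $I_\varepsilon$ to pick up precisely $\rho^+_\kappa\dive_z f'_z(\xi)$ and annihilate $II_\varepsilon$ in the limit. All other steps (uniform $L^\infty$ bound, dominated convergence, chart extension) are standard mollifier analysis, given the $L^\infty$-boundedness and compact $z$-support of $\rho^+_\kappa$ together with the smoothness of $\mathcal{E}$.
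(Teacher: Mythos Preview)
Your argument is correct, but it is organized differently from the paper's proof. The paper writes $r_{\kappa,\varepsilon}$ via a first-order Taylor expansion of $\mathcal{E}$ with integral remainder and splits it into three pieces $a_\varepsilon+b_\varepsilon-(\rho^+_\kappa)_\varepsilon\dive_{\R^{n+1}}\mathcal{E}$; it then proves $L^1$ convergence of each piece directly, using translation continuity of $\partial_j\mathcal{E}_i$ in $L^1_{\mathrm{loc}}$ for $a_\varepsilon$ and invoking \cite[Lemma~II.1]{DL89} for $b_\varepsilon$. By contrast, you keep the two-term decomposition $I_\varepsilon-II_\varepsilon$, establish pointwise a.e.\ convergence to zero at Lebesgue points of $\rho^+_\kappa$ via the moment identities, and upgrade to $L^1$ through a uniform $L^\infty(\mathcal{U}_\kappa\times(-L,L))$ bound and dominated convergence. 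Your route is more self-contained and fully exploits the smoothness of $\mathcal{E}$ (both for the Lipschitz estimate giving $\abs{I_\varepsilon}\le C_L$ and for the uniform convergence of the difference quotients), whereas the paper's argument stays closer to the original DiPerna--Lions framework and would still go through for $\mathcal{E}$ with merely $W^{1,1}_{\mathrm{loc}}$ coefficients---generality that is not needed here. One minor point worth making explicit in your write-up: the Lebesgue-point set depends on $(\omega,s)$, so the a.e.\ convergence in $(\omega,s,z,\xi)$ follows by Fubini, and the $L^\infty$ bound is uniform in $(\omega,s)$ since $\norm{\rho^+_\kappa}_{L^\infty}\le C(M,h,\mathcal{A})$.
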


\begin{proof}
By previous observations, for any $(\omega,s)\in\Omega_T$ 
and $\xi\in\R$, the quantity $r_{\kappa,\varepsilon}(\omega,s)(\cdot,\xi)$ 
is zero on the set $\tilde{X}_\kappa \setminus \mathcal{U}_\kappa$. 
Thus, it is sufficient to examine the points 
$z\in \mathcal{U}_\kappa$ only. By setting 
$w:=(z,\xi)$ and $\bar{w}:=(\bar{z},\bar{\xi})$, 
$r_{\kappa,\varepsilon}$ may be rewritten as
\begin{align*}
r_{\kappa,\varepsilon}&(\omega,s)(z,\xi)
\\ & =\sum_{i=1}^{n+1}
\int_{\substack{\abs{\bar{z}}<\varepsilon\\ \abs{\bar{\xi}}<\varepsilon}}
\rho^+_\kappa(\omega,s,w-\bar{w})
\mathcal{E}_i(w-\bar{w})(\partial_{i} \phi_\varepsilon)(\bar{w})\, d\bar{w}
\\ &\hspace{1.8cm} 
- \left(\nabla_{\R^{n+1}}(\rho^+_\kappa)_\varepsilon\cdot \mathcal{E}\right)(w)
-(\rho^+_\kappa)_\varepsilon(\omega,s)(w)\dive_{\R^{n+1}}\mathcal{E}(w)
\\ & =\sum_{i=1}^{n+1}
\int_{\substack{\abs{\bar{z}}<\varepsilon\\\abs{\bar{\xi}}<\varepsilon}}
\rho^+_\kappa(\omega,s,w-\bar{w})(\partial_{i} \phi_\varepsilon)(\bar{w}) 
\left[\mathcal{E}_i(w-\bar{w})-\mathcal{E}_i(w)\right]\, d\bar{w} 
\\ & \hspace{1.8cm}
-(\rho^+_\kappa)_\varepsilon(\omega,s)(w)
\dive_{\R^{n+1}}\mathcal{E}(w)
\\ & =\sum_{i,j=1}^{n+1} \int_0^1
\int_{\substack{\abs{\bar{z}}<\varepsilon\\\abs{\bar{\xi}}<\varepsilon}}
\left[\partial_j\mathcal{E}_i(w)-\partial_j\mathcal{E}_i(w-\tau\bar{w})\right]
\rho^+_\kappa(\omega,s,w-\bar{w})\bar{w}_j(\partial_{i} 
\phi_\varepsilon)(\bar{w}) \, d\bar{w}\,d\tau 
\\ &\hspace{0.8cm}
-\sum_{i,j=1}^{n+1}\partial_j\mathcal{E}_i(w)
\int_{\substack{\abs{\bar{z}}<\varepsilon\\\abs{\bar{\xi}}<\varepsilon}}
\rho^+_\kappa(\omega,s,w-\bar{w})
\bar{w}_j(\partial_{i} \phi_\varepsilon)(\bar{w}) \, d\bar{w} 
\\ &\hspace{4.2cm}
-(\rho^+_\kappa)_\varepsilon(\omega,s)(w)\dive_{\R^{n+1}}\mathcal{E}(w)
\\ &=: a_\varepsilon(\omega,s;z,\xi)+b_\varepsilon(\omega,s;z,\xi)
-(\rho^+_\kappa)_\varepsilon(\omega,s)(w)\dive_{\R^{n+1}}\mathcal{E}(w). 
\end{align*}
Note that for the vector field $\mathcal{E}$ the indices 
are put in a low position, so that the Einstein 
summation convention does not apply. Let us deal with the term 
$a_\varepsilon$ first. For $L>0$ we have
\begin{align*}
\EE&\int_0^T\int_{\mathcal{U}_\kappa\times(-L,L)}
\abs{a_\varepsilon} \,dw \,ds
\\ & \leq\sum_{i,j=1}^{n+1}\EE\int_0^1\int_0^T \, 
\int_{\mathcal{U}_\kappa\times(-L,L)}  \\
&\hspace{0.75cm}\times
\int_{\substack{\abs{\bar{z}}<\varepsilon\\ \abs{\bar{\xi}}<\varepsilon}}
\abs{\partial_j\mathcal{E}_i(w)
-\partial_j\mathcal{E}_i(w-\tau\bar{w})} \,C(M,h,\mathcal{A})\,
\abs{\bar{w}_j(\partial_{i} \phi_\varepsilon)(\bar{w})} 
\, d\bar{w}\, dw\,ds\,d\tau \\ & \leq C(M,h,\mathcal{A})\sum_{i,j=1}^{n+1}
\EE\int_0^1\int_0^T\, 
\int_{\substack{\abs{\bar{z}}<\varepsilon\\ \abs{\bar{\xi}}<\varepsilon}} \\ &\hspace{0.75cm}\times
\abs{\bar{w}_j(\partial_{i} \phi_\varepsilon)(\bar{w})}\,
\norm{\partial_j\mathcal{E}_i(\cdot)-\partial_j
\mathcal{E}_i(\cdot-\tau\bar{w})}_{L^1(\mathcal{U}_\kappa\times(-L,L))} 
\, d\bar{w}\,ds\,d\tau \\ & \leq C(M,h,\mathcal{A})\,\cdot T\sum_{i,j=1}^{n+1}
\int_0^1 \norm{\bar{w}_j(\partial_{i} \phi_\varepsilon)(\cdot)}_{L^1(\R^{n+1})} 
\\ &\hspace{0.75cm} \times 
\sup_{\substack{\abs{\bar{z}}<\varepsilon\\ \abs{\bar{\xi}}<\varepsilon}}
\norm{\partial_j\mathcal{E}_i(\cdot)
-\partial_j\mathcal{E}_i(\cdot-\tau\bar{w})}_{L^1(\mathcal{U}_\kappa\times(-L,L))}
\, d\tau \\ &\leq C \sum_{i,j=1}^{n+1} 
\sup_{\substack{\abs{\bar{z}}<\varepsilon\\ 
\abs{\bar{\xi}}<\varepsilon}}
\norm{\partial_j \mathcal{E}_i(\cdot)
-\partial_j\mathcal{E}_i(\cdot-\bar{w})}_{L^1(\mathcal{U}_\kappa\times(-L,L))}
\overset{\varepsilon\downarrow 0}{\longrightarrow} 0,
\end{align*}
where $C(M,h,\mathcal{A})$ is a positive constant 
depending on $M,h$, and $\mathcal{A}$. Hence, we have the 
convergence $a_\varepsilon\to 0$ in 
$L^1(\Omega_T\times\tilde{X}_\kappa \times(-L,L))$.

For the term $b_\varepsilon$, by following the proof 
of \cite[Lemma II.1]{DL89}, we immediately infer 
that $b_\varepsilon\to  \rho^+_\kappa\dive_{\R^{n+1}}\mathcal{E}$ 
in $L^1(\Omega_T\times\tilde{X}_\kappa \times(-L,L))$ 
as $\varepsilon\to 0$.  

It remains to deal with the term 
$-(\rho^+_\kappa)_\varepsilon(\omega,s)(w)\dive_{\R^{n+1}}\mathcal{E}(w)$. 
As $(\rho^+_\kappa)_\varepsilon(\omega,s)(\cdot,\cdot)$ belongs to 
$L^\infty(\R^n\times\R)$ for any $(\omega,s)\in\Omega_T$, it 
follows that 
\[
\norm{(\rho^+_\kappa)_\varepsilon(\omega,s)(\cdot,\cdot)
-\rho^+_\kappa(\omega,s,\cdot,\cdot)}_{L^1(\tilde{X}_\kappa \times(-L,L))}
\overset{\varepsilon\downarrow 0}{\longrightarrow} 0. 
\]
Furthermore, by basic $L^p$ convolution estimates,
\begin{align*}
\norm{(\rho^+_\kappa)_\varepsilon(\omega,s)(\cdot,\cdot)}_{L^1(\tilde{X}_\kappa 
\times(-L,L))} &\leq \mathcal{L}^{n+1}(\tilde{X}_\kappa \times(-L,L)) \,
\norm{(\rho^+_\kappa)_\varepsilon(\omega,s)(\cdot,\cdot)}_{L^\infty(\R^n\times\R)}
\\ & \leq \mathcal{L}^{n+1}(\tilde{X}_\kappa \times(-L,L))\,
\norm{\rho^+_\kappa(\omega,s,\cdot,\cdot)}_{L^\infty(\R^n\times\R)}\\
&\leq C(M,h,\mathcal{A},L), 
\end{align*}
uniformly in $(\omega,s)\in\Omega_T$. 
Thus, by H\"older's inequality and smoothness of $\mathcal{E}$,
\begin{align*}
\EE&\int_0^T\int_{\tilde{X}_\kappa \times(-L,L)}
\abs{\dive_{\R^{n+1}}\mathcal{E}(z,\xi)}
\,\abs{(\rho^+_\kappa)_\varepsilon(\omega,s)(z,\xi)
-\rho^+_\kappa(\omega,s,z,\xi)} \,d\xi\,dz\,ds 
\\ & \leq \EE\int_0^T 
\norm{\dive_{\R^{n+1}}\mathcal{E}}_{L^\infty(\tilde{X}_\kappa \times(-L,L))} 
\, \norm{(\rho^+_\kappa)_\varepsilon(\omega,s)
-\rho^+_\kappa(\omega,s,\cdot,\cdot)}_{L^1(\tilde{X}_\kappa \times(-L,L))}
\,ds\\ &\leq C\,\EE\int_0^T\, \norm{(\rho^+_\kappa)_\varepsilon(\omega,s)
-\rho^+_\kappa(\omega,s,\cdot,\cdot)}_{L^1(\tilde{X}_\kappa \times(-L,L))}\,ds 
\overset{\varepsilon\downarrow 0}{\longrightarrow} 0, 
\end{align*}
by means of dominated convergence. We conclude that 
$$
-(\rho^+_\kappa)_\varepsilon\dive_{\R^{n+1}}
\mathcal{E}\overset{\varepsilon\downarrow 0}{\longrightarrow}
 -\rho^+_\kappa\dive_{\R^{n+1}}\mathcal{E}
\quad \text{in $L^1(\Omega_T\times\tilde{X}_\kappa \times(-L,L))$},
$$ 
and thus the desired convergence of 
$r_{\kappa,\varepsilon}$ to zero follows. 

The remaining statements are now obvious and 
thus the lemma is proved.
\end{proof}

In view of Lemma \ref{lemma: commutator}, equation 
\eqref{eq: regolarizzata 1} becomes
\begin{equation}\label{eq: regolarizzata 2}
\begin{split}
-&(\rho^+_{\kappa})_\varepsilon(\omega,t)(z,\xi)
+(\rho_{0,\kappa})_\varepsilon(\omega)(z,\xi)
\\ &\hspace{0.6cm}
-\int_0^t\dive_{\R^n}((\rho^+_{\kappa})_\varepsilon(\omega,s)(z,\xi)
f'_{z}(\xi))\,ds
\\ & =-\sum_{k\geq 1}\int_0^t 
(g_k\nu_\kappa)_\varepsilon(\omega,s)(z,\xi)\,d\beta_k(s)\\
&\hspace{0.6cm}
+\frac12\int_0^t\partial_\xi
\left[(G^2\nu_\kappa)_\varepsilon(\omega,s)(z,\xi)\right]\, ds
-\partial_\xi[((\alpha_\kappa m)_{\sharp})_\varepsilon(\omega,t)(z,\xi)] 
\\ &\hspace{0.6cm}
-\int_0^t  (A_{\kappa})_\varepsilon (\omega,s)(z,\xi)\, ds
+\int_0^tr_{\kappa,\varepsilon}(\omega,s)(z,\xi)\,ds, 
\quad \text{$\P$-almost surely},
\end{split}
\end{equation}
valid for $\varepsilon<\varepsilon_{\kappa}/4$, $\xi\in\R$, 
and $z\in\tilde{X}_\kappa $. 

We recall that, for fixed $(\omega,s)\in\Omega_T$ and $\xi\in\R$, 
the smooth vector field 
\[
\tilde{X}_\kappa \ni z 
\mapsto (\rho^+_{\kappa})_\varepsilon(\omega,s)(z,\xi)f'_{z}(\xi)
\in\R^n
\]  
is compactly supported in $\kappa(\supp \alpha_\kappa) 
+\overline{B_\varepsilon(0)}\subset\subset \tilde{X}_\kappa $. 
The same property must be enjoyed by the smooth vector field   
\[
\tilde{X}_\kappa \ni z \mapsto 
\frac{(\rho^+_{\kappa})_\varepsilon(\omega,s)(z,\xi)
f'_{z}(\xi)}{\abs{h_\kappa(z)}^{1/2}}\in\R^n. 
\]
Therefore, for any $(\omega,s)\in\Omega_T$ and $\xi\in\R$, we 
pushforward this vector field to $X_\kappa$ via 
the diffeomorphism $\kappa^{-1}:\tilde{X}_\kappa \to X_\kappa$. 
Because of the compactness of its support, this new 
vector field may be seen as a global smooth vector field on $M$, 
if extended to zero outside of its support (cf.~\cite[Prop.~4.10]{LeeSmooth}): 
\[
M\ni x \mapsto 
\frac{(\rho^+_{\kappa})_\varepsilon(\omega,s)(x,\xi)f'_{x}(\xi)}
{\abs{h_\kappa(x)}^{1/2}}\in TM. 
\]
Moreover, in the coordinates given by $\kappa$, it holds 
\begin{align*}
\Div\left( \frac{(\rho^+_{\kappa})_\varepsilon(\omega,s)(x,\xi)
f'_{x}(\xi)}{\abs{h_\kappa(x)}^{1/2}}\right)
& = \frac{1}{\abs{h_\kappa(z)}^{1/2}}\,\partial_{z^l}
\left(\abs{h_\kappa(z)}^{1/2}
\frac{(\rho^+_{\kappa})_\varepsilon(\omega,s)(z,\xi)(f'_{z}(\xi))^l}
{\abs{h_\kappa(z)}^{1/2}}\right)
\\ &=\frac{1}{\abs{h_\kappa(z)}^{1/2}}\,\dive_{\R^n}
\left((\rho^+_{\kappa})_\varepsilon(\omega,s)(z,\xi)f'_{z}(\xi)\right) 
\;\;\; \mbox{in } X_\kappa,
\end{align*}
from the very definition of $\Div$.

We divide \eqref{eq: regolarizzata 2} by $\abs{h_\kappa(z)}^{1/2}$ and 
then ``pull back" to $M$. The result is
\begin{equation}\label{eq: regolarizzata 3}
\begin{split}
-&\frac{(\rho^+_{\kappa})_\varepsilon(\omega,t)(x,\xi)}{\abs{h_\kappa(x)}^{1/2}}
+\frac{(\rho_{0,\kappa})_\varepsilon(\omega)(x,\xi)}{\abs{h_\kappa(x)}^{1/2}}
\\ &\hspace{0.6cm}-\int_0^t
\Div\left( \frac{(\rho^+_{\kappa})_\varepsilon(\omega,s)(x,\xi)f'_{x}(\xi)}
{\abs{h_\kappa(x)}^{1/2}}\right)\,ds
\\ &=-\sum_{k\geq 1}\int_0^t \frac{(g_k\nu_\kappa)_\varepsilon(\omega,s)(x,\xi)}
{\abs{h_\kappa(x)}^{1/2}}\,d\beta_k(s)\\
&\hspace{0.6cm} +\frac12\int_0^t\partial_\xi
\left[\frac{(G^2\nu_\kappa)_\varepsilon(\omega,s)(x,\xi)}
{\abs{h_\kappa(x)}^{1/2}}\right]\, ds
-\partial_\xi\left[
\frac{((\alpha_\kappa m)_{\sharp})_\varepsilon(\omega,t)(x,\xi)}
{\abs{h_\kappa(x)}^{1/2}}\right] 
\\ &\hspace{0.6cm}
-\int_0^t  \frac{(A_{\kappa})_\varepsilon 
(\omega,s)(x,\xi)}{\abs{h_\kappa(x)}^{1/2}}\, ds 
+\int_0^t\frac{r_{\kappa,\varepsilon}
(\omega,s)(x,\xi)}{\abs{h_\kappa(x)}^{1/2}}\,ds, 
\quad \text{$\P$-almost surely},
\end{split}
\end{equation}
which is valid for $\varepsilon<\varepsilon_{\kappa}/4$, $\xi\in\R$, 
$x\in M$, and $t\in[0,T]$. In some of the terms we have used the relation 
$\partial_\xi(\cdots)\abs{h_\kappa(x)}^{-1/2}
=\partial_\xi(\cdots \abs{h_\kappa(x)}^{-1/2})$.

In the proof of Lemma \ref{lemma: simmetry argument} (for example) 
we will have to work with certain Radon measures associated 
to $g^2_k$, which we define now. For all $k\in\N$, set 
\[
(g^2_k\nu_\kappa)_{\omega,t}(\psi):= \int_{\kappa(\supp \alpha_\kappa)}
\int_\R\alpha_\kappa(z)g^2_k(z,\xi)\,\abs{h_\kappa(z)}^{1/2}
\psi(z,\xi)\,\nu_{\omega,t,\kappa^{-1}(z)}(d\xi)\,dz, 
\]
with $\psi\in C^0_c(\R^n\times\R)$. 
By assumption $g^2_k\in C^0(M\times\R)$ and because 
the support of $\alpha_\kappa$ is compact, 
$\alpha_\kappa g^2_k\abs{h_\kappa}^{1/2}$ may 
be seen as a (continuous) Borel function on $\R^n\times\R$, 
if extended to zero outside of $\tilde{X}_\kappa \times\R$.

As always, we regularize these measures using a mollifier. 
For $(\omega,t)\in\Omega_T$ and $(z,\xi)\in\R^n\times\R$, set
\[
(g^2_k\nu_\kappa)_\varepsilon(\omega,t)(z,\xi)
:=(g^2_k\nu_\kappa)_{\omega,t}
\left( \varepsilon^{-n}\phi_1\left(\frac{z-\cdot}{\varepsilon}\right) 
\varepsilon^{-1}\phi_2\left(\frac{\xi-\cdot}{\varepsilon}\right)\right). 
\]
The main properties are listed in the following lemma, whose 
proof is obvious in view of earlier (similar) lemmas.

\begin{lemma}\label{lemma: properties of g^2_k nu kappa epsilon}
Let $\kappa\in\mathcal{A}$. Then
\begin{enumerate}

\item $(g^2_k\nu_\kappa)_\varepsilon(\omega,t)\in C^\infty(\R^n_z\times\R_\xi)$ 
for all $(\omega,t)\in\Omega_T$;

\item for $\varepsilon < \varepsilon_\kappa$ and any $\omega\in\Omega, 
t\in[0,T], \xi\in\R$,
\[
\supp \left((g^2_k\nu_\kappa)_\varepsilon(\omega,t)(\cdot,\xi)\right)\subset 
\kappa(\supp\alpha_\kappa) 
+\overline{B_\varepsilon(0)}\subset\subset \tilde{X}_\kappa . 
\] 
Thus, for fixed $(\omega,t)\in\Omega_T$, the function 
$(g^2_k\nu_\kappa)_\varepsilon(\omega,t)$ can be seen as an 
element of $C^\infty(M\times\R)$, provided it is set zero outside of 
$X_\kappa\times\R$.

\item $\lim_{\varepsilon\to 0}(g^2_k\nu_\kappa)_\varepsilon(\omega,t)
= (g^2_k\nu_\kappa)_{\omega,t}$ in the sense of measures 
for any $(\omega,t)\in\Omega_T$. In particular, for 
$\psi\in C^0_c(X_\kappa\times\R)$,
\begin{align*}
\int_{M\times\R}\psi(x,\xi)& 
\frac{(g^2_k\nu_\kappa)_\varepsilon(\omega,t)(x,\xi)}
{\abs{h_\kappa(x)}^{1/2}}
\,d\xi\,dV_h(x)\\ 
&\overset{\varepsilon\downarrow 0}{\longrightarrow}
\int_M\alpha_\kappa(x)\left(\int_\R\psi(x,\xi)g^2_k(x,\xi)
\,\nu_{\omega,t,x}(d\xi)\right)\, dV_h(x). 
\end{align*}
This result holds for $\psi\in C^0_c(M\times\R)$ as well, since 
the function $\frac{(g^2_k\nu_\kappa)_\varepsilon}{\abs{h_\kappa}^{1/2}}$ 
and the measure $\alpha_\kappa g^2_k\,\nu \,dV_h$ 
are supported in $X_\kappa\times\R$.

\end{enumerate}
\end{lemma}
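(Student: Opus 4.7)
The plan is to follow verbatim the template used in the proofs of Lemmas \ref{lemma: properties of nu kappa epsilon}, \ref{lemma: properties of G^2 nu kappa epsilon}, and \ref{lemma: properties of gk nu kappa epsilon}, since the only new ingredient is the replacement of $G^2$ (or $g_k$) by $g_k^2$ as the weight in the test functional. The very first step, however, is to verify that $(g_k^2\nu_\kappa)_{\omega,t}$ is actually a finite Borel measure on $\R^n\times\R$: since $g_k^2 \leq G^2$ pointwise by the definition of $G^2$ in \eqref{eq: definition of G}, the bound $G^2(z,\xi)\leq D_1(1+\xi^2)$ combined with the ``vanishing at infinity'' property \eqref{def: Young measure} of $\nu$ (applied with $p=2$) shows that
\[
\int_{\R^n\times\R} \alpha_\kappa(z)\,g_k^2(z,\xi)\,\abs{h_\kappa(z)}^{1/2}
\,\nu_{\omega,t,\kappa^{-1}(z)}(d\xi)\,dz < \infty
\]
for all $(\omega,t)$ in some set $F\in\mathcal{P}$ with $(\P\otimes dt)(F^c)=0$; outside $F$ we redefine $(g_k^2\nu_\kappa)_{\omega,t}$ to be the null measure. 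This is the exact same fix that was used for $(g_k\nu_\kappa)$. The support of $(g_k^2\nu_\kappa)_{\omega,t}$ is then contained in $\kappa(\supp\alpha_\kappa)\times\R$.

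Once this is in hand, claims (1) and (2) are immediate from standard properties of convolution: for fixed $(\omega,t)$ the convolution of a finite compactly supported measure with the smooth mollifier $\phi_\varepsilon$ produces a $C^\infty$ function, and the support of the result is contained in the Minkowski sum $\kappa(\supp\alpha_\kappa)+\overline{B_\varepsilon(0)}$, which for $\varepsilon<\varepsilon_\kappa$ is compactly contained in $\tilde{X}_\kappa$. Extension by zero outside $X_\kappa\times\R$ then gives a smooth function on $M\times\R$, exactly as in Lemma \ref{lemma: properties of nu kappa epsilon}.

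For claim (3), the weak convergence $\lim_{\varepsilon\to 0}(g_k^2\nu_\kappa)_\varepsilon(\omega,t)=(g_k^2\nu_\kappa)_{\omega,t}$ in the sense of measures is again standard mollification; the specific integral identity for $\psi\in C^0_c(X_\kappa\times\R)$ is obtained by unwinding the definition in the local chart $\kappa$: one writes
\[
\int_{M\times\R}\psi(x,\xi)\,\frac{(g_k^2\nu_\kappa)_\varepsilon(\omega,t)(x,\xi)}
{\abs{h_\kappa(x)}^{1/2}}\,d\xi\,dV_h(x)
=\int_{\tilde{X}_\kappa\times\R}\psi(z,\xi)\,(g_k^2\nu_\kappa)_\varepsilon(\omega,t)(z,\xi)\,d\xi\,dz,
\]
recognizes the right-hand side as $(g_k^2\nu_\kappa)_{\omega,t}(\psi_\varepsilon)$ where $\psi_\varepsilon=\psi\underset{(z,\xi)}{\star}\phi_\varepsilon$ (after the usual symmetry of $\phi_\varepsilon$), and then passes $\varepsilon\to 0$ using uniform convergence $\psi_\varepsilon\to\psi$ on compacts together with the finiteness of the measure. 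The extension to $\psi\in C^0_c(M\times\R)$ is automatic by the support property. I do not anticipate any genuine obstacle in this proof; the only mildly delicate point is the initial well-definedness of the measure, and this is handled exactly as for $(g_k\nu_\kappa)$ by exploiting the bound $g_k^2\leq G^2$ together with the $p=2$ case of \eqref{def: Young measure}.
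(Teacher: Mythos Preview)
Your proposal is correct and follows exactly the template the paper intends: the paper itself dispenses with a proof, stating only that it ``is obvious in view of earlier (similar) lemmas,'' and your argument simply fills in those routine details (well-definedness via $g_k^2\le G^2$ and \eqref{def: Young measure}, then the standard convolution facts from Lemmas~\ref{lemma: properties of nu kappa epsilon}--\ref{lemma: properties of gk nu kappa epsilon}). There is nothing to add.
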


We need to define the following functions, for 
$(\omega,t,x,\xi)\in\Omega\times[0,T]\times M\times\R$ and 
$\varepsilon<\bar{\varepsilon}:=
\frac14 \min_{\kappa\in\mathcal{A}}\{\varepsilon_\kappa\}$:
\begin{align*}
& \Gamma_\varepsilon(\omega,t)(x,\xi)
:= \sum_{\kappa\in\mathcal{A}}
\frac{\Gamma_{\kappa,\varepsilon}(\omega,t)(x,\xi)}
{\abs{h_\kappa(x)}^{1/2}},
\\ &\Gamma_\varepsilon =
\rho^+_{\varepsilon}, 
\rho_{0,\varepsilon},
(g_k\nu)_\varepsilon,
(G^2\nu)_\varepsilon,
(g^2_k\nu)_\varepsilon,
m_\varepsilon,
A_{\varepsilon},
r_{\varepsilon},
\nu_\varepsilon,
\\ 
& \Gamma_{\kappa,\varepsilon} =
 (\rho^+_\kappa)_\varepsilon,
(\rho_{0,\kappa})_\varepsilon,
(g_k\nu_\kappa)_\varepsilon, 
(G^2\nu_\kappa)_\varepsilon,
(g^2_k\nu_\kappa)_\varepsilon, 
((\alpha_\kappa m)_{\sharp})_\varepsilon,
(A_{\kappa})_\varepsilon,
r_{\kappa,\varepsilon},
(\nu_\kappa)_\varepsilon.
\end{align*}

The properties of these functions are listed in
\begin{lemma}\label{properties of the regularized global objects}
For $\varepsilon<\bar{\varepsilon}$ and $(\omega,t)\in\Omega_T$, 
the functions defined above belong to $C^\infty(M\times\R)$. 
Furthermore, for $(\omega,t)\in\Omega_T$, the 
following convergence hold as $\varepsilon\to 0$:
\begin{enumerate}

\item $\rho^+_{\varepsilon}(\omega,t)\to \rho^+(\omega,t)$ in
$L^p_{\mathrm{loc}}(M\times\R)$, for all $1\leq p < \infty$. 

\item $\rho_{0,\varepsilon}(\omega)\to \rho_0(\omega)$ in 
$L^p_{\mathrm{loc}}(M\times\R)$, for all $1\leq p < \infty$.

\item $(g_k\nu)_\varepsilon(\omega,t)\to g_k\,\nu_{\omega,t} \,dV_h$ 
in the sense of measures.

\item $(G^2\nu)_\varepsilon(\omega,t)\to G^2\,\nu_{\omega,t} \,dV_h$ 
in the sense of measures.

\item $(g^2_k\nu)_\varepsilon(\omega,t)\to g^2_k\,\nu_{\omega,t} \,dV_h$ 
in the sense of measures.

\item $m_\varepsilon(\omega,t)\to m(\omega)\big|_{[0,t]\times M\times\R}$ 
in the sense of measures; the restriction of the 
measure $m(\omega)$ to $[0,t]\times M\times\R$ is denoted 
by $m(\omega)\big|_{[0,t]\times M\times\R}$.

\item $A_{\varepsilon}(\omega,t)\to 0$ in 
$L^p_{\mathrm{loc}}(M\times\R)$, for all $1\leq p < \infty$.

\item $r_{\varepsilon}\to 0$ in $L^1(\Omega_T\times M\times(-L,L))$, 
for all $L>0$.

\item $\nu_\varepsilon(\omega,t)\to \nu_{\omega,t} \,dV_h$ in the sense of 
measures; moreover, for a.e.~$(\omega,t)\in\Omega_T$ 
and all $(x,\xi)\in M\times\R$,  $\partial_\xi\rho^+_{\varepsilon}(\omega,t)(x,\xi)
=-\nu_\varepsilon(\omega,t)(x,\xi)$. 

\item for fixed $\varepsilon<\bar{\varepsilon}$ 
and any $(\omega,t)\in\Omega_T$, 
\[
\sum_{k=1}^p(g^2_k\nu)_\varepsilon(\omega,t)(x,\xi) 
\overset{p\to\infty}{\longrightarrow} 
(G^2\nu)_\varepsilon(\omega,t)(x,\xi),  
\qquad \forall (x,\xi)\in M \times \R.
\]
\end{enumerate}
\end{lemma}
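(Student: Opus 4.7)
The plan is to deduce each assertion by the same mechanism: every global object $\Gamma_\varepsilon$ is a \emph{finite} sum over the atlas of its local counterparts $\Gamma_{\kappa,\varepsilon}/\abs{h_\kappa}^{1/2}$, each of which already satisfies the analogous statement by the preceding Lemmas \ref{lemma: properties of rho kappa epsilon}, \ref{lemma: properties of alpha m  epsilon}, \ref{lemma: properties of nu kappa epsilon}, \ref{lemma: properties of G^2 nu kappa epsilon}, \ref{lemma: properties of A kappa epsilon}, \ref{lemma: properties of gk nu kappa epsilon}, \ref{lemma: properties of g^2_k nu kappa epsilon} and \ref{lemma: commutator}. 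The $C^\infty(M\times\R)$ regularity for $\varepsilon<\bar{\varepsilon}$ is immediate from the fact, proved in each of those lemmas, that every summand extends by zero to a $C^\infty(M\times\R)$ function. It then only remains to verify that summing the local limits over $\kappa\in\mathcal{A}$ produces the declared global limits.

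For (1) and (2) I sum the $L^p_{\mathrm{loc}}$ convergences $(\rho^+_\kappa)_\varepsilon/\abs{h_\kappa}^{1/2}\to \alpha_\kappa\rho^+$ (and similarly for $\rho_{0,\kappa}$) and invoke $\sum_{\kappa\in\mathcal{A}}\alpha_\kappa\equiv 1$. For (3)--(6), testing against $\psi\in C^0_c(M\times\R)$ gives, for instance,
\begin{align*}
\int_{M\times\R}\psi\,(g_k\nu)_\varepsilon\,d\xi\,dV_h
&=\sum_{\kappa\in\mathcal{A}}\int_{M\times\R}\psi\,\frac{(g_k\nu_\kappa)_\varepsilon}{\abs{h_\kappa}^{1/2}}\,d\xi\,dV_h\\
&\overset{\varepsilon\downarrow 0}{\longrightarrow}\sum_{\kappa\in\mathcal{A}}\int_M\alpha_\kappa(x)\int_\R\psi\, g_k\,\nu_{\omega,t,x}(d\xi)\,dV_h(x)\\
&=\int_M\int_\R\psi\, g_k\,\nu_{\omega,t,x}(d\xi)\,dV_h(x),
\end{align*}
and the cases of $(G^2\nu)_\varepsilon$, $(g_k^2\nu)_\varepsilon$, and $m_\varepsilon$ are handled identically by substituting the corresponding local lemma in place of Lemma \ref{lemma: properties of gk nu kappa epsilon}. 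For (9), the measure convergence of $\nu_\varepsilon$ follows by the same computation applied to Lemma \ref{lemma: properties of nu kappa epsilon}(3), and the identity $\partial_\xi \rho^+_{\varepsilon}=-\nu_\varepsilon$ is obtained by summing over $\kappa\in\mathcal{A}$ the termwise identity supplied by Lemma \ref{lemma: properties of nu kappa epsilon}(4).

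The only item that genuinely uses the differential of the partition of unity---and is therefore the main (but very mild) subtlety---is (7). By Lemma \ref{lemma: properties of A kappa epsilon}(4), each local piece $(A_\kappa)_\varepsilon/\abs{h_\kappa}^{1/2}$ converges in $L^p_{\mathrm{loc}}(M\times\R)$ to $\rho^+\left(f'_x(\xi),\nabla\alpha_\kappa\right)_h$. Summation yields
\[
\sum_{\kappa\in\mathcal{A}}\rho^+\left(f'_x(\xi),\nabla\alpha_\kappa\right)_h
=\rho^+\Bigl(f'_x(\xi),\nabla\!\sum_{\kappa\in\mathcal{A}}\alpha_\kappa\Bigr)_h
=\rho^+\left(f'_x(\xi),\nabla 1\right)_h=0,
\]
so $A_\varepsilon\to 0$ in $L^p_{\mathrm{loc}}(M\times\R)$. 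Item (8) is immediate by adding the null $L^1(\Omega_T\times M\times(-L,L))$ limits furnished by Lemma \ref{lemma: commutator}.

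Finally, for (10), I note that, for each $\kappa\in\mathcal{A}$ and every $(z,\xi)\in\R^n\times\R$,
\[
(g_k^2\nu_\kappa)_\varepsilon(\omega,t)(z,\xi)=\int\alpha_\kappa(\bar z)\,g_k^2(\bar z,\bar\xi)\,\abs{h_\kappa(\bar z)}^{1/2}\phi_\varepsilon(z-\bar z,\xi-\bar\xi)\,\nu_{\omega,t,\kappa^{-1}(\bar z)}(d\bar\xi)\,d\bar z,
\]
and analogously for $(G^2\nu_\kappa)_\varepsilon$. Since $\sum_{k=1}^p g_k^2\nearrow G^2$ pointwise, and $G^2$ is integrable against the nonnegative finite measure $\alpha_\kappa\abs{h_\kappa}^{1/2}\phi_\varepsilon(z-\bar z,\xi-\bar\xi)\,\nu_{\omega,t,\kappa^{-1}(\bar z)}(d\bar\xi)\,d\bar z$ by \eqref{eq: definition of G} and the vanishing-at-infinity property of $\nu$, monotone convergence gives $\sum_{k=1}^p(g_k^2\nu_\kappa)_\varepsilon(\omega,t)(z,\xi)\to (G^2\nu_\kappa)_\varepsilon(\omega,t)(z,\xi)$ for every $(z,\xi)$. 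Summation over the finite atlas yields the pointwise convergence on $M\times\R$ stated in (10).
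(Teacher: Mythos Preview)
Your proof is correct and follows essentially the same route as the paper: items (1)--(9) are deduced by summing the local lemmas over the finite atlas, with the same observation for (7) that the $\nabla\alpha_\kappa$ contributions cancel because $\sum_\kappa\alpha_\kappa\equiv 1$. For (10) you use monotone convergence (since $\sum_{k=1}^p g_k^2\nearrow G^2$ and the mollifier restricts the integration to a compact set in $(\bar z,\bar\xi)$, on which $G^2$ is bounded), whereas the paper argues via Ascoli--Arzel\`a that $G_p^2\to G^2$ uniformly on compacts; your route is slightly more direct and the appeal to the vanishing-at-infinity property is in fact unnecessary, since the measure $\phi_\varepsilon(z-\bar z,\xi-\bar\xi)\,\nu_{\omega,t,\kappa^{-1}(\bar z)}(d\bar\xi)\,d\bar z$ already has compact support.
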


\begin{proof}
The smoothness of the functions as well claims $(1)$ to $(6)$, 
and $(9)$ are direct consequences of 
Lemmas \ref{lemma: properties of rho kappa epsilon}, \ref{lemma: 
properties of alpha m  epsilon}, 
\ref{lemma: properties of G^2 nu kappa epsilon}, 
\ref{lemma: properties of A kappa epsilon}, 
\ref{lemma: properties of gk nu kappa 
epsilon}, and \ref{lemma: commutator}, and the fact 
that the partition of unity is finite.

Furthermore, from Lemma \ref{lemma: properties of A kappa epsilon}, 
the limit for $A_\varepsilon(\omega,t)$ is 
\[
\sum_{\kappa\in\mathcal{A}}\rho^+(\omega,t,\cdot,\cdot)
(f'_\cdot(\cdot),\nabla\alpha_\kappa)_h=\rho^+(\omega,t,\cdot,\cdot)
(f'_\cdot(\cdot),\nabla 1)_h=0. 
\]

To verify claim (8), we notice from 
Lemma \ref{lemma: commutator} that
\begin{align*}
\int_{\Omega_T\times M\times(-L,L)}&
\frac{\abs{r_{\kappa,\varepsilon}(\omega,t)(x,\xi)}}{\abs{h_\kappa(x)}^{1/2}}
\, d\xi\,dV_h(x)\,dt\,\P(d\omega) 
\\ & =\int_{\Omega_T\times(-L,L)}
\int_{\tilde{X}_\kappa}
\abs{r_{\kappa,\varepsilon}(\omega,t)(z,\xi)}
\,dz \,d\xi\,dt\,\P(d\omega)
\overset{\varepsilon\downarrow 0}{\longrightarrow} 0. 
\end{align*}
Summing over $\kappa$, we obtain the desired conclusion.

To prove claim $(10)$, we begin by observing that 
Lemma \ref{lemma: G^2 is locally Lipschitz } 
holds for $G^2_p(x,\xi):=\sum_{k=1}^p\abs{g_k(x,\xi)}^2$, 
namely, $G^2_p$ is locally Lipschitz, uniformly in $p\in\N$. 
Hence, by means of the Ascoli-Arzel\'a theorem, $G^2_p\to G^2$ 
uniformly on compact sets. It is therefore clear 
that for any $\psi\in C^0_c(\R^n\times\R)$, $(\omega,t)\in\Omega_T$, 
and $\kappa\in\mathcal{A}$, we have
\[
\sum_{k=1}^p(g^2_k\nu_\kappa)_{\omega,t}(\psi) 
\overset{p\to\infty}{\longrightarrow} (G^2\nu_\kappa)_{\omega,t}(\psi). 
\]
We conclude hence that, for fixed $\varepsilon<\bar{\varepsilon}$ 
and $(\omega,t)\in\Omega_T$,
\[
\sum_{k=1}^p(g^2_k\nu)_\varepsilon(\omega,t)(\cdot,\cdot) 
\overset{p\to\infty}{\longrightarrow}
(G^2\nu)_\varepsilon(\omega,t)(\cdot,\cdot),  
\;\;\; \mbox{pointwise on } M\times\R. 
\]
\end{proof}

\subsection{Global equation and renormalization}

To obtain a single (global) equation, we 
sum over $\kappa$ in \eqref{eq: regolarizzata 3}, arriving at   
\begin{equation}\label{eq: regolarizzata 4}
\begin{split}
-\rho^+_{\varepsilon}&(\omega,t)(x,\xi)+\rho_{0,\varepsilon}(\omega)(x,\xi)
\\ & =\int_0^t\Div(\rho^+_{\varepsilon}(\omega,s)(x,\xi)f'_{x}(\xi))\,ds
\\ & \qquad -\sum_{k\geq 1}\int_0^t (g_k\nu)_\varepsilon(\omega,s)(x,\xi)
\,d\beta_k(s) \\ & \qquad\quad
+\frac12\int_0^t\partial_\xi
\left[(G^2\nu)_\varepsilon(\omega,s)(x,\xi)\right]
\,ds-\partial_\xi\left[m_\varepsilon(\omega,t)(x,\xi)\right] 
\\ & \qquad\quad\quad 
-\int_0^t  A_{\varepsilon} (\omega,s)(x,\xi)\, ds
+\int_0^tr_{\varepsilon}(\omega,s)(x,\xi)\,ds
\\ & =: -H_\varepsilon(t)(x,\xi)-I_\varepsilon(t)(x,\xi), 
\quad \text{$\P$-almost surely},
\end{split}
\end{equation}
valid for $\varepsilon<\bar{\varepsilon}$, 
$(x,\xi)\in M\times\R$, and $t\in[0,T]$, where 
\[
H_\varepsilon(t)(x,\xi):=\sum_{k\geq 1}
\int_0^t(g_k\nu)_\varepsilon(\omega,s)(x,\xi)\,d\beta_k(s)
\]
and
\begin{align*}
I_\varepsilon(t)(x,\xi)&:=-\int_0^t
\Div(\rho^+_{\varepsilon}(\omega,s)(x,\xi)f'_{x}(\xi))\,ds
- \frac12\int_0^t\partial_\xi 
\left[(G^2\nu)_\varepsilon(\omega,s)(x,\xi)\right]
\, ds \\ &\hspace{0.5cm}
+\partial_\xi\left[m_\varepsilon(\omega,t)(x,\xi)\right] 
+\int_0^t A_{\varepsilon} (\omega,s)(x,\xi)\,ds
-\int_0^tr_\varepsilon(\omega,s)(x,\xi)\,ds. 
\end{align*}
Observe that $I_\varepsilon(\cdot)(x,\xi)$ is in $BV[0,T]$, 
c{\`a}dl{\`a}g, and adapted. In fact, we have that 
$I_\varepsilon(\cdot)(x,\xi)-\partial_\xi
\left[m_\varepsilon(\cdot,\cdot)(x,\xi)\right]$ 
is absolutely continuous. 

Applying It\^{o}'s formula for semimartingales \cite{Protter}, 
we obtain the following equation for 
$(\rho^+_{\varepsilon}(\cdot,\cdot)(x,\xi))^2$: $\P$-almost surely,
\begin{align*}
(\rho^+_{\varepsilon}&(\omega,t)(x,\xi))^2
=(\rho_{0,\varepsilon}(\omega)(x,\xi))^2+2\int_0^t
\rho^+_{\varepsilon}(\omega,s_-)(x,\xi)\, dH_\varepsilon(s)(x,\xi)
\\ &+2\int_0^t\rho^+_{\varepsilon}(\omega,s_-)(x,\xi)\, 
I_\varepsilon(ds)(x,\xi)+\int_0^td\left[H_\varepsilon(s)(x,\xi)\right]
\\ &+\int_0^td\left[I_\varepsilon(s)(x,\xi)\right]+2\int_0^t\, 
d\left[H_\varepsilon(s)(x,\xi),I_\varepsilon(s)(x,\xi)\right]
\\ & +\sum_{0<s\leq t}\left\{(\rho^+_{\varepsilon}(\omega,s)(x,\xi))^2
-(\rho^+_{\varepsilon}(\omega,s_-)(x,\xi))^2\right.
\\ &\hspace{1.6cm}\left. 
-2\rho^+_{\varepsilon}(\omega,s_-)(x,\xi)
\Delta \rho^+_{\varepsilon}(\omega,s)(x,\xi) 
- (\Delta\rho^+_{\varepsilon}(\omega,s)(x,\xi))^2\right\}, 
\end{align*}
valid for $\varepsilon<\bar{\varepsilon}$, $(x,\xi)\in M\times \R$, 
and $t\in[0,T]$. Here $d[\cdots]$ denotes 
quadratic covariation, and $\Delta f(s)$ 
indicates the jump of a function $f$ at $s$, that is, 
$\Delta f(s)=f(s)-f(s_-)$. We observe that 
$\sum_{0<s\leq t}\{\cdots\}$ is identically zero, and so is 
$\int_0^td\left[H_\varepsilon(s),I_\varepsilon(s)\right]$, 
because $H_\varepsilon$ is a continuous martingale 
and $I_\varepsilon$ is of bounded variation. 
Note that $\int_0^td [I_\varepsilon(s)(x,\xi)]\geq 0$.
Hence, the equation for $(\rho^+_{\varepsilon})^2$ simplifies to
\begin{equation}\label{eq: regolarizzata al quadrato}
\begin{split}
(\rho^+_{\varepsilon}&(\omega,t)(x,\xi))^2
=(\rho_{0,\varepsilon}(\omega)(x,\xi))^2
+2\int_0^t\rho^+_{\varepsilon}(\omega,s_-)(x,\xi)
\, dH_\varepsilon(s)(x,\xi)
\\ &
+2\int_0^t\rho^+_{\varepsilon}(\omega,s_-)(x,\xi)
\, I_\varepsilon(ds)(x,\xi)+\int_0^t
\, d\left[H_\varepsilon(s)(x,\xi)\right]
\\ & +\int_0^t\, d\left[I_\varepsilon(s)(x,\xi)\right], 
\quad \text{$\P$-almost surely}.
\end{split}
\end{equation}

\subsection{Proof of Proposition \ref{prop: Reduction and uniqueness}}
We add \eqref{eq: regolarizzata al quadrato} 
and \eqref{eq: regolarizzata 4}, and multiply the 
result by $0\leq\theta\in D(M\times\R)$; afterwards, we integrate with 
respect to $d\xi\,dV_h(x)\,d\P$. 
In view of the (stochastic) Fubini theorem, we can 
interchange integrals as we see fit. Consequently,
\begin{align*}
\EE&\int_{M\times\R}H_\varepsilon(t)(x,\xi)\theta 
\,d\xi\,dV_h(x)=0, \\
\EE&\int_{M\times\R}\theta\int_0^t\rho^+_{\varepsilon}(\omega,s)(x,\xi)
\,dH_\varepsilon(s)(x,\xi)\,d\xi\,dV_h(x)=0, 
\end{align*}
and as a result we obtain
\begin{equation}\label{eq: difference between rho_epsilon^2 and rho_epsilon}
\begin{split}
J_0& :=\EE\int_{M\times\R}\left[(\rho^+_\varepsilon(\omega,t)(x,\xi))^2
-\rho^+_\varepsilon(\omega,t)(x,\xi)\right]\theta\,d\xi\,dV_h(x)\\
& =\EE\int_{M\times\R}\left[(\rho_{0,\varepsilon}(\omega)(x,\xi))^2
-\rho_{0,\varepsilon}(\omega)(x,\xi)\right]\theta\,d\xi\,dV_h(x)
\\ & \quad
-2\EE\int_0^t\int_{M\times\R}\theta\,\rho^+_\varepsilon(\omega,s)(x,\xi)\,
\Div(\rho^+_{\varepsilon}(\omega,s)(x,\xi)f'_{x}(\xi))
\,d\xi\,dV_h(x)\,ds 
\\ & \quad 
-\EE\int_0^t\int_{M\times\R}\theta\, \rho^+_{\varepsilon}(\omega,s)(x,\xi)
\partial_\xi\left[(G^2\nu)_\varepsilon(\omega,s)(x,\xi)\right] 
\,d\xi\,dV_h(x)\,ds
\\ & \quad
+2\EE\int_{M\times\R}\int_0^t \rho^+_{\varepsilon}(\omega,s_-)(x,\xi) 
\,\theta\,\partial_\xi\left[m_\varepsilon(\omega,ds)(x,\xi)\right] 
\,d\xi\,dV_h(x)
\\ & \quad 
+2\EE\int_0^t\int_{M\times\R}\theta\, \rho^+_{\varepsilon}(\omega,s)(x,\xi)
A_\varepsilon(\omega,s)(x,\xi) \,d\xi\,dV_h(x)\,ds
\\ & \quad 
-2\EE\int_0^t\int_{M\times\R}\theta\, \rho^+_{\varepsilon}(\omega,s)(x,\xi)
r_\varepsilon(\omega,s)(x,\xi) \,d\xi\,dV_h(x)\,ds
\\ & \quad 
+\EE\int_{M\times\R}\int_0^t
\sum_{k\geq 1}\left((g_k\nu)_\varepsilon(\omega,s)(x,\xi)\right)^2
\, ds\,\theta\,d\xi\,dV_h(x)
\\ & \quad 
+\EE\int_{M\times\R}\theta\int_0^td\left[I_\varepsilon(s)(x,\xi)\right]
\,d\xi\,dV_h(x)
\\ & \quad +\EE\int_0^t\int_{M\times\R}\theta\,
\Div(\rho^+_{\varepsilon}(\omega,s)(x,\xi)f'_{x}(\xi))\,d\xi\,dV_h(x)\,ds
\\ & \quad 
+\frac12\,\EE\int_0^t\int_{M\times\R}\theta\, 
\partial_\xi\left[(G^2\nu)_\varepsilon(\omega,s)(x,\xi)\right] 
\,d\xi\,dV_h(x)\,ds
\\ & \quad 
-\EE\int_{M\times\R} \theta\,\partial_\xi
\left[m_\varepsilon(\omega,t)(x,\xi)\right] \,d\xi\,dV_h(x)\\
&\quad -\EE\int_0^t\int_{M\times\R}\theta\, 
A_\varepsilon(\omega,s)(x,\xi) \,d\xi\,dV_h(x)\,ds
\\ & \quad +\EE\int_0^t\int_{M\times\R}\theta\, 
r_\varepsilon(\omega,s)(x,\xi) \,d\xi\,dV_h(x)\,ds
=:J_1+\cdots +J_{13}, 
\end{split}
\end{equation}
valid for $\varepsilon<\bar{\varepsilon}$ 
and $0\leq\theta\in D(M\times\R)$.

We are going to send the parameter $\varepsilon$ to $0$. 
Let us analyze the behavior of the terms 
in \eqref{eq: difference between rho_epsilon^2 and rho_epsilon} 
separately, starting with the left hand side. 
By Lemma \ref{properties of the regularized global objects}, 
$\rho^+_{\varepsilon}(\omega,t)$ converges 
to $\rho^+(\omega,t)$ in $L^p_{\mathrm{loc}}(M\times\R)$ 
as $\varepsilon\to 0$, for all $(\omega,t)\in\Omega_T$ 
and $p\in [1,\infty)$. Moreover, as we have seen in 
the proof of Lemma \ref{lemma: commutator}, we can bound 
$\abs{(\rho^+_\kappa)_\varepsilon(\omega,t)(z,\xi)}\leq C(M,h,\mathcal{A})$, 
$\forall \kappa\in\mathcal{A}$, uniformly in $\varepsilon$, 
$(\omega,t,z,\xi)\in\Omega_T\times\R^n\times\R$. 
Therefore, for a different constant, 
\begin{equation}\label{eq: universal bound for rho epsilon}
	\abs{\rho^+_\varepsilon(\omega,t)(x,\xi)}
	\leq C(M,h,\mathcal{A}), 
\end{equation}
for all $(\omega,t,x,\xi)\in\Omega_T\times M\times\R$. 
By the dominated convergence theorem,
\[
\lim_{\varepsilon\to 0}J_0=\EE\int_{M\times\R}
\left[\rho^+(\omega,t,x,\xi)^2-\rho^+(\omega,t,x,\xi)\right]
\theta\,d\xi\,dV_h(x). 
\]
An analogous result holds for $J_1$.

From Lemma \ref{properties of the regularized global objects}, 
we know that $\lim\limits_{\varepsilon\to 0} r_{\varepsilon}= 0$ in 
$L^1(\Omega_T\times M\times(-L,L))$, for all $L>0$. 
This property, coupled with \eqref{eq: universal bound for rho epsilon}, 
enables us to conclude that 
$$
J_{13} \overset{\varepsilon\downarrow 0}{\longrightarrow} 0,
\qquad J_6 \overset{\varepsilon\downarrow 0}{\longrightarrow} 0. 
$$

We now deal with the terms $J_5$ and $J_{12}$. 
By Lemma \ref{properties of the regularized global objects}, 
$\lim_{\varepsilon\to 0} A_{\varepsilon}(\omega,s)= 0$ 
in $L^p_{\mathrm{loc}}(M\times\R)$, for all $(\omega,s)\in\Omega_T$ 
and $p\in [1, \infty)$. Furthermore, as in the proof of 
Lemma \ref{lemma: properties of A kappa epsilon}, we 
see that $\norm{A_\varepsilon(\omega,s)}_{L^1(\supp\theta)}$ is 
bounded by an $\varepsilon$-independent 
constant $C(M,h,\mathcal{A},f',\theta)$.  
The dominated convergence theorem thus implies  
$$
J_{12} 
\overset{\varepsilon\downarrow 0}{\longrightarrow} 0
\qquad
J_5\overset{\varepsilon\downarrow 0}{\longrightarrow} 0. 
$$

Let us consider the terms involving the $\Div$ operator. 
Since $\rho^+_{\varepsilon}(\omega,s)$ is smooth (in $x$) 
and the $\xi$-derivative of the flux is geometry-compatible 
\eqref{eq: gc}, it follows (after an integration by parts) that 
for any $(\omega,s,\xi)\in\Omega_T\times\R$,
\begin{align*}
& -2\int_M\theta\,\rho^+_\varepsilon(\omega,s)(x,\xi)
\,\Div(\rho^+_{\varepsilon}(\omega,s)(x,\xi)f'_{x}(\xi))\,dV_h(x)
\\ & \qquad  = \int_M\rho^+_\varepsilon(\omega,s)(x,\xi)^2
\left(f'_x(\xi),\nabla \theta\right)_h\,dV_h(x). 
\end{align*}
Moreover, coupling Lemma \ref{properties of the regularized global objects} 
with $\abs{\left(f'_x(\xi),\nabla \theta\right)}_h 
\leq C(f',\theta)$ on $\supp\theta'$, we obtain
\begin{align*}
&\int_{M\times\R}\rho^+_\varepsilon(\omega,s)(x,\xi)^2
\left(f'_x(\xi),\nabla \theta\right)_h\,d\xi\,dV_h(x)
\\ & \qquad 
\overset{\varepsilon \downarrow 0}{\longrightarrow}
\int_{M\times\R}\rho^+(\omega,s,x,\xi)^2
\left(f'_x(\xi),\nabla \theta\right)_h\,d\xi\,dV_h(x),  
\end{align*}
for all $(\omega,s)\in\Omega_T$. Again we can find a bound that is uniform 
in $\varepsilon$ and $(\omega,s)\in\Omega_T$, which permits us to apply the 
dominated convergence theorem to conclude that 
\[
\lim_{\varepsilon}J_2=\EE\int_0^t\int_{M\times\R}
\rho^+(\omega,s,x,\xi)^2\left(f'_x(\xi),\nabla \theta\right)_h
\,d\xi\,dV_h(x)\,ds.  
\]
A similar result holds for $J_9$.

We now treat the term $J_{11}$. 
By Lemma \ref{properties of the regularized global objects}, 
\[
-\int_{M\times\R} \theta\,\partial_\xi
\left[m_\varepsilon(\omega,t)(x,\xi)\right] 
\,d\xi\,dV_h(x)
\overset{\varepsilon\downarrow 0}{\longrightarrow}
\int_{[0,t]\times M\times\R}\partial_\xi\theta 
\, m(ds,dx,d\xi), 
\]
for all $(\omega,t)\in\Omega_T$. Furthermore, by 
Lemma \ref{lemma: properties of alpha m  epsilon}, 
summing over $\kappa\in\mathcal{A}$, we obtain 
\begin{align*}
\bigg|\int_{M\times\R}\theta\,\partial_\xi
\left[m_\varepsilon(\omega,t)(x,\xi)\right]
\,d\xi\,dV_h(x) \bigg| & 
\leq \norm{\partial_\xi\theta}_{L^\infty(M\times\R)}
\int_{[0,t]\times M\times\R} \,m(ds,dx,d\xi)
\\ &  
\leq\norm{\partial_\xi\theta}_{L^\infty(M\times\R)}\,
m(\omega)([0,T]\times M\times\R). 
\end{align*}
In view of the integrability property 
$\EE \, m([0,T]\times M\times\R)<\infty$, we 
are allowed to apply the dominated convergence theorem 
to arrive at
\[
\lim_{\varepsilon\to 0} J_{11}
=\EE\int_{[0,t]\times M\times\R}\partial_\xi\theta
\, m(ds,dx,d\xi). 
\]

We now deal with the term $J_{10}$. Arguing as 
above, Lemma \ref{properties of the regularized global objects} 
implies
\begin{align*}
&\int_{M\times\R}\theta\, 
\partial_\xi\left[(G^2\nu)_\varepsilon(\omega,s)(x,\xi)\right] 
\,d\xi\,dV_h(x)
\\ & \qquad\quad 
\overset{\varepsilon\downarrow 0}{\longrightarrow}
-\int_M\left(\int_\R\partial_\xi\theta \,G^2(x,\xi)
\nu_{\omega,s,x}(d\xi)\right)dV_h(x), 
\end{align*}
for all $(\omega,s)\in\Omega_T$. Furthermore, by 
Lemma \ref{lemma: properties of G^2 nu kappa epsilon}, summing 
over $\kappa\in\mathcal{A}$, we obtain 
\begin{align*}
\bigg|\int_{M\times\R}\theta\, \partial_\xi & 
\left[(G^2\nu)_\varepsilon(\omega,s)(x,\xi)\right] 
\,d\xi\,dV_h(x)\bigg|
\\ & \leq \norm{\partial_\xi\theta}_{L^\infty(M\times\R)}
\int_M\left(\int_\R \,G^2(x,\xi)\,\nu_{\omega,s,x}(d\xi)\right)\, dV_h(x), 
\end{align*}
which is integrable over $\Omega\times[0,t]$ by the ``vanishing 
at infinity" assumption, cf.~Definition \ref{def: generalized solution}. 
Again by dominated convergence, we conclude
\[
\lim_{\varepsilon\to 0}J_{10}=-\frac12\,
\EE\int_0^t\int_M\left(\int_\R\partial_\xi\theta 
\,G^2(x,\xi)\, \nu_{\omega,s,x}(d\xi)\right)\, dV_h(x)\,ds. 
\]

Summarizing, sending $\varepsilon\to 0$ in 
\eqref{eq: difference between rho_epsilon^2 and rho_epsilon} yields
\begin{equation}\label{eq: difference between rho_epsilon^2 and rho_epsilon (2)}
\begin{split}
\EE & \int_{M\times\R}\left[(\rho^+(\omega,t,x,\xi))^2
-\rho^+(\omega,t,x,\xi)\right]\theta\,d\xi\,dV_h(x)
\\ & =\EE\int_{M\times\R}\left[(\rho_{0}(\omega,x,\xi))^2
-\rho_{0}(\omega,x,\xi)\right]\theta\,d\xi\,dV_h(x)
\\ & \quad 
+\EE\int_0^t\int_{M\times\R}\left[(\rho^+(\omega,s,x,\xi))^2
-\rho^+(\omega,s,x,\xi)\right]\left(f'_x(\xi),\nabla \theta\right)_h
\,d\xi\,dV_h(x)\,ds
\\ & \quad 
+\limsup_{\varepsilon\to 0}\bigg\{\EE\int_0^t\int_{M\times\R}
\partial_\xi\theta\, \rho^+_{\varepsilon}(\omega,s)(x,\xi)
(G^2\nu)_\varepsilon(\omega,s)(x,\xi)\,d\xi\,dV_h(x)\,ds
\\ & \qquad \qquad\quad\quad
-\EE\int_0^t\int_{M\times\R}\theta\, \nu_\varepsilon(\omega,s)(x,\xi)
(G^2\nu)_\varepsilon(\omega,s)(x,\xi)\,d\xi\,dV_h(x)\,ds
\\ & \qquad \qquad\quad\quad
+2\EE\int_{M\times\R}\int_0^t \rho^+_{\varepsilon}(\omega,s_-)(x,\xi) 
\,\theta\,\partial_\xi\left[m_\varepsilon(\omega,ds)(x,\xi)\right] 
\,d\xi\,dV_h(x)
\\ & \qquad \qquad\quad\quad
+\EE\int_{M\times\R}\int_0^t\sum_{k\geq 1}
\left((g_k\nu)_\varepsilon(\omega,s)(x,\xi)\right)^2\, ds\,\theta\,d\xi\,dV_h(x)
\\ & \qquad \qquad\quad\quad
+\EE\int_{M\times\R}\theta\int_0^t
\, d\left[I_\varepsilon(s)(x,\xi)\right]\,d\xi\,dV_h(x)\bigg\}
\\ &\quad
-\frac12\,\EE\int_0^t\int_M
\left(\int_\R\partial_\xi\theta \,G^2(x,\xi)
\nu_{\omega,x,s}(d\xi)\right)\, dV_h(x)\,ds
\\ &\quad +\EE\int_{[0,t]\times M\times\R}\partial_\xi\theta
\,m(ds,dx,d\xi), 
\end{split}
\end{equation}
where we have integrated by parts in the $J_3$ term and made 
use of Lemma \ref{properties of the regularized global objects} 
to express the $\xi$-derivative of 
$\rho^+_{\varepsilon}(\omega,s)$ as $-\nu_\varepsilon(\omega,s)$. 

Before continuing we state three technical lemmas, whose proofs
are postponed until the end of the section.

\begin{lemma}\label{lemma: stima per G2 nu epsilon contro rho epsilon e test}
For $\gamma\in C^0(M)$ and $\psi\in C^1_c(\R)$,
\begin{align*}
&\bigg|\int_{M\times\R}\gamma(x)\,
\partial_\xi\psi(\xi) \, \rho^+_{\varepsilon}(\omega,s)
(x,\xi)(G^2\nu)_\varepsilon(\omega,s)(x,\xi)\,d\xi\,dV_h(x)\bigg|
\\ & \; \leq 
C(M,h,\mathcal{A})\norm{\gamma}_{L^\infty(M)}
\norm{\partial_\xi\psi}_{L^\infty(\R)}
\int_M\int_{\supp\partial_\xi \psi
+[-\varepsilon,\varepsilon]} \!\!\! \!\!\! G^2(x,\xi)
\,\nu_{\omega,s,x}(d\xi)\,dV_h(x), 
\end{align*}
for all $(\omega,s)\in\Omega_T$ and $\varepsilon<\bar{\varepsilon}/2$.
\end{lemma}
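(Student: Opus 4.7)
The argument combines a convolution estimate with bookkeeping through the partition of unity. First, I invoke the uniform bound $\abs{\rho^+_\varepsilon(\omega,s)(x,\xi)}\leq C(M,h,\mathcal{A})$ from \eqref{eq: universal bound for rho epsilon}, together with the nonnegativity of $(G^2\nu)_\varepsilon$, to reduce the problem to controlling
\[
\int_{\supp\partial_\xi\psi}\int_M(G^2\nu)_\varepsilon(\omega,s)(x,\xi)\,dV_h(x)\,d\xi.
\]

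Next, I decompose $(G^2\nu)_\varepsilon=\sum_{\kappa\in\mathcal{A}}(G^2\nu_\kappa)_\varepsilon/\abs{h_\kappa}^{1/2}$ and pass to chart coordinates on each $\tilde X_\kappa$; the factor $\abs{h_\kappa(x)}^{1/2}$ cancels the Riemannian density, reducing the $\kappa$-term to $\int_{\supp\partial_\xi\psi}\int_{\tilde X_\kappa}(G^2\nu_\kappa)_\varepsilon(z,\xi)\,dz\,d\xi$. Substituting the definition of $(G^2\nu_\kappa)_\varepsilon$ and applying Fubini, the $z$-integration of $\varepsilon^{-n}\phi_1((z-\bar z)/\varepsilon)$ yields $1$, while the $\xi$-integration of $\varepsilon^{-1}\phi_2((\xi-\bar\xi)/\varepsilon)$ against $\mathbf{1}_{\supp\partial_\xi\psi}(\xi)$ gives a convolution bounded pointwise by $\mathbf{1}_{\supp\partial_\xi\psi+[-\varepsilon,\varepsilon]}(\bar\xi)$. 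Consequently the $\kappa$-term is at most
\[
\int_M\alpha_\kappa(x)\int_{\supp\partial_\xi\psi+[-\varepsilon,\varepsilon]}G^2(x,\xi)\,\nu_{\omega,s,x}(d\xi)\,dV_h(x)
\]
after pulling back to $M$.

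Finally, summing over the finite atlas and using $\sum_\kappa\alpha_\kappa\equiv 1$ collapses the partition, producing precisely the claimed majorant; combining with the prefactor $C(M,h,\mathcal{A})\norm{\gamma}_{L^\infty(M)}\norm{\partial_\xi\psi}_{L^\infty(\R)}$ finishes the estimate. The only mildly delicate point is the support tracking in the $\xi$-mollification: because $\phi_2$ is supported in $[-1,1]$, the convolved indicator has $\xi$-support inside $\supp\partial_\xi\psi+[-\varepsilon,\varepsilon]$ as needed, and the restriction $\varepsilon<\bar\varepsilon/2$ guarantees the spatial mollification does not escape its chart, so the pull-back/push-forward identifications used throughout remain legitimate. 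No new analytic input beyond Fubini and standard properties of convolution is required—the lemma is essentially a careful accounting exercise.
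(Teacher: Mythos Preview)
Your proof is correct and uses essentially the same ingredients as the paper: the uniform bound on $\rho^+_\varepsilon$, convolution support tracking in the $\xi$-variable, and summation over the partition of unity. The only cosmetic difference is that the paper transposes the mollifier onto the test function $\gamma\,\partial_\xi\psi\,\rho^+_\varepsilon$ (writing the integral as $(G^2\nu_\kappa)_{\omega,s}\bigl((\gamma\,\partial_\xi\psi\,\rho^+_\varepsilon)_\varepsilon\bigr)$ and then bounding the $L^\infty$ norm of the convolved function), whereas you pull out the $L^\infty$ bounds first and integrate the mollifier directly via Fubini; these are two equivalent ways of executing the same estimate.
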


The next lemma depends crucially on the regularity of the 
noise coefficients $(g_k)_k$, cf.~\eqref{eq: gk are lipschitz} 
and the motivational discussion around \eqref{intro:reg-error}. 
 
\begin{lemma}\label{lemma: simmetry argument}
Let $0\leq\psi\in\mathcal{D}(\R)$. Then 
\begin{align*}
-\EE\int_0^t&\int_{M\times\R}\psi(\xi)\, 
\nu_\varepsilon(\omega,s)(x,\xi)
(G^2\nu)_\varepsilon(\omega,s)(x,\xi)\,d\xi\,dV_h(x)\,ds
\\ & +\EE\int_{M\times\R}\int_0^t\sum_{k\geq 1}
\left((g_k\nu)_\varepsilon(\omega,s)(x,\xi)\right)^2 \,ds
\, \psi(\xi)\, \,d\xi\,dV_h(x)
\overset{\varepsilon\downarrow 0}{\longrightarrow} 0. 
\end{align*}
\end{lemma}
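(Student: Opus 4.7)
The plan is to exploit the algebraic identity $\tfrac12(a^2+b^2)-ab=\tfrac12(a-b)^2$ together with the Lipschitz hypothesis \eqref{eq: gk are lipschitz} to reveal an $O(\varepsilon^2)$ defect between the two quantities.

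I begin by rewriting each regularized object as an integral against a single global kernel. Unpacking the partition-of-unity definitions leading to Lemma \ref{properties of the regularized global objects}, one has
\[
(g_k\nu)_\varepsilon(x,\xi)=\int_{M\times\R}g_k(y,\zeta)\,\widetilde\phi_\varepsilon(x,\xi;y,\zeta)\,\nu_{\omega,s,y}(d\zeta)\,dV_h(y),
\]
and similarly for $\nu_\varepsilon$ and $(G^2\nu)_\varepsilon$, where
\[
\widetilde\phi_\varepsilon(x,\xi;y,\zeta):=\sum_{\kappa\in\mathcal{A}}\frac{\alpha_\kappa(y)}{|h_\kappa(x)|^{1/2}}\,\phi_\varepsilon(\kappa(x)-\kappa(y),\xi-\zeta).
\]
Introducing two independent dummy pairs $(y,\zeta),(\bar y,\bar\zeta)$ in the product $\nu_\varepsilon(G^2\nu)_\varepsilon$ and averaging over the two possible placements of $G^2$, I obtain the pointwise identity
\begin{equation*}
\nu_\varepsilon(G^2\nu)_\varepsilon-\sum_{k\ge1}\bigl((g_k\nu)_\varepsilon\bigr)^2=\tfrac12\sum_{k\ge1}\int\bigl[g_k(y,\zeta)-g_k(\bar y,\bar\zeta)\bigr]^2\widetilde\phi_\varepsilon(x,\xi;y,\zeta)\widetilde\phi_\varepsilon(x,\xi;\bar y,\bar\zeta)\,\nu_y(d\zeta)\nu_{\bar y}(d\bar\zeta)\,dV_h(y)dV_h(\bar y).
\end{equation*}

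Next I multiply by $\psi(\xi)$, integrate over $[0,t]\times M\times\R$ under the expectation, and swap the order of integration (Fubini) so that $(x,\xi)$ is integrated first. On the support of the double mollifier one has $d_h(y,\bar y)\le C(\mathcal{A},h)\,\varepsilon$ and $|\zeta-\bar\zeta|\le 2\varepsilon$, so \eqref{eq: gk are lipschitz} forces $\sum_{k\ge1}|g_k(y,\zeta)-g_k(\bar y,\bar\zeta)|^2\le D_2(d_h^2(y,\bar y)+|\zeta-\bar\zeta|^2)\le C\varepsilon^2$. The $(x,\xi)$-integral of the product $\widetilde\phi_\varepsilon\widetilde\phi_\varepsilon\psi$ produces a symmetric kernel in $(y,\zeta,\bar y,\bar\zeta)$ that is uniformly bounded in $\varepsilon$, because $|h_\kappa|^{1/2}$ is smooth and bounded from below on a fixed neighborhood of $\kappa(\supp\alpha_\kappa)$ and $\|\phi_\varepsilon\|_{L^1}=1$. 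The compact support of $\psi$ confines $\zeta,\bar\zeta$ to a bounded interval, and combined with the vanishing-at-infinity bound for $\nu$ furnished by Definition \ref{def: generalized solution}, the remaining $\nu\otimes\nu$-integration over $M\times M$ yields a finite expectation uniformly in $\varepsilon$. Altogether the expression in the lemma is $O(\varepsilon^2)$ and therefore vanishes as $\varepsilon\downarrow 0$.

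The main technical obstacle is that $\widetilde\phi_\varepsilon(x,\xi;y,\zeta)$ is not literally symmetric under $(x,\xi)\leftrightarrow(y,\zeta)$, owing to the coordinate expression $\kappa(x)-\kappa(y)$ and the weight $|h_\kappa(x)|^{-1/2}$. Consequently the symmetrization above has to be carried out \emph{after} integrating out $(x,\xi)$, at which stage the product of the two kernels trivially becomes symmetric in the exchange $(y,\zeta)\leftrightarrow(\bar y,\bar\zeta)$. Apart from this subtlety, the remaining work is chart-by-chart bookkeeping, with all resulting constants absorbed into a single $C=C(\mathcal{A},h,D_2,\psi)$.
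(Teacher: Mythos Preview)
Your symmetrization idea is exactly the mechanism the paper uses, and the pointwise identity you write down is correct. The gap is in the sentence ``the $(x,\xi)$-integral of the product $\widetilde\phi_\varepsilon\widetilde\phi_\varepsilon\psi$ produces a symmetric kernel \ldots\ that is uniformly bounded in $\varepsilon$.'' This is false: bounding one factor in $L^1$ leaves the other in $L^\infty$, and $\|\widetilde\phi_\varepsilon\|_{L^\infty}\sim\varepsilon^{-(n+1)}$. Concretely, if $\nu_{\omega,s,y}=\delta_{u(y)}$ (the kinetic case), then
\[
\int_{M\times\R}\psi(\xi)\,\nu_\varepsilon(x,\xi)^2\,d\xi\,dV_h(x)
=\int K_\varepsilon(y,\zeta,\bar y,\bar\zeta)\,\nu_y(d\zeta)\nu_{\bar y}(d\bar\zeta)\,dV_h(y)dV_h(\bar y)
\]
is of order $\varepsilon^{-1}$, not $O(1)$: after the spatial convolutions collapse, what remains is essentially $\int_\R\psi(\xi)\bigl(\varepsilon^{-1}\phi_2((\xi-u)/\varepsilon)\bigr)^2d\xi\sim\varepsilon^{-1}$. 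So your final accounting $O(\varepsilon^2)\cdot O(1)=O(\varepsilon^2)$ collapses; with the correct kernel size you would get $O(\varepsilon^2)\cdot O(\varepsilon^{-1})=O(\varepsilon)$ in the $\xi$-direction only, and the spatial contribution would require a separate argument.

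The paper's proof avoids this by \emph{not} pulling the squared difference out of the integral. It splits $\sum_k|g_k(y,\zeta)-g_k(\bar y,\bar\zeta)|^2\le D_2(d_h(y,\bar y)^2+|\zeta-\bar\zeta|^2)$ and treats the two pieces differently: for the spatial piece, the $d\bar z\,d\bar w$ integration is against Lebesgue measure (it is the $dV_h$ part of $\nu_\kappa$, not the Young-measure part), so the mollifiers integrate to $O(1)$ and the factor $d_h^2\lesssim\varepsilon^2$ survives; for the $\xi$-piece, the factor $|\zeta-\bar\zeta|^2$ is kept \emph{inside} the $\nu\otimes\nu$-integral, and Lemma~\ref{lemma: strumentale 2} shows that $\int_\R\psi(\xi)\int|\zeta-\bar\zeta|^2\phi_2^\varepsilon(\xi-\zeta)\phi_2^\varepsilon(\xi-\bar\zeta)\,\nu\otimes\nu\,d\xi\le C\varepsilon$, the point being that the $\varepsilon^2$ from $|\zeta-\bar\zeta|^2$ exactly cancels one of the two $\varepsilon^{-1}$'s coming from the mollifiers. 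To repair your argument you must keep the $|\zeta-\bar\zeta|^2$ weight coupled to the $\xi$-mollifiers rather than extracting a uniform $\varepsilon^2$ bound and then trying to control the bare kernel.
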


The last lemma controls a term that 
involves the measure $m_\varepsilon(\omega,ds)(x,\xi)$ 
for large values of $\xi$.

\begin{lemma}\label{lemma: un ennesimo lemma strumentale}
Let $\psi_N\in \mathcal{D}(\R)$, $N\in\N$ with 
$0\leq\psi_N\leq 1$, $\psi_N(\xi)=1$, if $\abs{\xi}\leq N$, $\psi_N(\xi)=0$, 
if $\abs{\xi}\geq N+1$, and $|\partial_\xi\psi_N(\xi)|\leq 2$. Then
\[
\lim_{N\to\infty}\limsup_{\varepsilon\to 0}
\abs{-2\EE\int_{M\times\R}\int_0^t 
\partial_\xi\psi_N\,\rho^+_{\varepsilon}(\omega,s_-)(x,\xi) 
\,m_\varepsilon(\omega,ds)(x,\xi) \,d\xi\,dV_h(x)}=0. 
\]
\end{lemma}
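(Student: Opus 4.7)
The plan is to exploit the compact support and sup-norm bound on $\partial_\xi\psi_N$, the uniform pointwise bound on $\rho^+_\varepsilon$, and ultimately the ``vanishing at infinity'' property (3) of the kinetic measure $m$ in Definition \ref{def: kinetik measure}.

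First I would bound the integrand in absolute value: using the uniform estimate $\abs{\rho^+_\varepsilon}\le C(M,h,\mathcal{A})$ from \eqref{eq: universal bound for rho epsilon}, together with $\abs{\partial_\xi\psi_N}\le 2\,\car{N\le\abs{\xi}\le N+1}$, the expression in the lemma is at most
\[
4C\,\EE\int_{M\times\R}\int_0^t\car{N\le\abs{\xi}\le N+1}\,m_\varepsilon(\omega,ds)(x,\xi)\,d\xi\,dV_h(x).
\]
Since $s\mapsto m_\varepsilon(\omega,s)(x,\xi)$ is non-decreasing for each fixed $(x,\xi)$, the inner $ds$-integral equals $m_\varepsilon(\omega,t)(x,\xi)$, so everything reduces to showing that
\[
\lim_{N\to\infty}\limsup_{\varepsilon\to 0}\EE\int_{M\times\seq{N\le\abs{\xi}\le N+1}}m_\varepsilon(\omega,t)(x,\xi)\,d\xi\,dV_h(x)=0.
\]

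Next I would unpack the definition $m_\varepsilon=\sum_\kappa\frac{((\alpha_\kappa m)_\sharp)_\varepsilon}{\abs{h_\kappa}^{1/2}}$. Passing to the local coordinates of the chart $\kappa$ on $X_\kappa$ cancels the $\abs{h_\kappa(x)}^{1/2}$ factor against the Riemannian density $dV_h$, turning each summand into a convolution of the Borel measure $\alpha_\kappa\,m_\sharp$ on $\tilde{X}_\kappa\times\R$ with $\phi_\varepsilon$. A Fubini interchange then transfers the mollifier onto the indicator:
\[
\int_{\R^n\times\R}\car{N\le\abs{\xi}\le N+1}\phi_\varepsilon(z-\bar z,\xi-\bar\xi)\,d\xi\,dz
=\int_\R\car{N\le\abs{\xi}\le N+1}(\phi_2)_\varepsilon(\xi-\bar\xi)\,d\xi,
\]
which is bounded by $\car{N-\varepsilon\le\abs{\bar\xi}\le N+1+\varepsilon}$ because $\phi_2$ has mass one and support in $[-1,1]$.

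Summing over $\kappa$ and using $\sum_\kappa\alpha_\kappa\equiv 1$ to push $m_\sharp$ back to $m$ on $M$, I obtain, for $\varepsilon<1$,
\[
\EE\int_{M\times\seq{N\le\abs{\xi}\le N+1}}m_\varepsilon(\omega,t)(x,\xi)\,d\xi\,dV_h(x)
\le \EE\,m\bigl([0,T]\times M\times B_{N-1}^c\bigr).
\]
The right-hand side is independent of $\varepsilon$, so taking $\limsup_{\varepsilon\to 0}$ is harmless, and then $N\to\infty$ forces it to zero by property (3) of kinetic measures.

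The main obstacle is the bookkeeping in the second paragraph: correctly reading off the mollification of the pushforward measures $(\alpha_\kappa m)_\sharp$ in local coordinates and swapping integration order so that the convolution is shifted from the (signed) object $m_\varepsilon$ onto the innocuous indicator function. Once this is done, the estimate collapses to the definition of a kinetic measure, which is the simplest ingredient in the whole argument.
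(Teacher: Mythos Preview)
Your argument is correct and in fact slightly cleaner than the paper's. Both proofs begin identically: bound $\abs{\rho^+_\varepsilon}$ uniformly, use $\abs{\partial_\xi\psi_N}\le 2\,\car{J_N}$ with $J_N=\{N\le\abs{\xi}\le N+1\}$, and reduce to controlling $\EE\int_{M\times J_N}m_\varepsilon(\omega,t)\,d\xi\,dV_h$.

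From here the paper takes an indirect route: it invokes the weak convergence $m_\varepsilon(\omega,t)\to m(\omega)\big|_{[0,t]}$ from Lemma~\ref{properties of the regularized global objects} together with upper semicontinuity on the compact set $M\times J_N$ to get $\limsup_\varepsilon\int_{M\times J_N}m_\varepsilon\le m([0,t]\times M\times J_N)$ \emph{pointwise in $\omega$}, and then needs a reverse Fatou step to push $\limsup_\varepsilon$ inside $\EE$. Justifying reverse Fatou requires an integrable dominant $H(\omega)$, which the paper obtains by the convolution/Fubini trick applied to the constant test function $1$, yielding $\int_{M\times\R}m_\varepsilon(\omega,t)\,d\xi\,dV_h\le m(\omega)([0,T]\times M\times\R)$.

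You apply the same convolution/Fubini trick directly to the indicator $\car{J_N}$ rather than to $1$, which immediately produces the sharper $\varepsilon$-uniform bound $\EE\,m([0,T]\times M\times B_{N-1}^c)$. This makes both the weak-convergence step and the reverse Fatou step superfluous. The only point to watch is the justification of the Fubini interchange, which is immediate here since every factor is nonnegative (Tonelli applies).
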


Let us continue with the proof 
of Proposition \ref{prop: Reduction and uniqueness}. 
Choose $\theta(x,\xi)=\psi_N(\xi)$, where 
$\psi_N\in \mathcal{D}(\R)$, $N\in\N$, and $0\leq\psi_N\leq 1$, 
$\psi_N(\xi)=1$, if $\abs{\xi}\leq N$, $\psi_N(\xi)=0$, 
if $\abs{\xi}\geq N+1$, and $|\partial_\xi\psi_N(\xi)|\leq 2$. 

With this choice of $\theta$, the second term on the right-hand side 
of \eqref{eq: difference between rho_epsilon^2 and rho_epsilon (2)} 
is zero. Since the kinetic measure $m$ vanishes 
for large $\xi$ (cf.~Definition \ref{def: kinetik measure}),
\begin{align*}
\bigg|\EE\int_{[0,t]\times M\times\R}\partial_\xi\psi_N\,
\, m(ds,dx,d\xi)\bigg|
&\leq 2 \EE\int_{[0,t]\times M\times
\left\{\xi: N\leq \abs{\xi}\leq N+1\right\}} 
\, m(ds,dx,d\xi)\\
&\leq 2\EE\int_{[0,T]\times M\times B _N^c} \, m(ds,dx,d\xi)
\overset{N\uparrow\infty}{\longrightarrow} 0. 
\end{align*}
Moreover, by \eqref{eq: definition of G},
\begin{align*}
\bigg|-\frac12\,\EE\int_0^t\int_M & 
\left(\int_\R\partial_\xi\psi_N \,G^2(x,\xi)
\nu_{\omega,s,x}(d\xi)\right)dV_h(x)\,ds\bigg|
\\ &\leq 
\EE\int_0^t\int_M\left(\int_{N\leq \abs{\xi}\leq N+1}D_1
\left(1+\abs{\xi}^2\right)
\nu_{\omega,s,x}(d\xi)\right)dV_h(x)\,ds. 
\end{align*}
By the ``vanishing at infinity" assumption \eqref{def: Young measure}, 
applying twice the dominated convergence theorem, we 
conclude that the above term vanishes as $N\to\infty$. 

Therefore, taking into account Lemma \ref{lemma: simmetry argument} 
as well, the equation 
\eqref{eq: difference between rho_epsilon^2 and rho_epsilon (2)}
becomes
\begin{equation}\label{eq: difference between rho_epsilon^2 and rho_epsilon (3)}
\begin{split}
\EE& \int_{M\times\R}\left[(\rho^+(\omega,t,x,\xi))^2
-\rho^+(\omega,t,x,\xi)\right]\psi_N\,d\xi\,dV_h(x)
\\ &=\EE\int_{M\times\R}\left[(\rho_{0}(\omega,x,\xi))^2
-\rho_{0}(\omega,x,\xi)\right]\psi_N\,d\xi\,dV_h(x)
\\ &\quad 
+\limsup_{\varepsilon\to 0}\bigg\{\EE\int_0^t
\int_{M\times\R}\partial_\xi\psi_N\, 
\rho^+_{\varepsilon}(\omega,s)(x,\xi)
(G^2\nu)_\varepsilon(\omega,s)(x,\xi)\,d\xi\,dV_h(x)\,ds
\\ & \qquad\qquad\qquad\quad
+2\EE\int_{M\times\R}\int_0^t \rho^+_{\varepsilon}(\omega,s_-)(x,\xi) 
\,\psi_N\,\partial_\xi\left[m_\varepsilon(\omega,ds)(x,\xi)\right] 
\,d\xi\,dV_h(x)
\\ & \qquad\qquad\qquad\quad
+\EE\int_{M\times\R}\psi_N\int_0^td
\left[I_\varepsilon(s)(x,\xi)
\right]\,d\xi\,dV_h(x)\bigg\}+o(1),
\end{split}
\end{equation}
where the error $o(1)$ tends to zero as $N\to\infty$.

The following integration by parts formula can be verified:
\begin{equation}\label{eq: nasty IP}
\begin{split}
J_4 & =2\EE\int_{M\times\R}\int_0^t \rho^+_{\varepsilon}(\omega,s_-)(x,\xi) 
\,\psi_N\,\partial_\xi\left[m_\varepsilon(\omega,ds)(x,\xi)\right] 
\,d\xi\,dV_h(x)
\\ & =-2\EE\int_{M\times\R}\int_0^t 
\partial_\xi\left(\rho^+_{\varepsilon}(\omega,s_-)(x,\xi) 
\,\psi_N\right) \, m_\varepsilon(\omega,ds)(x,\xi) \,d\xi\,dV_h(x).
\end{split}
\end{equation}
Note that in the deterministic setting one also mollifies 
in time, in which case \eqref{eq: nasty IP} holds trivially. 
In the present context the relevant functions 
are not smooth in the time variable and, as a 
result, \eqref{eq: nasty IP} is no longer immediate. 
However, to avoid making this paper even longer, we do not 
give a detailed proof of \eqref{eq: nasty IP}.

Therefore,
\begin{align*}
J_4 &=-2\EE\int_{M\times\R}\int_0^t \partial_\xi\psi_N\,
\rho^+_{\varepsilon}(\omega,s_-)(x,\xi) 
\,m_\varepsilon(\omega,ds)(x,\xi) \,d\xi\,dV_h(x)
\\ &\hspace{0.5cm}+2\EE\int_{M\times\R}\int_0^t 
\psi_N\,\nu_\varepsilon(\omega,s_-)(x,\xi) 
\,m_\varepsilon(\omega,ds)(x,\xi) \,d\xi\,dV_h(x)
\\ & \geq -2\EE\int_{M\times\R}\int_0^t 
\partial_\xi\psi_N\,\rho^+_{\varepsilon}(\omega,s_-)(x,\xi) 
\,m_\varepsilon(\omega,ds)(x,\xi) \,d\xi\,dV_h(x), 
\end{align*}
because $m_\varepsilon(\omega,ds)(x,\xi)$ is a positive $s$-measure 
(indeed, the function $s\mapsto m_\varepsilon(\omega,s)(x,\xi)$ is 
non-decreasing), and $\psi_N$, $\nu_\varepsilon(\omega,s_-)(x,\xi)$
are non-negative functions.

By using Lemma \ref{lemma: stima per G2 nu epsilon contro rho epsilon e test} 
with $\gamma\equiv 1$ and $\psi=\psi_N$, for  
$\varepsilon<\bar{\varepsilon}/4,\varepsilon<1$ and 
$(\omega,s)\in\Omega_T$, we obtain
\begin{align*}
\bigg| &\int_{M\times\R} \partial_\xi\psi_N\, 
\rho^+_{\varepsilon}(\omega,s)(x,\xi)
(G^2\nu)_\varepsilon(\omega,s)(x,\xi)\,d\xi\,dV_h(x) \bigg|
\\ & \leq 2\,D_1 C(M,h,\mathcal{A}) 
\int_M \int_{N-1\leq \abs{\xi}\leq N+2}\left(1+\abs{\xi}^2\right)
\,\nu_{\omega,s,x}(d\xi)\,dV_h(x). 
\end{align*}
Hence,
\begin{align*}
\bigg|\limsup_{\varepsilon\to 0}
& \,\EE\int_0^t\int_{M\times\R}\partial_\xi\psi_N\, 
\rho^+_{\varepsilon}(\omega,s)(x,\xi)(G^2\nu)_\varepsilon(\omega,s)(x,\xi)
\,d\xi\,dV_h(x)\,ds\bigg|
\\ & \leq \limsup_{\varepsilon}
\bigg| \, \EE\int_0^t\int_{M\times\R}\partial_\xi\psi_N\, 
\rho^+_{\varepsilon}(\omega,s)(x,\xi)
(G^2\nu)_\varepsilon(\omega,s)(x,\xi)\,d\xi\,dV_h(x)\,ds\bigg|
\\ & \leq  2\,D_1C(M,h,\mathcal{A})\,
\EE \int_0^t\int_M\int_{N-1\leq \abs{\xi}\leq N+2}\left(1+\abs{\xi}^2\right)
\,\nu_{\omega,s,x}(d\xi)\,dV_h(x)\,ds, 
\end{align*}
and once again by the ``vanishing at infinity" assumption on $\nu$, 
cf.~\eqref{def: Young measure}, the last 
quantity goes to zero as $N\to\infty$.

Using Lemma \ref{lemma: un ennesimo lemma strumentale} 
and the non-negativity of 
$\int_0^td\left[I_\varepsilon(s)(x,\xi)\right]$, we arrive at 
\begin{align*}
& \EE \int_{M\times\R} \left[(\rho^+(\omega,t,x,\xi))^2
-\rho^+(\omega,t,x,\xi)\right]\psi_N\,d\xi\,dV_h(x)
\\ & \qquad \geq 
\EE\int_{M\times\R}\left[(\rho_{0}(\omega,x,\xi))^2
-\rho_{0}(\omega,x,\xi)\right]\psi_N\,d\xi\,dV_h(x) +o(1), 
\end{align*}
where $o(1)$ tends to zero as $N\to\infty$.

We use the monotone convergence theorem to send $N\to\infty$, obtaining 
\begin{equation}\label{eq: pre-contraction inequality}
\begin{split}
& 0\leq\EE\int_{M\times\R}\left[\rho^+(\omega,t,x,\xi)
-(\rho^+(\omega,t,x,\xi))^2\right]\,d\xi\,dV_h(x)
\\ & \qquad \leq 
\EE\int_{M\times\R}\left[\rho_{0}(\omega,x,\xi)
-(\rho_{0}(\omega,x,\xi))^2\right]\,d\xi\,dV_h(x), 
\end{split}
\end{equation}
for all $t\in[0,T]$. This concludes the proof of 
Proposition \ref{prop: Reduction and uniqueness}.

\subsection{Uniqueness part of Theorem \ref{thm:well-posed}}\label{subsec:wellposed}

Let us consider a kinetic initial function $\rho_0=\En_{u_0>\xi}$, 
with $u_0\in L^\infty(\Omega,\mathcal{F}_0;L^\infty(M,h))$. 
Then the right-hand side of \eqref{eq: pre-contraction inequality} is zero. 
Repeating an argument from \cite{DV2010}, there is a 
function $u^+(\omega,t,x)$ such that 
\[
\rho^+(\omega,t,x,\xi)=\En_{u^+(\omega,t,x)>\xi},\qquad 
\text{for a.e.~$(\omega,t,x,\xi)$.}
\]
The function $u^+$ constitutes a kinetic solution according 
to Definition \ref{def: solution}. A similar result holds for 
$\rho^-$. Since $\rho^+(t)=\rho(t)$ for a.e.~$t$, it follows 
that a generalized kinetic solution $\rho$ is in fact a kinetic solution.  

Finally, following \cite{Perthame}, let us establish the $L^1$ contraction 
principle in Theorem \ref{thm:well-posed}. 
Let $u_1, u_2$ be two kinetic solutions with corresponding 
initial data $u_{1,0}, u_{2,0}$ and kinetic measures $m_1, m_2$. 
Set $\rho=\frac12 \left(\En_{u_1>\xi}+\En_{u_2>\xi}\right)$ 
and $\rho_0=\frac12 \left(\En_{u_{1,0}>\xi}+\En_{u_{2,0}>\xi}\right)$. 
Then $\rho$ is a generalized kinetic solution with initial data $\rho_0$, kinetic 
measure $m=\frac12 (m_1+m_2)$, and $\partial_\xi \rho 
=-\frac12 \left(\delta_{u_1}+\delta_{u_2}\right)$.
By \eqref{eq: pre-contraction inequality},
$$
\EE \int_{M\times\R} \left( \rho-\rho^2\right)(t)
\,d\xi \,dV_h(x)
\leq \EE\int_{M\times\R}\left(\rho_0-\rho_0^2\right)
\,d\xi \,dV_h(x),
$$
for a.e.~$t\in [0,T]$.  Using that $\rho_i^2=\rho_i$ 
for $i=1,2$, a simple computation reveals that 
$\rho-\rho^2=\frac14(\rho_1-\rho_2)^2=\frac14 \abs{\rho_1-\rho_2}$ and 
so $\int_{\R}(\rho-\rho^2)\, d\xi=\frac14\abs{u_1-u_2}$.
Similarly, $\int_{\R}(\rho_0-\rho_0^2)\, d\xi
=\frac14\abs{u_{1,0}-u_{2,0}}$. 
Therefore, \eqref{L1-contraction} holds for a.e.~$t\in[0,T]$. 

The trajectories of a kinetic solution $u$ are continuous. 
Indeed, as $u=u^+$ a.e.~with 
respect to $(\omega,t,x)$, it is enough 
to show that $u^+$ admits $\P$-a.s.~continuous 
trajectories in $L^p(M)$. This is achieved by 
replicating the proof in \cite[Cor.~16]{DV2010}. 
Thanks to the continuity of the trajectories, \eqref{L1-contraction} 
must hold for all $t\in[0,T]$.

\subsection{Proofs of technical lemmas}

\begin{proof}[Proof of Lemma 
\ref{lemma: stima per G2 nu epsilon contro rho epsilon e test}]
As usual, we make use of a local argument: fix $\kappa\in\mathcal{A}$ 
and consider $\gamma\in C^0_c(X_\kappa)$. 
Hence, $\gamma\,\,\partial_\xi\psi \, 
\rho^+_{\varepsilon}(\omega,s)\in  C^0_c(X_\kappa\times\R)$ 
and, by means of the coordinates given by $\kappa$,
\begin{align*}
I & :=\bigg|\int_{M\times\R}\gamma(x)\,\partial_\xi\psi(\xi) 
\, \rho^+_{\varepsilon}(\omega,s)(x,\xi)
\frac{(G^2\nu_\kappa)_\varepsilon(\omega,s)(x,\xi)}
{\abs{h_\kappa(x)}^{1/2}}\,d\xi\,dV_h(x)\bigg|\\
&=\bigg|\int_{\tilde{X}_\kappa \times\R}
\gamma(z)\,\partial_\xi\psi(\xi) \, 
\rho^+_{\varepsilon}(\omega,s)(z,\xi)
(G^2\nu_\kappa)_\varepsilon(\omega,s)(z,\xi)\,d\xi\,dz\bigg|
\\ & =\abs{(G^2\nu_\kappa)_{\omega,s}\left((\gamma\,\partial_\xi\psi 
\, \rho^+_{\varepsilon}(\omega,s))_\varepsilon\right)}
\\ & \leq \norm{(\gamma\,\partial_\xi\psi \, 
\rho^+_{\varepsilon}(\omega,s))_\varepsilon}_{L^\infty(\tilde{X}_\kappa \times\R)} 
\\ &\hspace{1cm}\times\int_{S_\varepsilon}
\alpha_\kappa(z)G^2(z,\xi)\,\abs{h_\kappa(z)}^{1/2}\, 
\nu_{\omega,s,\kappa^{-1}(z)}(d\xi)\,dz, 
\end{align*}
where (as before) $(\cdots)_\varepsilon$ means convolution 
between $\cdots$ and $\phi_\varepsilon$, and 
\[
S_\varepsilon:=\supp\,(\gamma\,\partial_\xi\psi 
\, \rho^+_{\varepsilon}(\omega,s))_\varepsilon 
\cap \left(\kappa(\supp\alpha_\kappa)\times\R\right). 
\]
Since $\supp(\gamma\,\partial_\xi\psi \, 
\rho^+_\varepsilon(\omega,s))\subset 
\supp\gamma\times\supp\partial_\xi\psi$,
\[
S_\varepsilon\subset \kappa(\supp\alpha_\kappa)
\times\left(\supp\partial_\xi\psi+[-\varepsilon,\varepsilon]\right). 
\]
Thus, by basic convolution estimates, we obtain
\begin{align*}
I& \leq \norm{\gamma\,\partial_\xi\psi \, 
\rho^+_{\varepsilon}(\omega,s)}_{L^\infty(\tilde{X}_\kappa \times\R)}
\\ &\hspace{0.7cm}\times\int_{\kappa(\supp\alpha_\kappa)}
\alpha_\kappa(z)\int_{\supp\partial_\xi\psi+[-\varepsilon,\varepsilon]} 
G^2(z,\xi)\,\abs{h_\kappa(z)}^{1/2}\,\nu_{\omega,s,\kappa^{-1}(z)}(d\xi)\,dz
\\ &\leq C(M,h,\mathcal{A})\norm{\gamma}_{L^\infty(M)} 
\norm{\partial_\xi\psi}_{L^\infty(\R)} \\
&\hspace{0.7cm}\times\int_M\alpha_\kappa(x)
\int_{\supp\partial_\xi\psi+[-\varepsilon,\varepsilon]}G^2(x,\xi)
\,\nu_{\omega,s,x}(d\xi)\,dV_h(x), 
\end{align*}
which holds for any global $\gamma\in C^0(M)$ as well, since the 
support of $\frac{(G^2\nu_\kappa)_\varepsilon(\omega,s)}
{\abs{h_\kappa}^{1/2}}$ is contained in 
$\kappa^{-1}(\mathcal{U}_\kappa)\times\R$. Summing over 
$\kappa\in\mathcal{A}$, we obtain the desired result.
\end{proof}

\begin{proof}[Proof of Lemma \ref{lemma: simmetry argument}]
Consider two charts $\kappa,\kappa'\in\mathcal{A}$ 
such that $X_\kappa\cap X_{\kappa'}\neq \emptyset$ (to avoid trivialities). 
The coordinates given by $\kappa$ will be denoted $z$ and the ones 
given by $\kappa'$ will be called $w$ instead. Our aim is to 
compute the following quantity for each fixed $k\in\N$, $\xi\in\R$, 
and $(\omega,s)\in F$ (cf.~discussion before Lemma 
\ref{lemma: properties of gk nu kappa epsilon}):
\begin{align*}
\mathcal{S}_k(\kappa,\kappa'):=\int_M\bigg(- & 
\frac{(\nu_\kappa)_\varepsilon(\omega,s)(x,\xi)}{\abs{h_\kappa(x)}^{1/2}}
\frac{(g^2_k\nu_{\kappa'})_\varepsilon(\omega,s)(x,\xi)}{|h_{\kappa'}(x)|^{1/2}}
\\ & +\frac{(g_k\nu_\kappa)_\varepsilon(\omega,s)(x,\xi)}{\abs{h_\kappa(x)}^{1/2}}
\frac{(g_k\nu_{\kappa'})_\varepsilon(\omega,s)(x,\xi)}
{|h_{\kappa'}(x)|^{1/2}}\bigg)
\,dV_h(x). 
\end{align*}
To alleviate the notation, we have not made 
explicit the dependence of $\mathcal{S}_k(\kappa,\kappa')$ 
on $\xi$ and $(\omega,s)$. To integrate we use the 
coordinates given by $\kappa$. Being explicit about the 
definition of the integrand (specifically 
the role of $\kappa, \kappa'$), we have
\begin{align*}
\mathcal{S}_k(\kappa,\kappa') & = \int_M
\bigg(-\frac{(\nu_\kappa)_\varepsilon(\omega,s)(\kappa(x),\xi)}
{|h_\kappa(\kappa(x))|^{1/2}}
\frac{(g^2_k\nu_{\kappa'})_\varepsilon(\omega,s)(\kappa'(x),\xi)}
{|h_{\kappa'}(\kappa'(x))|^{1/2}}
\\ &\hspace{1.6cm}
+\frac{(g_k\nu_\kappa)_\varepsilon(\omega,s)(\kappa(x),\xi)}
{h_\kappa(\kappa(x))|^{1/2}}\frac{(g_k\nu_{\kappa'})_\varepsilon(\omega,s)
(\kappa'(x),\xi)}{|h_{\kappa'}(\kappa'(x))|^{1/2}}\bigg)\,dV_h(x)\\
&=\int_{\kappa(X_\kappa\cap X_{\kappa'})}
\bigg(-(\nu_\kappa)_\varepsilon(\omega,s)(z,\xi)
\frac{(g^2_k\nu_{\kappa'})_\varepsilon(\omega,s)
(\kappa'\circ\kappa^{-1}(z),\xi)}
{|h_{\kappa'}(\kappa'\circ\kappa^{-1}(z))|^{1/2}}
\\ &\hspace{1.6cm}
+(g_k\nu_\kappa)_\varepsilon(\omega,s)(z,\xi)
\frac{(g_k\nu_{\kappa'})_\varepsilon(\omega,s)(\kappa'\circ\kappa^{-1}(z),\xi)}
{|h_{\kappa'}(\kappa'\circ\kappa^{-1}(z))|^{1/2}}\bigg)\,dz
\\ & = \int_{\kappa(X_\kappa\cap X_{\kappa'})}
\frac{|\jac(\kappa'\circ\kappa^{-1})(z)|}{\abs{h_\kappa(z)}^{1/2}}
\\ & \qquad \qquad \qquad
\times \bigg((-(\nu_\kappa)_\varepsilon(\omega,s)(z,\xi)
 (g^2_k\nu_{\kappa'})_\varepsilon(\omega,s)(\kappa'\circ\kappa^{-1}(z),\xi)
\\ & \qquad \qquad \qquad\qquad
+ (g_k\nu_\kappa)_\varepsilon(\omega,s)(z,\xi)
(g_k\nu_{\kappa'})_\varepsilon(\omega,s)
(\kappa'\circ\kappa^{-1}(z),\xi)\bigg) \,dz. 
\end{align*}
Note the attendance of $\kappa, \kappa'$. Indeed, recall that our 
functions are defined on the Euclidean space 
and then lifted to $M$ via the charts $\kappa$ and $\kappa'$.

For convenience set $\Phi:=\kappa'\circ\kappa^{-1}$. 
By definition of the involved quantities, 
\begin{align*}
-& (\nu_\kappa)_\varepsilon(\omega,s)(z,\xi)
(g^2_k\nu_{\kappa'})_\varepsilon(\omega,s)(\kappa'\circ\kappa^{-1}(z),\xi)\\
& \hspace{0.3cm}+(g_k\nu_\kappa)_\varepsilon(\omega,s)(z,\xi)
(g_k\nu_{\kappa'})_\varepsilon(\omega,s)(\kappa'\circ\kappa^{-1}(z),\xi))
\\ & = -\int_{\kappa(\supp \alpha_\kappa)\times\R}\alpha_\kappa(\bar{z})
\, \abs{h_\kappa(\bar{z})}^{1/2}\phi_\varepsilon(z-\bar{z},\xi-\bar{\xi})
\,\nu_{\omega,s,\kappa^{-1}(\bar{z})}(d\bar{\xi})\,d\bar{z} 
\\ & \qquad \quad
\times\int_{\kappa'(\supp \alpha_{\kappa'})\times\R}\alpha_{\kappa'}(\bar{w})
\,\abs{h_{\kappa'}(\bar{w})}^{1/2}g^2_k(\bar{w},\bar{\bar{\xi}})
\\ & \qquad\qquad\quad \qquad\qquad\qquad\qquad 
\times \phi_\varepsilon(\Phi(z)-\bar{w},\xi-\bar{\bar{\xi}})
\,\nu_{\omega,s,\kappa'^{-1}(\bar{w})}(d\bar{\bar{\xi}})\,d\bar{w}
\\ & \qquad
+\int_{\kappa(\supp \alpha_\kappa)\times\R}\alpha_\kappa(\bar{z})
\,\abs{h_\kappa(\bar{z})}^{1/2}g_k(\bar{z},\bar{\xi})
\phi_\varepsilon(z-\bar{z},\xi-\bar{\xi})
\,\nu_{\omega,s,\kappa^{-1}(\bar{z})}(d\bar{\xi})\,d\bar{z} 
\\ & \qquad \qquad \quad
\times\int_{\kappa'(\supp \alpha_{\kappa'})\times\R}\alpha_{\kappa'}(\bar{w})
\,\abs{h_{\kappa'}(\bar{w})}^{1/2}g_k(\bar{w},\bar{\bar{\xi}})
\\ & \qquad\qquad\qquad\qquad\qquad\qquad\qquad 
\times \phi_\varepsilon(\Phi(z)-\bar{w},\xi-\bar{\bar{\xi}})
\,\nu_{\omega,s,\kappa'^{-1}(\bar{w})}(d\bar{\bar{\xi}})\,d\bar{w}
\\ & =
\int_{(\kappa(\supp \alpha_\kappa)\times\R)
\times (\kappa'(\supp \alpha_{\kappa'})\times\R)}
\left[-g^2_k(\bar{w},\bar{\bar{\xi}}) 
+ g_k(\bar{z},\bar{\xi})\,g_k(\bar{w},\bar{\bar{\xi}})\right]
\\ &\hspace{1cm}
\times \phi_\varepsilon(z-\bar{z},\xi-\bar{\xi})
\,\phi_\varepsilon(\Phi(z)-\bar{w},\xi-\bar{\bar{\xi}})
\,(\nu_\kappa)_{\omega,s}\otimes(\nu_{\kappa'})_{\omega,s}
(d\bar{z},d\bar{\xi},d\bar{w},d\bar{\bar{\xi}}). 
\end{align*}

Summarizing our findings so far,
\begin{align*}
\mathcal{S}_k(\kappa,\kappa') & =
\int_{\kappa(X_\kappa\cap X_{\kappa'})}
\frac{\abs{\jac(\Phi)(z)}}{\abs{h_\kappa(z)}^{1/2}} 
\\ & \qquad \times
\bigg(\int_{(\kappa(\supp \alpha_\kappa)\times\R)\times (\kappa'(\supp 
\alpha_{\kappa'})\times\R)}\left[-g^2_k(\bar{w},\bar{\bar{\xi}})
+ g_k(\bar{z},\bar{\xi})\,g_k(\bar{w},\bar{\bar{\xi}})\right]
\\ & \qquad\qquad\qquad\qquad
\times \phi_\varepsilon(z-\bar{z},\xi-\bar{\xi})
\,\phi_\varepsilon(\Phi(z)-\bar{w},\xi-\bar{\bar{\xi}})
\\ & \qquad\qquad\qquad\qquad\qquad 
\times (\nu_\kappa)_{\omega,s}\otimes(\nu_{\kappa'})_{\omega,s}
(d\bar{z},d\bar{\xi},d\bar{w},d\bar{\bar{\xi}})\bigg)\,dz, 
\end{align*}
with $\kappa,\kappa'\in\mathcal{A}$ such that 
$X_\kappa\cap X_{\kappa'}\neq \emptyset$. 

Reversing the roles of $\kappa$ and $\kappa'$, we make 
the following crucial observation:
\begin{align*}
\mathcal{S}_k(\kappa',\kappa)
& = \int_{\kappa(X_\kappa\cap X_{\kappa'})}
\frac{\abs{\jac(\Phi)(z)}}{\abs{h_\kappa(z)}^{1/2}} 
\\ & \qquad \times
\bigg(\int_{(\kappa(\supp \alpha_\kappa)\times\R)
\times (\kappa'(\supp \alpha_{\kappa'})\times\R)}
\left[-g^2_k(\bar{z},\bar{\xi}) + g_k(\bar{z},\bar{\xi}) 
\, g_k(\bar{w},\bar{\bar{\xi}})\right]
\\ & \qquad\qquad\qquad\qquad 
\times \phi_\varepsilon(z-\bar{z},\xi-\bar{\xi})
\,\phi_\varepsilon(\Phi(z)-\bar{w},\xi-\bar{\bar{\xi}})
\\ & \qquad\qquad\qquad\qquad\qquad \times
(\nu_\kappa)_{\omega,s}\otimes(\nu_{\kappa'})_{\omega,s}
(d\bar{z},d\bar{\xi},d\bar{w},d\bar{\bar{\xi}})\bigg)\,dz, 
\end{align*}
where we are again using the coordinates $z$ 
given by $\kappa$ to integrate. 

Therefore, we arrive at the following remarkable identity:
\[
\begin{split}
& \mathcal{S}_k(\kappa,\kappa') + \mathcal{S}_k(\kappa',\kappa)
=\int_{\kappa(X_\kappa\cap X_{\kappa'})}
\frac{\abs{\jac(\Phi)(z)}}{\abs{h_\kappa(z)}^{1/2}}\,
\\ & \quad 
\times \bigg(\int_{(\kappa(\supp \alpha_\kappa)\times\R)\times 
(\kappa'(\supp \alpha_{\kappa'})\times\R)}
-\abs{g_k(\bar{z},\bar{\xi})-g_k(\bar{w},\bar{\bar{\xi}})}^2\\
&\qquad\qquad\qquad  \times \phi_\varepsilon(z-\bar{z},\xi-\bar{\xi})\,
\phi_\varepsilon(\Phi(z)-\bar{w},\xi-\bar{\bar{\xi}})
\\ & \qquad\qquad\qquad\qquad \times 
(\nu_\kappa)_{\omega,s} \otimes(\nu_{\kappa'})_{\omega,s}
(d\bar{z},d\bar{\xi},d\bar{w},d\bar{\bar{\xi}})\bigg)\,dz. 
\end{split}
\]
In view of the previous computations, we can 
write (for $k\in\N$) 
\begin{align*}
\mathcal{C}_k(\varepsilon) & 
:= \int_M  
\left(
-\nu_\varepsilon(\omega,s)(x,\xi)(g_k^2\nu)_\varepsilon(\omega,s)(x,\xi)
+\left((g_k\nu)_\varepsilon(\omega,s)(x,\xi)\right)^2
\right) \,dV_h(x)
\\ & =\sum_{\substack{\kappa,\kappa'\in\mathcal{A} \\ 
X_\kappa\cap X_{\kappa'}\neq \emptyset}}\mathcal{S}_k(\kappa,\kappa')
=\frac12\sum_{\substack{\kappa,\kappa'\in\mathcal{A} \\
X_\kappa\cap X_{\kappa'}\neq \emptyset}}\mathcal{S}_k(\kappa,\kappa')
+\frac12\sum_{\substack{\kappa,\kappa'\in\mathcal{A}\\ 
X_\kappa\cap X_{\kappa'}\neq \emptyset}}\mathcal{S}_k(\kappa',\kappa)
\\ & =-\frac12 \sum_{\substack{\kappa,\kappa'\in\mathcal{A}\\
X_\kappa\cap X_{\kappa'}\neq \emptyset}}
\int_{\kappa(X_\kappa\cap X_{\kappa'})}
\frac{\abs{\jac(\Phi)(z)}}{\abs{h_\kappa(z)}^{1/2}}
\\ &\hspace{0.8cm}\times
\bigg(\int_{(\kappa(\supp \alpha_\kappa)\times\R)
\times (\kappa'(\supp \alpha_{\kappa'})\times\R)}
\abs{g_k(\bar{z},\bar{\xi})-g_k(\bar{w},\bar{\bar{\xi}})}^2
\\ &\hspace{3.0cm}
\times \phi_\varepsilon(z-\bar{z},\xi-\bar{\xi})
\,\phi_\varepsilon(\Phi(z)-\bar{w},\xi-\bar{\bar{\xi}})
\\ &\hspace{4.0cm}
\times (\nu_\kappa)_{\omega,s}\otimes(\nu_{\kappa'})_{\omega,s}
(d\bar{z},d\bar{\xi},d\bar{w},d\bar{\bar{\xi}})\bigg)\,dz, 
\end{align*}
where we have suppressed the dependency on $\omega, s, \xi$ 
in $\mathcal{C}_k(\varepsilon)$. 
Note $\mathcal{C}_k(\varepsilon)\le 0$.

Let us assume for the moment that each 
$g_k\in C^0(\R)$ is independent of $x\in M$. 
Then, summing over $k\in\N$, using \eqref{eq: gk are lipschitz} and 
Lemma \ref{properties of the regularized global objects}, we obtain
\begin{equation}\label{eq: simmetry argument}
\begin{split}
\mathcal{C}(\varepsilon) & :=
\int_M \left(-\nu_\varepsilon(\omega,s)(x,\xi)
(G^2\nu)_\varepsilon(\omega,s)(x,\xi)
+\sum_{k\geq 1}\left((g_k\nu)_\varepsilon
(\omega,s)(x,\xi)\right)^2\right)\,dV_h(x)
\\ &=\sum_{k\geq 1} \mathcal{C}_k(\varepsilon)
\\ & \geq-\frac{D_2}{2} \sum_{\substack{\kappa,\kappa'\in\mathcal{A}
\\ X_\kappa\cap X_{\kappa'}\neq \emptyset}}
\int_{\kappa(X_\kappa\cap X_{\kappa'})}
\frac{\abs{\jac(\Phi)(z)}}{\abs{h_\kappa(z)}^{1/2}}
\\ &\hspace{1.6cm}
\times\bigg(\int_{(\kappa(\supp \alpha_\kappa)\times\R)\times 
(\kappa'(\supp \alpha_{\kappa'})\times\R)}
\abs{\bar{\xi}-\bar{\bar{\xi}}}^2\phi_\varepsilon(z-\bar{z},\xi-\bar{\xi})
\\ & \hspace{2.3cm}
\times \phi_\varepsilon(\Phi(z)-\bar{w},\xi-\bar{\bar{\xi}})
\,(\nu_\kappa)_{\omega,s}\otimes(\nu_{\kappa'})_{\omega,s}
(d\bar{z},d\bar{\xi},d\bar{w},d\bar{\bar{\xi}}) \bigg)\,dz\\
&= -\frac{D_2}{2}\sum_{\substack{\kappa,\kappa'\in\mathcal{A}
\\ X_\kappa\cap X_{\kappa'}\neq \emptyset}}
\int_{\kappa(X_\kappa\cap X_{\kappa'})}
\frac{\abs{\jac(\Phi)(z)}}{\abs{h_\kappa(z)}^{1/2}}
\\ &\hspace{0.5cm}
\times \int_{\kappa(\supp \alpha_\kappa)\times 
\kappa'(\supp \alpha_{\kappa'})}\alpha_\kappa(\bar{z})\alpha_{\kappa'}(\bar{w})
\abs{h_\kappa(\bar{z})}^{1/2}\abs{h_{\kappa'}(\bar{w})}^{1/2}
\\ & \hspace{1.1cm} 
\times \varepsilon^{-n}\phi_1\left(\frac{z-\bar{z}}{\varepsilon}\right)
\varepsilon^{-n}\phi_1\left(\frac{\Phi(z)-\bar{w}}{\varepsilon}\right)
\Lambda_{\varepsilon,\kappa,\kappa'}(\omega,s,\bar{z},\bar{w},\xi)
\,d\bar{z}\,d\bar{w}\,dz,
\end{split}
\end{equation}
where 
\begin{align*}
& \Lambda_{\varepsilon,\kappa,\kappa'}(\omega,s,\bar{z},\bar{w},\xi)
\\ & \quad := \int_{\R^2}\abs{\bar{\xi}-\bar{\bar{\xi}}}^2
\varepsilon^{-1}\phi_2\left(\frac{\xi-\bar{\xi}}{\varepsilon}\right)
\varepsilon^{-1}\phi_2\left(\frac{\xi-\bar{\bar{\xi}}}{\varepsilon}\right)
\nu_{\omega,s,\kappa^{-1}(\bar{z})}\otimes\nu_{\omega,s,\kappa'^{-1}(\bar{w})}
(d\bar{\xi},d\bar{\bar{\xi}}). 
\end{align*}

Let $\psi\in \mathcal{D}(\R)$, $\psi\geq 0$. 
In view of Lemma \ref{lemma: strumentale 2} (in the appendix), 
\[
0\leq\int_\R\psi(\xi)\,
\Lambda_{\varepsilon,\kappa,\kappa'}(\omega,s,\bar{z},\bar{w},\xi)
\,d\xi\leq 4\norm{\psi}_{L^\infty}\norm{\phi_2}_{L^\infty} \varepsilon, 
\]
uniformly in $\omega,s,\bar{z},\bar{w},\kappa,\kappa'$.

Hence, integrating \eqref{eq: simmetry argument} 
with respect to $\psi\,d\xi$ leads to
\begin{equation}\label{eq: simmetry argument 2}
\begin{split}
0 & \geq \int_{\R} \psi(\xi) \mathcal{C}(\varepsilon)\,d\xi
\geq -\varepsilon\, C_\psi
\sum_{\substack{\kappa,\kappa'\in\mathcal{A}
\\ X_\kappa\cap X_{\kappa'}\neq \emptyset}}
\int_{\kappa(X_\kappa\cap X_{\kappa'})}
\frac{\abs{\jac(\Phi)(z)}}{\abs{h_\kappa(z)}^{1/2}}
\\ &\hspace{0.6cm}
\times\int_{\kappa(\supp \alpha_\kappa)\times \kappa'(\supp 
\alpha_{\kappa'})}\alpha_\kappa(\bar{z})\alpha_{\kappa'}(\bar{w})
\abs{h_\kappa(\bar{z})}^{1/2}\abs{h_{\kappa'}(\bar{w})}^{1/2}
\\ &\hspace{2.5cm}\times
\varepsilon^{-n}\phi_1\left(\frac{z-\bar{z}}{\varepsilon}\right)
\varepsilon^{-n}\phi_1\left(\frac{\Phi(z)-\bar{w}}{\varepsilon}\right)
\,d\bar{z}\,d\bar{w}\,dz
\\ &\geq-\varepsilon\, C_\psi\, C_{M,h,\mathcal{A}}
\sum_{\substack{\kappa,
\kappa'\in\mathcal{A}\\X_\kappa\cap X_{\kappa'}\neq \emptyset}}
\int_{\kappa(X_\kappa\cap X_{\kappa'})}
\frac{\abs{\jac(\Phi)(z)}}{\abs{h_\kappa(z)}^{1/2}}\,dz
\ge -\varepsilon\,C',
\end{split}
\end{equation}
for some ($\varepsilon$-independent) constants 
$C_\psi, C_{M,h,\mathcal{A}},C'\ge 0$. 
This inequality holds for a.e.~$(\omega,s)\in\Omega_T$. 
Integrating \eqref{eq: simmetry argument 2} 
over $\Omega\times[0,t]$, we obtain
$$
0\geq \EE\int_0^t \int_{\R} \psi(\xi) 
\mathcal{C}(\varepsilon)\,d\xi\geq -\varepsilon\, C'\,t
$$
concluding the proof of Lemma \ref{lemma: simmetry argument} 
in the case where each $g_k$ is independent of $x$. 
 
Let us consider the general ($x$-dependent case).  First of all, we 
may assume that for any $\kappa\in\mathcal{A}$ there exists 
$C_\kappa>1$ such that $h_\kappa(x)\leq C_\kappa \mbox{Id}$ in the sense 
of bilinear forms for all $x\in X_\kappa$, where $\mbox{Id}$ is the 
identity matrix on $\R^n$. Indeed, if it is not the case, we can 
choose for any $x\in M$ normal coordinates centered at $x$, and by 
using the fact the $M$ is compact, we obtain 
a finite atlas $\mathcal{A}$ for which the above property is fulfilled, and 
furthermore $\tilde{X}_\kappa $ is convex. It follows that 
$$
d_h(x,y)\leq C_\kappa \abs{\kappa(x)-\kappa(y)},
\qquad x,y\in X_\kappa,
$$
where $\abs{\cdot}$ is the Euclidean distance on $\R^n$.  
As a result, \eqref{eq: gk are lipschitz} takes the form 
\begin{equation}\label{eq: gk are lipschitz wrt the euclidean distance}
\sum_{k\geq 1}
\abs{g_k(\bar{z},\bar{\xi})-g_k(\bar{w},\bar{\bar{\xi}})}^2
\leq D_2\left(C_\kappa\,\abs{\bar{z}-\Phi^{-1}(\bar{w})}^2 
+ \abs{\bar{\xi}-\bar{\bar{\xi}}}^2\right), 
\end{equation}
for all $\bar{z}\in X_\kappa,\bar{w}\in\kappa'(X_\kappa\cap X_{\kappa'})$, 
and $\bar{\xi},\bar{\bar{\xi}}\in\R$.

By direct inspection of the expression for 
$\mathcal{S}_k(\kappa,\kappa')$, the integral is actually zero if 
$\kappa^{-1}(\mathcal{U}_\kappa)\cap \kappa'^{-1}(\mathcal{U}_{\kappa'})
=\emptyset$. Therefore, the expression must be studied 
only for $z\in\mathcal{U}_\kappa\cap\Phi^{-1}(\mathcal{U}_{\kappa'})$, 
$\bar{z}\in B_\varepsilon(z)$, and $\bar{w}\in B_\varepsilon(\Phi(z))$. 
By the construction of $\mathcal{U}_\kappa$ and the choice of $\varepsilon$, 
$B_\varepsilon(z)\subset\tilde{X}_\kappa$ and $
B_\varepsilon(\Phi(z))\subset\kappa'(X_\kappa\cap X_{\kappa'})$. 
Hence, for all such points, \eqref{eq: gk are lipschitz wrt the 
euclidean distance} clearly holds.

Restarting from \eqref{eq: simmetry argument}, we now obtain 
\begin{equation*}
\begin{split}
\mathcal{C}(\varepsilon)
& \geq \mathcal{I}(\varepsilon) -\frac{D_2}{2}
\max_\kappa C_\kappa \sum_{\substack{\kappa,\kappa'\in\mathcal{A}\\
X_\kappa\cap X_{\kappa'}\neq \emptyset}}
\int_{\mathcal{U}_\kappa\cap\Phi^{-1}(\mathcal{U}_{\kappa'})}
\frac{\abs{\jac(\Phi)(z)}}{\abs{h_\kappa(z)}^{1/2}}
\\ &\hspace{0.6cm}\times\bigg(
\int_{(B_\varepsilon(z)\times\R)\times (B_\varepsilon(\Phi(z))\times\R)}
\abs{\bar{z}-\Phi^{-1}(\bar{w})}^2 \, 
\phi_\varepsilon(z-\bar{z},\xi-\bar{\xi})
\\ &\hspace{1.3cm}
\times \phi_\varepsilon(\Phi(z)-\bar{w},\xi-\bar{\bar{\xi}})
\,(\nu_\kappa)_{\omega,s}\otimes(\nu_{\kappa'})_{\omega,s}
(d\bar{z},d\bar{\xi},d\bar{w},d\bar{\bar{\xi}})\bigg)\,dz
\\ & =\mathcal{I}(\varepsilon)
-\frac{D_2}{2}\max_\kappa C_\kappa\sum_{\substack{\kappa,
\kappa'\in\mathcal{A}\\X_\kappa\cap X_{\kappa'}\neq \emptyset}}
\int_{\mathcal{U}_\kappa\cap\Phi^{-1}(\mathcal{U}_{\kappa'})}
\frac{\abs{\jac(\Phi)(z)}}{\abs{h_\kappa(z)}^{1/2}}\\
&\hspace{0.5cm} 
\times \int_{B_\varepsilon(z)\times B_\varepsilon(\Phi(z))}
\abs{\bar{z}-\Phi^{-1}(\bar{w})}^2\,\alpha_\kappa(\bar{z})
\alpha_{\kappa'}(\bar{w})\abs{h_\kappa(\bar{z})}^{1/2}
\abs{h_{\kappa'}(\bar{w})}^{1/2} \\
&\hspace{1.1cm}\times\varepsilon^{-n}
\phi_1\left(\frac{z-\bar{z}}{\varepsilon}\right)
\varepsilon^{-n}\phi_1\left(\frac{\Phi(z)-\bar{w}}{\varepsilon}\right)
\bar{\Lambda}_{\varepsilon,\kappa,\kappa'}(\omega,s,\bar{z},\bar{w},\xi)
\,d\bar{z}\,d\bar{w}\,dz, 
\end{split}
\end{equation*}
where $\mathcal{I}(\varepsilon)$ denotes the corresponding term in 
the $x$-independent case, and
\begin{align*}
& \bar{\Lambda}_{\varepsilon,\kappa,\kappa'}(\omega,s,\bar{z},\bar{w},\xi)
\\ & \quad
:=  \int_{\R^2}\varepsilon^{-1}
\phi_2\left(\frac{\xi-\bar{\xi}}{\varepsilon}\right)
\varepsilon^{-1}\phi_2\left(\frac{\xi-\bar{\bar{\xi}}}{\varepsilon}\right)
\nu_{\omega,s,\kappa^{-1}(\bar{z})}\otimes\nu_{\omega,s,\kappa'^{-1}(\bar{w})}
(d\bar{\xi},d\bar{\bar{\xi}}). 
\end{align*}

Let $0\le \psi\in \mathcal{D}(\R)$. 
Arguing as in the proof of Lemma \ref{lemma: strumentale 2}, 
\[
0\leq \int_\R 
\psi(\xi)\,\bar{\Lambda}_{\varepsilon,\kappa,\kappa'}
(\omega,s,\bar{z},\bar{w},\xi)\,d\xi
\leq \norm{\psi}_{L^\infty}\norm{\phi_2}_{L^\infty}
\,\varepsilon^{-1}, 
\]
uniformly in $\omega,s,\bar{z},\bar{w},\kappa, \kappa'$. 
There is a constant $C_\Phi$ 
such that $|\bar{z}-\Phi^{-1}(\bar{w})|
\leq (C_\Phi+1)\varepsilon$, for all $\bar{z}\in B_\varepsilon(z)$ 
and $\bar{w}\in B_\varepsilon(\Phi(z))$. Hence, integrating 
the above lower bound on $\mathcal{C}(\varepsilon)$ 
against $\psi\,d\xi$ we obtain
\begin{equation*}
\begin{split}
0 & \geq \int_{\R} \psi(\xi) \mathcal{C}(\varepsilon)\,d\xi
\\ & \geq-\varepsilon C'-C_\psi\frac{D_2}{2}\max_\kappa C_\kappa
\sum_{\substack{\kappa,\kappa'\in\mathcal{A}\\
X_\kappa\cap X_{\kappa'}\neq \emptyset}}
\int_{\mathcal{U}_\kappa\cap\Phi^{-1}(\mathcal{U}_{\kappa'})}
\frac{\abs{\jac(\Phi)(z)}}{\abs{h_\kappa(z)}^{1/2}}
\,C_{M,h,\mathcal{A}}\, \varepsilon^2\,\varepsilon^{-1}\,dz
\\ & \geq -\varepsilon\,C'-\varepsilon C'', 
\end{split}
\end{equation*}
for some constants $C_{M,h,\mathcal{A}}, C''$, whereas 
$C'$ derives from \eqref{eq: simmetry argument 2}. 
Integrating over $\Omega\times [0,t]$, we conclude 
as in the $x$-independent case.
\end{proof}

\begin{proof}[Proof of Lemma 
\ref{lemma: un ennesimo lemma strumentale}]

Set $J_N:=\left\{\xi\in\R: N\leq\abs{\xi}\leq N+1\right\}$. 
We have
\begin{align*}
\mathcal{I}(\varepsilon) 
& := \abs{-2\EE\int_{M\times\R}\int_0^t \partial_\xi\psi_N\,
\rho^+_{\varepsilon}(\omega,s_-)(x,\xi)
\,m_\varepsilon(\omega,ds)(x,\xi) \,d\xi\,dV_h(x)}
\\ &\hspace{0.5cm}\leq 4\,C(M,h,\mathcal{A})\,
\EE\int_{M\times J_N}\int_0^t 
\abs{m_\varepsilon(\omega,ds)(x,\xi)} \,d\xi\,dV_h(x), 
\end{align*}
where $\abs{m_\varepsilon(\omega,ds)(x,\xi)}$ is the total 
variation of the $s$-measure $m_\varepsilon(\omega,ds)(x,\xi)$. 
Since $s\mapsto m_\varepsilon(\omega,s)(x,\xi)$ is 
non-decreasing, the expression on the right-hand side equals
\[
4\,C(M,h,\mathcal{A})\,\EE\int_{M\times J_N} 
m_\varepsilon(\omega,t)(x,\xi) \,d\xi\,dV_h(x). 
\] 
Since $M\times J_N$ is compact, weak convergence 
of measures implies that
\[
\limsup_{\varepsilon\to 0}\int_{M\times J_N} 
m_\varepsilon(\omega,t)(x,\xi) \,d\xi\,dV_h(x)
\leq \int_{[0,t]\times M\times J_N} m(ds,dx,d\xi), 
\]
for any $(\omega,t)\in\Omega_T$ and $N\in\N$. 
Suppose that there exists a dominant integrable 
$H=H(\omega)$ such that
\[
\int_{M\times J_N} m_\varepsilon(\omega,t)(x,\xi) 
\,d\xi\,dV_h(x) \leq H(\omega), 
\]
uniformly in $\varepsilon$. Then, by the reverse Fatou lemma, we infer
\begin{align*}
\limsup_{\varepsilon\to 0} \mathcal{I}(\varepsilon)
& \leq 4\,C(M,h,\mathcal{A})\,\limsup_{\varepsilon\to 0}
\EE\int_{M\times J_N} m_\varepsilon(\omega,t)(x,\xi) 
\,d\xi\,dV_h(x)
\\ &
\leq 4\,C(M,h,\mathcal{A})\,\EE\limsup_{\varepsilon\to 0}
\int_{M\times J_N} m_\varepsilon(\omega,t)(x,\xi) \,d\xi\,dV_h(x)
\\ &
\leq 4\,C(M,h,\mathcal{A})\,
\EE\int_{[0,t]\times M\times J_N} m(ds,dx,d\xi), 
\end{align*}
valid for any $t\in[0,T]$ and $N\in\N$. Since the kinetic measure 
$m$ vanishes for large $\xi$, the last quantity converges 
to zero as $N\to\infty$. 

We are hence left to find a suitable $H$. We argue like this: let 
$\kappa\in\mathcal{A}$, $(\omega,t)\in\Omega_T$, 
and $\varepsilon<\bar{\varepsilon}$. 
For $\psi \in C^0_c(\R^n\times\R)$, $0\le \psi\le 1$, we 
compute (functions are identified with measures)
\begin{align*}
((\alpha_\kappa m)_\sharp)_\varepsilon(\omega,t)(\psi)
&=\int_{\R^n\times\R}\psi_\varepsilon(z,\xi)
(\alpha_\kappa m)_\sharp(\omega,t)(dz,d\xi)\\
&\leq \norm{\psi_\varepsilon}_{L^\infty}
(\alpha_\kappa m)_\sharp(\omega,t)(\R^n\times\R)\\
&\leq \norm{\psi}_{L^\infty}
(\alpha_\kappa m)_\sharp(\omega,t)(\R^n\times\R)\\
&\leq (\alpha_\kappa m)_\sharp(\omega,t)(\R^n\times\R), 
\end{align*}
where, as usual, $(\cdots)_\varepsilon$ 
means convolution between $\cdots$ and $\phi_\varepsilon$. 
Thus, taking the supremum over such $\psi$, we infer
\[
((\alpha_\kappa m)_\sharp)_\varepsilon(\omega,t)(\R^n\times\R)
\leq (\alpha_\kappa m)_\sharp(\omega,t)(\R^n\times\R). 
\]
On the other hand, 
$(\alpha_\kappa m)_\sharp(\omega,t)(\R^n\times\R)
=(\alpha_\kappa m)_\sharp(\omega,t)(\tilde{X}_\kappa \times\R)$, and thus
\[
((\alpha_\kappa m)_\sharp)_\varepsilon(\omega,t)(\tilde{X}_\kappa \times\R)
\leq (\alpha_\kappa m)_\sharp(\omega,t)(\tilde{X}_\kappa \times\R), 
\]
for any $(\omega,t)\in\Omega_T$, $\varepsilon<\bar{\varepsilon}$ 
and $\kappa\in\mathcal{A}$. As a consequence,
\begin{align*}
&\int_{M\times\R}\frac{((\alpha_\kappa m)_\sharp)_\varepsilon
(\omega,t)(x,\xi)}{\abs{h_\kappa(x)}^{1/2}}\, d\xi\,dV_h(x) 
=((\alpha_\kappa m)_\sharp)_\varepsilon(\omega,t)(\tilde{X}_\kappa \times\R) 
\\ & \qquad \leq (\alpha_\kappa m)_\sharp
(\omega,t)(\tilde{X}_\kappa \times\R) 
= \int_{[0,t]\times M\times\R}\alpha_\kappa(x) m(ds,dx,d\xi), 
\end{align*}
and, by summing over $\kappa\in\mathcal{A}$,
\[
\int_{M\times\R} m_\varepsilon(\omega,t)(x,\xi)
\,d\xi\,dV_h(x) \leq\int_{[0,t]\times M\times\R}m(ds,dx,d\xi).
\]
Therefore, for all $(\omega,t)\in\Omega_T$, 
$\varepsilon<\bar{\varepsilon}$, and $N\in\N$,
\begin{align*}
& \int_{M\times J_N} m_\varepsilon(\omega,t)(x,\xi) \,d\xi\,dV_h(x) 
\leq \int_{M\times\R} m_\varepsilon(\omega,t)(x,\xi)\,d\xi\,dV_h(x)
\\ & \qquad 
\leq \int_{[0,t]\times M\times\R}m(ds,dx,d\xi)
\leq\int_{[0,T]\times M\times\R}m(ds,dx,d\xi), 
\end{align*}
which is integrable in $\omega$.
\end{proof}

\section{Stochastic parabolic problem}\label{sec: Vanishing}

As a step towards constructing solutions to \eqref{eq:target}, we 
study a problem where we add to \eqref{eq:target} 
a small diffusion term $\varepsilon\, \Delta_h$ ($\varepsilon>0$) 
involving the Laplace-Beltrami operator $\Delta_h$ on $(M,h)$. 
For a $C^2(M)$ function $u$,
$$
\Delta_h u := \Div\Grad u = 
h^{ab}\left(\partial_{ab}u-\Gamma_{ab}^c\partial_cu\right),
\qquad \text{in local coordinates}.
$$
For fixed $\varepsilon>0$, we consider the parabolic problem
\begin{equation}\label{eq:target2}
\begin{split}
& du^\varepsilon + \Div \, f_x(u^\varepsilon) \, dt 
= \varepsilon\, \Delta_hu^\varepsilon \, dt 
+ B(u^\varepsilon)\, dW(t), \;\;\; x\in M, \; t\in (0,T),\\
& u^\varepsilon(0,x) = u_0^\varepsilon(x), 
\;\;\; \text{$x\in M$}.
\end{split}
\end{equation} 

In this section we assume the following strengthened 
conditions on the flux $f$, in comparison with 
\eqref{eq: growth condition 0}:
\begin{equation}\label{eq: growth condition 1}
\begin{cases}
\abs{f_x(\xi)}_h \leq C_0(1+\abs{\xi}^r), & 
\abs{\xi}\leq L, \; x\in M, \\
\abs{f_x(\xi)}_h \leq C_0\abs{\xi}, & 
\abs{\xi}> L, \; x\in M,
\end{cases}
\end{equation}
for some positive constants $C_0,r,L$. Moreover, we suppose
\begin{equation}\label{eq: growth condition 2}
\abs{f_x(\xi_1)-f_x(\xi_2)}_h \leq C_1\abs{\xi_1-\xi_2},  
\quad  \xi_1,\xi_2\in\R,\; x\in M.
\end{equation}
for some constant $C_1>0$. 

We wish to establish the existence and uniqueness of a 
solution for suitable initial data $u_0^\varepsilon$. 
We interpret \eqref{eq:target2} in the
``variational framework'', and look for 
so-called variational solutions \cite{Krylov/Rozowskii}. 
Since $\Ltwo$ is a separable Hilbert space 
and $\Hone$ is reflexive and continuously 
and densely embedded in $\Ltwo$ \cite{Hebey}, 
we consider the Gelfand triple
\[
V\subset H = H^\ast \subset V^\ast, 
\]
with $V:=\Hone$ and $H:=\Ltwo$. 
As usual, the duality pairing between $V$ and its 
dual $V^\ast$ will be denoted by $\coppia{\cdot}{\cdot}$; 
moreover, observe that
\[
\coppia{u}{v}= (u,v)_H,  \;\;\; u\in H, \,v\in V,
\]
where $(u,v)_H=\int_Mu\,v\,dV_h$ is the $\Ltwo$-scalar product.

For $\varepsilon>0$ and $u\in\Hone$, we 
define $A^\varepsilon(u)\in V^\ast$ by
\begin{equation*}
\coppia{A^\varepsilon(u)}{v}
:= \int_M\left(-\varepsilon\nabla u 
+ f_x(u),\nabla v\right)_h\, dV_h(x),  
\quad v\in\Hone. 
\end{equation*}
We observe that, in view of \eqref{eq: growth condition 1}, the 
right-hand side is well-defined, because for any 
$v\in\Hone$ with $\norm{v}_V\leq 1$,
\begin{equation}\label{eq: boundness of Aepsilon}
\begin{split}
\left|\coppia{A^\varepsilon(u)}{v}\right| 
& \leq \varepsilon\norm{\nabla u}_{L^2(M)}\norm{\nabla v}_{L^2(M)} 
+ C_0(1+L^r)\norm{\nabla v}_{L^1(M)}\\
&\hspace{1cm}+C_0\norm{u}_{L^2(M)}\norm{\nabla v}_{L^2(M)}\\
&\leq \varepsilon\norm{\nabla u}_{L^2(M)} + C_0(1+L^r)+C_0\norm{u}_{L^2(M)}, 
\end{split}
\end{equation}
where in the first line we used $\norm{\nabla v}_{L^1(M)}
\leq \norm{\nabla v}_{L^2(M)}$ (recall $\mathrm{Vol}(M,h)=1$). 
We also observe that, in view of \eqref{eq: growth condition 2}, 
the map $A^\varepsilon$ is Lipschitz from $V$ to $V^\ast$, since
\begin{align*}
\norm{A^\varepsilon(u_1)-A^\varepsilon(u_2)}_{V^\ast}
&=\sup_{\norm{V}_V\leq 1}\coppia{A^\varepsilon(u_1)-A^\varepsilon(u_2)}{v}\\
&\leq \varepsilon \norm{\nabla(u_1-u_2)}_{L^2(M)}+C_1\norm{u_1-u_2}_{L^2(M)}\\
&\leq (\varepsilon+ C_1)\, \norm{u_1-u_2}_{\Hone}. 
\end{align*}

In particular, $A^\varepsilon$ is 
$\mathcal{B}(V)/\mathcal{B}(V^\ast)$ measurable, and 
hence progressively measurable, being independent 
of $\omega\in\Omega$ and $t\in [0,T]$. The next two lemmas show that the 
assumptions \eqref{eq: growth condition 1}, \eqref{eq: 
growth condition 2}, \eqref{eq: definition of G}, 
and \eqref{eq: gk are lipschitz} guarantee that we 
may use the variational framework of \cite{PR} to solve \eqref{eq:target2}.

\begin{lemma}\label{lemma: Aepsilon satisfies H1-H4}
Assume that $f=f_x(\xi)$ satisfies \eqref{eq: growth condition 1} 
and \eqref{eq: growth condition 2}. Then, for 
any $\varepsilon>0$, the map $A^\varepsilon:V\to V^\ast$ is 
hemicontinuous, weakly monotone, coercive, and bounded.
\end{lemma}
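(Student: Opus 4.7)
The plan is to verify each of the four properties in turn, using the explicit form of $A^\varepsilon$ together with the geometry-compatibility \eqref{eq: gc}. Two of them are essentially already on the page. Boundedness follows from \eqref{eq: boundness of Aepsilon} by taking the supremum over $\norm{v}_V\le 1$, yielding $\norm{A^\varepsilon(u)}_{V^\ast}\le C(1+\norm{u}_V)$. Hemicontinuity, i.e.\ continuity of $\lambda\mapsto\coppia{A^\varepsilon(u+\lambda v)}{w}$ on $\R$, is immediate from the Lipschitz bound $\norm{A^\varepsilon(u_1)-A^\varepsilon(u_2)}_{V^\ast}\le(\varepsilon+C_1)\norm{u_1-u_2}_V$ recorded just before the lemma.

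For weak monotonicity I would compute
\[
\coppia{A^\varepsilon(u_1)-A^\varepsilon(u_2)}{u_1-u_2}
= -\varepsilon\norm{\nabla(u_1-u_2)}_{\Ltwo}^2
+ \int_M\left(f_x(u_1)-f_x(u_2),\nabla(u_1-u_2)\right)_h\,dV_h,
\]
bound the cross term by $C_1\norm{u_1-u_2}_{\Ltwo}\norm{\nabla(u_1-u_2)}_{\Ltwo}$ via \eqref{eq: growth condition 2} and Cauchy--Schwarz, and apply Young's inequality with weight tuned to $\varepsilon$ to absorb half of the diffusive term. What remains is an estimate of the form $\coppia{A^\varepsilon(u_1)-A^\varepsilon(u_2)}{u_1-u_2}\le C(\varepsilon,C_1)\norm{u_1-u_2}_H^2$, which is exactly weak monotonicity.

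The substantive step is coercivity. My plan is to prove the cancellation identity
\[
\int_M\left(f_x(v),\nabla v\right)_h\,dV_h = 0 \quad\text{for all } v\in V,
\]
from which $\coppia{A^\varepsilon(v)}{v}=-\varepsilon\norm{\nabla v}_{\Ltwo}^2=-\varepsilon\norm{v}_V^2+\varepsilon\norm{v}_H^2$ follows, yielding coercivity in the form required by \cite{PR}. I would introduce the $\xi$-primitive $G_x(\xi):=\int_0^\xi f_x(\tau)\,d\tau$, a smooth $x$-dependent vector field on $M$. In local coordinates, commuting the $\tau$-integral with $\abs{h_\kappa}^{-1/2}\partial_i[\abs{h_\kappa}^{1/2}\,\cdot\,]$ yields $\Div G_x(\xi)=\int_0^\xi\Div f_x(\tau)\,d\tau=0$ thanks to \eqref{eq: gc}. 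A direct chain rule then gives $\Div(G_x(v(x)))=(\Div G_x)(v)+(\partial_\xi G_x(v),\nabla v)_h=(f_x(v),\nabla v)_h$, so the divergence theorem on the boundaryless compact manifold $M$ delivers the identity, first for smooth $v$ and then for $v\in V$ by density (using the overall linear growth encoded in \eqref{eq: growth condition 1}). The main obstacle is precisely this cancellation: a naive estimate of the convective term would cost a full $\norm{\nabla v}_{\Ltwo}^2$ and would spoil coercivity once $r>1$ in \eqref{eq: growth condition 1}, so the antiderivative-plus-geometry-compatibility trick is what converts an apparently loss-making contribution into zero.
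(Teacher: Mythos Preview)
Your proof is correct, and for boundedness and weak monotonicity it matches the paper. Your hemicontinuity argument (via the Lipschitz bound already recorded) is shorter than the paper's, which instead fixes $u,v,w$ and checks continuity of $\lambda\mapsto\coppia{A^\varepsilon(u+\lambda v)}{w}$ directly by dominated convergence using \eqref{eq: growth condition 1}.

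The real difference is coercivity. You invoke the geometry-compatibility \eqref{eq: gc} to obtain the cancellation $\int_M(f_x(v),\nabla v)_h\,dV_h=0$ (this is precisely the case $S''\equiv 1$ of Lemmas~\ref{lemma: gc1}--\ref{lemma: gc2}, which the paper proves later), and then read off coercivity with constant $\varepsilon$ and no additive term. The paper, by contrast, does \emph{not} use \eqref{eq: gc} here: it simply bounds the convective term via \eqref{eq: growth condition 1} and Cauchy's inequality, obtaining
\[
\coppia{A^\varepsilon(v)}{v}\le \Bigl(\tfrac{C_0^2}{\varepsilon}+\tfrac{\varepsilon}{2}\Bigr)\norm{v}_H^2-\tfrac{\varepsilon}{2}\norm{v}_V^2+\tfrac{C_0^2(1+L^r)^2}{\varepsilon}.
\]
Your route is cleaner and gives better constants, but relies on the standing hypothesis \eqref{eq: gc}, which the lemma's statement does not list among its assumptions; the paper's route shows that for mere well-posedness of the parabolic problem (as opposed to the later $\varepsilon$-uniform a~priori estimates) only the growth conditions \eqref{eq: growth condition 1}--\eqref{eq: growth condition 2} are needed. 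Both are valid; just be aware of which hypotheses you are actually using.
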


\begin{proof}
Let us check hemicontinuity. Fix $u,v,w\in \Hone$, 
and let $\lambda\in\R$. Then
\begin{align*}
\coppia{A^\varepsilon(u+\lambda v)}{w} 
& = \int_M-\varepsilon\left(\nabla u + 
\lambda\nabla v,\nabla w\right)_h \, dV_h(x) \\
&\hspace{0.5cm} 
+ \int_M\left(f_x(u+\lambda v),\nabla w\right)_h \, dV_h(x) 
=: I_1(\lambda) + I_2(\lambda).   
\end{align*}
Clearly, $I_1(\lambda)$ is linear in $\lambda$ and 
hence continuous. For $I_2(\lambda)$ 
we argue as follows: if $\lambda \to \lambda_0$, then, by 
smoothness of $f_x(\xi)$, it clearly follows that 
\[
\left(f_x(u(x)+\lambda v(x)),\nabla w(x)\right)_h 
\overset{\lambda\to\lambda_0}
{\longrightarrow} \left(f_x(u(x)+\lambda_0 v(x)),
\nabla w(x)\right)_h,
\]
for every $x\in M$. In view of \eqref{eq: growth condition 1}, we 
can find a dominating function and conclude via the 
dominated convergence theorem. Hence, $I_2(\lambda)$ is continuous.

To verify weak monotonicity, we take $u,v\in\Hone$ 
and observe that
\begin{align*}
{}_{V^\ast}\langle & 
A^\varepsilon(u)-A^\varepsilon(v),u-v\rangle_{V}\\
& = -\varepsilon \norm{\nabla u-\nabla v}^2_{L^2(M)}
+ \int_M\left(f_x(u)-f_x(v),\nabla u-\nabla v\right)_h dV_h(x)\\
& \leq -\varepsilon \norm{\nabla u-\nabla v}^2_{L^2(M)} 
+ C_1\norm{u-v}_{L^2(M)}\norm{\nabla u -\nabla v}_{L^2(M)}
\end{align*}
by means of \eqref{eq: growth condition 2} and 
H\"older's inequality. With the help of Cauchy's 
inequality ``with $\varepsilon$'', we obtain
\begin{align*}
\coppia{A^\varepsilon(u)-A^\varepsilon(v)}{u-v}
& \leq -\varepsilon \norm{\nabla u-\nabla v}^2_{L^2(M)} 
+ \frac{C_1^2}{2\varepsilon}\norm{u-v}^2_{L^2(M)}\\
&\hspace{1cm} +\frac{\varepsilon}{2}
\norm{\nabla u -\nabla v}^2_{L^2(M)}\\
&\leq \frac{C_1^2}{2\varepsilon}\norm{u-v}^2_{L^2(M)}. 
\end{align*}

Next we examine coercivity. For any $v\in V$, thanks 
to \eqref{eq: growth condition 1},
\begin{align*}
\coppia{A^\varepsilon(v)}{v} & = 
-\varepsilon \norm{\nabla v}^2_{L^2(M)} 
+ \int_M\left(f_x(v),\nabla v\right)_h dV_h(x)
\\ &\leq -\varepsilon \norm{\nabla v}^2_{L^2(M)} 
+ C_0(1+L^r)\norm{\nabla v}_{L^1(M)}\\
&\hspace{0.5cm} +C_0\norm{V}_{L^2(M)}\norm{\nabla v}_{L^2(M)} . 
\end{align*}
Once again by Cauchy's inequality ``with $\varepsilon$'',
\begin{align*}
\coppia{A^\varepsilon(v)}{v} 
&\leq -\varepsilon \norm{\nabla v}^2_{L^2(M)} 
+ \frac{\varepsilon}{4} \norm{\nabla v}^2_{L^2(M)}
+ \frac{C_0^2(1+L^r)^2}{\varepsilon}\\
& \hspace{1.5cm} + \frac{\varepsilon}{4} \norm{\nabla v}^2_{L^2(M)} 
+ \frac{C_0^2}{\varepsilon}\norm{v}^2_{L^2(M)} 
\\ &= \left(\frac{C_0^2}{\varepsilon}+\frac{\varepsilon}{2}\right)
\norm{v}^2_{L^2(M)} -\frac{\varepsilon}{2}\norm{v}^2_{\Hone}
+\frac{C_0^2(1+L^r)^2}{\varepsilon}.   
\end{align*}
Finally, it is clear that \eqref{eq: boundness of Aepsilon} 
implies, for any $u\in\Hone$,
\begin{align*}
\norm{A^\varepsilon(u)}_{V^\ast} & \leq 
\varepsilon\norm{\nabla u}_{L^2(M)} 
+ C_0(1+L^r)+C_0\norm{u}_{L^2(M)}\\
&\leq (\varepsilon + C_0)\norm{u}_{\Hone} + C_0(1+L^r). 
\end{align*}
This concludes the proof of the lemma.
\end{proof}

\begin{lemma}\label{lemma: B is Lipschitz on H1}
Assume \eqref{eq: definition of G} 
and \eqref{eq: gk are lipschitz} hold. 
Then $B$ is a Lipschitz mapping from $\Hone$ to $\hS$.
\end{lemma}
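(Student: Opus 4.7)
The plan is very short: the statement is essentially a corollary of the $L^2$-Lipschitz estimate \eqref{eq: B is Lipschitz on L2} that has already been established, combined with the continuous embedding $\Hone \hookrightarrow \Ltwo$. No new ideas are required; the lemma is recorded separately only because the variational framework of Krylov-Rozovskii / Pr\'ev\^ot-R\"ockner that will be invoked for \eqref{eq:target2} demands as a hypothesis a Lipschitz condition on $B$ from $V=\Hone$ (not from $H=\Ltwo$) into $\hS$, even though the image is still in $L_2(\mathfrak{U};H)$ rather than in $L_2(\mathfrak{U};V)$.

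Concretely, the plan is as follows. First I would note that for $z\in\Hone$ one has $z\in\Ltwo$ with $\norm{z}_{\Ltwo}\leq \norm{z}_{\Hone}$ (this is just the trivial term $j=0$ in the definition of the $H^1$ norm), so $B(z)$ is well defined as an element of $\hS$ by \eqref{eq: B is Lipschitz on L2} applied together with the easy bound $\norm{B(0)}_{\hS}^2\leq D_1(1+\norm{0}_{\Ltwo}^2)$ coming from \eqref{eq: definition of G}. Second, for any $z_1,z_2\in\Hone$, I would invoke \eqref{eq: B is Lipschitz on L2} directly to obtain
\begin{equation*}
\norm{B(z_1)-B(z_2)}_{\hS}^2 \leq D_2\norm{z_1-z_2}_{\Ltwo}^2 \leq D_2\norm{z_1-z_2}_{\Hone}^2,
\end{equation*}
where the last inequality again uses only that the $L^2$-norm is controlled by the $H^1$-norm (since $\mathrm{Vol}(M,h)=1$). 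Taking square roots gives the desired Lipschitz bound with constant $\sqrt{D_2}$.

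There is no real obstacle, and no nontrivial step beyond citing \eqref{eq: B is Lipschitz on L2}. The only thing worth checking carefully is that the same $D_2$ (rather than a larger, $\varepsilon$-dependent constant) suffices; this is automatic because the right-hand side of \eqref{eq: gk are lipschitz} does not involve any derivatives of $g_k$ and the estimate \eqref{eq: B is Lipschitz on L2} is derived pointwise in $x$ before integration in $dV_h(x)$. Consequently, combined with Lemma \ref{lemma: Aepsilon satisfies H1-H4}, this lemma provides the last hypothesis needed to apply the general existence-and-uniqueness theorem for SPDEs in the variational setting to \eqref{eq:target2}, which is exactly the purpose for which it is stated.
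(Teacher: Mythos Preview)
Your proposal is correct and follows exactly the same approach as the paper: the paper's proof is a one-line application of \eqref{eq: B is Lipschitz on L2} together with $\norm{\cdot}_{\Ltwo}\leq\norm{\cdot}_{\Hone}$, yielding the same Lipschitz constant $\sqrt{D_2}$. Your write-up simply adds some contextual remarks that the paper omits.
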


\begin{proof}
From \eqref{eq: B is Lipschitz on L2} we immediately obtain
\[
\norm{B(z_1)-B(z_2)}_{\hS}^2 \leq D_2\norm{z_1-z_2}_{\Hone}^2, 
\]
for any $z_1,z_2\in\Hone$; hence $B$ is 
Lipschitz with constant $\sqrt{D_2}$.
\end{proof}

Therefore, in view of the general results 
in \cite[Theorems 2.1 \& 2.2, p.~1253]{Krylov/Rozowskii}, we infer the existence 
and uniqueness of a variational solution to the 
stochastic parabolic problem \eqref{eq:target2}.

\begin{theorem}[well-posedness]\label{thm: existence variational solution}
Suppose conditions \eqref{eq: growth condition 1}, 
\eqref{eq: growth condition 2}, \eqref{eq: definition of G}, 
and \eqref{eq: gk are lipschitz} hold. 
Fix $\varepsilon>0$, and let $u_0^\varepsilon\in L^2(\Omega;\Ltwo)$ 
be an $\mathcal{F}_0$-measurable initial function. 
Then there exists a unique solution $u^\varepsilon$ to \eqref{eq:target2}, 
namely a continuous $\Ltwo$-valued $\{\mathcal{F}_t\}_{t\in[0,T]}$-adapted 
process $(u^\varepsilon(t))_{t\in[0,T]}$ such that 
$u^\varepsilon\in\Hone$ for $\P\otimes dt$-a.e.~$(\omega,t)\in\Omega_T$, 
$u^\varepsilon\in\LOT{2}{\Hone}\cap\LOT{2}{\Ltwo}$, and $\P$-a.s.
\begin{equation*}
\begin{split}
& u^\varepsilon(t)= u^\varepsilon_0 
+ \int_0^t\left( \int_M\left(-\varepsilon\nabla 
u^\varepsilon(s) + f_x(u^\varepsilon(s)),\cdot\right)_h 
\, dV_h(x)\right) \, ds  
\\ & \qquad\qquad\qquad
+\int_0^t B(u^\varepsilon(s)) \,dW(s), 
\qquad t\in[0,T], 
\end{split} 
\end{equation*}
where the equation is understood as 
equality between elements of $V^\ast$. Moreover, 
\[
\EE\left[\sup_{t\in[0,T]}\norm{u^\varepsilon(t)}_{L^2(M)}^2\right]<\infty, 
\]
and the following It\^{o} formula holds for 
the square of the $\Ltwo$-norm:
\begin{equation}\label{eq: Ito p=2}
\begin{split}
& \norm{u^\varepsilon(t)}_{L^2(M)}^2 
= \norm{u^\varepsilon_0}_{L^2(M)}^2
\\ & \quad + 2\int_0^t\left(-\varepsilon 
\norm{\nabla u^\varepsilon(s)}_{L^2(M)}^2 
+ \int_M\left(f_x(u^\varepsilon(s)),\nabla u^\varepsilon(s)\right)_h 
\, dV_h(x)\right)\, ds \\
&\quad + \int_0^t \norm{B(u^\varepsilon(s))}_{\hS}^2 \,ds\\
&\quad +2\int_0^t
\left( u^\varepsilon(s), B(u^\varepsilon(s))\,dW(s)\right)_{L^2(M)}, 
\quad \text{$\P$-a.s., for any $t\in[0,T]$}.
\end{split}
\end{equation}
\end{theorem}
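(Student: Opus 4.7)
The plan is to obtain Theorem \ref{thm: existence variational solution} by appealing directly to the classical variational theory for SPDEs as developed by Krylov and Rozovskii \cite{Krylov/Rozowskii} (see also \cite{PR} for a modern exposition). The entire work consists of checking that the operator $A^\varepsilon$ and the noise coefficient $B$ satisfy the standard structural assumptions of that theory on the Gelfand triple $V=\Hone\subset H=\Ltwo\subset V^\ast$; once these are in place, existence, uniqueness, the a priori $L^2$ estimate, and the It\^{o} formula for $\norm{u^\varepsilon(t)}^2_{\Ltwo}$ are all immediate consequences of the abstract theorem.

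First, I would observe that the map $A^\varepsilon:V\to V^\ast$ is well defined and deterministic, so in particular $\mathcal{B}(V)/\mathcal{B}(V^\ast)$-measurable and hence progressively measurable in the sense required by \cite[Sec.~4.2]{PR}. The four hypotheses (hemicontinuity H1, weak monotonicity H2, coercivity H3, and boundedness H4) have already been verified in Lemma \ref{lemma: Aepsilon satisfies H1-H4} under \eqref{eq: growth condition 1}--\eqref{eq: growth condition 2}: the $-\varepsilon\norm{\nabla v}^2_{L^2(M)}$ term from the Laplace-Beltrami part provides both the strict monotonicity and the coercivity, while the flux contribution is absorbed by Cauchy's inequality with $\varepsilon$.

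Second, the noise coefficient $B:V\to\hS$ is Lipschitz by Lemma \ref{lemma: B is Lipschitz on H1}, which in particular gives the linear growth estimate
\[
\norm{B(v)}^2_{\hS}\leq C\left(1+\norm{v}^2_{H}\right),
\]
needed for the abstract framework. Since $B$ does not depend on $(\omega,t)$, the required progressive measurability is automatic. Together with the $\mathcal{F}_0$-measurability and $L^2(\Omega;H)$-integrability of $u_0^\varepsilon$, all the hypotheses of \cite[Thm.~2.1]{Krylov/Rozowskii} are met.

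Invoking that theorem yields the continuous $H$-valued adapted process $u^\varepsilon$, the integrability $u^\varepsilon\in\LOT{2}{V}\cap\LOT{2}{H}$, the integral identity in $V^\ast$, and the supremum estimate $\EE\,\sup_{t\in[0,T]}\norm{u^\varepsilon(t)}^2_{H}<\infty$. The It\^{o} formula \eqref{eq: Ito p=2} for $\norm{u^\varepsilon(t)}^2_{H}$ is a direct application of the standard variational It\^{o} formula (e.g.~\cite[Thm.~4.2.5]{PR}) to the pair $(u^\varepsilon,A^\varepsilon(u^\varepsilon))$, after rewriting $\coppia{A^\varepsilon(u^\varepsilon)}{u^\varepsilon}$ in the form given in \eqref{eq: Ito p=2} using the identification $\coppia{\cdot}{\cdot}=(\cdot,\cdot)_H$ on $V$. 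There is no genuine obstacle here; the only subtlety worth emphasising is the verification that $A^\varepsilon$ maps into $V^\ast$ with the prescribed growth, which requires the (improved) growth bound \eqref{eq: growth condition 1} rather than the weaker polynomial bound \eqref{eq: growth condition 0} assumed elsewhere.
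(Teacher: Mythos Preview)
Your proposal is correct and matches the paper's approach exactly: the paper does not give a separate proof of this theorem but simply states (immediately before the statement) that, in view of Lemmas \ref{lemma: Aepsilon satisfies H1-H4} and \ref{lemma: B is Lipschitz on H1}, the result follows from the general variational theory in \cite[Theorems 2.1 \& 2.2]{Krylov/Rozowskii}. Your write-up is in fact slightly more explicit than the paper about which abstract conclusions (existence, uniqueness, the supremum estimate, and the It\^{o} formula for the $L^2$-norm) are being invoked.
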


\begin{remark}\label{rmk: analytically weak solution}
The expression $\int_M\left(-\varepsilon\nabla 
u^\varepsilon(s) + f_x(u^\varepsilon(s)),\cdot\right)_h 
\, dV_h(x)$ above denotes the linear functional on $V$ induced 
by $-\varepsilon\nabla u^\varepsilon(s) + f_x(u^\varepsilon(s))$.

Fix any $\theta\in\Hone$. It is immediate to 
see that the following equation holds $\P$-a.s., 
for any $t\in[0,T]$:
\begin{equation}\label{eq: analytically weak solution}
\begin{split}
& \left( u^\varepsilon(t),\theta\right)_{L^2(M)}
\\ & \quad = \left(u^\varepsilon_0,\theta\right)_{L^2(M)} 
+ \int_0^t\left(\, \int_M\left(-\varepsilon\nabla u^\varepsilon(s) 
+ f_x(u^\varepsilon(s)),\nabla\theta\right)_h \, dV_h(x)\right) 
\, ds \\ & \quad \qquad \qquad\qquad\qquad
+ \left(\int_0^t B(u^\varepsilon(s))\,dW(s),\theta\right)_{L^2(M)}.
\end{split}
\end{equation}
Thus, $u^\varepsilon$ is a weak solution in the ordinary sense.
\end{remark}

\section{Generalized It\^{o} formula}\label{sec: generalized ito}

Our aim is to establish a generalized It\^{o} formula for weak 
solutions to a general class of SPDEs on Riemannian manifolds. 
This result will be used to derive the kinetic 
formulation as well as to establish a priori $\Lp{p}$-estimates. 
An analogous formula for the Euclidean case is 
presented in \cite{DHV2016} (see also \cite{Krylov13}). 

The stochastic equations take the general form
\begin{equation}\label{eq: wide class}
\begin{split}
& du = F(t)\, dt + \Div G(t)\,dt +\Delta_h I(t)\,dt 
+ H(t)\,dW(t), \\
& u(0,x) = u_0(x), 
\end{split}
\end{equation}
where $t\in (0,T)$, $x\in M$, and $(M,h)$
is an $n$-dimensional ($n\ge 1$) compact smooth Riemannian 
manifold, which is connected, oriented, and 
without boundary. The cylindrical 
Wiener process $W$ is defined in Section \ref{sec: Background}. 

We assume $F\in\LOT{2}{\Ltwo}$ is predictable. 
Let us write $\overrightarrow{\Ltwo}$ for the separable Hilbert space of 
measurable vector fields on $M$ for which $|\cdot|_h$ belongs to $\Ltwo$. 
The vector field $G\in\LOTV$ is assumed to be predictable. 
We assume $I\in\LOT{2}{\Hone}$ is $\Ltwo$-predictable. 
Moreover, we ask that $H\in\LOT{2}{\hS}$ 
is predictable, and denote $H_k:=He_k,\,k\in\N$. 

Given an $\mathcal{F}_0$-measurable initial function 
$u_0\in L^2(\Omega;\Ltwo)$, a process 
$u\in \LC{2}{\Ltwo}\cap \LOT{2}{\Hone}$ is a weak 
solution of \eqref{eq: wide class} if, for 
any $\theta\in C^2(M)$, the following equation 
holds $\P$-a.s., for any $t\in[0,T],$
\begin{align*}
\left(u(t),\theta\right)_{L^2(M)}& 
= \left(u_0,\theta\right)_{L^2(M)} 
+ \int_0^t\left(\, \int_MF(s)\,\theta(x) \,dV_h(x)\right)\, ds\\
&\hspace{0.6cm}- \int_0^t\left( \,\int_M\left(G(s),\nabla\theta(x)\right)_h 
\, dV_h(x)\right) \, ds \\
&\hspace{0.6cm}+\int_0^t\int_MI(s)\Delta_h\theta(x)dV_h(x)\,ds
+  \left(\,\int_0^t H(s)\,dW(s),\theta\right)_{L^2(M)}.
\end{align*}

\begin{proposition}[generalized It\^{o} formula]\label{prop: generalized Ito}
Let 
$$
u\in \LC{2}{\Ltwo}\cap \LOT{2}{\Hone}
$$ 
be a weak solution of \eqref{eq: wide class}. Fix 
$$
\psi\in C^1(M), \qquad 
S\in C^2(\R), \quad 
S''\in L^\infty(\R).
$$ 
Then, $\P$-a.s., for any $t\in[0,T],$ 
\begin{equation}\label{eq: generalized Ito}
\begin{split}
& \left( S(u(t))\right.,\left.\psi\right)_{L^2(M)}\\
& \quad = \left(S(u_0),\psi\right)_{L^2(M)} 
+ \int_0^t\left(\, \int_MS'(u(s))F(s)\,\psi(x) \,dV_h(x)\right)\, ds\\
& \quad \qquad 
- \int_0^t\left(\, \int_M\big(G(s),\nabla\left(S'(u(s))\psi(x) 
\right)\big)_h \, dV_h(x)\right)\, ds \\
& \quad \qquad \quad
-\int_0^t \int_M\big( \nabla I(s),\nabla\left(S'(u(s))\psi\right) \big)_h 
\, dV_h(x)\,ds\\
& \quad \qquad\quad \quad
+  \left(\,\int_0^t S'(u(s))H(s)\,dW(s),\psi\right)_{L^2(M)}\\
& \quad \qquad \quad\quad\quad
+\frac12\sum_{k\geq 1}\int_0^t\left(\,
\int_MS''(u(s))H_k^2(s)\psi(x) \,dV_h(x) \right) \, ds. 
\end{split}
\end{equation}
\end{proposition}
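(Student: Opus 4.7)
The plan is to reduce to finite-dimensional semimartingales via spectral projection onto the eigenbasis of $-\Delta_h$, apply the classical scalar Itô formula pointwise in $x$, and then pass to the limit in the projection rank. Since $(M,h)$ is compact, $-\Delta_h$ admits a complete $L^2(M,h)$-orthonormal basis of smooth eigenfunctions $\{e_k\}_{k\ge 0}$ with eigenvalues $0=\lambda_0\le\lambda_1\le\cdots$. Let $P_N$ denote the $L^2$-orthogonal projection onto $V_N:=\mathrm{span}\{e_0,\ldots,e_N\}$, and write $u^k(t):=(u(t),e_k)_{L^2}$, with analogous definitions for the Fourier coefficients of $F$, $I$, $u_0$. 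Since $G$ is only an $L^2$ vector field (so $\Div G$ is a priori distributional), set $g_k(s):=\int_M(G(s),\nabla e_k)_h\,dV_h$; testing the weak formulation of \eqref{eq: wide class} against $\theta=e_k$ and using $\Delta_h e_k=-\lambda_k e_k$ produces, $\P$-a.s.\ for all $t\in[0,T]$, the real-valued Itô SDE
\begin{equation*}
u^k(t)=u^k_0+\int_0^t F^k(s)\,ds-\int_0^t g_k(s)\,ds-\lambda_k\int_0^t I^k(s)\,ds+M^k(t),
\end{equation*}
with $M^k(t):=\sum_j\int_0^t(H_j(s),e_k)_{L^2}\,d\beta_j$. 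Hence $u_N(t,x):=\sum_{k\le N}u^k(t)\,e_k(x)$ is smooth in $x$ and, at each $x$, a continuous semimartingale in $t$, with $d\langle u_N(\cdot,x)\rangle_t=\sum_j(P_NH_j(t))^2(x)\,dt$.

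Applying the classical scalar Itô formula pointwise in $x$ to $S(u_N(t,x))$, multiplying by $\psi(x)$, integrating over $M$, and using (stochastic) Fubini to commute integrals, I reorganize the drift through two identities. First, the vector-field duality
\begin{equation*}
\int_M S'(u_N)\Bigl(\sum_{k\le N}g_k\,e_k\Bigr)\psi\,dV_h=\int_M\left(G,\nabla P_N(S'(u_N)\psi)\right)_h\,dV_h,
\end{equation*}
follows directly from the definition of $g_k$ together with the $V_N$-expansion of $P_N(S'(u_N)\psi)$. Second, the integration by parts
\begin{equation*}
\int_M \Delta_h I_N\cdot S'(u_N)\psi\,dV_h=-\int_M\left(\nabla I_N,\nabla(S'(u_N)\psi)\right)_h\,dV_h
\end{equation*}
is licit since $I_N$ is smooth. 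This yields \eqref{eq: generalized Ito} with $u$ replaced by $u_N$, $H_j$ by $P_NH_j$, and $\nabla(S'(u)\psi)$ replaced by $\nabla P_N(S'(u_N)\psi)$ in the $G$-term.

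The final step is to let $N\to\infty$. The spectral characterization $\norm{\phi}_{H^1}^2\asymp\sum_k(1+\lambda_k)|\phi^k|^2$ makes $P_N$ a contraction on $H^1(M,h)$ with $P_N\phi\to\phi$ in $H^1$ for every $\phi\in H^1$; consequently $u_N\to u$ in $L^2(\Omega_T;H^1)$ and, along a subsequence, a.e.\ on $\Omega_T\times M$, $I_N\to I$ in $L^2(\Omega_T;H^1)$, $F_N\to F$ in $L^2(\Omega_T;L^2)$, and $P_NH\to H$ in $L^2(\Omega_T;\hS)$. Since $S''\in L^\infty$, the derivative $S'$ has at most linear growth, so dominated convergence gives $S^{(j)}(u_N)\to S^{(j)}(u)$ in $L^p(\Omega_T\times M)$ for $j=0,1,2$; the chain rule $\nabla(S'(u_N)\psi)=S''(u_N)\psi\nabla u_N+S'(u_N)\nabla\psi$ then produces $S'(u_N)\psi\to S'(u)\psi$ in $L^2(\Omega_T;H^1)$, and the $H^1$-boundedness of $P_N$ transfers this convergence to $P_N(S'(u_N)\psi)\to S'(u)\psi$ in the same space. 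The drift, integration-by-parts, and quadratic-variation terms then pass by dominated convergence, while for the martingale term Itô's isometry combined with the estimate
\begin{equation*}
\sum_j\left|(S'(u_N)\psi,P_NH_j)-(S'(u)\psi,H_j)\right|^2\le 2\norm{(S'(u_N)-S'(u))\psi}_{L^2}^2\norm{H}_{\hS}^2+2\norm{S'(u)\psi}_{L^2}^2\norm{(I-P_N)H}_{\hS}^2
\end{equation*}
yields convergence to $(\int_0^t S'(u)H\,dW,\psi)_{L^2}$, completing the proof.

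The main obstacle is the $G$-contribution: because $\Div G$ is merely distributional, one cannot naively project it onto $V_N$, and the entire identification rests on the duality identity above, which transfers the Galerkin truncation onto the test function $S'(u_N)\psi$ rather than onto $G$ itself. One must then secure genuine $H^1$-convergence of $P_N(S'(u_N)\psi)$ to $S'(u)\psi$, which hinges on the combination of $S''\in L^\infty$ (hence a usable chain rule), the $H^1$-regularity of $u$, and the $H^1$-boundedness of the spectral projector. A secondary technical point is the stochastic Fubini interchange for the martingale term, justified by the predictability of $H$ together with $H\in L^2(\Omega_T;\hS)$.
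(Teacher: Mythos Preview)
Your proof is correct and takes a genuinely different route from the paper. The paper regularizes via the heat semigroup $P_\tau$: it tests the weak formulation with the heat kernel $p_\tau(x,\cdot)$, applies the scalar It\^o formula to $S(P_\tau u(t)(x))$ pointwise, integrates over $M$, and sends $\tau\to 0$. The most delicate point there is the $G$-term: since $\Div G$ is only distributional, the paper first assumes $G$ is $C^1$ with $\Div G\in L^2$, and then removes this hypothesis by a \emph{second} regularization, smoothing $G$ with the de Rham--Hodge heat semigroup $\mathcal{E}_\tau$ on $1$-forms (and likewise $I$ with $P_\tau$). Your spectral/Galerkin approach collapses this two-stage procedure into a single projection $P_N$; the crucial simplification is the duality identity
\[
\int_M S'(u_N)\Bigl(\sum_{k\le N}g_k e_k\Bigr)\psi\,dV_h=\int_M\bigl(G,\nabla P_N(S'(u_N)\psi)\bigr)_h\,dV_h,
\]
which transfers the truncation from $G$ (unprojectable) onto the $H^1$ test function $S'(u_N)\psi$, thereby avoiding any vector-field regularization machinery. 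The price is that you need the spectral description of the $H^1$-norm and the $H^1$-boundedness of $P_N$, both standard on compact $M$. Two small points to tighten: at level $N$ the $F$-term carries $P_N F$ rather than $F$ (this should be stated explicitly, and its convergence is straightforward); and to upgrade the limiting identity from ``for a.e.\ $t$'' to ``for all $t$, $\P$-a.s.'' you should invoke the continuity of both sides in $t$, together with $\sup_{t\in[0,T]}\|(I-P_N)u(t)\|_{L^2(M)}\to 0$, which follows from compactness of the trajectory $\{u(t):t\in[0,T]\}$ in $L^2(M)$.
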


\begin{proof}
The proof uses ideas from \cite{DHV2016}. 
In our context further complications arise, 
since the underlying space is not 
Euclidean but a general Riemannian manifold. 
We use regularization by means of the heat kernel on $(M,h)$. 
To this end, let $(P_\tau)_{\tau\geq 0}$ be the 
heat semigroup on $\Ltwo$ and $p_\tau(x,y)$ its associated 
heat kernel (for its definition, construction 
and main properties, we refer to \cite{Grig,Strichartz}). 

Here, we recall that $(x,y,\tau)\mapsto p_\tau(x,y)$ 
is in $C^\infty(M\times M\times (0,\infty))$, it is 
symmetric in $x$ and $y$ for any $\tau>0$, and it is positive. 
For a function $w\in\Ltwo$,
\begin{equation}\label{eq: action of the heat kernel}
P_\tau w(x)=\int_M p_\tau(x,y)w(y)\,dV_h(y), 
\qquad x\in M,  \; \tau>0,
\end{equation}
and $P_\tau w\in C^\infty(M)$. Moreover,
\begin{equation}\label{eq: continuity of Ptau}
P_\tau w \stackrel{L^2(M)}{\longrightarrow} w,  
\quad \text{as $\tau\to 0^+$}, 
\end{equation}
and
\[
\norm{P_\tau w}_{L^2(M)}\leq \norm{w}_{L^2(M)},  \quad \tau> 0. 
\]
Finally, the following pointwise bounds hold \cite{Davies}:
\begin{equation}\label{eq: pointwise bound for ptau}
	p_\tau(x,y)\lesssim \tau^{-n/2}, 
	\quad  
	\abs{\nabla_yp_\tau(x,y)}_h\lesssim \tau^{-\frac{n+1}{2}},
	\quad
	\abs{\nabla^2_yp_\tau(x,y)}_h\lesssim \tau^{-\frac{n+4}{2}}, 
\end{equation}
for $x,y\in M$ and $\tau>0$, where $n$ is the dimension of $M$. 

In the course of the proof, we will also make use of the heat kernel on forms. 
We hence begin by making a brief digression (for further details, 
we see \cite{Elworthy,Bakry,deRham}). 
Denote by $(\mathcal{E}_\tau)_{\tau\geq 0}$ the de Rham-Hodge 
semigroup on 1-forms, associated to the de Rham-Hodge Laplacian, 
which by elliptic regularity has a kernel $e(\tau,\cdot,\cdot)$. 
More precisely, for any $\tau>0$, $e(\tau,
\cdot,\cdot)$ is a double form on $M\times M$, such 
that for any 1-form $\omega\in \Ltwo$ and any $p\in M$,
\[
(\mathcal{E}_\tau\omega)(p)
= \int_M e(\tau,p,q)\wedge\star_q\,\omega(q), 
\]
where $\star$ is the Hodge star operator, $q$ is a 
typical point in $M$ and $\wedge$ is the wedge product between forms. 
Concretely, in a coordinate patch $(U,(x^i))$ around $p$ 
and in a coordinate patch $(U',(y^j))$ around $q$, 
if we write the double form $e(\tau,\cdot,\cdot)$ as
\[
e(\tau,x,y)=\left(e(\tau,x,y)_{ij}\,dx^i\right)\, dy^j
\]
and $\omega$ as $\omega(y)=\omega_k(y)\, dy^k$, then 
the above integral becomes
\[
(\mathcal{E}_\tau\omega)(x)= \left(\int_M e(\tau,x,y)_{ij}\,h^{jk}(y)
\,\omega_k(y) \,dV_h(y)\right)\, dx^i. 
\]
For a vector field $V$, we denote by $V^\flat$ the 1-form 
obtained by lowering an index via the metric $h$; analogously, 
for a 1-form $\omega$, we denote by $\omega^\sharp$ the 
vector field obtained by raising an index always via the metric.

We define for a vector field $V$ the following quantity
\[
\mathcal{E}_\tau V := ((\mathcal{E}_\tau V^\flat))^\ast. 
\] 
We have the following remarkable properties: 
for $V\in \overrightarrow{\Ltwo}$,
\begin{itemize}

\item $\mathcal{E}_\tau V$ is a smooth vector field for any $\tau>0$,

\item $\mathcal{E}_\tau V\to V$ in $\overrightarrow{\Ltwo}$ as $\tau\to 0^+$,

\item $\norm{\mathcal{E}_\tau V}_{\overrightarrow{L^2(M)}}
\leq \norm{V}_{\overrightarrow{L^2(M)}}$, for any $\tau\geq 0$. 
\end{itemize}
Furthermore, in analogy with \eqref{eq: action of the heat kernel}, 
the following local expression holds
\[
(\mathcal{E}_\tau V)(x)= 
\left(\int_M e(\tau,x,y)_{ij} 
\,V^{j}(y) \,dV_h(y)\right)h^{ik}(x)\partial_k. 
\]
Our aim now is to compute $\Div \,\mathcal{E}_\tau V$, for fixed $\tau>0$. 
To this end, we observe that the kernel $e(\tau,x,y)$ is jointly 
smooth in $x$ and $y$, and that by assumption $V^j$ is integrable. 
Therefore, we are allowed to interchange differentiation 
and integration, to obtain
\begin{equation}\label{eq: div del campo vettoriale regolarizzato}
\begin{split}
\Div \,\mathcal{E}_\tau V(x) &= \int_M 
\partial_k e(\tau,x,y)_{ij}\,V^{j}(y)\,dV_h(y)\,h^{ik}(x)
\\ & \hspace{0.7cm}+\int_M e(\tau,x,y)_{ij}\,V^{j}(y)
\,dV_h(y)\,\partial_kh^{ik}(x)
\\ &\hspace{0.7cm}
+\Gamma^\rho_{\rho k}(x)
\int_M e(\tau,x,y)_{ij}\,V^{j}(y)\,dV_h(y)h^{ik}(x), 
\end{split}
\end{equation}
in local coordinates $x$ (differentiation is carried out in $x$).

We now fix $\tau>0$ and $x\in M$, and 
use $y\mapsto p_\tau(x,y)$ as a test function in \eqref{eq: wide class}, 
obtaining, $\P$-a.s., for any $t\in[0,T]$,
\begin{align*}
P_\tau u(t)(x)=& \, P_\tau u_0(x) 
\\ & + \int_0^tP_\tau F(s)(x)\,ds
-\int_0^t\left( \,\int_M\left(G(s),\nabla_yp_\tau(x,y)\right)_h 
\, dV_h(y)\right)\, ds \\
& +\int_0^t\int_MI(s)\Delta_yp_\tau(x,y)\,dV_h(y)\,ds
+  \left(\,\int_0^t H(s)\,dW(s),p_\tau(x,\cdot)\right)_{L^2(M)},
\end{align*}
where we have employed the stochastic Fubini theorem \cite[Thm 4.33]{DZ}. 
The last term may be written as
\[
\left(\int_0^t H(s)\,dW(s),p_\tau(x,\cdot)\right)_{L^2(M)}
= \sum_{k\geq 1}\int_0^t
P_\tau H_k(s)(x)\,d\beta_k(s).
\]
In view of the predictability and integrability assumptions 
made on $F,G,I, H$, we are allowed to apply 
the classical It\^{o} formula \cite[Thm.~4.32]{DZ} 
to $S(P_\tau u(t))$. We multiply 
the result by $\psi(x)$, obtaining, $\P$-a.s., 
for any $t\in[0,T]$, $x\in M$, and $\tau>0$, 
\begin{align*}
S(P_\tau u(t))\psi 
=   S &( P_\tau  u_0)  \psi  
+\int_0^t S'(P_\tau u(s))\psi P_\tau F(s)\,ds 
\\ & -\int_0^t S'(P_\tau u(s))\psi
\left(\, \int_M\left(G(s),\nabla_yp_\tau(\cdot,y)\right)_h 
\,dV_h(y)\right)\, ds 
\\ & +\int_0^tS'(P_\tau u(s))\psi
\left(\int_MI(s)\Delta_yp_\tau(\cdot,y)\,dV_h(y)\right) \, ds 
\\ & +\sum_{k\geq 1}\int_0^t S'(P_\tau u(s))\psi  
P_\tau H_k(s)\,d\beta_k(s) 
\\ & + \frac12 \sum_{k\geq 1}\int_0^t S''(P_\tau u(s))
\psi \left(P_\tau H_k(s)\right)^2\,ds.
\end{align*}
In view of
\begin{equation}\label{eq: growth S 1}
\abs{S(\xi)}\leq C\left(1+\xi^2\right),  
\qquad \abs{S'(\xi)}\leq C\left( 1+\abs{\xi} \right),  
\qquad \xi\in\R, 
\end{equation}
the pointwise estimates \eqref{eq: pointwise bound for ptau} 
on $p_\tau$ and the integrability assumptions on $u$, we can 
integrate the expression above w.r.t.~$dV_h(x)$, arriving at 
\begin{equation}\label{eq: proto-Ito}
\begin{split}
\int_M & S(P_\tau u(t))\psi(x)\, dV_h(x)
= \, \int_MS(P_\tau u_0)\psi(x)\,dV_h(x) \\
&+\int_0^t \int_MS'(P_\tau u(s))\psi(x)P_\tau F(s)\,dV_h(x)\,ds \\
&- \int_0^t \int_MS'(P_\tau u(s))\psi(x)
\left( \int_M\big(G(s),\nabla_yp_\tau(x,y)\big)_h 
\, dV_h(y)\right)\, dV_h(x)\,ds\\
&+\int_0^t\int_MS'(P_\tau u(s))\psi(x)
\left(\int_MI(s)\Delta_yp_\tau(x,y)\,dV_h(y)\right)\, dV_h(x)\,ds \\
&+\int_M\sum_{k\geq 1}\int_0^t 
S'(P_\tau u(s))\psi(x)  P_\tau H_k(s)\,d\beta_k(s)\,dV_h(x)\\
&+ \frac12\sum_{k\geq 1} \int_0^t \int_MS''(P_\tau u(s))\psi(x) 
\left(P_\tau H_k(s)\right)^2\,dV_h(x)\,ds, 
\end{split}
\end{equation}
$\P$-a.s., for any $t\in[0,T]$, and $\tau>0$. 
Moreover, we can interchange the integrals 
appearing in the formula above thanks to the 
Fubini and stochastic Fubini theorems.

The aim is to prove that, as $\tau\to 0^+$, each term 
in \eqref{eq: proto-Ito} converges $\P$-a.s.~to its corresponding 
term in \eqref{eq: generalized Ito}. Let us begin with the term 
on the left hand side: in view of \eqref{eq: growth S 1}, it 
follows that $S(u(t))\in\Lp{1}$, $\P$-a.s., for any $t\in[0,T]$.
Moreover, since $\abs{S(r_1)-S(r_2)}
\leq C\left( (1+\abs{r_2})\abs{r_1-r_2}+\abs{r_1-r_2}^2\right)$, 
for any $r_1,r_2\in\R,$ 
\begin{align*}
\int_M  & \abs{S(P_\tau u(t))-S(u(t))}\,\abs{\psi(x)}\,dV_h(x) \\
&\leq C\norm{\psi}_{L^\infty} \int_M 
\abs{P_\tau u(t)-u(t)}(1+|u(t)|)\,dV_h(x) 
\\ & \hspace{0.75cm}
+C\norm{\psi}_{L^\infty} \int_M \abs{P_\tau u(t)-u(t)}^2\,dV_h(x)\\
&\leq C \norm{P_\tau u(t)-u(t)}_{L^2(M)}
\norm{1+\abs{u(t)}}_{L^2(M)}
+ C \norm{P_\tau u(t)-u(t)}^2_{L^2(M)}, 
\end{align*}
and thus, by \eqref{eq: continuity of Ptau}, we obtain
\[
\int_M  S(P_\tau u(t))\psi(x)\,dV_h(x) 
\overset{\tau\downarrow 0}{\to} 
\int_M  S(u(t))\psi(x)\,dV_h(x),  
\quad \text{$\P$-a.s., $t\in[0,T]$}.
\]
Similarly, $\P$-a.s., $\int_M  S(P_\tau u_0)\psi(x)\,dV_h(x)
\to \int_M  S(u_0)\psi(x)\, dV_h(x)$.

In view of \eqref{eq: growth S 1}, $S'(u(t))\in\Ltwo$, $\P$-a.s., 
$t\in[0,T]$, and
\begin{align*}
\int_0^t\int_M  &
\abs{S'(P_\tau u(t))P_\tau F(s)-S'(u(t))F(s)}\,\abs{\psi(x)}
\,dV_h(x)\, ds \\
&\leq \norm{\psi}_{L^\infty} 
\int_0^t\int_M \abs{S'(P_\tau u(s))-S'(u(s))}\,\abs{P_\tau F(s)}
\,dV_h(x)\, ds \\
&\hspace{0.75cm}+\norm{\psi}_{L^\infty} \int_0^t
\int_M \abs{S'(u(s))}\,\abs{P_\tau F(s)-F(s)}\,dV_h(x)\, ds.
\end{align*}
Since $\abs{S'(r_1)-S'(r_2)}\leq 
\norm{S''}_{L^\infty}\abs{r_1-r_2}$, 
for any $r_1,r_2\in\R$, we infer
\begin{align*}
\int_0^t\int_M  & 
\abs{S'(P_\tau u(t))P_\tau F(s)-S'(u(t))F(s)}\,\abs{\psi(x)}
\,dV_h(x)\, ds\\
&\leq C \int_0^t\int_M\abs{P_\tau u(s) -u(s)}\,\abs{P_\tau F(s)}
\,dV_h(x)\, ds \\
&\hspace{0.75cm}+C \int_0^t\int_M (1+\abs{(u(s)})\,
\abs{P_\tau F(s)-F(s)}\,dV_h(x)\, ds\\
&\leq C \int_0^t\norm{P_\tau u(s)-u(s)}_{L^2(M)}
\,\norm{F(s)}_{L^2(M)}\,ds\\
&\hspace{0.75cm}
+C \int_0^t\norm{P_\tau F(s)-F(s)}_{L^2(M)}\,ds\\
&\hspace{0.75cm}
+C \int_0^t\norm{u(s)}_{L^2(M)}\,
\norm{P_\tau F(s)-F(s)}_{L^2(M)}\,ds,
\end{align*}
where we have used H\"older's inequality to 
obtain the second inequality. Since 
$\norm{P_\tau u(s) -u(s)}_{L^2(M)}\to 0$, $\P$-a.s., for each $s$, 
and $\norm{P_\tau F(s)-F(s)}_{L^2(M)}\to 0$ for 
$\P\otimes dt$-a.e.~$(\omega,s)$, as $\tau\to 0^+$, we 
can use the dominated convergence theorem to conclude that 
the terms above go to zero. Thus, $\P$-a.s., for any 
$t\in [0,T]$,
\[
\int_0^t \int_MS'(P_\tau u(s))\psi(x)P_\tau F(s)\,dV_h(x)\, ds
\overset{\tau\downarrow 0}{\to} 
\int_0^t \int_MS'(u(s))\psi(x)F(s)\,dV_h(x)\, ds. 
\]

Let us now deal with the last term in \eqref{eq: proto-Ito}. 
We have
\begin{align*}
\sum_{k\geq 1}&\int_0^t\int_M
\abs{S''(P_\tau u(s))(P_\tau H_k(s))^2-S''(u(s))H_k^2(s)}
\,\abs{\psi(x)} \,dV_h(x)\,ds\\
&\leq \norm{\psi}_{L^\infty} \sum_{k\geq 1}\int_0^t\int_M
\abs{S''(P_\tau u(s))}\, \abs{(P_\tau H_k(s))^2-H_k^2(s)}
\,dV_h(x)\,ds \\
&\qquad +\norm{\psi}_{L^\infty}\sum_{k\geq 1}
\int_0^t\int_M \abs{S''(P_\tau u(s))-S''(u(s))}
\,\abs{H_k^2(s)}\,dV_h(x)\,ds\\
&=: I_1+I_2. 
\end{align*}
By the general fact $\norm{a^2-b^2}_{L^1} 
\leq \norm{a-b}_{L^2}\norm{a+b}_{L^2}$,
\begin{align*}
I_1 &\leq \norm{\psi}_{L^\infty} \norm{S''}_{L^\infty}\sum_{k\geq 1}
\int_0^t \norm{(P_\tau H_k(s))^2-H_k^2(s)}_{L^1(M)}\,ds \\
& \leq C \sum_{k\geq 1}\int_0^t \norm{P_\tau H_k(s)-H_k(s)}_{L^2(M)}
\norm{P_\tau H_k(s)+H_k(s)}_{L^2(M)}\,ds. 
\end{align*}
We have $\norm{P_\tau H_k(s)-H_k(s)}_{L^2(M)}\to 0$ 
as $\tau\to 0^+$ for $\P\otimes dt$-a.e.~$(\omega,s)$.
Furthermore, $\norm{P_\tau H_k(s)\pm H_k(s)}_{L^2(M)}
\leq 2 \norm{H_k(s)}_{L^2(M)}$ for 
$\P\otimes dt$-a.e.~$(\omega,s)$ and 
(by assumption) $H\in\LOT{2}{\hS}$. Therefore, by the 
dominated convergence theorem, 
$I_1\to 0$ as $\tau\to 0^+$, $\P$-a.s., $t\in[0,T]$. 

For the term $I_2$ we proceed in this way: since 
$P_\tau u(s)\to u(s)$ in $\Ltwo$ 
as $\tau\to 0^+$, $\P$-a.s., for a.e.~$s$, we 
infer by Lemma \ref{lemma: strumentale} that
\[
S''(P_\tau u(s))\overset{\tau\downarrow 0}{\to} 
S''(u(s))\;\; 
\mbox{in }\,\, \Lp{1}, 
\quad \text{$\P$-a.s., $s\in [0,t]$.}
\] 
Let $(\tau_j)_j$ be a sequence tending to $0$ as $j\to\infty$. 
We extract a subsequence $(\tau_{j_i})_i$. By the reverse 
dominated convergence theorem, we can extract a 
further subsequence (not relabelled) such that 
$\abs{S''(P_{\tau_{j_i}} u(s)(x))- S''(u(s)(x))}\to 0$ 
as $i\to \infty$, $\P$-a.s., for a.e.~$x\in M$ and $s\in [0,t]$. 
Thus, $\sum_{k}\abs{H_k^2(s)(x)}\,
\abs{S''(P_{\tau_{j_i}} u(s)(x))- S''(u(s)(x))}$ converges to 
zero. Besides, $\sum_{k}\abs{H_k^2(s)(x)}\,
\abs{S''(P_{\tau_{j_i}} u(s)(x))- S''(u(s)(x))}$ 
can be bounded by $2\norm{S''}_{L^\infty}\sum_{k\geq 1}H_k^2(s)(x)$, 
$\P$-a.s., for a.e.~$x\in M$ and $s\ge 0$. 
In view of our assumptions, the latter term belongs to $\Lp{1}$, $\P$-a.s., 
for a.e.~$s\ge 0$. By the dominated convergence theorem, we infer 
$\sum_{k}\abs{H_k^2(s)}\, \abs{S''(P_{\tau_{j_i}}u(s))- S''(u(s))}\to 0$ 
in $\Lp{1}$ as $i\to\infty$, $\P$-a.s., for a.e.~$s\in [0,t]$. 
This is true for the original sequence $(\tau_j)_j$ as well. 
In other words, 
\[
\norm{\, \sum_{k\geq 1}\abs{H_k^2(s)} \,
\abs{S''(P_{\tau_{j_i}} u(s))- S''(u(s))}}_{L^1(M)}
\overset{\tau\downarrow 0}{\to} 0,
\quad \text{$\P$-a.s., $s\in [0,t]$}.
\]
This last quantity is bounded by $2\norm{S''}_{L^\infty} 
\norm{H(s)}^2_{\hS}$, which is $\P\otimes dt$ integrable by assumption. 
By dominated convergence, we eventually conclude that 
\[
I_2\overset{\tau\downarrow 0}{\to} 0, 
\quad \text{$\P$-a.s., $t\in[0,T]$}. 
\]

Next we consider the stochastic term. First of all, by 
the Burkholder-Davis-Gundy inequality \eqref{eq: BDG} 
and \eqref{eq: growth S 1}, 
\begin{align*}
\mathcal{I} & :=\EE\sup_{0\leq t\leq T}
\abs{\sum_{k\geq 1}\int_0^t\int_M S'(u(s))H_k(s)\psi(x)
\,dV_h(x)\, d\beta_k(s)}
\\ &\leq C \EE\left(\int_0^T\sum_{k\geq 1}
\abs{\int_M S'(u(s))H_k(s)\psi(x)\, dV_h(x)}^2 \, ds \right)^\frac12  
\\ &\leq C \EE\left(\int_0^T\norm{S'(u(s))}^2_{L^2(M)}\norm{H(s)}^2_{\hS}
\,ds \right)^\frac12  
\\ &\leq C \EE\left(\int_0^T\left(1+ \norm{u(s)}^2_{L^2(M)}\right)
\norm{H(s)}^2_{\hS}\,ds \right)^\frac12  
\\ & \leq C \EE\left(\int_0^T\norm{u(s)}^2_{L^2(M)}
\,\norm{H(s)}^2_{\hS}\,ds \right)^\frac12  \\
&\hspace{1cm}+ C \EE\left(\int_0^T\norm{H(s)}^2_{\hS}\,ds \right)^\frac12,
\end{align*}
where we have used the inequality $\sqrt{x+y}\leq\sqrt{x}+\sqrt{y}$, 
valid for $x,y\geq 0$. Therefore, by H\"older's inequality and the 
integrability assumptions on $u$ and $H$,
\begin{align*}
\mathcal{I} & \leq C \EE\left(\sup_{0\leq s\leq T}
\norm{u(s)}^2_{L^2(M)}\,\int_0^T\norm{H(s)}^2_{\hS}\,ds \right)^\frac12  \\
&\hspace{1cm}+ C \left(\EE\int_0^T\norm{H(s)}^2_{\hS}\,ds\right)^\frac12 \\
& \leq C 
\left(\EE\int_0^T\norm{H(s)}^2_{\hS}\,ds \right)^\frac12
\left[ 1+
\left(\EE\sup_{0\leq s\leq T}
\norm{u(s)}^2_{L^2(M)}\right)^\frac12\right]\, ,
\end{align*}
which is finite. Thus, the stochastic integral 
in \eqref{eq: generalized Ito} is well defined. 
Therefore, arguing as above,
\begin{align*}
\EE\sup_{0\leq t\leq T}& 
\abs{\sum_{k\geq 1} \int_0^t\int_M 
\left[S'(P_\tau u(s))P_\tau H_k(s)-S'(u(s))H_k(s)\right]\psi(x)\,dV_h(x)
\, d\beta_k(s) }\\
&\leq C \EE\left(\int_0^T\norm{S'(P_\tau u(s))-S'(u(s))}^2_{L^2(M)}
\norm{P_\tau H(s)}^2_{\hS}\,ds \right)^\frac12  \\
&\hspace{0.5cm}+ C \EE\left(\int_0^T
\norm{S'(u(s))}^2_{L^2(M)}
\norm{P_\tau H(s)-H(s)}^2_{\hS}\,ds \right)^\frac12\\
&=: J_1 + J_2. 
\end{align*}
We have
\[
J_1 \leq \,C \EE\left(\int_0^T
\norm{P_\tau u(s))-u(s)}^2_{L^2(M)}\norm{H(s)}^2_{\hS}
\,ds \right)^\frac12.   
\]
The term $\norm{P_\tau u(s))-u(s)}^2_{L^2(M)}\norm{H(s)}^2_{\hS}$ 
is dominated by the quantity $2\sup_{0\leq s\leq T}
\norm{u(s)}^2_{L^2(M)}\norm{H(s)}^2_{\hS}$ 
for $\P\otimes ds$ a.e.~$(\omega,s)$, which is integrable 
in time, $\P$-almost surely. Moreover, 
$\norm{P_\tau u(s))-u(s)}^2_{L^2(M)}\to 0$ 
as $\tau\to 0^+$, for $\P\otimes ds$ a.e.~$(\omega,s)$. 
Hence, by the dominated convergence theorem,
\[
\left(\int_0^T\norm{P_\tau u(s))-u(s)}^2_{L^2(M)}
\norm{H(s)}^2_{\hS}\, ds \right)^\frac12 
\overset{\tau\downarrow 0}{\to} 0,  
\quad \text{$\P$-almost surely}. 
\]
This last quantity is bounded uniformly in $\tau$ by 
\[
2\left(\sup_{0\leq s\leq T}\norm{u(s)}^2_{L^2(M)}\right)^{\frac12}
\left(\int_0^T\norm{H(s)}^2_{\hS}\, ds\right)^{\frac12},
\quad \text{$\P$-a.s.},
\]
which is $\P$-integrable by assumption. 
Applying again the dominated convergence theorem, the 
conclusion is that
\[
J_1\to 0,  \;\; \mbox{as } \tau\to 0^+. 
\]
Because $\norm{P_\tau H(s)-H(s)}_{\hS}^2\to 0$ 
for $\P\otimes ds$ a.e.~$(\omega,s)$, the same 
conclusion holds true for $J_2$ as well. 
Therefore, up to a subsequence, we obtain 
\begin{align*}
\sum_{k\geq 1}\int_0^t & 
\int_M S'(P_\tau u(s))P_\tau H_k(s)
\,\psi(x)\,dV_h(x)\, d\beta_k(s)
\\&
\overset{\tau\downarrow 0}{\to}
\left(\int_0^t S'(u(s))H(s)\,dW(s),\psi\right)_{L^2(M)},  
\quad \text{$\P$-almost surely}.
\end{align*}

We are going to examine the remaining terms:
\[
A(\tau):=- \int_0^t \int_MS'(P_\tau u(s)(x))\psi(x)
\left( \int_M\left(G(s),
\nabla_yp_\tau(x,y)\right)_h \, dV_h(y)\right) \, dV_h(x)\,ds
\]
and 
\[
B(\tau):=\int_0^t\int_MS'(P_\tau u(s))\psi(x)
\left(\int_MI(s)\Delta_yp_\tau(x,y) \, dV_h(y)\right)\, dV_h(x)\,ds. 
\]

We temporarily assume that
\begin{itemize}
\item  for $\P\otimes dt$-a.e.~$(\omega,t)\in\Omega_T$, $G(\omega,t)$ 
is a $C^1$ vector field (so that $\Div G$ exists 
for $\P\otimes dt$-a.e.~$(\omega,t)\in\Omega_T$),

\item $\Div G\in\LOT{2}{\Ltwo}$.

\item  for $\P\otimes dt$-a.e.~$(\omega,t)\in\Omega_T$, $I(\omega,t)$ 
is in $C^2(M)$ (so that $\Delta_h I$ exists for 
$\P\otimes dt$-a.e.~$(\omega,t)\in\Omega_T$), and

\item $\Delta_h I\in\LOT{2}{\Ltwo}$.
\end{itemize}
As a result we are now allowed to integrate by parts in 
the expressions above. 

For the first term we obtain, $\P$-a.s.,
\begin{align*}
A_\tau &=\int_0^t \int_MS'(P_\tau u(s)(x))\,\psi(x)
\left( \int_M\mbox{div}_{h,y} G(s,y)p_\tau(x,y)
\, dV_h(y)\right)\, dV_h(x)\,ds
\\ & 
=\int_0^t \int_MS'(P_\tau u(s)(x))\,\psi(x)\, 
P_\tau\left(\mbox{div}_{h,y} G(s)\right)(x)\, dV_h(x)\,ds.
\end{align*}
For this reason, arguing exactly as was done with the $F$-term, we 
conclude that this quantity converges as $\tau\to 0^+$ to
\begin{align*}
\int_0^t &\int_MS'(u(s)(x))\,\psi(x)\, 
\left(\mbox{div}_{h,y} G(s)\right)(x)\, dV_h(x)\, ds\\
& =\int_0^t \int_MS'(u(s)(x))\,\psi(x)\, \Div G(s)(x)\, dV_h(x)\, ds, 
\end{align*}
$\P$-a.s., $t\in[0,T]$.
Observe that, in view of the integrability 
assumptions made on $u$ and \eqref{eq: growth S 1}, 
the chain rule for Sobolev functions implies 
$S'(u(s))\psi\in\Hone$ 
for $\P\otimes dt$ a.e.~$(\omega,s)\in\Omega_T$, a fact that enables us 
to integrate by parts in the last expression. 
Summarizing, under the additional smoothness assumptions 
made above on $G$, we have proved that
\[
A_\tau\overset{\tau\downarrow 0}{\to}
-\int_0^t \int_M\left( G(s),\nabla\left(S'(u(s))
\psi\right) \right)_h \, dV_h(x)\, ds, 
\quad
\text{$\P$-a.s., $t\in[0,T]$}. 
\]
A completely analogous result holds for $B_\tau$:
\[
B_\tau
\overset{\tau\downarrow 0}{\to}
-\int_0^t \int_M\left( \nabla I(s),\nabla\left(S'(u(s))
\psi\right) \right)_h \, dV_h(x)\, ds, 
\quad
\text{$\P$-a.s., $t\in[0,T]$}. 
\]

Recapping the story so far, the generalized It\^{o} formula 
holds under additional smoothness assumptions on $G$ and $I$. 
To establish the formula in the general case, we must 
apply a regularization procedure, using the heat kernel for $I$ and 
the heat kernel on forms for $G$.

Assume $G\in\LOTV$, $I\in\LOT{2}{\Hone}$ are $\Ltwo$-predictable. 
Consider an arbitrary sequence $(\tau_l)_l\subset(0,1)$ with 
$\tau_l\to 0^+$ as $l\to \infty$. We start by regularizing $G$. 
Indeed, $\mathcal{E}_{\tau_l}G(s)$ is a smooth vector field 
for $\P\otimes dt$ a.e.~$(\omega,s)\in\Omega_T$. 
Obviously, $\mathcal{E}_{\tau_l}G\in\LOTV$ is predictable. 
Finally, $\Div \,\mathcal{E}_{\tau_l}G$ exists and is in $\Ltwo$ 
for $\P\otimes dt$ a.e.~$(\omega,s)\in\Omega_T$. 
To show that $\Div \,\mathcal{E}_{\tau_l}G$ is in $\LOT{2}{\Ltwo}$ 
we proceed like this: for any $\gamma\in C^\infty(M)$, 
consider the $\R$-valued stochastic process
\[
\Omega_T \ni (\omega,t) \mapsto\Lambda_\gamma
= (\Div \,\mathcal{E}_{\tau_l}G(t),\gamma)_{L^2(M)}
=-(\mathcal{E}_{\tau_l}G(t),\nabla\gamma)_{\overrightarrow{L^2(M)}}.  
\]
Since (by assumption) $G$ is predictable and 
$\mathcal{E}_{\tau_l}$ is a continuous from 
$\overrightarrow{\Ltwo}$ to itself, it follows 
that $\mathcal{E}_{\tau_l}G$ is predictable. 
Moreover, by continuity of the scalar product, we 
immediately infer that $\Lambda_\gamma$ is predictable 
for any $\gamma\in C^\infty(M)$ as well. 
Because $\Ltwo$ is separable and $\overline{C^\infty(M)}^{L^2(M)}=\Ltwo$, 
the image of $C^\infty(M)$ under the Riesz isomorphism 
of $\Ltwo$ is a norming set, and thus, in view 
of \cite[Cor.~4, Ch.~2]{DiestelUhl}, we conclude 
that $\Div \,\mathcal{E}_{\tau_l}G$ is predictable. In particular, it 
is $\mathcal{F}\otimes\mathcal{B}([0,T])/\mathcal{B}(L^2(M))$ 
measurable. Given a Banach space $X$, we recall 
that a subset $G\subset X^\ast$ is norming if 
$\norm{x}=\sup\limits_{x^\ast\in G}
\abs{\langle x^\ast,x\rangle}/\norm{x^\ast}$ for each $x\in X$. 

By means of \eqref{eq: div del campo vettoriale regolarizzato}, we 
may write locally, for $\P\otimes dt$-a.e.~$(\omega,s)\in\Omega_T$,
\begin{align*}
&\Div  \,\mathcal{E}_{\tau_l} G(s)(x) 
= \int_M \partial_k e(\tau_l,x,y)_{ij}\,G^{j}(s,y)\,dV_h(y)\,h^{ik}(x)
\\ & \qquad +\int_M  e(\tau_l,x,y)_{ij}\,G^{j}(s,y)\,dV_h(y)
\left(\partial_kh^{ik}(x) 
+ \Gamma^\rho_{\rho k}(x)h^{ik}(x)\right).
\end{align*}
Hence, for each fixed $l\in\N$, there is a 
constant $C(M,h,\tau_l)>0$ such that
\[
\abs{\Div \mathcal{E}_{\tau_l}G(s)(x)}\leq C(M,h,\tau_l)\,
\norm{G(s)}_{\overrightarrow{\Ltwo}}, 
\]
for $\P\otimes dt$-a.e.~$(\omega,s)\in\Omega_T$ 
and all $x$ in the coordinate patch. Therefore,
\[
\int_M\abs{\Div \mathcal{E}_{\tau_l}G(s)(x)}^2 \, dV_{h}(x)
\leq C(M,h)\,\norm{G(s)}_{\overrightarrow{\Ltwo}}^2,
\]
for $\P\otimes dt$ a.e.~$(\omega,s)\in\Omega_T$, and 
the required integrability is a consequence of $G\in\LOT{2}{\Ltwo}$. 

Concerning the $I$ term, we apply $P_{\tau_l}$ to it. 
Exploiting the general fact
\[
\norm{P_{\tau_l}w}_{\Hone}\leq \norm{w}_{\Hone}, 
\qquad w\in\Hone, 
\]
and arguing as before, we can show that
$P_{\tau_l}I\in\LOT{2}{\Hone}$ and 
$P_{\tau_l}I$, $\Delta_h P_{\tau_l}I$ 
are $L^2(M)$-predictable. 
To verify the required integrability, we need access to an 
explicit expression for $\Delta_h P_{\tau_l}I$. 
According to \cite[Theorem 7.20]{Grig}, locally it holds
\begin{align*}
\Delta_h P_{\tau_l}I(s)(x)& 
= h^{ab}(x)(\partial_{ab}P_{\tau_l}I(s)(x)
-\Gamma_{ab}^c(x)\partial_cP_{\tau_l}I(s)(x))\\
& = h^{ab}(x)\int_M \partial_{ab}p_{\tau_l}(x,y)I(s,y)\,dV_h(y)\\
&\hspace{.75cm}-h^{ab}(x)\Gamma_{ab}^c(x)
\int_M\partial_cp_{\tau_l}(x,y)I(s,y)\,dV_h(y)\\
& =h^{ab}(x)\int_M \left(\partial_{ab}p_{\tau_l}(x,y) 
-\Gamma_{ab}^c(x)\partial_cp_{\tau_l}(x,y)\right)I(s,y)\,dV_h(y)\\
& =\int_M \Delta_{h,x}p_{\tau_l}(x,y) I(s,y)\,dV_h(y). 
\end{align*}
Hence, in view of the integrability assumptions made on $I$ 
and repeating earlier arguments, we infer 
$\Delta_h P_{\tau_l}I\in\LOT{2}{\Ltwo}$.

We are thus allowed to apply It\^{o}'s formula \eqref{eq: generalized Ito} 
with $\mathcal{E}_{\tau_l}G$ and $P_{\tau_l}I$, which will hold 
for any $t\in[0,T]$ and all $\omega\in\Omega_l$ for some 
$\Omega_l$ with $\P(\Omega_l)=1$. Set
\[
\Omega_0:=\bigcap_{l\in\N}\Omega_l, \qquad 
\P(\Omega_0)=1, 
\]
and fix $\omega\in\Omega_0$ and $t\in[0,T]$. 
To conclude the proof, we are required to check that 
\[
\int_0^t \int_M\big( \mathcal{E}_{\tau_l}G(s),
\nabla\left(S'(u(s))\psi\right) \big)_h \, dV_h\,ds
\overset{\tau\downarrow 0}{\to}
\int_0^t \int_M\big( G(s),
\nabla\left(S'(u(s))\psi\right) \big)_h \,dV_h\,ds
\]
and 
\[
\int_0^t \int_M\big( \nabla P_{\tau_l}I(s),
\nabla\left(S'(u(s))\psi\right) \big)_h \, dV_h\,ds
\overset{\tau\downarrow 0}{\to}
\int_0^t \int_M\big( \nabla I(s),
\nabla\left(S'(u(s))\psi\right) \big)_h \,dV_h\,ds.
\]
Regarding the first convergence claim, 
\begin{align*}
\int_0^t & \int_M \abs{\left( \mathcal{E}_{\tau_l}G(s)-G(s),
\nabla\left(S'(u(s))\psi\right) \right)_h} 
\,dV_h\,ds\\
&\leq\int_0^t \int_M \abs{\left( \mathcal{E}_{\tau_l}G(s)-G(s), 
\nabla u(s) \right)_hS''(u(s))\psi}\, dV_h\,ds\\
&\hspace{0.75cm}
+\int_0^t \int_M \abs{\left( \mathcal{E}_{\tau_l}G(s)-G(s), 
\nabla \psi \right)_hS'(u(s))} \,dV_h\,ds\\
&\leq\norm{\psi}_{L^\infty} \norm{S''}_{L^\infty}\int_0^t 
\norm{\mathcal{E}_{\tau_l}G(s)-G(s)}_{\overrightarrow{L^2(M)}}
\norm{u(s)}_{\Hone}\,ds
\\ &\hspace{0.75cm}
+C(M,h)\norm{\nabla \psi}_{L^\infty}\int_0^t 
\norm{\mathcal{E}_{\tau_l}G(s)-G(s)}_{\overrightarrow{L^2(M)}}
\norm{S'(u(s))}_{L^2(M)}\,ds
\\ &\leq C\int_0^t 
\norm{\mathcal{E}_{\tau_l}G(s)-G(s)}_{\overrightarrow{L^2(M)}}
\left(1+\norm{u(s)}_{\Hone}\right)\, ds. 
\end{align*}
The integrand in the last term converges to zero as $l\to\infty$, 
for $\P\otimes dt$-a.e.~$(\omega,s)$, and it is dominated 
by $C(M,h)\norm{G(s)}_{\overrightarrow{L^2(M)}}
\left(1+\norm{u(s)}_{\Hone}\right)$, 
which is integrable on $\Omega_T$. 
Thus, the integral converges to zero by the 
dominated convergence theorem. 

In a completely analogous way, using the fact that 
$P_{\tau_l}w\to w$ in $\Hone$, for $w\in\Hone$, we see that 
also the second convergence claim holds. 
This concludes the proof of Proposition \ref{prop: generalized Ito}.
\end{proof}

\section{Existence result}\label{Sec: Existence}

In this section we will prove the existence part of 
Theorem \ref{thm:well-posed}. We will achieve this by arguing 
that (weak) solutions to the parabolic problem \eqref{eq:target2} 
converge as $\varepsilon\to 0$ to a generalized kinetic 
solution of \eqref{eq:target}. In view of the already established 
rigidity/uniqueness results (cf.~Section \ref{sec: reduction and uniqueness}), 
the existence claim will follow from this.

Complying with Section \ref{sec: Vanishing}, we will without loss 
generality assume that the flux $f$ satisfies 
the conditions \eqref{eq: growth condition 1} and \eqref{eq: growth condition 2}. 
These conditions can be replaced by \eqref{eq: growth condition 0} 
at the expense of carrying out an additional approximation 
(and convergence) argument. We leave the details to the interested reader.

\subsection{Kinetic formulation of parabolic SPDE}

We start by writing the kinetic equation (in weak 
form) linked to the parabolic problem \eqref{eq:target2}.

\begin{proposition}[kinetic formulation]
\label{prop: kinetic formulation of the parabolic problem}
Fix $\varepsilon>0$ and $u_0\in L^\infty(M,h)$. 
Let $u^\varepsilon$ be the unique weak solution of \eqref{eq:target2}, 
with initial data $u^\varepsilon|_{t=0}=u_0$, given by 
Theorem \ref{thm: existence variational solution}. 
Then $\rho^\varepsilon:=\En_{u^\varepsilon>\xi}$ satisfies 
\begin{equation*}
\begin{split}
\int_0^T&\int_M\int_\R\rho^\varepsilon\,\partial_t\psi
\, d\xi \,dV_h(x)\, dt
+\int_M\int_\R\rho_0\,\psi(0,x,\xi)
\, d\xi \,dV_h(x) \\ & 
+\int_0^T\int_M\int_\R\rho^\varepsilon
\left(f'_x(\xi),\nabla \psi\right)_h\, d\xi \,dV_h(x)\, dt\\
&+\varepsilon\int_0^T\int_M\int_\R\rho^\varepsilon\Delta_h\psi
\,d\xi\,dV_h(x)\,dt =m^\varepsilon(\partial_\xi\psi)
\\ & -\sum_{k\geq 1}\int_0^T\int_M\int_\R g_k(x,\xi) \,\psi\,
\nu^\varepsilon_{\omega,t,x}(d\xi)\,dV_h(x)\,d\beta_k(t)
\\ &-\frac12\int_0^T\int_M\int_\R\partial_\xi\psi \,G^2(x,\xi)
\,\nu^\varepsilon_{\omega,t,x}(d\xi)\,dV_h(x)\, dt, 
\qquad \text{$\P$-almost surely},
\end{split}
\end{equation*}
$\forall \psi\in C^\infty_c([0,T)\times M\times\R)$, 
where $\rho_0:=\En_{u_0>\xi}$ and, for 
$\Psi\in C^0_b([0,T]\times M\times\R)$,
\[
\nu^\varepsilon_{\omega,t,x}=\delta_{u^\varepsilon(\omega,t,x)},  
\;
m^\varepsilon(\Psi)=\int_0^T\int_M\int_\R 
\varepsilon\, \abs{\nabla u^\varepsilon(t)}^2_h
\,\Psi(t,x,\xi)\, \delta_{u^\varepsilon(t)}(d\xi)\,dV_h(x)\,dt.
\] 
\end{proposition}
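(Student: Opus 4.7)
The plan is to derive the kinetic formulation by applying the generalized It\^o formula of Proposition~\ref{prop: generalized Ito} to the variational solution $u^\varepsilon$. The parabolic problem \eqref{eq:target2} fits the framework \eqref{eq: wide class} with $F\equiv 0$, $G=-f_x(u^\varepsilon)$, $I=\varepsilon u^\varepsilon$, $H=B(u^\varepsilon)$; the required predictability and integrability properties follow from Theorem~\ref{thm: existence variational solution} together with \eqref{eq: growth condition 1}, \eqref{eq: growth condition 2}, and \eqref{eq: definition of G}. It will suffice to verify the claimed identity for test functions of tensor-product form
\[
\psi(t,x,\xi)=\chi(t)\,\psi_0(x)\,\Psi(\xi),\qquad \chi\in C^\infty_c([0,T)),\ \psi_0\in C^\infty(M),\ \Psi\in C^\infty_c(\R),
\]
and then extend to arbitrary $\psi\in C^\infty_c([0,T)\times M\times\R)$ by a density argument: finite linear combinations of such tensor products are $C^1$-dense, and both sides of the target identity depend continuously on $\psi$ in the $C^1$-norm (the $m^\varepsilon$ and $\nu^\varepsilon$ terms being controlled by $\norm{\psi}_{C^1}$ together with the $L^2$-bounds on $u^\varepsilon$ and $\nabla u^\varepsilon$ of Theorem~\ref{thm: existence variational solution}).

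Set $S(v):=\int_{-\infty}^v\Psi(\xi)\,d\xi$. Since $\Psi$ has compact support, $S$ is bounded, $S'=\Psi\in C^\infty_c(\R)$, and $S''=\Psi'\in L^\infty(\R)$, so the hypotheses of Proposition~\ref{prop: generalized Ito} are met. Applied to $u^\varepsilon$ with test function $\psi_0$, the formula \eqref{eq: generalized Ito} produces, $\P$-a.s.~and for all $t\in[0,T]$, a pointwise-in-time equation for
\[
A(t):=\bigl(S(u^\varepsilon(t)),\psi_0\bigr)_{L^2(M)}=\int_{M\times\R}\rho^\varepsilon(t)\,\Psi(\xi)\,\psi_0(x)\,d\xi\,dV_h.
\]
Multiplying that equation by $\chi'(t)$, integrating over $[0,T]$, and invoking the classical and stochastic Fubini theorems (\cite[Thm.~4.33]{DZ}) to swap $\int_0^T\chi'(t)\int_0^t(\cdots)\,ds\,dt=-\int_0^T\chi(s)(\cdots)\,ds$, reproduces on the left-hand side exactly the terms $\int\rho^\varepsilon\partial_t\psi\,d\xi\,dV_h\,dt+\int\rho_0\,\psi(0,\cdot,\cdot)\,d\xi\,dV_h$ (the latter being the boundary contribution $\chi(0)A(0)$, since $\chi(T)=0$).

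It then remains to identify the right-hand side. For the convective term, integration by parts and the geometry-compatibility condition \eqref{eq: gc} (differentiated in $\xi$) imply that the auxiliary vector field $K_x(v):=\int_{-\infty}^v\Psi(\eta)f'_x(\eta)\,d\eta$ is divergence-free in $x$ for every $v\in\R$; hence
\[
\int_M\bigl(f_x(u^\varepsilon),\nabla[\Psi(u^\varepsilon)\,\psi_0]\bigr)_h\,dV_h
=\int_M\int_\R\rho^\varepsilon\,\Psi(\xi)\bigl(f'_x(\xi),\nabla\psi_0\bigr)_h\,d\xi\,dV_h.
\]
For the parabolic term, the chain-rule identity $\Psi(u^\varepsilon)\nabla u^\varepsilon=\nabla S(u^\varepsilon)$ combined with integration by parts on $(M,h)$ gives
\[
-\varepsilon\int_M\bigl(\nabla u^\varepsilon,\nabla[\Psi(u^\varepsilon)\psi_0]\bigr)_h\,dV_h
=-\varepsilon\int_M\Psi'(u^\varepsilon)|\nabla u^\varepsilon|^2_h\psi_0\,dV_h+\varepsilon\int_M S(u^\varepsilon)\Delta_h\psi_0\,dV_h;
\]
after integrating in $s$ against $\chi(s)$, the first piece matches $-m^\varepsilon(\partial_\xi\psi)$ by the very definition of $m^\varepsilon$, while the second rewrites as $\varepsilon\int\rho^\varepsilon\Delta_h\psi\,d\xi\,dV_h$. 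Finally, since $\nu^\varepsilon=\delta_{u^\varepsilon}$, the martingale and It\^o-correction terms are precisely $\sum_k\int g_k\psi\,\nu^\varepsilon(d\xi)\,dV_h\,d\beta_k$ and $\tfrac12\int\partial_\xi\psi\,G^2\,\nu^\varepsilon(d\xi)\,dV_h\,ds$ respectively.

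I expect the main technical subtlety to be the bookkeeping for the flux term: one must carefully justify the exchange of $\partial_\xi$ and $\Div$ acting on $f_x(\xi)$ (which is permitted because $f$ is smooth in both variables), check that the contribution at $\xi=-\infty$ in the definition of $K_x$ vanishes (ensured by the compact $\xi$-support of $\Psi$), and verify via the chain rule that the divergence-free property of $K_x(v)$ at fixed $v$ transfers to the composition $x\mapsto K_x(u^\varepsilon(x))$ so as to produce the desired term $\int\rho^\varepsilon(f'_x,\nabla\psi)_h\,d\xi\,dV_h$. All remaining computations are routine applications of It\^o's formula, the chain rule, and integration by parts on $(M,h)$.
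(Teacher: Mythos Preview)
Your proposal is correct and follows essentially the same route as the paper: apply the generalized It\^o formula (Proposition~\ref{prop: generalized Ito}) with $F\equiv 0$, $G=-f_x(u^\varepsilon)$, $I=\varepsilon u^\varepsilon$, $H=B(u^\varepsilon)$ and $S(\xi)=\int_{-\infty}^\xi\Psi$, use the geometry-compatibility condition to recast the flux contribution, integrate against a time test function, and then extend from tensor products to general $\psi\in C_c^\infty([0,T)\times M\times\R)$ by density. The only cosmetic difference is that the paper first passes from $\psi_0\otimes\Psi$ to arbitrary $\psi\in\mathcal D(M\times\R)$ (obtaining a pointwise-in-time identity) and only afterwards brings in the time variable via $\theta\in\mathcal D(-T,T)$ and a Whitney extension, whereas you work directly with triple tensor products; the computations and key ingredients are the same.
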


\begin{proof}
As the proof is fairly standard, we keep the level of 
detail at a minimum. We use the generalized It\^{o} formula 
(cf.~Proposition \ref{prop: generalized Ito}) with 
$F\equiv 0$, $G(s)=-f_x(u^\varepsilon(s))$, 
$I(s)=\varepsilon u^\varepsilon(s)$, $H(s)=B(u^\varepsilon(s))$, 
$\psi\in \mathcal{D}(M)$, and the nonlinear 
function $S(\xi)=\int_{-\infty}^\xi\lambda(r)\,dr$, 
$\lambda\in \mathcal{D}(\R)$. Taking into account \eqref{eq: gc}, 
the result is the following equation 
which holds $\P$-a.s., for any $t\in[0,T]$:
\begin{align*}
& \int_M \big\langle \rho^\varepsilon(t),
\lambda \big\rangle_{\mathcal{D}'(\R)}\psi(x) \,dV_h(x)
-\int_M \big\langle \rho_0,\lambda \big\rangle_{\mathcal{D}'(\R)} 
\psi(x) \,dV_h(x)
\\ & \quad =
\int_0^t \int_M \Big\langle \partial_\xi
(\varepsilon\,\abs{\nabla u^\varepsilon(s)}_h^2
\,\delta_{u^\varepsilon(s)}),\lambda \Big \rangle_{\mathcal{D}'(\R)}
\psi(x) \,dV_h(x) \,ds
\\ & \quad\qquad
-\varepsilon \int_0^t\int_M
\left( \nabla\big \langle \rho^\varepsilon(s),
\lambda\big\rangle_{\mathcal{D}'(\R)}, \nabla\psi\right)_h\,dV_h(x)\,ds
\\ & \quad \qquad\quad
-\int_0^t\int_M \Div \big\langle\rho^\varepsilon(s),f'_x(\cdot)\,
\lambda\big\rangle_{\mathcal{D}'(\R)}\psi(x)\,dV_h(x)\,ds
\\ & \quad \qquad\quad\quad
+\sum_{k\geq 1}\int_0^t\int_M \big\langle 
\delta_{u^\varepsilon(s)},\lambda\,g_k(x,\cdot)
\big \rangle_{\mathcal{D}'(\R)} \psi(x)\,dV_h(x)\,d\beta_k(s)
\\ & \quad \qquad\quad\quad\quad
-\frac12\int_0^t\int_M\big \langle\partial_\xi
\left(\delta_{u^\varepsilon(s)}G^2(x,\cdot)\right),
\lambda\big\rangle_{\mathcal{D}'(\R)} \,\psi(x)\,dV_h(x)\,ds. 
\end{align*}

As the topological tensor product 
$\mathcal{D}(M)\otimes\mathcal{D}(\R)$ is 
a dense subspace of $\mathcal{D}(M\times\R)$ (cf.~\cite[p.~38]{deRham}), 
the formula remains valid for any $\psi\in\mathcal{D}(M\times\R)$.
Indeed, after some simple manipulations, the following 
equation holds $\P$-a.s., for any $t\in[0,T]$, 
and for any $\psi\in\mathcal{D}(M\times\R)$: 
\begin{align*}
& \int_{M\times\R}\rho^\varepsilon(t)\psi(x,\xi)\,d\xi\,dV_h(x)
-\int_{M\times\R} \rho_0\psi(x,\xi) \,d\xi\,dV_h(x)
\\ & \qquad 
= -\int_0^t\int_M \varepsilon\,\abs{\nabla u^\varepsilon(s)}_h^2
\,(\partial_\xi\psi)(x,u^\varepsilon(s)) \,dV_h(x)\,ds
\\ & \qquad\quad +\varepsilon\int_0^t
\int_{M\times\R} \rho^\varepsilon(s)\Delta_h\psi
\,d\xi\,dV_h(x)\,ds\\ & \qquad\quad\quad 
+\int_0^t\int_{M\times\R} \rho^\varepsilon(s) 
\left(f'_x(\xi)\,\nabla\psi(x,\xi)\right)_h\,d\xi\,dV_h(x)\,ds
\\ & \qquad\quad\quad\quad
+\sum_{k\geq 1}\int_0^t\int_M g_k(x,u^\varepsilon(s))\,
\psi(x,u^\varepsilon(s))\,dV_h(x)\,d\beta_k(s)
\\ & \qquad\quad\quad\quad\quad
+\frac12\int_0^t\int_M(\partial_\xi\psi)(x,u^\varepsilon(s))
\,G^2(x,u^\varepsilon(s))\,dV_h(x)\,ds. 
\end{align*}
We multiply this equation by $\partial_t\theta$ and integrate over $[0,T]$, 
with $\theta\in \mathcal{D}(-T,T)$. The stochastic 
term is handled using ``integration by parts". 
The final result is
\begin{align*}
& \int_0^T\int_{M\times\R} \rho^\varepsilon(t)\psi(x,\xi)
\partial_t\theta(t) \,d\xi\,dV_h(x)\,dt
+\int_{M\times\R} \rho_0\psi(x,\xi)\theta(0) \,d\xi\,dV_h(x)
\\ & \quad 
= \int_0^T\int_M \varepsilon\,\abs{\nabla u^\varepsilon(t)}_h^2
\,(\partial_\xi\psi)(x,u^\varepsilon(t))\theta(t) \,dV_h(x)\,dt
\\ & \quad \quad
-\varepsilon\int_0^T\int_{M\times\R}\rho^\varepsilon(t)\Delta_h\psi
\,\theta(t)\,d\xi\,dV_h(x)\,dt
\\ & \quad \quad \quad 
-\int_0^T\int_{M\times\R} \rho^\varepsilon(t)(f'_x(\xi)
\,\nabla\psi(x,\xi))_h\theta(t)\,d\xi\,dV_h(x)\,dt
\\ & \quad\quad\quad\quad
-\sum_{k\geq 1}\int_0^T\int_M g_k(x,u^\varepsilon(t))
\,\psi(x,u^\varepsilon(t))\,\theta(t)\,dV_h(x)\,d\beta_k(t)
\\ & \quad\quad\quad\quad\quad
-\frac12\int_0^T\int_M(\partial_\xi\psi)(x,u^\varepsilon(t))
\,G^2(x,u^\varepsilon(t))\,\theta(t)\,dV_h(x)\,dt. 
\end{align*}
By density of tensor products, this equation continue to 
hold for any test function $\psi\in\mathcal{D}((-T,T)\times M\times\R)$. 
Finally, if we are given a function $\psi\in\mathcal{D}([0,T)\times M\times\R)$, we
extend it to a function belonging to $\mathcal{D}((-T,T)\times M\times\R)$ using 
the Whitney extension theorem. The equation holds for this extension too.
\end{proof}

\subsection{A priori $L^p$ estimates}
Our goal is to use the generalized It\^{o} formula in 
Proposition \ref{prop: generalized Ito} to establish an $\varepsilon$-independent 
$L^p$ bound on the weak solution to the parabolic SPDE \eqref{eq:target2}. 
Before we can do that we need two auxiliary lemmas, which make use of 
the geometric compatibility condition \eqref{eq: gc}. 

\begin{lemma}\label{lemma: gc1}
Suppose $f=f_x(\xi)$ is a smooth geometry-compatible vector 
field on $M$, smoothly depending on $\xi\in\R$. For $u\in C^1(M)$ 
and $S\in C^2(\R)$ with $S''\in L^\infty(\R)$,
\[
\int_M \left( f_x(u),\nabla S'(u) \right)_h \, dV_h(x)
=\int_M\left( f_x(u),\nabla u \right)_h S''(u)\,dV_h(x)=0. 
\]
\end{lemma}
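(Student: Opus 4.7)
The first identity is immediate from the chain rule: since $u\in C^1(M)$ and $S\in C^2(\R)$ (so $S''$ is continuous and bounded), one has $\nabla S'(u) = S''(u)\nabla u$ pointwise on $M$, whence $(f_x(u),\nabla S'(u))_h = S''(u)\,(f_x(u),\nabla u)_h$. The real content of the lemma is the vanishing assertion, and the plan is to exhibit a $C^1$ vector field $V$ on $M$ whose divergence equals the integrand $S''(u)(f_x(u),\nabla u)_h$; since $M$ is compact and $\partial M = \emptyset$, the divergence theorem will then force $\int_M \Div V\,dV_h = 0$.

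The candidate is produced by $\xi$-integrating the flux at the level of the parameter. Set
\[
F(x,\eta) := \int_0^\eta S''(\xi)\,f_x(\xi)\,d\xi, \qquad x\in M,\ \eta\in\R,
\]
which is smooth in $x$ and $C^1$ in $\eta$, and then define $V(x) := F(x,u(x)) \in T_xM$. Because $u\in C^1(M)$ and $f$ is smooth in both arguments, $V$ is a well-defined $C^1$ vector field on $M$.

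Working in local coordinates (or using the intrinsic product rule for $\Div$), one has the splitting
\[
\Div V(x) = \bigl[\Div_x F(\,\cdot\,,\eta)\bigr]_{\eta = u(x)} + \bigl(\partial_\eta F(x,u(x)),\,\nabla u(x)\bigr)_h.
\]
The decisive step, where geometry-compatibility \eqref{eq: gc} enters, is the interchange of $\Div_x$ with the $\xi$-integral to give $\Div_x F(x,\eta) = \int_0^\eta S''(\xi)\,\Div f_x(\xi)\,d\xi = 0$. For the second summand, $\partial_\eta F(x,\eta) = S''(\eta) f_x(\eta)$, so it collapses to $S''(u)(f_x(u),\nabla u)_h$, yielding the pointwise identity $\Div V = S''(u)(f_x(u),\nabla u)_h$. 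Integrating over $M$ then closes the argument. The only technicality is the justification of differentiation under the integral sign, which is routine given the smoothness of $f$ and the boundedness of $S''$; I do not foresee any serious obstacle beyond this bookkeeping.
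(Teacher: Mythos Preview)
Your argument is correct and follows essentially the same strategy as the paper: build a $\xi$-primitive of the flux, use geometry-compatibility to kill the ``frozen-$\xi$'' divergence, and apply the divergence theorem on the closed manifold. The only cosmetic difference is that the paper takes $F_x(\xi)=\int_0^\xi S'(w)\,\partial_w f_x(w)\,dw$ and first integrates $(f_x(u),\nabla S'(u))_h$ by parts to reach $S'(u)\,\Div f_x(u)=\Div F_x(u)$, whereas you take $F(x,\eta)=\int_0^\eta S''(\xi)\,f_x(\xi)\,d\xi$ and exhibit the integrand directly as $\Div V$; the two primitives are related by an integration by parts in $\xi$.
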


\begin{proof}
Following \cite{Ben-Artzi/LeFloch}, we define 
\[
F_x(\xi):=\int_0^\xi S'(w)\partial_w f_x(w)\,dw\in T_xM,  
\qquad x\in M,  \,\xi\in\R. 
\]
The geometry compatibility condition \eqref{eq: gc} implies the 
following pointwise identity: 
\[
S'(u(x))\, \Div f_x(u(x))
=\Div F_x(u(x)),  \qquad x\in M. 
\]
Therefore, the divergence theorem implies 
\begin{align*}
&\int_M \left( f_x(u),\nabla u \right)_h S''(u)\,dV_h(x)
=\int_M\left( f_x(u),\nabla S'(u) \right)_h \, dV_h(x)
\\ & \quad = -\int_M S'(u(x))\, \Div f_x(u(x)) \,dV_h(x) 
=-\int_M \Div F_x(u(x)) \,dV_h(x)=0.
\end{align*}
\end{proof}

\begin{lemma}\label{lemma: gc2}
Suppose $f=f_x(\xi)$ is a smooth geometry-compatible vector 
field on $M$, smoothly depending on $\xi\in\R$, such that 
\eqref{eq: growth condition 1} and \eqref{eq: growth condition 2} hold. 
The conclusion of Lemma \ref{lemma: gc1} remains valid 
for any $u\in\Hone$.
\end{lemma}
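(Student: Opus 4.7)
The natural plan is to proceed by approximation, reducing everything to Lemma \ref{lemma: gc1}. First I would dispose of the first equality via the standard chain rule for Sobolev functions: since $S\in C^2(\R)$ with $S''\in L^\infty(\R)$, the function $S'$ is Lipschitz, so $S'(u)\in H^1(M,h)$ for any $u\in H^1(M,h)$, with $\nabla S'(u)=S''(u)\nabla u$ almost everywhere. This immediately identifies the two integrals in the statement, and it remains to show that
\[
I(u):=\int_M\left(f_x(u),\nabla u\right)_h S''(u)\,dV_h(x)=0.
\]

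Next I would pick a sequence $u_n\in C^\infty(M)$ with $u_n\to u$ in $H^1(M,h)$ (density of smooth functions on compact manifolds), and, passing to a subsequence, $u_n\to u$ almost everywhere on $M$. Lemma \ref{lemma: gc1} applied to each $u_n$ gives $I(u_n)=0$. The proof then reduces to showing $I(u_n)\to I(u)$. I would split
\[
I(u_n)-I(u)=\int_M\left(f_x(u_n)\,S''(u_n),\nabla u_n-\nabla u\right)_h dV_h+\int_M\bigl(f_x(u_n)S''(u_n)-f_x(u)S''(u),\nabla u\bigr)_h dV_h.
\]
The first piece is controlled by Cauchy--Schwarz together with the uniform $L^2$-bound on $f_x(u_n)$ (coming from the linear-at-infinity part of \eqref{eq: growth condition 1} and boundedness of $\seq{u_n}$ in $L^2$) and $\norm{S''}_{L^\infty}$, times $\norm{\nabla u_n-\nabla u}_{\overrightarrow{L^2(M)}}\to 0$.

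For the second piece, I would prove strong convergence $f_x(u_n)S''(u_n)\to f_x(u)S''(u)$ in $\overrightarrow{L^2(M)}$ and then apply Cauchy--Schwarz. Writing
\[
f_x(u_n)S''(u_n)-f_x(u)S''(u)=f_x(u_n)\bigl(S''(u_n)-S''(u)\bigr)+\bigl(f_x(u_n)-f_x(u)\bigr)S''(u),
\]
the second summand goes to zero in $\overrightarrow{L^2(M)}$ by the global Lipschitz bound \eqref{eq: growth condition 2} on $f$ and the $L^2$-convergence $u_n\to u$. The first summand tends to zero almost everywhere and is dominated by $2\norm{S''}_{L^\infty}|f_x(u_n)|_h$, whose squares form an equi-integrable sequence (since $|u_n|^2$ is equi-integrable by $L^2$-convergence and $|f_x(u_n)|_h^2\le C(1+|u_n|^2)$); hence Vitali's convergence theorem yields $L^2$-convergence to zero. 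Combining, $I(u_n)\to I(u)=0$, completing the proof.

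The only real subtlety — not a major obstacle — is justifying the $\overrightarrow{L^2}$-convergence of $f_x(u_n)S''(u_n)$ without an a priori $L^\infty$ bound on $u_n$; this is where one must invoke Vitali rather than crude dominated convergence, exploiting only the \emph{linear} growth of $f$ at infinity from \eqref{eq: growth condition 1} and the Lipschitz estimate \eqref{eq: growth condition 2}.
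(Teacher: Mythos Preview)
Your proof is correct and follows the same overall strategy as the paper: approximate $u$ in $H^1(M,h)$ by smooth functions, apply Lemma~\ref{lemma: gc1} to each approximant, and pass to the limit. The decompositions differ only cosmetically. The one genuine difference lies in the treatment of the term containing $S''(u_n)-S''(u)$. The paper handles its analogue ($A_3$) by invoking Lemma~\ref{lemma: strumentale} to get $S''(u_m)\to S''(u)$ in every $L^p$, and then controls $\int_M |u|\,|\nabla u|_h\,|S''(u_m)-S''(u)|\,dV_h$ via a generalized H\"older inequality together with the Sobolev embedding $H^1(M,h)\hookrightarrow L^{q_n}(M,h)$ (with dimension-dependent exponents $p_n,q_n$). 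Your route instead shows $f_x(u_n)S''(u_n)\to f_x(u)S''(u)$ in $\overrightarrow{L^2}$ directly via Vitali's theorem, using only the linear-at-infinity growth from \eqref{eq: growth condition 1} and the equi-integrability of $|u_n|^2$ coming from $L^2$-convergence. This is somewhat more elementary---no Sobolev embedding, no dimension casework---and you also make explicit the chain-rule identification $\nabla S'(u)=S''(u)\nabla u$, which the paper leaves tacit.
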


\begin{proof}
By definition of $\Hone$, we can find a sequence 
$(u_m)_m\subset C^1(M)$ such that $u_m\rightarrow u$ in 
$\Hone$ as $m\to\infty$. By Lemma \ref{lemma: gc1},
\[
\int_M\left(f_x(u_m), \nabla u_m \right)_h S''(u_m)\,dV_h(x)=0,  
\qquad m\in\N. 
\] 
Let us write this equation as
\begin{align*}
0 & = 
\int_M\left( f_x(u_m),\nabla u_m \right)_h S''(u_m)\,dV_h(x)
\\ & = 
\int_M\left( f_x(u_m)-f_x(u),\nabla u_m \right)_h S''(u_m)\,dV_h(x)
\\ & \qquad 
+ \int_M\left( f_x(u),\nabla u_m- \nabla u \right)_h S''(u_m)\,dV_h(x)
\\ & \qquad \quad
+ \int_M\left( f_x(u),\nabla u \right)_h (S''(u_m)-S''(u))\,dV_h(x)
\\ & \qquad \quad
+ \int_M\left( f_x(u),\nabla u \right)_h S''(u)\,dV_h(x)\\
& =: A_1+A_2+A_3 + \int_M\left( f_x(u),\nabla u \right)_h S''(u)\,dV_h(x). 
\end{align*}
The lemma follows if we can show that 
$A_1,A_2,A_3\to 0$ as $m\to \infty$. 

Clearly,
\[
\abs{A_1}\leq C_1 \,\norm{S''}_{L^\infty}
\norm{u-u_m}_{L^2(M)}\sup_m \norm{\nabla u_m}_{L^2(M)},
\]
where $C_1$ is the constant in \eqref{eq: growth condition 2}. 
Therefore, $A_1\to 0$ as $m\to\infty$.

In a similar way, employing \eqref{eq: growth condition 1},
\begin{align*}
\abs{A_2} \leq C_0\,\norm{S''}_{L^\infty} 
& \Big ( (1+L^r) \norm{\nabla u_m-\nabla u}_{L^1(M)}
\\ & \qquad + \norm{u}_{L^2(M)}
\norm{\nabla u-\nabla u_m}_{L^2(M)} \Big), 
\end{align*}
and hence $A_2\to 0$ as $m\to \infty$.

For the remaining term, we argue like this:
\begin{align*}
\abs{A_3} & \leq C_0\int_M (1+L^r)\abs{\nabla u}_h
\abs{S''(u_m)-S''(u)}\,dV_h(x)
\\ &\qquad + C_0\int_M \abs{u}
\,\abs{\nabla u}_h \abs{S''(u_m)-S''(u)}\,dV_h(x) 
\\ & \leq C_0(1+L^r)\,\norm{\nabla u}_{L^2(M)}
\norm{S''(u_m)-S''(u)}_{L^2(M)}
\\ &\qquad 
+C_0\int_M \abs{u}\,\abs{\nabla u}_h
\abs{S''(u_m)-S''(u)}\,dV_h(x). 
\end{align*}
By invoking Lemma \ref{lemma: strumentale} (in the appendix), 
$S''(u_m)\to S''(u)$ in $\Lp{p}$ as $m\to\infty$, 
for any $p\in[1,\infty)$. Therefore, 
\[
\abs{A_3}\leq o(1)+
C_0 \int_M \abs{u}\,\abs{\nabla u}_h 
\abs{S''(u_m)-S''(u)}\,dV_h(x),  
\]
where $o(1)\to 0$ as $m\to\infty$. 
Let $p_n$ and $q_n$ be defined by
\[
p_n=\left\{
\begin{array}{ll}
2, & \text{if $n=1$} \\
\geq 1, & \text{if $n=2$} \\
n, & \text{if $n>2$}
\end{array}
\right.
\]
and $\frac{1}{q_n}+\frac12+\frac{1}{p_n}=1$, 
where $n$ is the dimension of the manifold. 
Therefore, by the generalized H\"older 
inequality and the Sobolev embedding,
\[
\abs{A_3}\leq o(1) + C_0\,
\norm{u}_{L^{q_n}(M)}\norm{\nabla u}_{L^2(M)}
\norm{S''(u_m)-S''(u)}_{L^{p_n}(M)}, 
\]
and thus $A_3\to 0$ as $m\to \infty$, again thanks 
to Lemma \ref{lemma: strumentale}.
\end{proof}

Fix $\varepsilon>0$, and consider 
$\mathcal{F}_0$-measurable initial datum 
$u_0^\varepsilon\in L^2(\Omega;\Ltwo)$. Denote by $u^\varepsilon$ 
the corresponding weak (variational) solution given by 
Theorem \ref{thm: existence variational solution}.  
In what follows, we derive some basic energy estimates that will allow us 
to show (weak) convergence of $u^\varepsilon$ as $\varepsilon\to 0$.

\begin{proposition}\label{prop: bound for the L2 norm}
There exists an $\varepsilon$-independent 
constant $K>0$ such that 
\begin{equation}\label{eq: bound for the L2 norm}
\EE\sup_{0\leq t\leq T} \norm{u^\varepsilon(t)}_{L^2(M)}^2
+\varepsilon \, \EE\int_0^T\norm{\nabla u^\varepsilon(t)}_{L^2(M)}^2\,dt 
\leq K \left(1+\EE\,\norm{u^\varepsilon_0}_{L^2(M)}^2\right). 
\end{equation}
\end{proposition}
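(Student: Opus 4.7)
The plan is to apply the Itô identity \eqref{eq: Ito p=2} already supplied by Theorem \ref{thm: existence variational solution}, exploit the geometry-compatibility of $f$ to eliminate the convective term, and then close the estimate via the Burkholder-Davis-Gundy inequality together with Gronwall's lemma.

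The first key step is to observe that Lemma \ref{lemma: gc2}, applied with $S(r)=r^2/2$ (so $S''\equiv 1\in L^\infty(\R)$), yields
\[
\int_M\bigl(f_x(u^\varepsilon(s)),\nabla u^\varepsilon(s)\bigr)_h\,dV_h(x)=0,
\qquad \text{for $\P\otimes dt$-a.e.~$(\omega,s)\in\Omega_T$,}
\]
since $u^\varepsilon(s)\in\Hone$ for $\P\otimes dt$-a.e.~$(\omega,s)$. Combined with the growth bound \eqref{eq: definition of G}, which gives $\norm{B(u^\varepsilon(s))}_{\hS}^2\leq D_1+D_1\norm{u^\varepsilon(s)}_{L^2(M)}^2$ (using $\mathrm{Vol}(M,h)=1$), the Itô formula \eqref{eq: Ito p=2} reduces to
\[
\norm{u^\varepsilon(t)}_{L^2(M)}^2 + 2\varepsilon\int_0^t\norm{\nabla u^\varepsilon(s)}_{L^2(M)}^2\,ds
\leq \norm{u_0^\varepsilon}_{L^2(M)}^2 + \int_0^t D_1\bigl(1+\norm{u^\varepsilon(s)}_{L^2(M)}^2\bigr)\,ds + M(t),
\]
where $M(t):=2\int_0^t\bigl(u^\varepsilon(s),B(u^\varepsilon(s))\,dW(s)\bigr)_{L^2(M)}$ is the martingale remainder.

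Next I would take the supremum over $t\in[0,T']$ for arbitrary $T'\leq T$ and then expectation. The BDG inequality \eqref{eq: BDG} combined with Cauchy-Schwarz controls the martingale piece:
\[
\EE\sup_{t\leq T'}\abs{M(t)}
\leq C\,\EE\left(\sup_{s\leq T'}\norm{u^\varepsilon(s)}_{L^2(M)}^2\cdot \int_0^{T'}D_1\bigl(1+\norm{u^\varepsilon(s)}_{L^2(M)}^2\bigr)\,ds\right)^{1/2},
\]
and Young's inequality allows me to absorb $\tfrac12\,\EE\sup_{s\leq T'}\norm{u^\varepsilon(s)}_{L^2(M)}^2$ into the left hand side. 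Setting $\phi(T'):=\EE\sup_{s\leq T'}\norm{u^\varepsilon(s)}_{L^2(M)}^2$, this produces the closed inequality
\[
\phi(T')+2\varepsilon\,\EE\int_0^{T'}\norm{\nabla u^\varepsilon(s)}_{L^2(M)}^2\,ds
\leq C_1\bigl(1+\EE\norm{u_0^\varepsilon}_{L^2(M)}^2\bigr)+C_2\int_0^{T'}\phi(s)\,ds,
\]
for all $T'\in[0,T]$, after which Gronwall's lemma delivers \eqref{eq: bound for the L2 norm} with $K=K(D_1,T)$ independent of $\varepsilon$. The finiteness of $\phi$ required to legitimize this absorption argument is guaranteed by the $\EE\sup$ bound in Theorem \ref{thm: existence variational solution}.

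The only delicate point is the vanishing of the flux contribution: because the variational solution is only $H^1$ in space, the smooth computation of Lemma \ref{lemma: gc1} does not apply directly, and one genuinely needs the density/approximation extension to $\Hone$ provided by Lemma \ref{lemma: gc2}. Everything else reduces to a standard BDG-plus-Gronwall moment estimate.
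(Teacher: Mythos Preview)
Your proof is correct and follows essentially the same route as the paper: It\^o identity \eqref{eq: Ito p=2}, Lemma~\ref{lemma: gc2} to annihilate the flux term, the growth bound \eqref{eq: definition of G} on $B$, BDG plus Young to absorb half of $\EE\sup\norm{u^\varepsilon}^2$, and Gronwall. The only organizational difference is that the paper first takes expectation at fixed $t$ (killing the martingale directly) and applies Gronwall to $t\mapsto\EE\norm{u^\varepsilon(t)}_{L^2}^2$, and only afterward takes $\sup_t$ and invokes BDG, whereas you apply Gronwall directly to $\phi(T')=\EE\sup_{s\le T'}\norm{u^\varepsilon(s)}_{L^2}^2$; both orderings are standard and equivalent here.
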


\begin{proof}
From equation \eqref{eq: Ito p=2}, it follows that
\begin{align*}
&\EE\,\norm{u^\varepsilon(t)}_{L^2(M)}^2
= \EE\,\norm{u^\varepsilon_0}_{L^2(M)}^2 
\\ & \quad 
+2\int_0^t\EE\left(-\varepsilon \norm{\nabla u^\varepsilon(s)}_{L^2(M)}^2 
+\int_M\left( f_x(u^\varepsilon(s)),\nabla u^\varepsilon(s)\right)_h
\, dV_h(x)\right)\, ds 
\\ & \quad 
+\int_0^t \EE\,\norm{B(u^\varepsilon(s))}_{\hS}^2\, ds, 
\qquad t\in [0,T]
\end{align*}
For additional details, see \cite[Remark 4.2.8]{PR}. 
Because $u^\varepsilon$ belongs to $\Hone$ for 
$\P\otimes dt$-a.e.~$(\omega,s)\in\Omega_T$, it follows 
from Lemma \ref{lemma: gc2} with $S(\xi)=\xi^2$ that 
$$
\int_0^t\EE \int_M 
\left(f_x(u^\varepsilon(s)),\nabla u^\varepsilon(s)\right)_h 
\,dV_h(x)\,ds=0. 
$$
Using this and \eqref{eq: definition of G}, we obtain
\begin{align*}
\EE\,\norm{u^\varepsilon(t)}_{L^2(M)}^2 
& \leq \EE\,\norm{u^\varepsilon_0}_{L^2(M)}^2 
+ \int_0^t \EE\,\norm{B(u^\varepsilon(s))}_{\hS}^2 \,ds
\\ & \leq \EE\,\norm{u^\varepsilon_0}_{L^2(M)}^2 
+ D_1\int_0^t\EE \left(1+\norm{u^\varepsilon(s)}_{L^2(M)}^2\right)\, ds
\\ & =\EE\,\norm{u^\varepsilon_0}_{L^2(M)}^2 + D_1t
+\int_0^t D_1\,\EE\, \norm{u^\varepsilon(s)}_{L^2(M)}^2\, ds. 
\end{align*}
Note that $t\mapsto \EE\, \norm{u^\varepsilon(t)}_{L^2(M)}^2$ 
is continuous, and that 
$t\mapsto \EE\,\norm{u^\varepsilon_0}_{L^2(M)}^2 + D_1t$ 
is non-decreasing and positive. Hence, an application of 
Gronwall's inequality yields
\begin{equation}\label{eq: Gronwall-Bellman}
\EE\,\norm{u^\varepsilon(t)}_{L^2(M)}^2 
\leq e^{D_1t}\left(\EE\,\norm{u^\varepsilon_0}_{L^2(M)}^2 
+ D_1t\right), \qquad t\in[0,T].
\end{equation}

Let us now return to \eqref{eq: Ito p=2}. Using once 
more \eqref{eq: definition of G} and Lemma \ref{lemma: gc2}, 
taking the supremum and then the expectation, 
and making use of the elementary inequality 
$\sup_t\alpha(t)+\sup_t \beta(t)\leq 2\sup_t\gamma(t)$ 
valid for positive functions $\alpha(t),\beta(t)$ 
satisfying $\alpha(t)+\beta(t)\leq \gamma(t)$, we obtain
\begin{align*}
&\EE \sup_{0\leq t\leq T} \norm{u^\varepsilon(t)}_{L^2(M)}^2
+2\varepsilon\, \EE\int_0^T
\norm{\nabla u^\varepsilon(s)}_{L^2(M)}^2\,ds
\\ & \qquad 
\leq 2\EE\,\norm{u^\varepsilon_0}_{L^2(M)}^2 
+2D_1T+ 2D_1\int_0^T\EE\,\norm{u^\varepsilon(s)}_{L^2(M)}^2 \,ds
\\ &\qquad \qquad
+4\EE\sup_{0\leq t\leq T} \left|\, \sum_{k\geq 1}
\int_0^t \int_M u^\varepsilon(s) g_k(x,u^\varepsilon(s))
\,dV_h(x)\,d\beta_k(s)\right|. 
\end{align*}
By the Burkholder-Davis-Gundy inequality \eqref{eq: BDG}, arguing 
as in the proof of Proposition \ref{prop: generalized Ito} 
and employing Cauchy's inequality, we can bound the last term by
\[
\frac12 \EE\sup_{0\leq t\leq T}\norm{u^\varepsilon(t)}_{L^2(M)}^2 
+C\left[1+\int_0^T\EE\,\norm{u^\varepsilon(s)}_{L^2(M)}^2 ds\right], 
\]
where $C$ depends on $D_1,T$ but not $\varepsilon,C_0,r,L,C_1,D_2$. 
Thus, for a new constant $C$, 
\begin{align*}
& \frac12 \EE\sup_{0\leq t\leq T}
\norm{u^\varepsilon(t)}_{L^2(M)}^2 + 2\varepsilon\,\EE\int_0^T
\norm{\nabla u^\varepsilon(s)}_{L^2(M)}^2\, ds
\\ & \qquad 
\leq 2\EE\,\norm{u^\varepsilon_0}_{L^2(M)}^2 
+ C\left( 1 + \int_0^T\EE\,\norm{u^\varepsilon(s)}_{L^2(M)}^2 \,ds\right). 
\end{align*}
Combining this with \eqref{eq: Gronwall-Bellman}, we arrive at 
the claim \eqref{eq: bound for the L2 norm}.
\end{proof}

With similar reasoning, this time using our 
generalized It\^{o} formula \eqref{eq: generalized Ito}, it 
is possible to derive a related bound for the $L^p$-norm.

\begin{proposition}\label{prop: bound for the Lp norm}
Fix $p\in (2,\infty)$. There exists an $\varepsilon$-independent 
constant $K_p>0$ such that 
\begin{equation}\label{eq: bound for the Lp norm}
\begin{split}
& \EE\sup_{0\leq t\leq T}
\norm{u^\varepsilon(t)}_{L^p(M)}^p
+\varepsilon\,\EE\int_0^T\int_M 
\abs{u^\varepsilon(t)}^{p-2}
\abs{\nabla u^\varepsilon(t)}^2_h\,dV_h(x)\,dt \\ & \qquad 
\leq K_p\left(1+\EE\,\norm{u^\varepsilon_0}_{L^p(M)}^p\right). 
\end{split}
\end{equation}
\end{proposition}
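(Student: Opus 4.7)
The plan is to mimic the proof of Proposition \ref{prop: bound for the L2 norm}, with the role of $S(\xi)=\xi^2$ played by $S(\xi)=\abs{\xi}^p$. Because Proposition \ref{prop: generalized Ito} requires $S''\in L^\infty(\R)$, I first introduce a family of truncations $S_R\in C^2(\R)$ with $S_R''\geq 0$ everywhere, $S_R(\xi)=\abs{\xi}^p$ on $[-R,R]$, $S_R''\in L^\infty(\R)$, and $S_R''(\xi)\leq p(p-1)\abs{\xi}^{p-2}$ pointwise, with $S_R''(\xi)\uparrow p(p-1)\abs{\xi}^{p-2}$ monotonically as $R\to\infty$. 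Apply the generalized It\^{o} formula with $\psi\equiv 1$, $F\equiv 0$, $G(s)=-f_x(u^\varepsilon(s))$, $I(s)=\varepsilon u^\varepsilon(s)$, $H(s)=B(u^\varepsilon(s))$, and $S=S_R$.

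Next I analyze each term in the resulting identity. The flux term $\int_0^t\int_M(f_x(u^\varepsilon),\nabla S_R'(u^\varepsilon))_h\,dV_h\,ds$ vanishes by Lemma \ref{lemma: gc2}, which is applicable since $S_R''\in L^\infty(\R)$. The Laplacian term, after integration by parts, contributes the non-positive quantity $-\varepsilon\int_0^t\int_M S_R''(u^\varepsilon(s))\abs{\nabla u^\varepsilon(s)}_h^2\,dV_h\,ds$, which will be moved to the left-hand side as the dissipation. The It\^{o} correction involves $\sum_k H_k^2(s)=G^2(x,u^\varepsilon(s))\leq D_1(1+\abs{u^\varepsilon(s)}^2)$ by \eqref{eq: definition of G}, so a Young-type estimate yields
\[
\tfrac12\int_M S_R''(u^\varepsilon(s))G^2(x,u^\varepsilon(s))\,dV_h
\leq C_p\Bigl(1+\int_M\abs{u^\varepsilon(s)}^p\,dV_h\Bigr),
\]
uniformly in $R$, using the key bound $S_R''(\xi)\leq p(p-1)\abs{\xi}^{p-2}$ together with $\abs{\xi}^{p-2}(1+\abs{\xi}^2)\leq 1+\abs{\xi}^p$ for $p>2$. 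The stochastic integral $\int_0^t\int_M S_R'(u^\varepsilon)B(u^\varepsilon)\,dW(s)$ is handled by the Burkholder--Davis--Gundy inequality \eqref{eq: BDG}, whose right-hand side, after Cauchy's inequality ``with $\varepsilon$'', is bounded by $\tfrac14\EE\sup_{t\in[0,T]}\int_M S_R(u^\varepsilon(t))\,dV_h$ plus a multiple of $1+\EE\int_0^T\int_M\abs{u^\varepsilon}^p\,dV_h\,ds$, exactly as in the $L^2$ proof. Absorbing this into the left-hand side and invoking Gronwall's inequality yields an estimate with constants independent of both $R$ and $\varepsilon$.

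Finally, sending $R\to\infty$ gives \eqref{eq: bound for the Lp norm}: monotone convergence applies to $S_R(u^\varepsilon)\uparrow\abs{u^\varepsilon}^p$ and to $S_R''(u^\varepsilon)\abs{\nabla u^\varepsilon}_h^2\uparrow p(p-1)\abs{u^\varepsilon}^{p-2}\abs{\nabla u^\varepsilon}_h^2$. The main obstacle is the careful design of the truncation $S_R$: one simultaneously needs $S_R''$ bounded so that Proposition \ref{prop: generalized Ito} and Lemma \ref{lemma: gc2} are available, $S_R''$ monotone in $R$ (for the dissipation) so that monotone convergence applies on the left, and the pointwise bound $S_R''(\xi)\leq p(p-1)\abs{\xi}^{p-2}$ to make the estimate on the It\^{o} correction $R$-independent. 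Once this is arranged, the remainder closely parallels the $L^2$ argument.
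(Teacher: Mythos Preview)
Your overall strategy coincides with the paper's: approximate $|\xi|^p$ by $S_R\in C^2$ with $S_R''\in L^\infty$, apply Proposition~\ref{prop: generalized Ito} with $\psi\equiv1$, discard the flux term via Lemma~\ref{lemma: gc2}, bound the It\^o correction using \eqref{eq: definition of G}, control the stochastic integral by BDG, and pass to the limit.

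There is, however, a genuine gap in the Gronwall step. Your bound on the It\^o correction and your BDG remainder are both expressed through $\int_M|u^\varepsilon|^p\,dV_h$, while the quantity on the left after absorption is $\EE\sup_t\int_M S_R(u^\varepsilon(t))\,dV_h$. Because $S_R(\xi)\le|\xi|^p$ (not $\ge$), the inequality does not feed back into itself and Gronwall cannot close at fixed $R$. Sending $R\to\infty$ first does not help either: that would require knowing $\EE\|u^\varepsilon(t)\|_{L^p}^p<\infty$ beforehand, which is precisely what you are trying to prove. A related issue hits the BDG absorption: to peel off $\tfrac14\EE\sup_t\int_M S_R(u^\varepsilon)\,dV_h$ one must factor $S_R'(u)$ through $\sqrt{S_R(u)}$, hence needs a uniform-in-$R$ bound of the type $|S_R'(\xi)|^2\le C_p\,S_R(\xi)\,|\xi|^{p-2}$, which your listed properties of $S_R$ do not supply.

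The paper repairs both points by demanding more of the truncation. Its explicit quadratic extension satisfies, in addition to $S_m''(\xi)\le p(p-1)|\xi|^{p-2}$, the structural inequalities $\xi^2S_m''(\xi)\le p(p-1)S_m(\xi)$ and $|\xi S_m'(\xi)|\le pS_m(\xi)$ (see \eqref{eq: elementary inequalities}). These allow the It\^o correction and the martingale term to be bounded by $C\bigl(1+\int_M S_m(u^\varepsilon)\,dV_h\bigr)$, so Gronwall closes at the $S_m$ level, where all quantities are finite thanks to the quadratic growth of $S_m$ and Proposition~\ref{prop: bound for the L2 norm}; only afterwards does one send $m\to\infty$ via Fatou. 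If you add these two inequalities to your requirements on $S_R$ (the paper's explicit $S_m$ is the simplest way to guarantee them) and replace $|u^\varepsilon|^p$ by $S_R(u^\varepsilon)$ in your intermediate estimates, your argument goes through. (Minor aside: the pointwise inequality $|\xi|^{p-2}(1+|\xi|^2)\le 1+|\xi|^p$ you quote is false for $|\xi|>1$; the correct version carries a constant~$2$, which is harmless.)
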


\begin{remark}
The bound \eqref{eq: bound for the Lp norm} implies
\[
\EE\left(\esssup_{t\in[0,T]}
\int_M\int_\R \abs{\xi}^p \, \nu^\varepsilon_{\omega,t,x}(d\xi)\, 
dV_h(x)\right)\leq C_p
\]
for some $\varepsilon$-independent constant $C_p>0$, 
where the ``Dirac delta" Young measure $\nu^\varepsilon_{\omega,t,x}$ is defined 
in Proposition \ref{prop: kinetic formulation of the parabolic problem}.
\end{remark}

\begin{proof}
To show the assertion we would like to employ 
the generalized It\^{o} formula with $\psi\equiv 1$ 
and nonlinear function $S(\xi)=\abs{\xi}^p$.
Unfortunately this cannot be done directly as the 
second derivative of $S$ is not bounded. 
We are therefore forced to utilize a suitable 
approximation of $S$, following 
an idea from \cite{DMS2005}. 

For $m\in\N$, define $S_m\in C^2(\R)$ by
\[
S_m(\xi)=
\begin{cases}
\abs{\xi}^p, 
	& \abs{\xi}\leq m \\
	m^{p-2}\left\{\frac{p(p-1)}{2}\xi^2-p(p-2)m\abs{\xi}
	+\frac{(p-1)(p-2)}{2}m^2\right\},  
	& \abs{\xi}> m
\end{cases}.
\]
The following elementary properties can be easily verified:
\begin{equation}\label{eq: elementary inequalities}
\begin{split}
& \abs{\xi S_m'(\xi)} \leq p S_m(\xi), \\
& \abs{S_m'(\xi)} \leq p(1+ S_m(\xi)), \\
& \abs{S_m'(\xi)} \leq \abs{\xi} S_m''(\xi), \\
& \xi^2 S_m''(\xi) \leq p(p-1) S_m(\xi), \\
& S_m''(\xi) \leq p(p-1)(1+ S_m(\xi)). 
\end{split}
\end{equation}

Before applying Proposition \ref{prop: generalized Ito}, we make 
a few preliminary observations. Since $u^\varepsilon$ is $\Ltwo$-predictable 
and $B$ is Lipschitz on $\Ltwo$, it follows that $B(u^\varepsilon)$ 
is $\hS$-predictable. By means of \eqref{eq: growth condition 2}, we see that also
$f_x(u^\varepsilon)$ is $\overrightarrow{\Ltwo}$-predictable. 
Moreover, if we utilize test functions in \eqref{eq: analytically weak solution} 
that are in $C^2(M)$, we are allowed to integrate by parts the term involving 
$\nabla u^\varepsilon$, obtaining 
\[
\int_0^t \int_M\varepsilon u^\varepsilon(s) \Delta_h\theta \, dV_h(x)\, ds. 
\] 

Setting $F\equiv 0$, $G(s):=-f_x(u^\varepsilon(s))$, 
$I(s):=\varepsilon u^\varepsilon(s)$, and $H(s):=B(u^\varepsilon(s))$, we see 
that all the assumptions in Proposition \ref{prop: generalized Ito} 
are satisfied. As a result, we can apply the generalized It\^{o} 
formula with $S_m(\cdot)$ and $\psi\equiv 1$, with the result that
\begin{equation}\label{eq: Ito con Sm}
\begin{split}
& \int_MS_m(u^\varepsilon(t))\,dV_h(x)
=\int_MS_m(u^\varepsilon_0)\,dV_h(x)
\\ & \qquad + \int_0^t \int_M\left(-\varepsilon\nabla u^\varepsilon(s)
+f_x(u^\varepsilon(s)),\nabla S_m'(u^\varepsilon(s)) \right)_h 
\,dV_h(x)\,ds \\ & \qquad \quad 
+  \left(\,\int_0^t S_m'(u^\varepsilon(s))
B(u^\varepsilon(s))\,dW(s),1\right)_{L^2(M)}
\\ & \qquad \quad \quad 
+\frac12\sum_{k\geq 1}\int_0^t\,
\int_MS_m''(u^\varepsilon(s)) 
g_k^2(x,u^\varepsilon(s)) \,dV_h(x)\, ds,
\end{split}
\end{equation}
$\P$-a.s., for any $t\in[0,T]$.

In view of Lemma \ref{lemma: gc2}, also in the 
present case the integral involving the flux $f$ vanishes. 
Furthermore, since $S_m''\geq 0$, we see that
\begin{align*}
& \int_0^t \int_M\left(-\varepsilon\nabla u^\varepsilon(s),
\nabla S_m'(u^\varepsilon(s)) \right)_h \,dV_h(x)\,ds
\\ & \qquad = -\varepsilon\int_0^t \int_M 
\abs{\nabla u^\varepsilon(s)}_h^2 
S_m''(u^\varepsilon(s)) \,dV_h(x)\,ds\leq 0. 
\end{align*}
By making use of \eqref{eq: elementary inequalities} 
and \eqref{eq: definition of G}, 
\begin{align*}
& \frac12\int_0^t\int_M S_m''(u^\varepsilon(s)) 
G^2(x,u^\varepsilon(s)) \, dV_h(x)\, ds
\\ & \qquad \leq \frac{D_1}{2}\int_0^t\int_M 
S_m''(u^\varepsilon(s)) \left(1+\abs{u^\varepsilon(s)}^2\right) 
\, dV_h(x)\, ds
\\ & \qquad \leq D_1p(p-1)
\int_0^t\int_M\left(\frac12+ S_m(u^\varepsilon(s))\right) dV_h(x) \,ds.
\end{align*}

Utilizing our findings in \eqref{eq: Ito con Sm} and then 
taking the expectation, we obtain, for any $t\in[0,T]$,
\begin{align*}
& \EE \int_MS_m(u^\varepsilon(t))\,dV_h(x) 
\leq \EE\int_MS_m(u^\varepsilon_0)\,dV_h(x) 
\\ & \qquad
+\EE\, \sum_{k\geq 1}\int_0^t\int_M 
S_m'(u^\varepsilon(s))g_k(x,u^\varepsilon(s))
\,dV_h(x)\,d\beta_k(s)
\\ &\qquad \qquad 
+D_1p(p-1)\frac{t}{2}+\int_0^t D_1p(p-1)
\, \EE\int_MS_m(u^\varepsilon(s)) \,dV_h(x)\, ds. 
\end{align*}

In view of the Burkholder-Davis-Gundy 
inequality \eqref{eq: BDG}, arguing as in the 
proof of Proposition \ref{prop: bound for the L2 norm}, 
it follows that the second term on the right-hand side 
of the inequality can be bounded as follows:
\begin{align*}
&\EE\sup_{0\leq t\leq T}
\abs{\sum_{k\geq 1}\int_0^t\int_M S_m'(u^\varepsilon(s)) 
g_k(x,u^\varepsilon(s)) \,dV_h(x)\,d\beta_k(s)}
\\ & \qquad \leq C\left(1+\EE\sup_{0\leq t\leq T}
\norm{u^\varepsilon(t)}_{L^2(M)}^2\right), 
\end{align*}
where the constant $C$ depends on $D_1, T, p$ but not 
$m,\varepsilon,C_0,r,L,C_1,D_2$. 
From Proposition \ref{prop: bound for the L2 norm} and the 
simple fact that $\norm{u^\varepsilon}_{L^2(M)}^2\leq 1 
+\norm{u^\varepsilon}_{L^p(M)}^p$ ($p>2$), the 
last term may be bounded by
\[
C\left(1+\EE\, \norm{u^\varepsilon_0}_{L^p(M)}^p\right), 
\]
for some new constant $C$. 

Noting that $S_m(\xi)\leq \abs{\xi}^p$, we 
summarize our computations as follows:
\begin{align*}
&\EE\int_MS_m(u^\varepsilon(t))\,dV_h(x) 
\leq \EE \,\norm{u^\varepsilon_0}_{L^p(M)}^p
+C\left(1+\EE\,\norm{u^\varepsilon_0}_{L^p(M)}^p\right)
\\ &\qquad +D_1p(p-1)\frac{t}{2}
+\int_0^tD_1p(p-1)\EE\int_M S_m(u^\varepsilon(s)) \,dV_h(x) \,ds. 
\end{align*}
As in the proof of Proposition \ref{prop: bound for the L2 norm}, we 
apply the Gronwall inequality to obtain 
\[
\EE\int_M S_m(u^\varepsilon(t))\,dV_h(x) 
\leq C\left(1+\EE\,\norm{u^\varepsilon_0}_{L^p(M)}^p\right), 
\qquad t\in [0,T],
\]
and hence, by Fatou's lemma, 
\begin{equation}\label{eq: Gronwall-Bellman 2}
\EE\norm{u^\varepsilon(t)}_{L^p(M)}^p 
\leq C\left(1+\EE\,\norm{u^\varepsilon_0}_{L^p(M)}^p\right). 
\end{equation}

Let us return to \eqref{eq: Ito con Sm}. 
Taking into account the previous computations,
\begin{align*}
& \int_M S_m(u^\varepsilon(t))\,dV_h(x)
+\varepsilon \int_0^t \int_M\abs{\nabla u^\varepsilon(s)}_h^2
\, S_m''(u^\varepsilon(s)) \,dV_h(x)\,ds
\\ & \quad \leq
\norm{u^\varepsilon_0}_{L^p(M)}^p 
+\sum_{k\geq 1}\int_0^t
\int_M S_m'(u^\varepsilon(s))g_k(x,u^\varepsilon(s))\,dV_h(x)\,d\beta_k(s)
\\ &\quad\qquad +D_1p(p-1)\frac{t}{2}
+\int_0^t D_1p(p-1)\norm{u^\varepsilon(s)}_{L^p(M)}^p \,ds,
\end{align*}
$\P$-a.s., for any $t\in[0,T]$. 
Note that $S_m$ is convex and, for any $\xi\in \R$, 
$S_m(\xi)\to \abs{\xi}^p$ and $S_m''(\xi)\to p(p-1)\abs{\xi}^{p-2}$ 
as $m\to \infty$. As a result of this, the 
superadditivity of $\liminf$, Fatou's lemma, and 
taking the supremum over $[0,T]$, we obtain
\begin{equation}\label{Lp-bound-tmp1}
\begin{split}	
& \sup_{0\leq t\leq T} \norm{u^\varepsilon(t)}_{L^p(M)}^p
+\varepsilon p(p-1)\int_0^T 
\int_M \abs{u^\varepsilon(s)}^{p-2}
\abs{\nabla u^\varepsilon(s)}_h^2 \,dV_h(x)\,ds
\\ & \quad \leq 2\norm{u^\varepsilon_0}_{L^p(M)}^p 
+D_1p(p-1)T
+ \int_0^T2D_1p(p-1)\norm{u^\varepsilon(s)}_{L^p(M)}^p \,ds
+ \mathcal{I},
\end{split}
\end{equation}
$\P$-a.s., for any $t\in[0,T]$, where
$$
\mathcal{I}
:= 2\liminf_{m\to \infty}\sup_{0\leq t\leq T} 
\abs{\sum_{k\geq 1}\int_0^t\int_M S_m'(u^\varepsilon(s))
g_k(x,u^\varepsilon(s)) \,dV_h(x)\,d\beta_k(s)}.
$$

Note that $u^\varepsilon$ admits an 
$L^r(M,h)$-continuous modification for any $r\in(2,p)$, and thus 
for $p$ as well. This is a consequence of the fact that 
this is already known for $p=2$, the $L^p$ estimate 
\eqref{eq: Gronwall-Bellman 2}, and a standard interpolation argument.

Using the Fatou lemma and arguing as we have 
done several times before,
$$
\EE \, \mathcal{I}\le 
C\left(1+\EE\,\norm{u^\varepsilon_0}_{L^p(M)}^p\right).
$$
Taking the expectation in \eqref{Lp-bound-tmp1}, 
noting that $\varepsilon p(p-1)>\varepsilon$, we thus obtain 
\begin{align*}
& \EE\sup_{0\leq t\leq T}\norm{u^\varepsilon(t)}_{L^p(M)}^p
+\varepsilon \,\EE \int_0^T \int_M
\abs{u^\varepsilon(s)}^{p-2}\abs{\nabla u^\varepsilon(s)}_h^2 
\,dV_h(x)\,ds
\\ & \quad \leq 2\EE\, \norm{u^\varepsilon_0}_{L^p(M)}^p 
+D_1p(p-1)T+\int_0^T2D_1p(p-1)\EE\norm{u^\varepsilon(s)}_{L^p(M)}^p\,ds
\\ & \quad \qquad
+C\left(1+\EE\,\norm{u^\varepsilon_0}_{L^p(M)}^p\right). 
\end{align*}
Thanks to \eqref{eq: Gronwall-Bellman 2}, we conclude that there is 
a constant $K_p>0$, depending on $D_1$ and $T$ but independent of 
$\varepsilon,C_0,r,L,C_1,D_2$, such that 
\eqref{eq: bound for the Lp norm} holds.
\end{proof}

\subsection{Bounds on kinetic measure}
In view of Propositions \ref{prop: bound for the L2 norm} 
and \ref{prop: bound for the Lp norm}, there is an 
$\varepsilon$-independent constant $C_p$ such that
\[
\EE\int_{[0,T]\times M\times\R}
\abs{\xi}^p\,m^\varepsilon(ds,dx,d\xi)\leq C_p,
\qquad p\in [2,\infty),
\] 
where the ``parabolic" kinetic measure $m^\varepsilon$ is defined in 
Proposition \ref{prop: kinetic formulation of the parabolic problem}.

We also have the improved estimate
\begin{equation}\label{improved-diss}
\EE\abs{\int_{[0,T]\times M\times\R}
\abs{\xi}^{2p}\,m^\varepsilon(ds,dx,d\xi)}^2 \leq C_p, 
\quad p\in [0,\infty),
\end{equation}
To derive this estimate we replicate the 
proof of Proposition \ref{prop: bound for the Lp norm}, 
this time with $S_m(\xi)\to \abs{\xi}^q$ as $m\to\infty$, 
where $q:=2p+2$. Indeed, we obtain
\begin{align*}
& \int_0^T\int_M 
\varepsilon\abs{\nabla u^\varepsilon(s)}_h^2
\,S_m''(u^\varepsilon(s)) \,dV_h(x)\,ds
\\ & \qquad\leq 
\norm{u_0}_{L^q(M)}^q
+ \sum_{k\geq 1}\int_0^T\int_M S'_m(u^\varepsilon(s))
g_k(x,u^\varepsilon(s))\,dV_h(x)\,d\beta_k(s)
\\ & \qquad\qquad 
+ C_q\left(1+\int_0^T\norm{u^\varepsilon(s)}_{L^q(M)}^q\, ds\right), 
\qquad \text{$\P$-almost surely},
\end{align*}

We square and then take the expectation. 
By the It\^{o} isometry,
\begin{align*}
\mathcal{D}(m) & :=\EE\, \abs{\int_0^T \int_M \varepsilon 
\, \abs{\nabla u^\varepsilon(s)}_h^2\,S_m''(u^\varepsilon(s)) 
\,dV_h(x)\,ds}^2 \\ & \leq C+C \, \EE\, \norm{u_0}_{L^q(M)}
+ C\,\EE\int_0^T\sum_{k\geq 1} 
\abs{\int_MS'_m(u^\varepsilon(s))g_k(x,u^\varepsilon(s))\,dV_h(x)}^2\, ds
\\ & \qquad \qquad + C\, \EE\,
 \abs{\int_0^T\norm{u^\varepsilon(s)}_{L^q(M)}^q \,ds}^2, 
\end{align*}
where the constant $C$ is independent of $\varepsilon$. 
Making use of $\norm{\cdot}_{L^q(M)}\leq \norm{\cdot}_{L^{2q}(M)}$ and 
Proposition \ref{prop: bound for the Lp norm}, we arrive at
\begin{align*}
&\mathcal{D}(m)\leq C + C\,\EE\int_0^T\sum_{k\geq 1}
\abs{\int_M S'_m(u^\varepsilon(s))g_k(x,u^\varepsilon(s))\,dV_h(x)}^2\, ds. 
\end{align*}
for a new constant $C$. By Jensen's inequality, elementary 
properties of the function $S_m$, cf.~\eqref{eq: elementary inequalities}, 
\eqref{eq: definition of G}, and again Proposition 
\ref{prop: bound for the Lp norm}, we can bound the 
second term on the right-hand side by a constant. 
The final result is 
\begin{equation}\label{Dm-bound}
\mathcal{D}(m)\leq C_q,
\end{equation}
for some positive constant $C_q$ independent 
of $\varepsilon$ (and $m$).

Seeing that $0\leq S_m''(\xi) \uparrow q(q-1)\abs{\xi}^{q-2}$ 
pointwise as $m\to\infty$, the monotone 
convergence theorem implies
$$
\lim_{m\to \infty} \mathcal{D}(m) = \mathcal{D},
\qquad \text{$\P$-almost surely},
$$
where $\mathcal{D}=\mathcal{D}(\omega)\in [0,\infty]$ is defined by
$$
\mathcal{D}:=
\abs{\int_0^T\int_M \varepsilon\,
\abs{\nabla u^\varepsilon(s)}_h^2\, 
q(q-1)\abs{u^\varepsilon(s)}^{q-2} \,dV_h(x)\,ds}^2. 
$$
On the other hand, since $q=2p+2\ge 2$, Propositions 
\ref{prop: bound for the L2 norm} and 
\ref{prop: bound for the Lp norm} ensure that 
$\mathcal{D}<\infty$ ($\P$-a.s.). In particular,
$\liminf\limits_{m\to \infty} \mathcal{D}(m) 
= \mathcal{D}\in [0,\infty)$ ($\P$-a.s.). 
By means of Fatou's Lemma and \eqref{Dm-bound}, 
$\EE \, \mathcal{D} \leq C_q$, that is,
$$
\EE\abs{\int_0^T\int_M \varepsilon\,
\abs{\nabla u^\varepsilon(s)}_h^2\,
\abs{u^\varepsilon(s)}^{2p} \,dV_h(x)\,ds}^2
\leq C_p, \qquad p\in [0,\infty).
$$
The claim \eqref{improved-diss} follows from this.

\subsection{Existence part of Theorem \ref{thm:well-posed}}

Following \cite[Section 4]{DV2010} closely, using the a priori 
bounds derived in the preceding paragraphs, 
$\rho^\varepsilon=\En_{u^\varepsilon>\xi}$ converges weakly-$\star$ 
to some $\rho$ in $L^\infty(\Omega\times (0,T)\times M\times \R)$, 
along a subsequence as $\varepsilon\to 0$ (not relabeled). 
Moreover, $m^\varepsilon$ converges weakly-$\star$ to 
some $m$ in $L^2(\Omega;\mathcal{M})$, where 
$\mathcal{M}$ denotes the space of bounded Borel measures on 
$(0,T)\times M\times \R$. The limit $(\rho,m)$ 
constitutes a generalized kinetic solution 
according to Definition \ref{def: generalized solution}.
Thanks to Proposition \ref{prop: Reduction and uniqueness}, 
$\rho$ is actually a kinetic function, that is, $\rho=\En_{u>\xi}$ for 
some $u$. The function $u$ is a kinetic solution according to 
Definition \ref{def: solution}. Moreover, 
$u^{\varepsilon}=\int_\R\left(\rho^\varepsilon-\En_{0>\xi}\right)\, d\xi$ 
converges strongly to $u=\int_\R\left(\rho-\En_{0>\xi}\right)\, d\xi$ 
in $L^p(\Omega\times [0,T]\times M)$: 
$$
\lim_{\varepsilon\to 0}\, 
\EE \, \norm{u^\varepsilon-u}^p_{L^p([0,T]\times M)}=0,  
\qquad \forall p\in [1,\infty).
$$
Since the kinetic solution $u$ is unique, cf.~Theorem \ref{thm:well-posed}, 
the entire sequence converges. For the details, we refer again to \cite{DV2010}.

\section{Appendix}

\begin{lemma}\label{lemma: strumentale}
Let $F\in C^0_b(\R)$. Suppose $(u_j)_j\subset\Ltwo$ 
converges to $u$ in $\Ltwo$. Then $F(u_j)\to F(u)$ 
for $j\to\infty$ in $\Lp{p}$, for any $p\in[1,\infty[$.
\end{lemma}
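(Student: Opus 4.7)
The plan is to combine the boundedness of $F$ with the classical subsequence principle: it suffices to show that every subsequence of $(F(u_j))_j$ has a further subsequence converging to $F(u)$ in $L^p(M,h)$. The main observation is that $L^2$ convergence does not imply a.e.~convergence in general, only along a subsequence, so the whole argument must be phrased subsequentially; there is no serious obstacle beyond keeping this in mind.

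First I would record the uniform bound. Since $F \in C^0_b(\R)$, any measurable $v:M\to\R$ satisfies
\[
\norm{F(v)}_{\Lp{p}}^p = \int_M \abs{F(v(x))}^p \, dV_h(x)
\leq \norm{F}_{L^\infty(\R)}^p \, \mathrm{Vol}(M,h)
= \norm{F}_{L^\infty(\R)}^p,
\]
so $F(u_j)$ and $F(u)$ all lie in $\Lp{p}$ with a uniform bound, and the constant $\norm{F}_{L^\infty(\R)}^p$ provides an integrable dominating function on $M$ (since $M$ has finite volume).

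Next I would apply the subsequence trick. Fix an arbitrary subsequence $(u_{j_k})_k$ of $(u_j)_j$. Since $u_{j_k}\to u$ in $\Ltwo$, a standard result (convergence in $L^2$ implies convergence in measure, which implies a.e.~convergence along a subsequence) yields a further subsequence, which I will not relabel, such that $u_{j_k}(x)\to u(x)$ for $dV_h$-a.e.~$x\in M$. By continuity of $F$,
\[
F(u_{j_k}(x)) \to F(u(x)) \quad \text{for $dV_h$-a.e.~$x\in M$}.
\]
Combined with the pointwise bound $\abs{F(u_{j_k})-F(u)}^p \leq (2\norm{F}_{L^\infty(\R)})^p$ and the integrability of constants on $M$, the dominated convergence theorem gives $F(u_{j_k})\to F(u)$ in $\Lp{p}$.

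Since every subsequence of $(F(u_j))_j$ admits a sub-subsequence converging to $F(u)$ in $\Lp{p}$, the whole sequence $F(u_j)$ converges to $F(u)$ in $\Lp{p}$, for every $p\in[1,\infty)$. This completes the proof.
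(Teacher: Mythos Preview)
Your proof is correct and follows essentially the same approach as the paper: pick an arbitrary subsequence, extract a further subsequence converging $dV_h$-a.e., use continuity of $F$ for pointwise convergence, apply dominated convergence with the bound $(2\norm{F}_{L^\infty})^p$, and conclude via the subsequence principle. The paper phrases the extraction step as an application of the ``reverse dominated convergence theorem,'' but this is the same argument you give via convergence in measure.
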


\begin{proof}
We pick any subsequence $(j_k)_k$. Then 
the reverse dominated convergence theorem 
guarantees that there exists $(j_{k_l})_l$ such that we 
have pointwise convergence for a.e.~$x\in M$. 
Therefore, by continuity of $F$, we obtain
\[
\abs{F(u_{j_{k_l}}(x))-F(u(x))}^p\to 0,  \quad \text{as $l\to\infty$},
\]
for a.e.~$x\in M$. Moreover, 
$\abs{F(u_{j_{k_l}}(x))-F(u(x))}^p\leq 2^p\norm{F}_{L^\infty}^p$, and 
thus by dominated convergence $F(u_{j_{k_l}})$ tends 
in $\Lp{p}$ to $F(u)$ as $l\to\infty$. 
Since $(j_k)_k$ was arbitrary, the same 
result must hold for the whole sequence. 
\end{proof}


\begin{lemma}\label{lemma: strumentale 2}
Let $\psi$ be in $\mathcal{D}(\R)$ and $\psi\geq 0$. 
Let $\mu$ and $\nu$ be finite Borel measures on $\R$. 
Let $\eta$ be a standard mollifier on $\R$ and set 
$\eta_\varepsilon(\cdot):=\varepsilon^{-1}
\eta\left(\frac{\cdot}{\varepsilon}\right)$ 
for $\varepsilon$ positive. Then
\[
\int_\R\psi(u)\int_{\R^2}
\abs{\xi-\gamma}^2\eta_\varepsilon(u-\xi)
\,\eta_\varepsilon(u-\gamma)
\,\mu\otimes\nu(d\xi,d\gamma)\,du \leq C\varepsilon, 
\] 
where $C=4 \norm{\psi}_{L^\infty}\norm{\eta}_{L^\infty}
(\mu\otimes\nu)(\R^2)$
\end{lemma}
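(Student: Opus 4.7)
The plan is to apply Fubini's theorem (the integrand is non-negative) to interchange the $u$-integral with the integral against $\mu\otimes\nu$, and then estimate the inner $u$-integral uniformly in $(\xi,\gamma)$. The crucial observation is that the factor $\eta_\varepsilon(u-\xi)\,\eta_\varepsilon(u-\gamma)$ vanishes unless both $|u-\xi|\le\varepsilon$ and $|u-\gamma|\le\varepsilon$; by the triangle inequality this forces $|\xi-\gamma|\le 2\varepsilon$ on the support, which is exactly where the extra factor of $\varepsilon$ comes from.

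On the set where the integrand is nonzero we therefore have the pointwise bound $|\xi-\gamma|^2\le 2\varepsilon\,|\xi-\gamma|\le 4\varepsilon^2$. I would then estimate one of the mollifier factors by its $L^\infty$-norm, namely $\eta_\varepsilon(u-\gamma)\le \varepsilon^{-1}\|\eta\|_{L^\infty}$, and keep the other factor inside the integral so that $\int_\R \eta_\varepsilon(u-\xi)\,du = 1$. Using $\psi(u)\le \|\psi\|_{L^\infty}$ this yields
\begin{align*}
\int_\R \psi(u)\,|\xi-\gamma|^2\,\eta_\varepsilon(u-\xi)\,\eta_\varepsilon(u-\gamma)\,du
&\le 4\varepsilon^2\,\|\psi\|_{L^\infty}\,\varepsilon^{-1}\|\eta\|_{L^\infty}\!\int_\R \eta_\varepsilon(u-\xi)\,du \\
&= 4\varepsilon\,\|\psi\|_{L^\infty}\|\eta\|_{L^\infty},
\end{align*}
which is independent of $(\xi,\gamma)$.

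Integrating this uniform bound against the finite product measure $\mu\otimes\nu$ gives the desired estimate with $C=4\|\psi\|_{L^\infty}\|\eta\|_{L^\infty}(\mu\otimes\nu)(\R^2)$. There is no serious obstacle here; the argument is a direct consequence of the support properties of the standard mollifier together with the triangle inequality, and Fubini applies without difficulty because all integrands are non-negative and the two measures are finite.
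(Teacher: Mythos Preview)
Your proof is correct and follows essentially the same route as the paper: Fubini, then the support condition $|u-\xi|\le\varepsilon$, $|u-\gamma|\le\varepsilon$ forces $|\xi-\gamma|\le 2\varepsilon$, after which one mollifier is bounded by $\varepsilon^{-1}\|\eta\|_{L^\infty}$ and the other is integrated to $1$. The paper performs an explicit change of variables $u=\varepsilon v+\xi$ before making the same estimates, but this is only a cosmetic difference.
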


\begin{proof}
By direct computation, setting $S_y:=B_1(0)\cap B_1(y)$ 
for $y\in\R$, we see that
\begin{align*}
&\int_\R\psi(u)\int_{\R^2}\abs{\xi-\gamma}^2\eta_\varepsilon(u-\xi)
\,\eta_\varepsilon(u-\gamma)\,\mu\otimes\nu(d\xi,d\gamma)\,du \\
& =\int_{\R^2}\abs{\xi-\gamma}^2\left[\int_\R\psi(u)
\,\eta_\varepsilon(u-\xi)\,\eta_\varepsilon(u-\gamma)\,du\right]
\mu\otimes\nu(d\xi,d\gamma)\\
&=\int_{\R^2}\abs{\xi-\gamma}^2\left[
\int_{S_{\frac{\gamma-\xi}{\varepsilon}}}
\psi(\varepsilon v+\xi)\,\eta(v)\,\varepsilon^{-1}
\eta\left(v+\frac{\xi-\gamma}{\varepsilon}\right)\,dv\right]
\mu\otimes\nu(d\xi,d\gamma)\\
&=\int_{\abs{\gamma-\xi}\leq 2\varepsilon}
\abs{\xi-\gamma}^2\left[\int_{S_{\frac{\gamma-\xi}{\varepsilon}}}
\psi(\varepsilon v+\xi)\,\eta(v)\,\varepsilon^{-1}
\eta\left(v+\frac{\xi-\gamma}{\varepsilon}\right)\,dv\right]
\mu\otimes\nu(d\xi,d\gamma)
\\ & \leq\norm{\psi}_{L^\infty}
\norm{\eta}_{L^\infty}\,\varepsilon^{-1}
\int_{\abs{\gamma-\xi}\leq 2\varepsilon}\abs{\xi-\gamma}^2
\mu\otimes\nu(d\xi,d\gamma)\leq C\varepsilon. 
\end{align*}
\end{proof}

\end{document}